\def\mod {\mathrm{mod}}
\def\const{\mathrm{const}}
\def\arctan{\mathrm{arctan}}
\def\Id{\mathrm{Id}}
\numberwithin{equation}{section}
\newtheorem{theorem}{Theorem}[section]
\newtheorem{proposition}[theorem]{Proposition}
\newtheorem{corollary}[theorem]{Corollary}
\newtheorem{lemma}[theorem]{Lemma}
\newtheorem{remark}[theorem]{Remark}
\newtheorem{example}[theorem]{Example}
\newtheorem{definition}[theorem]{Definition}
\newenvironment{proof}[1][Proof]{\noindent\textit{#1.} }{\hfill$\Box$
	
\medskip}
 \title{Finite Groups of Random Walks in the Quarter Plane and Periodic $4$-bar Links}
\author[1,3]{Vladimir Dragovi\'c}
\author[2,3]{Milena Radnovi\'c}
\affil[1]{\textsc{The University of Texas at Dallas, Department of Mathematical Sciences}}
\affil[2]{\textsc{The University of Sydney, School of Mathematics and Statistics}}
\affil[3]{\textsc{Mathematical Institute SANU, Belgrade}}
\affil[ ]{\texttt{vladimir.dragovic@utdallas.edu, milena.radnovic@sydney.edu.au}}
\date{}
\begin{document}

\maketitle

\begin{abstract}
We solve two long standing open problems, one from probability theory formulated by Malyshev in 1970 and another one from a crossroad of geometry and dynamics, going back to Darboux in 1879.
The Malyshev problem is of finding effective, explicit necessary and sufficient conditions in the closed form to characterize  all random walks in the quarter plane with a finite group of the random walk of order $2n$, for all $n\ge 2$, in the generic case where the underlining biquadratic is an elliptic curve.
Until now, the results were known only for $n=2, 3, 4$ and were obtained using ad-hoc methods developed separately for each of the three cases.
We provide a method that solves the problem for all $n$ and in a unified way.
In this paper, explicit examples of random walks with the groups of orders higher than $10$ are presented for the first time, including orders $12$, $14$, and $16$ and the same method applies to any higher order as well.
We also consider situations with singular biquadratics in a systematic manner.
Further, we establish a new two-way relationship between {\it diagonal} random walks in the quarter plane and $4$-bar links.
We describe all $n$-periodic Darboux transformations for $4$-bar link problems for all $n\ge 2$, thus completely solving the Darboux problem, in which after $n$ iterations, a polygonal configuration maps to a congruent one of the same orientation, that he solved for $n=2$, and which was very  recently extended to $n=3$.
We also study  \emph{$k$-semi-periodicity} as a natural type of periodicity of the Darboux transformations, where  after $k$ iterations of the Darboux transformation, a polygonal configuration maps to a congruent one, but of opposite orientation. By introducing a new object, \emph{the secondary $(2,2)$ correspondence} and the related secondary cubic of the centrally-symmetric biquadratics, we provide necessary and sufficient conditions for $k$-semi-periodicity for $4$-bar links for all $k\ge 2$ in an explicit closed form, while the case $k=2$ was solved very recently.

\vskip 0.5cm

MSC:  60J20; 05C81; 60G50; 52C25; 14H50; 14H70
\end{abstract}

\tableofcontents

\section{Introduction}

We solve the long standing open problem formulated by Malyshev in \cite{Mal} in 1970, of finding effective, explicit necessary and sufficient conditions in the closed form to characterize  all random walks in the quarter plane with a finite group of the random walk of order $2n$, for all $n\ge 2$, in the generic case where the underlining biquadratic, defining the kernel of the random walk, is an elliptic curve.

Each biquadratic curve has two natural involutions $h$ and $v$, see Figure \ref{fig:switches}.
In \cite{Mal}, Malyshev defined  the group of random walk as the group generated by $h$ and $v$ for the kernels of random walks. We provide all necessary formal definitions in Section \ref{sec:grw}.
In particular, see  \eqref{eq:H}.

\begin{figure}[h]
	\centering
\begin{tikzpicture}[scale=0.8]

\coordinate(xy)at(-1.99, -0.92565);
\coordinate(x'y)at(3.3488, -0.92565);
\coordinate(xy')at(-1.99,  1.66686);

\draw[fill=black,black](xy) circle (0.07) node[below left]{$(x,y)$};

\draw[fill=black,black](xy') circle (0.07) node[above left]{$(x,y')$};

\draw[fill=black,black](x'y) circle (0.07) node[below right]{$(x',y)$};

\draw[thick,Stealth-Stealth](-1.91, -0.92565)--(3.2688, -0.92565)node[midway,above]{$h$};

\draw[thick,Stealth-Stealth](-1.99, -0.84565)--(-1.99,  1.58686)node[midway,right]{$v$};

\node at (1.7,1.7){$\mathcal{C}$};

\draw [very thick] plot [smooth cycle] coordinates {(-3.59,0.179477)(-3.49,0.0230174)(-3.39,-0.0646813)(-3.29,-0.136015)(-3.19,-0.199611)(-3.09,-0.258959)(-2.99,-0.31593)(-2.89,-0.371707)(-2.79,-0.427131)(-2.69,-0.482863)(-2.59,-0.539461)(-2.49,-0.597435)(-2.39,-0.657276)(-2.29,-0.719479)(-2.19,-0.78456)(-2.09,-0.853078)(-1.99,-0.92565)(-1.89,-1.00297)(-1.79,-1.08581)(-1.69,-1.1751)(-1.59,-1.27186)(-1.49,-1.37733)(-1.39,-1.49294)(-1.29,-1.62035)(-1.19,-1.76149)(-1.09,-1.91861)(-0.99,-2.09422)(-0.89,-2.29103)(-0.79,-2.51182)(-0.69,-2.75896)(-0.59,-3.03375)(-0.49,-3.33514)(-0.39,-3.65801)(-0.29,-3.99103)(-0.19,-4.31514)(-0.09,-4.60427)(0.01,-4.82978)(0.11,-4.96816)(0.21,-5.00852)(0.31,-4.95527)(0.41,-4.82486)(0.51,-4.63923)(0.61,-4.41991)(0.71,-4.18458)(0.81,-3.94608)(0.91,-3.71284)(1.01,-3.48985)(1.11,-3.27972)(1.21,-3.08345)(1.31,-2.90105)(1.41,-2.73198)(1.51,-2.57541)(1.61,-2.43033)(1.71,-2.29575)(1.81,-2.17066)(1.91,-2.05414)(2.01,-1.94531)(2.11,-1.84339)(2.21,-1.74765)(2.31,-1.65743)(2.41,-1.57215)(2.51,-1.49126)(2.61,-1.41427)(2.71,-1.34073)(2.81,-1.27021)(2.91,-1.20231)(3.01,-1.13665)(3.11,-1.07284)(3.21,-1.01051)(3.31,-0.949235)(3.41,-0.888565)(3.51,-0.827961)(3.61,-0.766744)(3.71,-0.703977)(3.81,-0.638231)(3.91,-0.56698)(4.01,-0.484609)(4.11,-0.36977)
(4.14,-0.194305)(4.04,-0.0572221)(3.94,0.0179179)(3.84,0.0780297)(3.74,0.130756)(3.64,0.179159)(3.54,0.224848)(3.44,0.268809)(3.34,0.311714)(3.24,0.35406)(3.14,0.396238)(3.04,0.438577)(2.94,0.481366)(2.84,0.524874)(2.74,0.569358)(2.64,0.615073)(2.54,0.662282)(2.44,0.711261)(2.34,0.762303)(2.24,0.815728)(2.14,0.87189)(2.04,0.931182)(1.94,0.994046)(1.84,1.06098)(1.74,1.13256)(1.64,1.20943)(1.54,1.29233)(1.44,1.38213)(1.34,1.47979)(1.24,1.58642)(1.14,1.70326)(1.04,1.83168)(0.94,1.9731)(0.84,2.12895)(0.74,2.30042)(0.64,2.48819)(0.54,2.69182)(0.44,2.90902)(0.34,3.13459)(0.24,3.35947)(0.14,3.57037)(0.04,3.75091)(-0.06,3.88462)(-0.16,3.9593)(-0.26,3.97044)(-0.36,3.92203)(-0.46,3.82441)(-0.56,3.69068)(-0.66,3.53373)(-0.76,3.36436)(-0.86,3.1907)(-0.96,3.01841)(-1.06,2.85111)(-1.16,2.69089)(-1.26,2.53882)(-1.36,2.39529)(-1.46,2.2602)(-1.56,2.13322)(-1.66,2.01387)(-1.76,1.90158)(-1.86,1.79575)(-1.96,1.69579)(-2.06,1.60113)(-2.16,1.5112)(-2.26,1.42548)(-2.36,1.34346)(-2.46,1.26464)(-2.56,1.18854)(-2.66,1.11466)(-2.76,1.04251)(-2.86,0.971509)(-2.96,0.901025)(-3.06,0.830254)(-3.16,0.758114)(-3.26,0.682971)(-3.36,0.601966)(-3.46,0.508774)(-3.56,0.379442)
	};
\end{tikzpicture}
\caption{Two involutions on a biquadratic curve $\mathcal{C}$: the horizontal switch $h$ mapping the points $(x,y)$ and $(x',y)$ to each other and the vertical switch $v$ mapping $(x,y)$ and $(x,y')$ to each other.}
\label{fig:switches}
\end{figure}
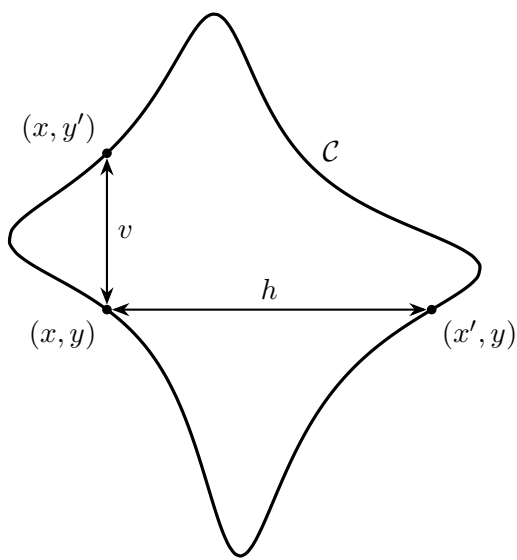

In \cite{Mal}, Malyshev qualified his problem of describing all random walks with a finite group, as ``sufficiently difficult". He found some particular cases of random walks with groups of order  four and six. He proved that among all random walks, those with a finite group of random walk form a set of Bair first category (i.e.~a meager set).

Malyshev strongly motivated his problem by showing that if the group of random walk is finite, then there is an algebraic construction of invariant measures.

The significance of the Malyshev problem of finiteness of groups of random walks also comes from queuing theory and analytic combinatorics.
In queuing theory, that kind of ideas and techniques were applied to the study of generation functions of the equilibrium probabilities in the theory of double queues, see \cite{FH} and Example \ref{ex:que}.
Similarly, the finiteness of the corresponding group of the walk is a sufficient condition for the generating functions of enumerating lattice walks to be algebraic in the theory of walks with small steps in the quarter plane, see e.g. \cites{BKR, BMM, FayRas1, KuRas, Ras} and Section \ref{sec:comb}.

Until now, the results that describe all random walks with groups of a given order $2n$ were known for $n=2, 3, 4$.
They were obtained using ad-hoc methods developed separately for each of the three cases and using specific properties of transition probabilities. The results for $n=2$ and $n=3$ were presented in 1999 in \cite{Randomwalks0}.
The case $n=4$ was solved in 2015 in \cite{FayIas}, where the general problem of an arbitrary order $2n$ of the group of random walk was described as ``deep".  The solution for $n=4$ from \cite{FayIas}, appeared also in the new, 2017 edition \cite{RandomWalks} of \cite{Randomwalks0}.

In this paper, explicit examples of random walks with the groups of orders higher than $10$ are presented for the first time. Here, new examples are given for random walks with the groups of orders $10$, $12$, $14$, and $16$, and we emphasize that the same method allows us  to construct random walks with the group of any higher order as well.

We also consider situations with singular biquadratics  in a systematic manner, see Section \ref{sec:singular}.

Further, we establish a new two-way relationship between {\it diagonal} random walks in the quarter plane (see Definition \ref{def:diagonal}) and $4$-bar links, see Section \ref{sec:fbrw}.
This connection between $4$-bar links with probability is novel with respect to the existing connections of $n$-bar links with probability theory, such as \cite{Far}*{Section 1.11}, \cite{Far2}, \cite{FarKap}.
We describe all $n$-periodic Darboux transformations for $4$-bar links for all $n\ge 2$, thus solving the long standing open problem on the stick of geometry and dynamics, that goes back to Darboux in 1879, \cite{Dar}, where he formulated the problem  as finding conditions that after $n$ iterations, a polygonal configuration maps to a congruent one of the same orientation.
Darboux solved this problem for $n=2$ in \cite{Dar}.

We also study \emph{$k$ semi-periodicity}, as a natural feature of the Darboux transformations, where  after $k$ iterations of the Darboux transformation, a polygonal configuration maps to a congruent one, but of opposite orientation. By introducing a new object, \emph{the secondary $(2,2)$ correspondence} and the related \emph{secondary cubic} of the centrally-symmetric biquadratics, we provide necessary and sufficient conditions for $k$-semi-periodicity for $4$-bar links for all $k\ge 2$ in an explicit closed form.

That secondary cubic of ours turns out to be isomorphic to the cubic curve constructed by Izmestiev in \cite{Izm}, see Remark \ref{rem:invariant}.
Using it in the smooth case, explicit conditions for periodicity of the Darboux transformation, \emph{irrespectively of orientation}, were derived in \cite{Izm}*{Theorem 4}.
Results obtained in \cite{Izm} include explicit conditions for $n$-periodicity in the orientation preserving sense of Darboux that we use here, for $n=2$ (already known to Darboux) and $n=3$, while conditions for $n=4$ and $n=6$ were also discussed.
See our Remarks  \ref{rem:invariant} and \ref{rem:4a}.

The theory of $(2,2)$ correspondences, especially of, generally speaking, non-symmetric ones,  is in the heart of this paper.
A $(2,2)$ correspondence defines a biquadratic curve in $\mathbb P^1\times \mathbb P^1$, which is isomorphic to a cubic curve in $\mathbb P^2$, which, in the smooth case, carries a natural group structure.
Though both a smooth biquadratic and the corresponding cubic are \emph{transcendentally} isomorphic to the same elliptic curve, the isomorphism between the two is apparently \emph{polynomial} in terms of the coefficients of the biquadratic.
This forms a unified framework for our solutions of the Malyshev and the Darboux problems,  although these two problems are seemingly quite contrasted to each other, as belonging to very distant fields of mathematics.
This theory has a long and illustrious history, starting with Euler in 1766, \cite{Euler1766}, see also \cites{Cayley1871, Fr}, as well as more modern accounts, like \cites{Clem, Sam, QRT1988, Tsu, DuistermaatBOOK}.
Symmetric $(2,2)$ correspondences played an important role in modern theory of integrable systems (see e.g.~\cites{Bax71b, Bax72a, Bax72b, Bax82, Krich, Dra1992, Dra1993, V1992, Dra2014}) as well as in the study of the Poncelet theorem, see \cites{GrifHar1978, FlattoBOOK, DR2011knjiga, DR2025rcd}, and references therein.
The necessary background material is presented in Sections \ref{sec:grw} and \ref{sec:P1P1P2} to make the paper reasonably self-contained and assessable to readers from various fields.

\subsection{The group of random walk in the quarter plane and the Malyshev problem}\label{sec:grw}

Following \cite{RandomWalks}, we consider \emph{maximally space homogeneous random walks} as a class of discrete time homogeneous  Markov chains, with the state space being the quarter plane
$\mathbb Z^2_+=\{(i, j)\mid i, j\in \mathbb N_0\}$.
In the interior of $\mathbb Z^2_+$, the jumps are of the size
one. The generators of the process in this region are
$\{p_{ij}\mid -1\le i,j\le 1\}$, where $p_{ij}$ is the transition probability for the jump from $(r, s)$ to $(r+i, s+j)$, for $rs>0$. Thus
$$
p_{ij}\ge0,\quad \sum_{i,j=-1}^1p_{ij}=1.
$$
The situation is different for the coordinate axes and the origin, where there are no bounds on the upward jumps and the downward jumps for both axes are bonded by one, see Figure \ref{fig:rw}.
\begin{figure}[h]
	\centering
		\begin{tikzpicture}[scale=1]
	\draw[thick,-Stealth] (-1,0) -- (8.5,0);
	\draw[thick,-Stealth] (0,-1) --(0,5.5);
	
	\foreach \x in {1,...,8}
	\draw[gray!50,dashed](\x,0)--(\x,5.5);
	
	\foreach \y in {1,...,5}
	\draw[gray!50,dashed](0,\y)--(8.5,\y);

	\foreach \x in {0,...,8}
	\foreach \y in {0,...,5}
	\draw[fill=gray!50,gray!50] (\x,\y) circle [radius=0.07];
	
	\draw[very thick,-Stealth](7,4)--(8,4)node[right]{{\small$p_{10}$}};
	\draw[very thick,-Stealth](7,4)--(6,4)node[left]{{\small$p_{-1,0}$}};
	\draw[very thick,-Stealth](7,4)--(7,3)node[below]{{\small$p_{0,-1}$}};
	\draw[very thick,-Stealth](7,4)--(7,5)node[above]{{\small$p_{01}$}};
	\draw[very thick,-Stealth](7,4)--(6,5)node[above]{{\small$p_{-1,1}$}};
	\draw[very thick,-Stealth](7,4)--(6,3)node[below]{{\small$p_{-1,-1}$}};
	\draw[very thick,-Stealth](7,4)--(8,5)node[above]{{\small$p_{11}$}};
	\draw[very thick,-Stealth](7,4)--(8,3)node[below]{{\small$p_{1,-1}$}};
	
	\draw[white,fill=white,opacity=0.95] (7,4) circle [radius=0.3];
	\node at(7,4){{\small$p_{00}$}};
	
	\draw[very thick,-Stealth](4,0)--(3,0);
	\draw[very thick,-Stealth](4,0)--(3,1);
	\draw[very thick,-Stealth](4,0)--(3,2);
	\draw[very thick,-Stealth](4,0)--(3,4);
	\draw[very thick,-Stealth](4,0)--(3,3);
	\draw[very thick,-Stealth](4,0)--(4,4);
	\draw[very thick,-Stealth](4,0)--(7,2);
	\draw[very thick,-Stealth](4,0)--(8,0);
	\draw[fill=black] (4,0) circle [radius=0.07];

	\draw[very thick,-Stealth](0,3)--(0,2);
	\draw[very thick,-Stealth](0,3)--(1,2);
	\draw[very thick,-Stealth](0,3)--(2,2);
	\draw[very thick,-Stealth](0,3)--(3,2);
	\draw[very thick,-Stealth](0,3)--(4,5);
	\draw[very thick,-Stealth](0,3)--(0,5);
	\draw[fill=black] (0,3) circle [radius=0.07];

	\draw[very thick,-Stealth,](0,0)--(2,2);
	\draw[very thick,-Stealth,](0,0)--(3,1);
	\draw[very thick,-Stealth,](0,0)--(3,0);
	\draw[very thick,-Stealth,](0,0)--(0,1);
		\draw[fill=black] (0,0) circle [radius=0.07];
	
\end{tikzpicture}
\caption{Maximally space homogeneous random walks: from a point in the interior of the quarter plane, jumps are possible only to the neighbouring points; from the points in the interior of the coordinate axes, the jumps downward the axes are bounded by one.}\label{fig:rw}
\end{figure}

We define the matrix  of a random walk $P$ by
\begin{equation}\label{eq:P}
	P=\begin{pmatrix}
		p_{11} & p_{10} &  p_{1,-1}\\
		 p_{01} &  p_{00}-1 &  p_{0,-1}\\
		 p_{-1,1} &  p_{-1,0} &  p_{-1,-1}
	\end{pmatrix}.
\end{equation}
Similarly, the transition matrix $\hat P$ is a square stochastic matrix that describes the probabilities of moving from one state to another, where each entry $\hat P_{\hat \ell, \hat k}$ represents a probability of moving from state  $\hat \ell$ to state $\hat k$. Here, $\hat \ell$ and $\hat k$ are points of the integer lattice in the quarter plane.
Then, an invariant measure $\pi$ is a left eigenvector of $\hat P$ with the eigenvalue equal to $1$: $\sum_{\hat \ell \in \mathbb Z_+^2} \pi_{\hat \ell}\hat P_{\hat \ell\hat k}=\pi_{\hat k}$,  see e.g. \cite{RandomWalks}. If an invariant measure is a probability measure, them it is called {\it a stationary distribution}.

The fundamental functional relation for the invariant measure $\pi(x,y)$ takes the following form \cite{RandomWalks}:
\begin{equation}\label{eq:fundamental}
-Q(x,y)\pi(x,y)=q(x,y)\pi(x)+\tilde q(x,y)\tilde\pi(y)+\pi_{00}q_0(x, y),
\end{equation}
where
\begin{align*}
&\pi(x, y)=\sum_{i,j=1}^\infty\pi_{ij}x^{i-1}y^{j-1},
\quad
\pi(x)=\sum_{i=1}^\infty\pi_{i0}x^{i-1},
\quad
\tilde \pi(y)=\sum_{j=1}^\infty\pi_{0j}y^{j-1},
\\
&Q(x,y)=xy\left(\sum_{i,j=-1}^1 p_{ij}x^iy^j-1\right),
\quad
p_{ij}\ge0,\quad \sum_{i,j=-1}^1p_{ij}=1,
\\
&q(x,y)=x\left(\sum_{i\ge-1,j\ge0} p'_{ij}x^iy^j-1\right),
\quad
\tilde q(x,y)=y\left(\sum_{i\ge0,j\ge-1} p''_{ij}x^iy^j-1\right),
\\
&q_0(x, y)=\sum_{i\ge0,j\ge0} p^0_{ij}x^iy^j-1.
\end{align*}
Here, $p^0_{ij}$, $p_{ij}'$, and $p_{ij}''$ are the transition probabilities from $(r,s)$ to $(r+i,s+j)$, where, respectively, $(r,s)=(0,0)$ is the origin, $(r,s)=(r,0)$, $r>0$ is on the $x$-axis, and $(r,s)=(0,s)$, $s>0$ is on the $y$-axis.

The instance of \eqref{eq:fundamental} with $Q(x, y)=0$ is of a special interest. This leads to the consideration of a biquadratic curve $\mathcal C$, given by its affine equation in $\mathbb C^2$:
\begin{equation}\label{eq:biqrandom}
\mathcal C\ :\ Q_P(x,y)=xy\left(\sum_{i,j=-1}^1 p_{ij}x^iy^j-1\right)=0,
\quad\text{with}\quad
p_{ij}\ge0,
\quad
\sum_{i,j=-1}^1p_{ij}=1.
\end{equation}

As it will be discussed in Section \ref{sec:QRT}, there are the vertical and horizontal switches $v$ and $h$ defined on the curve $\mathcal C$.
\emph{The group $\mathcal H$ of random walk in the quarter plane} is defined as the group of automorphisms of the curve $\mathcal{C}$ generated by those two switches:
\begin{equation}\label{eq:H}
\mathcal H:=\langle h, v\rangle.
\end{equation}
Since $h$ and $v$ are involutions, the group $\mathcal H$ has a normal cyclic subgroup of index $2$, denoted
$\mathcal H_0:=\langle\delta\rangle$, where $\delta= v\circ h$ is a map on $\mathcal C$, that is going to be an instance of a QRT map from  \eqref{eq:QRT}.
Thus, the group of random walk $\mathcal H$ is of a finite order if and only if the map $\delta$ is of a finite order $n$.
In such a case, the order of $\mathcal H$ equals $2n$.

In the case when $(x, y)$ belongs to the curve $\mathcal C$, i.e.~for $Q(x, y)=0$, the fundamental equation \eqref{eq:fundamental} simplifies to
\begin{equation}\label{eq:fundsimp}
q(x,y)\pi(x)+\tilde q(x,y)\tilde\pi(y)+\pi_{00}q_0(x, y)=0.
\end{equation}
Thus the finiteness of the group of random walk $\mathcal H$ implies that the simplified fundamental equation \eqref{eq:fundsimp} allows to be solved in an elegant, algebraic procedure. The main aim of this paper is to describe random walks with the groups of a given finite order $2n$, for all $n\in \mathbb N$. We will accomplish this in the case of smooth biquadratic curves $Q(x, y)=0$ in Theorem \ref{th:cayley} and for nonsmooth biquadratics in Section \ref{sec:singular}. The proofs of these results rely on a preparatory material that we present in Section \ref{sec:P1P1P2}.

\section{Biquadratic curves in $\mathbb{P}^1\times\mathbb{P}^1$ and cubics in $\mathbb{P}^2$}\label{sec:P1P1P2}

The role of this section is to present the essential properties of the biquadratic curves, which underlie the problems considered in this paper, and relate them to cubics in the projective plane.

While the curves that appear in the analysis of the main questions addressed by this paper are over the field of complex numbers, we note that the majority of results and considerations in this section hold for biquadratics and cubics over any field.

\subsection{Biquadratic polynomials}

This section outlines the most important algebraic properties of biquadratic polynomials in two variables and quartic polynomials in one variable.

\begin{definition}
A \emph{biquadratic polynomial} is a polynomial in two variables, denoted here $x$ and $y$, of degree two in each of these variables:
\begin{equation}\label{eq:biquad}
Q(x,y)
=
a_{22}x^2y^2+a_{21}x^2y+a_{20}x^2+a_{12}xy^2+a_{11}xy+a_{10}x+a_{02}y^2+a_{01}y+a_{00}.
\end{equation}
Here, the coefficients $a_{ij}$ are fixed complex numbers.
We say that $Q(x,y)$ is \emph{symmetric} if $Q(x,y)=Q(y,x)$.
\end{definition}

Any biquadratic polynomial $Q$ can also be seen
as a quadratic polynomial $y$ with the coefficients being polynomial in $x$, or as as a quadratic polynomial $x$ with the coefficients being polynomial in $y$.
Using the notation from \cite{RandomWalks}, we write:
\begin{equation}\label{eq:biquad1}
Q(x,y)=a(x)y^2+b(x)y+c(x)=\tilde{a}(y)x^2+\tilde{b}(y)x+\tilde{c}(y),
\end{equation}
with
\begin{gather*}
a(x)=a_{22}x^2+a_{12}x+a_{02},
\quad
b(x)=a_{21}x^2+a_{11}x+a_{01},
\quad
c(x)=a_{20}x^2+a_{10}x+a_{00};
\\	
\tilde{a}(y)=a_{22}y^2+a_{21}y+a_{20},
\quad
\tilde{b}(y)=a_{12}y^2+a_{11}y+a_{10},
\quad
\tilde{c}(y)=a_{02}y^2+a_{01}y+a_{00}.	
\end{gather*}

Denote by $\mathcal D_{Q_x}(y)$ the discriminant of $Q$, understood as a quadratic polynomial in $x$ and by $\mathcal D_{Q_y}(x)$ the discriminants of $Q$, understood as a  quadratic polynomial in $y$:
\begin{equation}\label{eq:discxy}
\mathcal D_{Q_x}(y)=\tilde{b}(y)^2-4\tilde{a}(y)\tilde{c}(y),
\quad
\mathcal D_{Q_y}(x)=b(x)^2-4a(x)c(x).
\end{equation}
In general, the discriminants $\mathcal D_{Q_x}(y)$ and $\mathcal D_{Q_y}(x)$ are polynomials of degree four:
\begin{align*}
\mathcal D_{Q_x}(y)=&\ y^4 \left(a_{12}^2-4 a_{02} a_{22}\right)
+y^3 (2 a_{11} a_{12}-4 a_{01} a_{22}-4 a_{02} a_{21})
\\&
+y^2 \left(a_{11}^2-4 a_{00} a_{22}-4 a_{01} a_{21}-4 a_{02} a_{20}+2 a_{10} a_{12}\right)
+y (2 a_{10} a_{11}-4 a_{00} a_{21}-4 a_{01} a_{20})
\\&
-4 a_{00} a_{20}+a_{10}^2,
\\
\mathcal D_{Q_y}(x)=&\ x^4 \left(a_{21}^2-4 a_{20} a_{22}\right)
+x^3 (2 a_{11} a_{21}-4 a_{10} a_{22}-4 a_{20} a_{12})
\\&
+x^2 \left(a_{11}^2-4 a_{00} a_{22}-4 a_{10} a_{12}-4 a_{20} a_{02}+2 a_{01} a_{21}\right)
+x (2 a_{01} a_{11}-4 a_{00} a_{12}-4 a_{10} a_{02})
\\&
-4 a_{00} a_{02}+a_{01}^2,
\end{align*}
and they are of a smaller degree if
$a_{12}^2-4 a_{02} a_{22}=0$ or $a_{21}^2-4 a_{20} a_{22}=0$, respectively.

Note that in this paper, we will consider a general situation, when a biquadratic polynomial is not necessarily symmetric.
In that case, the discriminant polynomials $\mathcal D_{Q_x}(y)$ and $\mathcal D_{Q_y}(x)$, in general, have distinct coefficients corresponding to the same degree of a variable.
Thus, it is of interest to examine the projective invariants of those two polynomials.

\begin{definition}
\emph{The Eisenstein invariants} of the quartic polynomial:
\begin{equation}\label{eq:quartic}
P(x)= a_4x^4+4a_3x^3+6a_2x^2+4a_1x+a_0,
\end{equation}
are
$$
D=a_0a_4+3a_2^2-4a_1a_3
\quad \text{and}\quad
E=a_0a_3^2+a_1^2a_4-a_0a_2a_4-2a_1a_2a_3+a_2^3.
$$
\end{definition}

Recall that \emph{the discrimanant} of a polynomial $P(x)$ is the resultant of $P(x)$ and $P'(x)$.
The discriminant is zero if and only if the polynomial $P$ has a double root. If the coefficients of a quartic polynomial $P$ are all real and the discriminant is negative, then it has two distinct real roots and a pair of complex-conjugated roots, while if the discriminant is positive, then either all four roots are real or the polynomial has two pairs of complex-conjugated roots.

The projective invariants satisfy the following.
\begin{proposition}
Suppose that $P(x)$ is a quartic polynomial given by \eqref{eq:quartic} with the Eisenstein invariants $D$, $E$.
Then:
\begin{itemize}
	\item the discriminant of $P(x)$ is $256(D^3-27E^2)$;
	\item the Eisenstein invariants of the polynomials $P(x+\alpha)$ and $x^4P(1/x)$ are equal to $D$, $E$;
	\item the Eisenstein invariants of the polynomial $P(\beta x)$ are $\beta^4 D$ and $\beta^6 E$.
\end{itemize}
\end{proposition}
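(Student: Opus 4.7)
The plan is to recognize $D$ and $E$ as the classical invariants $I$ and $J$ of the binary quartic form $F(x,y) = a_4 x^4 + 4 a_3 x^3 y + 6 a_2 x^2 y^2 + 4 a_1 x y^3 + a_0 y^4$, whose dehomogenization $F(x,1)$ equals $P(x)$. The three transformations appearing in the proposition correspond to the $SL_2$-substitutions $(x,y) \mapsto (x+\alpha y, y)$, $(x,y) \mapsto (\beta x, y)$, and $(x,y)\mapsto (y,x)$. Rather than invoking a general invariant-theoretic theorem, I intend to verify each transformation property by a direct calculation and then deduce the discriminant identity from them.

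The scaling and reversal parts are almost immediate. The substitution $x \mapsto \beta x$ changes the Eisenstein coefficients by $a_i \mapsto \beta^i a_i$; since every monomial of $D$ is a product $a_i a_j$ with $i+j=4$ and every monomial of $E$ is a product $a_i a_j a_k$ with $i+j+k=6$, I obtain $D \mapsto \beta^4 D$ and $E \mapsto \beta^6 E$. For the reversal $P(x) \mapsto x^4 P(1/x)$, the new coefficients satisfy $a_i' = a_{4-i}$, and inspection of the explicit formulas for $D$ and $E$ shows that each of them is literally invariant under the index reversal $(a_0,a_1,a_2,a_3,a_4)\mapsto (a_4,a_3,a_2,a_1,a_0)$.

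The translation statement requires actual computation. Expanding $P(x+\alpha)$ with the binomial theorem yields
$$a_i'(\alpha) = \sum_{j=i}^{4}\binom{4-i}{j-i}\,\alpha^{j-i}\,a_j,$$
and the task is to verify the polynomial identities $D(a_0'(\alpha),\dots,a_4'(\alpha)) \equiv D$ and $E(a_0'(\alpha),\dots,a_4'(\alpha)) \equiv E$ in $\alpha$. The cleanest route reduces this to an infinitesimal check: since translations form a one-parameter group, it suffices to show that both $\frac{d}{d\alpha} D'(\alpha)$ and $\frac{d}{d\alpha} E'(\alpha)$ vanish identically at $\alpha = 0$, viewed as polynomial identities in the $a_i$. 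This is the principal computational obstacle of the proof, though it is a finite and mechanical verification.

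With the three transformation properties in place, the discriminant formula follows from a weight-homogeneity argument and a normalization check. Assigning the variable $a_i$ weight $i$, both $D^3$ and $E^2$ are weighted-homogeneous of weight $12$; the ordinary discriminant of $P$ is likewise of weight $12$ (it scales by $\beta^{n(n-1)} = \beta^{12}$ under $x\mapsto\beta x$), is $SL_2$-invariant, and is homogeneous of degree $6$ in the $a_i$. By the classical fact that the ring of $SL_2$-invariants of a binary quartic is $\mathbb{C}[D,E]$, the discriminant must take the form $\mu D^3 + \nu E^2$. I will pin down the constants by evaluating on two test polynomials: $P(x) = x^4 - 1$, where $D = -1$, $E = 0$, and the discriminant equals $-256$, forcing $\mu = 256$; and $P(x) = x^4 + 4x$, where $D = 0$, $E = 1$, and the discriminant equals $-6912$, forcing $\nu = -27\cdot 256$. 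Combining yields $256(D^3 - 27E^2)$, as claimed.
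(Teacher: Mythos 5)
Your proof is correct, and it is considerably more structured than the paper's terse ``by straightforward calculation.'' The scaling and reversal checks are exactly the kind of coefficient-level verification the paper has in mind, and your reduction of the translation identity to the vanishing of $\frac{d}{d\alpha}D'(\alpha)$ and $\frac{d}{d\alpha}E'(\alpha)$ at $\alpha=0$, justified by the one-parameter-group property, is a clean way to organize what would otherwise be a messy expansion in $\alpha$. Where you genuinely diverge from the paper is the discriminant formula: rather than expanding the resultant of $P$ and $P'$ and comparing monomials, you invoke the classical structure theorem that the invariant ring of a binary quartic is generated in degrees $2$ and $3$, note that the discriminant is a degree-$6$ invariant so must equal $\mu D^3 + \nu E^2$, and fix the two constants with the test polynomials $x^4-1$ (giving $D=-1$, $E=0$, $\mathrm{disc}=-256$) and $x^4+4x$ (giving $D=0$, $E=1$, $\mathrm{disc}=-6912$) --- both computations are correct. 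This is elegant but imports a nontrivial classical theorem the paper does not need, and it quietly assumes the discriminant shares the invariance/covariance properties you established for $D$ and $E$; that fact is easy (translation leaves root differences unchanged, reversal sends roots to reciprocals and one checks the $a_n^{2n-2}$ prefactor compensates, scaling gives weight $12$), but it should be stated since the structure theorem only classifies things already known to be invariants.
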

\begin{proof}
By straightforward calculation.
\end{proof}

\begin{theorem}[Frobenius, \cite{Fr}]
Let $Q(x,y)$ be a biquadratic polynomial given by \eqref{eq:biquad1}.
If $\mathcal D_{Q_x}(y)$, $\mathcal D_{Q_y}(x)$ are its discriminant polynomials \eqref{eq:discxy}, and $D_y, E_y$ and $D_x, E_x$ their Eisenstein invariants, then:
$$D_y=D_x, \quad E_y=E_x.$$
\end{theorem}

\begin{corollary}\label{cor:discriminants}
Let $Q(x, y)$ be a biquadratic polynomial \eqref{eq:biquad1} and $\mathcal D_{Q_x}(y)$ and $\mathcal D_{Q_y}(x)$ its discriminant polynomials \eqref{eq:discxy}.
Then the discriminants of $\mathcal D_{Q_x}(y)$ and $\mathcal D_{Q_y}(x)$ are equal.
Moreover, we have:
\begin{align*}
	D_x=D_y=\, &
	\frac{1}{12} \left(a_{11}^2-4 a_{00} a_{22}-4 a_{01} a_{21}-4 a_{02} a_{20}+2 a_{10} a_{12}\right)^2
	\\&
	-( a_{10} a_{11}-2 a_{00} a_{21}-2 a_{01} a_{20})
	( a_{11} a_{12}-2 a_{01} a_{22}-2 a_{02} a_{21})
	\\&
	+\left(a_{10}^2-4 a_{00} a_{20}\right) \left(a_{12}^2-4 a_{02} a_{22}\right),\\
	E_x=E_y=\, &
	-\frac{1}{6} \left(a_{10}^2-4 a_{00} a_{20}\right)
	\left(a_{12}^2-4 a_{02} a_{22}\right)
	\left(a_{11}^2-4 a_{00} a_{22}-4 a_{01} a_{21}-4 a_{02} a_{20}+2 a_{10} a_{12}\right)
	\\&
	+\frac{1}{4} \left(a_{10}^2-4 a_{00} a_{20}\right)
	(a_{11} a_{12}-2 a_{01} a_{22}-2 a_{02} a_{21})^2
	\\&
	+\frac{1}{216} \left(a_{11}^2-4 a_{00} a_{22}-4 a_{01} a_{21}-4 a_{02} a_{20}+2 a_{10} a_{12}\right)^3
	\\&
	-\frac{1}{12} (a_{10} a_{11}-2 a_{00} a_{21}-2 a_{01} a_{20})
	(a_{11} a_{12}-2 a_{01} a_{22}-2 a_{02} a_{21})
	\times
	\\&\ \ \ \times
	\left(a_{11}^2-4 a_{00} a_{22}-4 a_{01} a_{21}-4 a_{02} a_{20}+2 a_{10} a_{12}\right)
	\\&
	+\frac{1}{4} \left(a_{12}^2-4 a_{02} a_{22}\right)
	(a_{10} a_{11}-2a_{00} a_{21}-2 a_{01} a_{20})^2.
\end{align*}
\end{corollary}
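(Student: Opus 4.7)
The plan is to derive both parts of the corollary as essentially direct consequences of the Frobenius theorem stated just above, together with the earlier proposition that expresses the discriminant of a quartic in terms of its Eisenstein invariants.

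For the first assertion — equality of the discriminants of $\mathcal D_{Q_x}(y)$ and $\mathcal D_{Q_y}(x)$ — I would simply combine two facts already in hand. Frobenius gives $D_x=D_y$ and $E_x=E_y$; and the first bullet of the proposition says that for any quartic of the form \eqref{eq:quartic}, the discriminant equals $256(D^3-27E^2)$. Substituting the common values of $D$ and $E$ yields the same number from either discriminant polynomial.

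For the explicit formulas, the approach is direct substitution into the definition of the Eisenstein invariants. I would pick $\mathcal D_{Q_x}(y)$ and read off its coefficients in the normalized form $a_4 y^4+4a_3 y^3+6a_2 y^2+4a_1 y+a_0$:
\begin{align*}
a_4 &= a_{12}^2-4a_{02}a_{22}, \qquad a_0 = a_{10}^2-4a_{00}a_{20},\\
a_3 &= \tfrac{1}{2}\bigl(a_{11}a_{12}-2a_{01}a_{22}-2a_{02}a_{21}\bigr),\\
a_1 &= \tfrac{1}{2}\bigl(a_{10}a_{11}-2a_{00}a_{21}-2a_{01}a_{20}\bigr),\\
a_2 &= \tfrac{1}{6}\bigl(a_{11}^2-4a_{00}a_{22}-4a_{01}a_{21}-4a_{02}a_{20}+2a_{10}a_{12}\bigr).
\end{align*}
Plugging these into $D=a_0a_4+3a_2^2-4a_1a_3$ and $E=a_0a_3^2+a_1^2a_4-a_0a_2a_4-2a_1a_2a_3+a_2^3$ produces exactly the stated formulas: the prefactors $\tfrac{1}{12}$, $\tfrac{1}{6}$, $\tfrac{1}{4}$, $\tfrac{1}{12}$, $\tfrac{1}{216}$ in the corollary are precisely the products $3/36$, $1/6$, $4\cdot(1/2)^2$, $2\cdot(1/2)\cdot(1/6)\cdot(1/2)$, $(1/6)^3$ that arise from the halves and sixth in the normalized coefficients. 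The equalities $D_x=D_y$ and $E_x=E_y$ are then quoted from Frobenius, so it does not matter that I chose $\mathcal D_{Q_x}(y)$ over $\mathcal D_{Q_y}(x)$.

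The only nontrivial aspect is bookkeeping in the $E$-computation — collecting the five contributions and verifying that each matches, term by term, with the corresponding line of the stated expression. This is purely mechanical algebra, with no conceptual step beyond the identification of coefficients above, so I would present the argument as ``by direct substitution of the coefficients of $\mathcal D_{Q_x}(y)$ into the defining formulas for $D$ and $E$, invoking Frobenius's theorem to assert the same values arise from $\mathcal D_{Q_y}(x)$.''
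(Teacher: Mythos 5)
Your proposal is correct and is essentially the intended derivation: the paper states the corollary without a separate proof precisely because it follows by reading off the coefficients of $\mathcal D_{Q_x}(y)$ in the normalized form $a_4y^4+4a_3y^3+6a_2y^2+4a_1y+a_0$, substituting into the definitions of $D$ and $E$, and invoking the Frobenius theorem (together with the first bullet of the preceding proposition, for the equality of the discriminants themselves). One small bookkeeping slip: your claimed derivation ``$4\cdot(1/2)^2$'' for the prefactor $\tfrac14$ is off (it equals $1$, not $\tfrac14$); the two $\tfrac14$'s in $E_x$ come simply from $(1/2)^2$ in $a_0a_3^2$ and $a_1^2a_4$, while $-4a_1a_3=-\alpha_1\alpha_3$ accounts for the unit coefficient in $D_x$. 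This does not affect the validity of the argument.
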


\emph{The homogenization} of the biquadratic polynomial $Q(x,y)$ given by \eqref{eq:biquad} is:
\begin{equation}\label{eq:Qhom}
	^h Q(x_0, x_1, y_0, y_1)=\sum_{i,j=0}^2a_{ij}x_0^ix_1^{2-i}y_0^jy_1^{2-j},
\end{equation}
while the homogenization of the quartic polynomial $P(x)$ given by \eqref{eq:quartic} is:
\begin{equation}\label{eq:quartic-hom}
	^h P(x_0,x_1)= a_4x_0^4+4a_3x_0^3x_1+6a_2x_0^2x_1^2+4a_1x_0x_1^3+a_0x_1^4,
\end{equation}
The invariants $D$ and $E$ and the discriminant of $^hP$ are defined as  the corresponding quantities for $P$.

\subsection{The surface $\mathbb{P}^1\times\mathbb{P}^1$ and the projective plane $\mathbb P^2$}

Before considering the curves given as the zero locus of biquadratic polynomials \eqref{eq:biquad} and \eqref{eq:Qhom}, we recall the geometric settings of the ambient spaces where those curves are defined, i.e.~the affine plane $\mathbb{C}^2$ and the surface $\mathbb{P}^1\times\mathbb{P}^1$.
Later in this section we will also review the projective plane  over complex numbers $\mathbb P^2$.
We will show that it is birationally equivalent to $\mathbb{P}^1\times\mathbb{P}^1$. We will use that for transforming biquadratic curves in $\mathbb{P}^1\times\mathbb{P}^1$ into cubic ones in $\mathbb{P}^2$.

Suppose that coordinates in $\mathbb P^1\times \mathbb P^1$ are $([x_0:x_1],[y_0:y_1])$.
The coordinate lines in that coordinate system belong to two classes that are given by
$x_0:x_1=\const$ and $y_0:y_1=\const$, respectively.
Note that two coordinate lines in the same class are disjoint, thus each coordinate line has self-intersection number equal to zero \cites{GrifHarPRINC,HartshorneAG,DuistermaatBOOK}.
The surface $\mathbb P^1\times \mathbb P^1$ is covered by four affine charts: each of those charts is obtained by one choice of
$i,j\in\{0,1\}$ and conditions $x_i\ne 0$ and $y_j\ne 0$.
The four charts and their local coordinate systems are presented schematically in the left-hand side of Figure \ref{fig:P1xP1}.

Now, consider the projective plane $\mathbb P^2$ and its coordinates $[X:Y:Z]$.
That plane is covered by three affine charts, each corresponding to one of the conditions $X\neq0$, $Y\neq0$, $Z\neq0$, see Figure \ref{fig:P1xP1}.
Since two distinct lines in the projective plane have a unique intersection point, the self-intersection number of any line is equal to $1$.

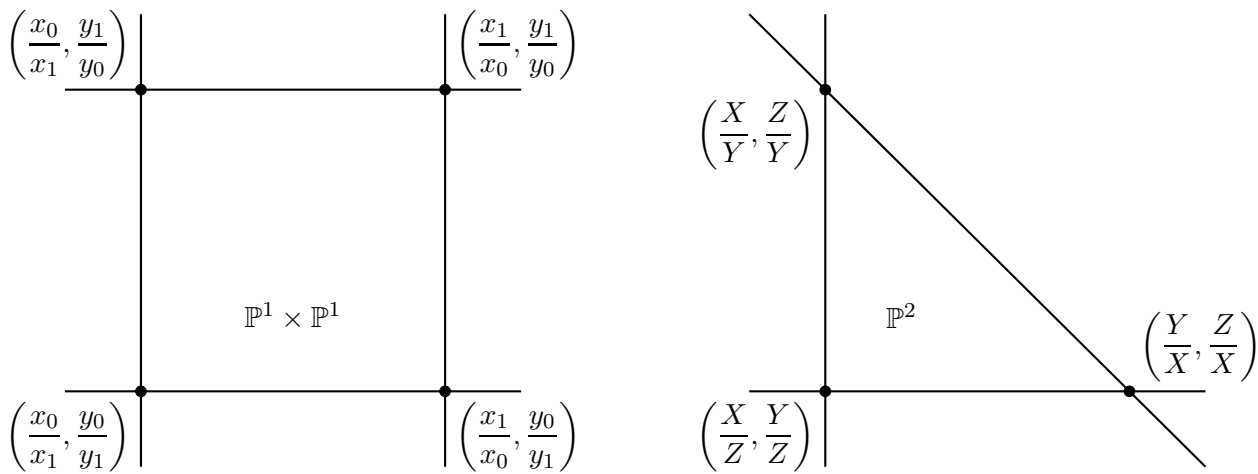
\begin{figure}[h]
	\begin{center}
		\begin{tikzpicture}[scale=0.8]
			\draw[thick] (-3,-2) -- (3,-2);
			\draw[thick] (-3,2) -- (3,2);
			
			\draw[thick] (-2,-3) --(-2,3);
			\draw[thick] (2,-3) --(2,3);
			
			\draw[fill=black] (2,2) circle [radius=0.07] node [above right]{$\left(\dfrac{x_1}{x_0},\dfrac{y_1}{y_0}\right)$};
			\draw[fill=black] (-2,-2) circle [radius=0.07] node [below left]{$\left(\dfrac{x_0}{x_1},\dfrac{y_0}{y_1}\right)$};		
			\draw[fill=black] (-2,2) circle [radius=0.07] node [above left]{$\left(\dfrac{x_0}{x_1},\dfrac{y_1}{y_0}\right)$};		
			\draw[fill=black] (2,-2) circle [radius=0.07] node [below right]{$\left(\dfrac{x_1}{x_0},\dfrac{y_0}{y_1}\right)$};	
\node at (0,-1){$\mathbb{P}^1\times\mathbb{P}^1$};	

\begin{scope}[shift={(9,0)}]
\draw[thick] (-3,-2) -- (3,-2);
\draw[thick] (-3,3) -- (3,-3);
\draw[thick] (-2,-3) --(-2,3);

\draw[fill=black] (-2,-2) circle [radius=0.07] node [below left]{$\left(\dfrac{X}{Z},\dfrac{Y}{Z}\right)$};		

\draw[fill=black] (-2,2) circle [radius=0.07] node [below left]{$\left(\dfrac{X}{Y},\dfrac{Z}{Y}\right)$};		

\draw[fill=black] (2,-2) circle [radius=0.07] node [above right]{$\left(\dfrac{Y}{X},\dfrac{Z}{X}\right)$};		

\node at (-1,-1){$\mathbb{P}^2$};	
\end{scope}
		\end{tikzpicture}
	\end{center}\caption{On the left: The surface $\mathbb{P}^1\times\mathbb{P}^1$, covered by four affine charts. All coordinate lines have self-intersection number equal to $0$.
	On the right:
	The projective plane $\mathbb P^2$, with local coordinate systems in three affine charts.  All lines in $\mathbb P^2$ have self-intersection number equal to $1$.
	 }\label{fig:P1xP1}
\end{figure}

The affine plane $\mathbb{C}^2$ with the coordinate system $(x,y)$ can be naturally embedded into $\mathbb{P}^1\times\mathbb{P}^1$ by the identification to any of the four affine charts: for example, take $(x,y)\to([x:1],[y:1])$.
In that way, the plane $\mathbb{C}^2$ is \emph{compactified} by adding two lines ``at infinity".
On the other hand, the affine plane can also be embedded into the projective plane: for example by the mapping $(x,y)\to[X:Y:1]$.
In that way, it is compactified by adding one line at infinity.
Those two embeddings together with the coordinate transformations are shown in Figure \ref{fig:embed}.

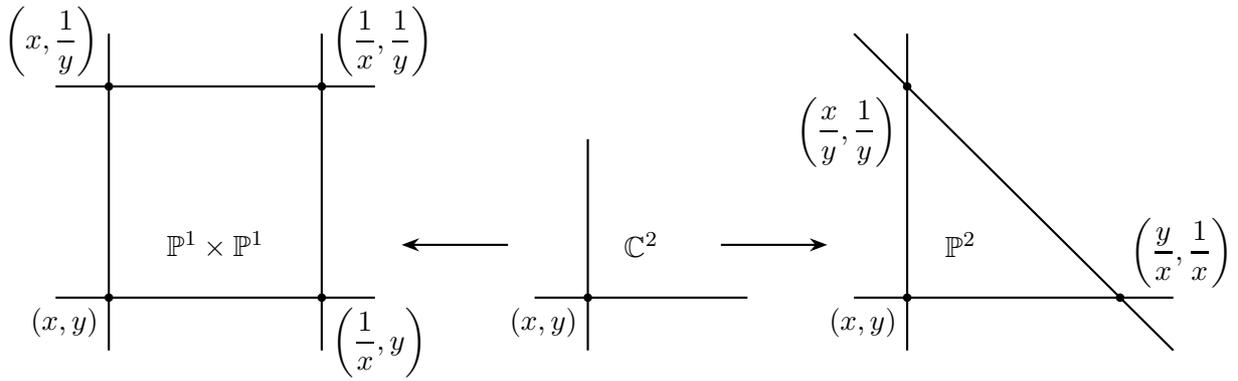
\begin{figure}[h]
	\begin{center}
		\begin{tikzpicture}[scale=0.5]
			\draw[thick] (-3,-2) -- (3,-2);
\draw[thick] (-3,2) -- (3,2);

\draw[thick] (-2,-3) --(-2,3);
\draw[thick] (2,-3) --(2,3);

\draw[fill=black] (2,2) circle [radius=0.07] node [above right]{$\left(\dfrac{1}{x},\dfrac{1}{y}\right)$};
\draw[fill=black] (-2,-2) circle [radius=0.07] node [below left]{$(x,y)$};		
\draw[fill=black] (-2,2) circle [radius=0.07] node [above left]{$\left(x,\dfrac{1}{y}\right)$};		
\draw[fill=black] (2,-2) circle [radius=0.07] node [below right]{$\left(\dfrac{1}{x},y\right)$};	
\node at (0,-1){$\mathbb{P}^1\times\mathbb{P}^1$};	

\begin{scope}[shift={(15,0)}]
			\draw[thick] (-3,-2) -- (3,-2);
			\draw[thick] (-3,3) -- (3,-3);
			
			\draw[thick] (-2,-3) --(-2,3);

			\draw[fill=black] (-2,-2) circle [radius=0.07] node [below left]{$(x,y)$};		
			
			\draw[fill=black] (-2,2) circle [radius=0.07] node [below left]{$\left(\dfrac{x}{y},\dfrac1y\right)$};		
			
			\draw[fill=black] (2,-2) circle [radius=0.07] node [above right]{$\left(\dfrac{y}{x},\dfrac1x\right)$};		
			
			\node at(-1,-1) {$\mathbb{P}^2$};
\end{scope}

\begin{scope}[shift={(9,0)}]
	\draw[thick] (-3,-2) -- (1,-2);
	\draw[thick] (-2,-3) -- (-2,1);
	
	\draw[fill=black] (-2,-2) circle [radius=0.07] node [below left]{$(x,y)$};		
	\node at(-1,-1) {$\mathbb{C}^2$};
\end{scope}			

\draw [thick, -Stealth] (9.5, -1) -- (11.5, -1);

\draw [thick, Stealth-] (3.5, -1) -- (5.5, -1);

		\end{tikzpicture}
	\end{center}\caption{The embeddings of the affine plane $\mathbb{C}^2$ into the surface $\mathbb{P}^1\times\mathbb{P}^1$ and the projective plane $\mathbb{P}^2$.}\label{fig:embed}
\end{figure}

Now, we will construct a surface that covers both  $\mathbb{P}^1\times\mathbb{P}^1$  and  $\mathbb{P}^2$.
For that, we will use the blow-up, which is one of the central constructions in algebraic geometry, see e.g.~\cites{HartshorneAG,GrifHarPRINC,DuistermaatBOOK, Clem}.

\begin{definition}\label{def:blow-up}
	\emph{The blow-up} of the plane $\mathbb{C}^2$ at point $(0,0)$ is the closed subset $\mathcal{X}$ of $\mathbb{C}^2\times\mathbb{CP}^1$ defined by the equation $u_1t_2=u_2t_1$, where $(u_1,u_2)\in\mathbb{C}^2$ and $[t_1:t_2]\in\mathbb{CP}^1$, see Figure \ref{fig:blow-up}.
	There is a natural morphism $\varphi: \mathcal{X}\to\mathbb{C}^2$, which is the restriction of the projection from $\mathbb{C}^2\times\mathbb{CP}^1$ to the first factor.
	The inverse image of the origin, $\varphi^{-1}(0,0)$ is the projective line $\mathcal{E}=\{(0,0)\}\times\mathbb{CP}^1$, called \emph{the exceptional line}.
	The morphism $\varphi$ is also called \emph{the blow-down along $\mathcal{E}$.}
\end{definition}

\begin{figure}[h]
	\centering
	\includegraphics[width=10.038cm, height=5.781cm]{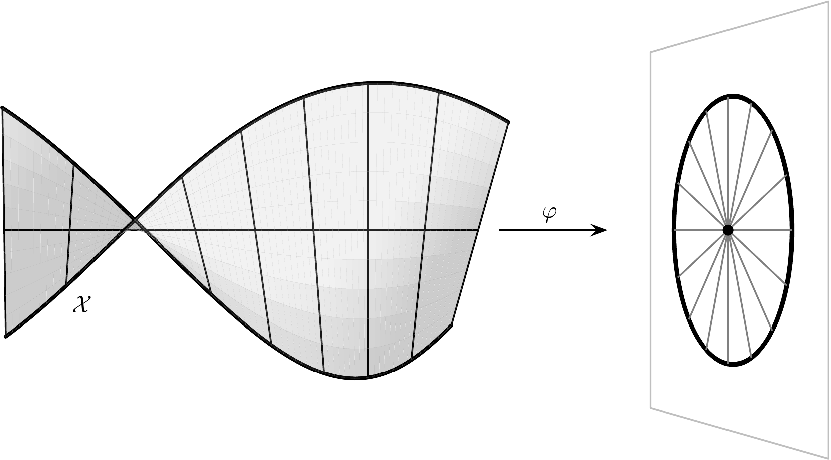}
	\caption{The blow-up of the plane at a point.}\label{fig:blow-up}
\end{figure}

\begin{remark}
	Notice that the points of the exceptional line $\varphi^{-1}(0,0)$ are in bijective correspondence with the lines containing $(0,0)$.
	On the other hand,
	$\varphi$ is an isomorphism between $\mathcal{X}\setminus\varphi^{-1}(0,0)$ and $\mathbb{C}^2\setminus\{(0,0)\}$.
	More generally, any complex two-dimensional surface can be blown up at a point \cites{HartshorneAG,GrifHarPRINC,DuistermaatBOOK}.
	In a local chart around that point, the construction will look the same as described for the case of the plane.
\end{remark}

\begin{remark}
If a curve $\mathcal{K}$ in $\mathbb{C}^2$ contains the origin, then its blow-up preimage in $\mathcal{X}$ is the union of the exceptional line with another curve, which is the closure of $\varphi^{-1}(\mathcal{K}\setminus\{(0,0)\})$.
Thus $\overline{\varphi^{-1}(\mathcal{K}\setminus\{(0,0)\})}$ is called
\emph{the proper preimage of $\mathcal{K}$}.
Notice that, since the blow-up separates curves intersecting at the origin, the self-intersection number of the proper preimage of $\mathcal{K}$ is less by $1$ than the self-intersection number of $\mathcal{K}$, if the curve contains the origin.
If the curve does not contain the origin, the self-intersection number remains the same for its preimage.
The self-intersection number of the exceptional line equals $-1$.
See \cites{HartshorneAG,GrifHarPRINC} for details.
\end{remark}

We construct a surface $\mathcal{S}$, that is obtained from $\mathbb{P}^2$ by blow-ups at two points and the same surface is also obtained from $\mathbb{P}^1\times\mathbb{P}^1$ by one blow-up.

Without losing generality, we choose the two points in $\mathbb{P}^2$ to be the intersection points of the line at infinity with the coordinate axes, i.e.~the points with coordinates $[X:Y:Z]=[1:0:0]$ and $[X:Y:Z]=[0:1:0]$; while in $\mathbb{P}^1\times\mathbb{P}^1$ we choose the point $([x_1:x_0],[y_1:y_0])=([1:0],[1:0])$.
Thus, using the coordinates represented in Figure \ref{fig:embed}, the surface $\mathcal{S}$, covered by five affine charts, is shown in Figure \ref{fig:S}.
\begin{figure}[h]
	\begin{center}
	\begin{tikzpicture}
		\draw[thick] (-3,-2) -- (3,-2);
		
		\draw[thick] (-2,-3) --(-2,3);
		
		\draw[thick,dashed] (1.5,2) -- (-3,2);
		
		\draw[thick,dashed] (2,-3) --(2,1.5);
		
		\draw[thick,dashed](.5,2.5)--(2.5,.5);

		\draw[fill=black] (-2,-2) circle [radius=0.07] node [below left]{$(x,y)$};		
		\draw[fill=black] (-2,2) circle [radius=0.07] node [above left]{$\left(x,\dfrac1y\right)$};		
		\draw[fill=black] (2,-2) circle [radius=0.07] node [below right]{$\left(\dfrac1x,y\right)$};	
		
		\draw[fill=black] (1,2) circle [radius=0.07] node [above right]{$\left(\dfrac1{x},\dfrac{x}y\right)$};	
		
		\draw[fill=black] (2,1) circle [radius=0.07] node [below left]{$\left(\dfrac{y}{x},\dfrac1y\right)$};	
		
		\node at(-1,-1) {$\mathcal{S}$};
	\end{tikzpicture}
\end{center}
\caption{The surface $\mathcal{S}$. The dashed lines are exceptional, i.e.~their self-intersection number is $-1$.}\label{fig:S}
\end{figure}
The surface $\mathcal{S}$ is projected to $\mathbb{P}^1\times\mathbb{P}^1$ by blowing down one of the exceptional lines and to $\mathbb{P}^2$ by blowing down two of them, see Figure \ref{fig:S-proj}.
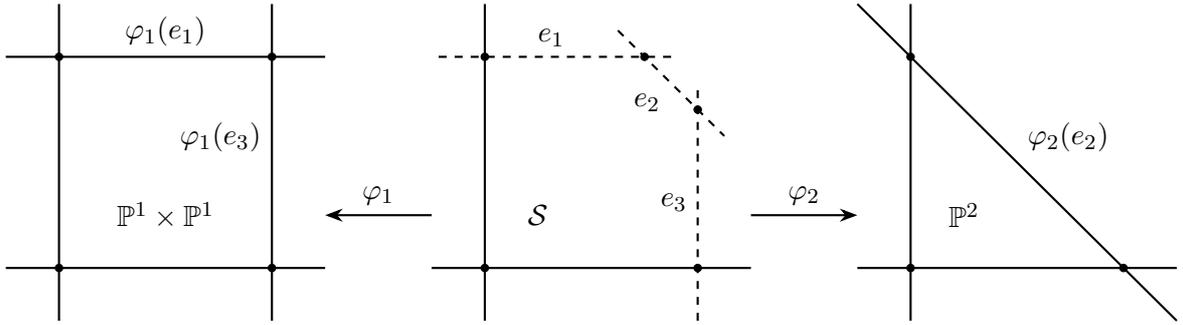
\begin{figure}[h]
	\begin{center}
		\begin{tikzpicture}[scale=0.7]
			\draw[thick] (-3,-2) -- (3,-2);
			
			\draw[thick] (-2,-3) --(-2,3);
			
			\draw[thick,dashed] (1.5,2) -- (-3,2) node[midway,above]{$e_1$};
			
			\draw[thick,dashed] (2,-3) --(2,1.5) node[midway,left]{$e_3$};
			
			\draw[thick,dashed](.5,2.5)--(2.5,.5) node[midway,below left]{$e_2$};
			
			\draw[fill=black] (-2,-2) circle [radius=0.07];		
			\draw[fill=black] (-2,2) circle [radius=0.07];		
			\draw[fill=black] (2,-2) circle [radius=0.07];	
			\draw[fill=black] (1,2) circle [radius=0.07];	
			\draw[fill=black] (2,1) circle [radius=0.07];	
			
			\node at(-1,-1) {$\mathcal{S}$};

\draw [thick, -Stealth] (3, -1) -- (5, -1)node[midway,above]{$\varphi_2$};
\draw [thick, -Stealth] (-3, -1) -- (-5, -1) node[midway,above]{$\varphi_1$};

\begin{scope}[shift={(-8,0)}]			
			\draw[thick] (-3,-2) -- (3,-2);
\draw[thick] (-3,2) -- (3,2) node[midway,above]{$\varphi_1(e_1)$};

\draw[thick] (-2,-3) --(-2,3);
\draw[thick] (2,-3) --(2,3)node[midway,above left]{$\varphi_1(e_3)$};

\draw[fill=black] (2,2) circle [radius=0.07];
\draw[fill=black] (-2,-2) circle [radius=0.07];		
\draw[fill=black] (-2,2) circle [radius=0.07];		
\draw[fill=black] (2,-2) circle [radius=0.07];	
\node at (0,-1){$\mathbb{P}^1\times\mathbb{P}^1$};	
\end{scope}

\begin{scope}[shift={(8,0)}]
	\draw[thick] (-3,-2) -- (3,-2);
	\draw[thick] (-3,3) -- (3,-3)node[midway,above right]{$\varphi_2(e_2)$};
	
	\draw[thick] (-2,-3) --(-2,3);

	\draw[fill=black] (-2,-2) circle [radius=0.07];		
	
	\draw[fill=black] (-2,2) circle [radius=0.07];		
	
	\draw[fill=black] (2,-2) circle [radius=0.07];		
	
	\node at(-1,-1) {$\mathbb{P}^2$};
\end{scope}

		\end{tikzpicture}
	\end{center}
	\caption{The projections of $\mathcal{S}$ to $\mathbb{P}^1\times\mathbb{P}^1$ and $\mathbb{P}^2$: map $\varphi_1$ is the blowdown along the exceptional line $e_2$, while $\varphi_2$ is the blowdown along $e_1$ and $e_3$.}\label{fig:S-proj}
\end{figure}

\subsection{Biquadratic curves in $\mathbb C^2$ and in $\mathbb{P}^1\times\mathbb{P}^1$}

\emph{A biquadratic curve} $\mathcal C_A$ in $\mathbb C^2$ is defined by the equation $Q(x,y)=0$, where $Q(x,y)$ is a biquadratic polynomial \eqref{eq:biquad}.
The compactification of that curve in $\mathbb{P}^1\times\mathbb{P}^1$ is the curve $\mathcal{C}$ given by the equation $^hQ(x_0,x_1,y_0,y_1)=0$, where $^hQ$ is the homogenization of $Q$ and given by the equation \eqref{eq:Qhom}.
Curve $\mathcal{C}_A$ is also called \emph{the affine part} of $\mathcal{C}$.

 Following \cite{DuistermaatBOOK}, we get:

\begin{theorem}\label{th:smooth}
Given a biquadratic polynomial $Q(x, y)$ \eqref{eq:biquad1} and its discriminant polynomials $\mathcal D_{Q_x}(y)$ and $\mathcal D_{Q_y}(x)$  \eqref{eq:discxy}, denote their fundamental projective invariants by $D_y=D_x$ and $E_y=E_x$.  The curve $\mathcal C$ in  $\mathbb P^1\times \mathbb P^1$, whose affine part is given as the zero set \eqref{eq:biquad}. Then the curve $\mathcal C$  is smooth if and only if the discriminant of the polynomials $\mathcal D_{Q_x}(y)$ and $\mathcal D_{Q_y}(x)$, which is equal to $256(D_x^3-27E_x^2)=256(D_y^3-27E_y^2)$, is non-zero. In this case, the curve $\mathcal C$ is elliptic and its $J$ invariant is equal to
$$
J= \frac{D_x^3}{D_x^3-27E_x^2}=\frac{D_y^3}{D_y^3-27E_y^2}.
$$
\end{theorem}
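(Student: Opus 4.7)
The plan is to birationally identify $\mathcal C$ with the hyperelliptic curve $\widetilde{\mathcal C}\colon z^2=\mathcal D_{Q_y}(x)$ by completing the square in $y$, and then to import both conclusions from the well-known theory of hyperelliptic double covers of $\mathbb P^1$ ramified at four points. The identity
\begin{equation*}
4\,a(x)\,Q(x,y)=\bigl(2\,a(x)\,y+b(x)\bigr)^{2}-\mathcal D_{Q_y}(x)
\end{equation*}
shows that on the open locus $\{a(x)\neq 0\}$ the substitution $z=2\,a(x)\,y+b(x)$ sends the affine part of $\mathcal C$ onto the affine part of $\widetilde{\mathcal C}$, with inverse $y=(z-b(x))/(2\,a(x))$. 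Once smoothness has been proved, both curves have arithmetic genus one (by adjunction for a $(2,2)$-divisor on $\mathbb P^1\times\mathbb P^1$, and via Riemann--Hurwitz on the hyperelliptic side), so this birational equivalence is automatically an isomorphism of smooth projective curves, and hence preserves the $J$-invariant.

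For the smoothness criterion I would first work in the affine chart of Figure \ref{fig:P1xP1}. At a point $(x_0,y_0)$ with $a(x_0)\neq 0$, the equations $Q=Q_y=0$ force $y_0=-b(x_0)/(2\,a(x_0))$ and $\mathcal D_{Q_y}(x_0)=0$; differentiating $Q$ with respect to $x$ and substituting $y_0$, the remaining equation $Q_x(x_0,y_0)=0$ reduces precisely to $\mathcal D_{Q_y}'(x_0)=0$. So an affine singular point with $a(x_0)\neq 0$ exists if and only if $x_0$ is a multiple root of $\mathcal D_{Q_y}$; the symmetric case $\tilde a(y_0)\neq 0$ is handled analogously via $\mathcal D_{Q_x}$. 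To cover the two lines at infinity in $\mathbb P^1\times\mathbb P^1$, I would transfer this argument into the other three charts via the substitutions $x\mapsto 1/x$ and $y\mapsto 1/y$; by the preceding Proposition on the Eisenstein invariants, these substitutions leave $D$ and $E$ unchanged (the $\beta$-scaling being irrelevant for the vanishing of $D^3-27E^2$). Consequently the single invariant $D_x^3-27E_x^2=D_y^3-27E_y^2$ detects multiple roots of the discriminant polynomials in every chart simultaneously, and $\mathcal C$ is smooth iff this invariant is non-zero.

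The $J$-invariant formula then follows by applying the classical expression for the $J$-invariant of an elliptic curve of the form $z^2=P(x)$ with $P$ a quartic: in terms of its Eisenstein invariants $D$, $E$, one has $J=D^3/(D^3-27E^2)$. Taking $P=\mathcal D_{Q_y}$ and invoking Frobenius' theorem, which supplies $D_x=D_y$ and $E_x=E_y$, yields the stated identity. The main difficulty I anticipate is not the affine computation, which is direct, but the bookkeeping at infinity: when $a_{12}^2-4\,a_{02}\,a_{22}=0$ the polynomial $\mathcal D_{Q_y}(x)$ drops in degree, and some roots effectively migrate to infinity on $\mathbb P^1$. The correct framework is to work with the bi-homogeneous discriminant $\hat{\mathcal D}_{Q_y}(x_0,x_1)$ of bidegree four as a section of $\mathcal O(4)$ on the base $\mathbb P^1$; the invariance $P(x)\mapsto x^4P(1/x)$ from the Proposition is exactly what is needed to count these roots at infinity with the correct multiplicity and to close the argument uniformly.
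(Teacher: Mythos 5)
The paper does not actually supply a proof of Theorem~\ref{th:smooth}; it states the result ``Following \cite{DuistermaatBOOK}'' and moves on. Your proposal therefore cannot be ``the same as the paper's proof,'' but it is a correct and natural self-contained route, and it is worth recording that it works.

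Your reduction to the hyperelliptic model is sound. The identity $4a(x)\,Q(x,y)=\bigl(2a(x)y+b(x)\bigr)^2-\mathcal D_{Q_y}(x)$ is a direct expansion, and the affine smoothness computation checks out: with $Q_y(x_0,y_0)=0$ forcing $y_0=-b(x_0)/(2a(x_0))$, one has $Q(x_0,y_0)=-\mathcal D_{Q_y}(x_0)/(4a(x_0))$ and, after using $b^2=4ac$ at $x_0$, the identity $4a(x_0)^2\,Q_x(x_0,y_0)=-a(x_0)\,\mathcal D_{Q_y}'(x_0)$, so a singular point over $\{a\neq 0\}$ corresponds exactly to a double root of $\mathcal D_{Q_y}$. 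Running the same computation in the chart $(x,1/y)$, where the equation reads $a(x)+b(x)w+c(x)w^2=0$ and the discriminant in $w$ is again $\mathcal D_{Q_y}(x)$, covers $\{c\neq 0\}$; together with the symmetric analysis via $\mathcal D_{Q_x}$ and the residual case where $a,b,c$ all vanish at $x_0$ (a vertical line component, which forces a higher-order zero of $b^2-4ac$), this enumerates all singular points. Your remark that the single quantity $D^3-27E^2$ detects multiple roots uniformly across charts because $D,E$ are unchanged by $x\mapsto x+\alpha$ and $x\mapsto 1/x$ is precisely the right way to handle roots that migrate to infinity, and you are right to flag this as the delicate step. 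The $J$-invariant then comes for free: any birational map between smooth projective genus-one curves is an isomorphism, $\hat\Gamma: z^2=\mathcal D_{Q_y}(x)$ has $J=D^3/(D^3-27E^2)$ by the classical quartic-to-Weierstrass reduction, and Frobenius gives $D_x=D_y$, $E_x=E_y$. One small slip to correct: the leading coefficient of $\mathcal D_{Q_y}(x)$ is $a_{21}^2-4a_{20}a_{22}$, not $a_{12}^2-4a_{02}a_{22}$ (the latter is the leading coefficient of $\mathcal D_{Q_x}(y)$), so the degree-drop scenario for $\mathcal D_{Q_y}$ is triggered by the former. The paper's implicit route (via Duistermaat) would pass through the explicit Weierstrass map $\Psi$ of Proposition~\ref{prop:transformation} and the elliptic-surface machinery; your hyperelliptic double-cover argument is more elementary and does not need the explicit formula for $\Psi$.
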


\begin{definition}
\emph{The projective invariants} $D_{\mathcal C}$ and $E_{\mathcal C}$ and the discriminant $F_{\mathcal C}$ of the biquadratic curve $\mathcal C$ in  $\mathbb P^1\times \mathbb P^1$, whose affine part is given as the zero set \eqref{eq:biquad}, are:
\begin{equation}\label{eq:GxGyExEy}
D_{\mathcal C}:=D_x=D_y, \quad E_{\mathcal C}:=E_x=E_y, \quad F_{\mathcal C}:=256(D_x^3-27E_x^2)=256(D_y^3-27E_y^2).
\end{equation}
\end{definition}

In \cite{Cayley1871}, Cayley proved that using a linear transformation in one variable in $Q$ given by \eqref{eq:biquad1}, one can get that the discriminant polynomials $\mathcal D_{Q_x}(y)$ and $\mathcal D_{Q_y}(x)$ have equal corresponding coefficients.
Thus, he proved the following:

\begin{proposition}[Cayley, \cite{Cayley1871}]\label{prop:sym}
Let  $\mathcal C$ be a smooth curve in  $\mathbb P^1\times \mathbb P^1$ defined by a  nonsymmetric biquadratic equation \eqref{eq:biquad}.
Then there exists a projective transformation $f$ in one variable such that $\hat Q(x, y)=Q(x,f(y))$ is symmetric.
\end{proposition}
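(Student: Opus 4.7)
The plan is to carry out the strategy hinted at in the paragraph preceding the statement: first use Frobenius's theorem to produce a Möbius transformation $f$ in the second variable which makes the two discriminant polynomials coincide coefficient by coefficient, and then argue that this forces the resulting biquadratic to be symmetric.

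\textbf{Step 1: matching the branch quartics.} By Frobenius's theorem quoted above, the two discriminants $\mathcal D_{Q_x}(y)$ and $\mathcal D_{Q_y}(x)$ share their Eisenstein invariants $(D,E)$. Smoothness of $\mathcal C$ (Theorem~\ref{th:smooth}) gives $F_{\mathcal C}=256(D^3-27E^2)\neq 0$, so both quartics have four distinct roots. The classical invariant theory of binary quartics then provides an element of $\mathrm{SL}_2(\mathbb C)$ relating them: there exist $\alpha,\beta,\gamma,\delta$ with $\alpha\delta-\beta\gamma\neq 0$ and a scalar $\lambda$ such that the biquadratic
$$
\tilde Q(x,y) := \lambda\,(\gamma y+\delta)^2\, Q\!\left(x,\frac{\alpha y+\beta}{\gamma y+\delta}\right)
$$
satisfies $\mathcal D_{\tilde Q_x}(t)=\mathcal D_{\tilde Q_y}(t)$ as polynomials in the common variable $t$. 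This reduces the problem to the case of a biquadratic with coincident branch divisors in the two projections.

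\textbf{Step 2: from matched divisors to symmetry.} I would then show that this coefficient-wise equality forces $\tilde Q(x,y)=\tilde Q(y,x)$. My preferred route is to use the elliptic parametrization of $\mathcal C$: introduce a uniformizing coordinate $z$ on the underlying elliptic curve, and write $x=\phi_X(\wp(z-z_x))$ and $y=\phi_Y(\wp(z-z_y))$ for suitable Möbius transformations $\phi_X,\phi_Y$ and base points $z_x,z_y$. Taking $f := \phi_X\circ\phi_Y^{-1}$ and using the evenness of $\wp$, the elliptic involution $\tau(z) := -z+z_x+z_y$ satisfies $x(\tau(z))=f(y(z))$ and $f(y(\tau(z)))=x(z)$, so $\tau$ is the pullback along $\mathcal C$ of the swap $(x,y)\mapsto(y,x)$ on $\mathbb P^1\times\mathbb P^1$. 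This yields the desired symmetry of the biquadratic after applying $f$ to the second variable.

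\textbf{Main obstacle.} The delicate point is reconciling the two constructions of $f$: the transformation produced in Step 1 is pinned down by the invariants only up to the finite $\mathrm{PGL}_2$-stabilizer of the common branch quartic (a Klein four-group in the generic case, acting by the permutations of the four roots fixing the cross-ratio structure), whereas the one produced in Step 2 is intrinsically determined. One must verify that these two candidate families of $f$ share an element, equivalently that among the finitely many biquadratic curves in $\mathbb P^1\times\mathbb P^1$ with a prescribed coincident pair of branch divisors -- a torsor over the $2$-torsion of the Jacobian of the elliptic model -- one can always pick a symmetric representative reachable from $Q$ by a Möbius transformation acting only on the second variable. Cayley's original approach handles this by explicit coefficient manipulation, which would serve as a computational verification backing up the conceptual argument above.
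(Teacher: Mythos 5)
Your Step~2, carried out carefully, is a complete and self-contained proof, and it is cleaner than what the paper offers: the paper's argument is essentially your Step~1 plus a bare assertion. The paper observes (via Theorem~\ref{th:smooth} and Frobenius) that the two discriminant quartics have four distinct roots and the same cross-ratio, hence there is a M\"obius map $f$ carrying the roots of $\mathcal D_{Q_x}(y)$ onto those of $\mathcal D_{Q_y}(x)$, and then simply states that this $f$ symmetrizes $Q$. That last step is exactly the one you were uneasy about, and the uniformization argument you propose is the right way to close it.

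The substantive correction is that your ``Main obstacle'' is illusory. As written, Step~2 never uses the outcome of Step~1: for any smooth biquadratic you have $x(z)=\phi_X(\wp(z-z_x))$ and $y(z)=\phi_Y(\wp(z-z_y))$ because $x$ and $y$ are degree-two functions on $\mathbb C/\Lambda$, and then $\tau(z)=-z+z_x+z_y$, together with $f=\phi_Y\circ\phi_X^{-1}$ (the inverse of the $f$ you wrote -- with your convention it is $Q(x,f^{-1}(y))$ that becomes symmetric, a harmless relabeling), already exhibits the transformed curve as swap-invariant. So Step~1, the appeal to the $\mathrm{PGL}_2$-stabilizer of the branch quartic, and the torsor over the $2$-torsion all drop out; there are no two constructions to reconcile. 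The only point worth spelling out to finish Step~2 rigorously: swap-invariance of the zero set gives $\hat Q(y,x)=\pm\hat Q(x,y)$, and the sign $-1$ would force $\hat Q(x,x)\equiv 0$, making the diagonal a component of $\mathcal C$ and contradicting smoothness/irreducibility, so $\hat Q$ is genuinely symmetric.
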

\begin{proof}
Here we give an outline.
For a complete proof, see \cite{Sam}.
According to Theorem \ref{th:smooth}, each of the two the discriminant polynomials has four distinct roots, and the two cross-ratios of the roots of each of these quartic polynomials are equal.
Thus, there exists a M\"obius transformation $f$ that maps the roots of  $\mathcal D_{Q_x}(y)$ to the roots of $\mathcal D_{Q_y}(x)$.
The transformation $f$ symmetrizes the biquadratic polynomial $Q$.
\end{proof}

Frobenius gave in \cite{Fr} a complete list of those that can be symmetrized as well as the list of those that cannot, see also \cite{Sam}.
We will come back to that list in Section \ref{sec:singular}.

\subsection{ From a smooth biquadratic in $\mathbb{P}^1\times\mathbb{P}^1$ to a smooth cubic in $\mathbb{P}^2$}

Consider a smooth biquadratic $\mathcal C$ in $\mathbb{P}^1\times\mathbb{P}^1$.
According to Theorem \ref{th:smooth}, $\mathcal C$ is an elliptic curve, thus it is isomorphic to  $\mathbb C/\Lambda$, for a nondegenerate lattice $\Lambda\subset \mathbb C$.
If we denote by $g_2$ and $g_3$ the invariants of $\Lambda$, then
the curve $\mathcal{C}$ is isomorphic to a smooth cubic $\Gamma$ in $\mathbb{P}^2$, which can be represented in the Weierstrass form as:
$y^2= 4x^3-g_2x-g_3$,
see, for example, \cites{DuistermaatBOOK, Tsu, Clem, DR2011knjiga}.
The isomorphism between $\mathcal{C}$ and $\Gamma$ is represented in Figure \ref{fig:com-diagram}.
\begin{figure}[h]
	\centering
	\begin{tikzpicture}
\node at (0,0){$\mathbb{C}/\Lambda$};
\node at (-2.2,-1.7){$\mathbf{P}^1\times\mathbf{P}^1\supset	\mathcal{C}$};
\node at (1.8,-1.7){$\Gamma\subset\mathbf{P}^2$};

\draw[thick,-Stealth](-0.2,-0.3)--(-1.1,-1.5) node[midway,above left]{$\Phi$};

\draw[thick,-Stealth](0.2,-0.3)--(1.1,-1.5) node[midway,above right]{$p$};

\draw[thick,-Stealth](-1,-1.7)--(1,-1.7) node[midway,below]{$p\circ\Phi^{-1}$};
	\end{tikzpicture}
	\caption{Mapping $\Phi$ is an analytic diffeomorphism between  $\mathbb C/\Lambda$ and $\mathcal C$, while
$p(z) =[\wp(z):\wp'(z):1]$ is an analytic diffeomorphism from $\mathbb C/\Lambda$ to cubic $\Gamma$ in $\mathbb P^2$, where $\wp(z)$ is the corresponding Weierstrass function. The map $p\circ \Phi^{-1}$ is a complex analytic diffeomorphism from the smooth biquadratic $\mathcal C\subset \mathbb{P}^1\times\mathbb{P}^1$ to the smooth cubic $\Gamma \subset \mathbb P^2$.
	}\label{fig:com-diagram}
\end{figure}

Moreover, there is the following connection between the projective invariants of $\mathcal{C}$ and the coefficients of the cubic $\Gamma$:

\begin{proposition}[\cite{DuistermaatBOOK}]\label{prop:biquadcubic}
	Consider a smooth biquadratic $\mathcal C$ in $\mathbb{P}^1\times\mathbb{P}^1$.
	Then its projective invariants $D_{\mathcal{C}}$ and $E_{\mathcal{C}}$ satisfy the following:
	\begin{equation}\label{eq:EFg}
		g_2=D_{\mathcal C}, \quad g_3=-E_{\mathcal C}.
	\end{equation}
	In other words, $\mathcal C$ is isomorphic to the smooth cubic $\Gamma \subset \mathbb P^2$ given by the affine equation:
	\begin{equation}\label{eq:cubic}
		\Gamma: y^2= 4x^3-g_2x-g_3 = 4x^3- D_{\mathcal C}x+E_{\mathcal C}.
	\end{equation}
\end{proposition}

The isomorphism between $\mathcal{C}$ and $\Gamma$ can be geometrically realised as follows.
First, choose a coordinate system in $\mathbb{P}^1\times\mathbb{P}^1$ such that the point $\mathcal{O}=([1:0],[1:0])$ belongs to $\mathcal{C}$ and the tangent line at that point does not coincide with any of the two coordinate lines through it.
Notice that in that coordinate system, the curve $\mathcal{C}$ will be given by the polynomial $^hQ$ \eqref{eq:Qhom} satisfying $a_{22}=0$ and $a_{21}a_{12}\neq0$.

Second, we apply the blow-up $\varphi_1$, as shown in Figure \ref{fig:S-proj}.
There, the point $\mathcal{O}$ is blown-up to the exceptional line $e_2$, while the proper preimages of the coordinate lines through $\mathcal{O}$ are the lines $e_1$ and $e_2$, which both have self-intersection number equal to $-1$.
Note that the proper preimage $\tilde{\mathcal{C}}$ of the curve $\mathcal{C}$ intersects each of the lines $e_1$, $e_2$, $e_3$ at a single point and that the intersection with $e_2$ does not lie on $e_1$ or $e_3$.

Third, we apply the blow down $\varphi_2$ of the lines $e_1$ and $e_3$.
The projection $\varphi_2(\tilde{\mathcal{C}})$ is a smooth curve.
Now, since $a_{22}=0$, notice that in the first affine chart of $\mathbb{P}^1\times\mathbb{P}^1$, the equation of the curve $\mathcal{C}$ will be cubic, thus $\varphi_2(\tilde{\mathcal{C}})$ is a cubic in $\mathbb{P}^2$.
Applying an appropriate change of coordinates of the projective plane, one can get the Weierstrass form \eqref{eq:cubic} \cite{GrifHarPRINC}.

Notice that this construction shows that the isomorphism between biquadratic $\mathcal{C}$ and cubic $\Gamma$ is a restriction of the birational equivalence between $\mathbb{P}^1\times\mathbb{P}^1$ and $\mathbb{P}^2$.

An explicit analytic form of that isomorphism is given in the following:

\begin{proposition}[\cite{DuistermaatBOOK}*{Lemma 2.4.13}]\label{prop:transformation}
Consider a smooth biquadratic $\mathcal C$ in $\mathbb{P}^1\times\mathbb{P}^1$ given by polynomial \eqref{eq:Qhom}, where the coordinates are chosen such that $a_{22}=0$.
Using the notation introduced in this subsection, denote by $\Lambda$ and $\wp$ the corresponding lattice and the Weierstrass function, and suppose that $\Phi$ is the isomorphism between $\mathbb{C}/\Lambda$ and $\mathcal{C}$ such that $\Phi(0+\Lambda)=\mathcal{O}$.
Define the rational functions of two variables $\mathcal P(x, y)$ and $\mathcal P'(x, y)$ as:
\begin{align*}
\mathcal P(x, y)&= - \frac{(a_{02}x+a_{12})(a_{20}y+a_{21})}{xy}+
\frac{a_{11}^2-4a_{10}a_{12}-4a_{01}a_{21}+8a_{02}a_{20} }{12}\\
\mathcal P'(x, y)&= \frac{\mathcal Q(x, y)}{x^2y^2},
\end{align*}
where
\begin{align*}
\mathcal Q(x, y)=&-a_{12}^2a_{21}x-3a_{02}a_{12}a_{21}x^2 -2a_{02}^2a_{21}x^3+a_{12}a_{21}^2y-(a_{02}a_{11}+a_{01}a_{12})a_{21}x^2y\\
&-2a_{01}a_{02}a_{21}x^3y+3a_{12}a_{20}a_{21}y^2+(a_{11}a_{20}+a_{10}a_{21})a_{12}xy^2+(a_{01}a_{12}a_{20}-a_{02}a_{10}a_{21})x^2y^2\\
&-2a_{00}a_{02}a_{21}x^3y^2+2a_{12}a_{20}^2y^3+2a_{10}a_{12}a_{20}xy^3+2a_{00}a_{12}a_{20}x^2y^3.
\end{align*}
Then:
$$
\wp(z)=\mathcal P(\Phi(z)),\quad \frac{d}{dz} \wp(z)=\mathcal P'(\Phi(z))
$$
and
\begin{equation}\label{eq:psi}
 ([x:1], [y:1])\mapsto [\mathcal P(x, y):\mathcal P'(x, y):1]\in \Gamma,
\end{equation}
is the formula of an isomorphism between the smooth biquadratic curve $\mathcal C$ in $\mathbf{P}^1\times\mathbf{P}^1$ and the smooth cubic $\Gamma \subset \mathbb P^2$ given by \eqref{eq:cubic}.
\end{proposition}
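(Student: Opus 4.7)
My plan is to reduce the proposition to matching Laurent expansions at the origin of $\mathbb{C}/\Lambda$: I will show that under a specific normalisation of $\Phi$ the pullback $\mathcal{P}\circ\Phi$ has the same polar divisor and the same constant term as $\wp$, which by the uniqueness of the Weierstrass function forces $\mathcal{P}\circ\Phi=\wp$; the identity $\mathcal{P}'\circ\Phi=\wp'$ is then obtained by differentiation. The isomorphism formula \eqref{eq:psi} follows immediately from $p(z)=[1:\wp(z):\wp'(z)]$ and the definition of $\Psi$.

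Since $a_{00}=0$, the point $\mathcal{O}=([1:0],[1:0])$ lies on $\mathcal{C}$; by Theorem \ref{th:smooth}, $\mathcal{C}$ is smooth there with tangent line $a_{10}x+a_{01}y=0$. I would normalise $\Phi$ so that the branch through $\mathcal{O}$ is parametrised as $x(z)=a_{01}z+\alpha z^2+O(z^3)$, $y(z)=-a_{10}z+\beta z^2+O(z^3)$, with $\alpha,\beta$ determined by expanding $Q(x(z),y(z))\equiv 0\pmod{z^3}$. As functions on $\mathcal{C}$, each of $x$ and $y$ has degree two, with zero divisors $(0,0)+(0,-a_{01}/a_{02})$ and $(0,0)+(-a_{10}/a_{20},0)$ respectively. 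An implicit-function-theorem tangent calculation at the two non-origin zeros shows that $a_{02}y+a_{01}$ and $a_{20}x+a_{10}$ each vanish to first order along $\mathcal{C}$ there, cancelling the apparent simple poles of $\mathcal{P}$. Thus $\mathcal{P}|_{\mathcal{C}}$ is holomorphic except for a double pole at $\mathcal{O}$, and substituting the parametrisation gives $z^{-2}$-coefficient $-a_{10}a_{01}/(a_{01}\cdot(-a_{10}))=1$, matching $\wp(z)=z^{-2}+O(z^2)$. The additive constant $(a_{11}^2-4a_{12}a_{10}-4a_{21}a_{01}+8a_{20}a_{02})/12$ in $\mathcal{P}$ is precisely the shift needed to cancel the $z^0$-coefficient of the resulting Laurent expansion.

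By the uniqueness of $\wp$ among elliptic functions with principal part $z^{-2}$ at the origin and vanishing constant term, we conclude $\mathcal{P}\circ\Phi=\wp$. The holomorphic differential on $\mathcal{C}$ is $dz=dx/Q_y$, so the normalisation $x'(0)=a_{01}=Q_y(\mathcal{O})$ is equivalent to $x'(z)=Q_y(x(z),y(z))$ identically; differentiating $\mathcal{P}\circ\Phi=\wp$ via the chain rule, combined with $Q_x\,dx+Q_y\,dy=0$, then yields $\wp'(z)=\mathcal{P}_xQ_y-\mathcal{P}_yQ_x$ along the parametrisation, and a polynomial identity modulo $Q$ identifies this expression with $\mathcal{P}'(x,y)$, giving $\mathcal{P}'\circ\Phi=\wp'$. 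As a cross-check, the linear part of $\mathcal{R}$ equals $a_{10}a_{01}(a_{01}y-a_{10}x)$, and a monomial inspection confirms $\mathcal{P}'|_{\mathcal{C}}$ is regular away from $\mathcal{O}$ with a triple pole there of leading coefficient $-2$, matching $\wp'(z)=-2z^{-3}+O(z)$. The main obstacle is the explicit algebraic verification of the constant-term identity for $\mathcal{P}$: solving $Q(x(z),y(z))=0$ to order $z^3$ gives $\alpha,\beta$ as rational expressions in the $a_{ij}$, and substitution into $-(a_{20}x+a_{10})(a_{02}y+a_{01})/(xy)$ yields a quadratic polynomial in the $a_{ij}$ that must collapse, after extensive but mechanical cancellation, to the asserted combination.
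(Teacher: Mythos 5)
The paper does not prove this proposition: it is imported from \cite{DuistermaatBOOK} with a citation and no argument, so there is no in-paper proof to compare against. Your sketch is the standard route and is sound. The polar-divisor analysis of $\mathcal{P}|_{\mathcal{C}}$ is correct: the zero divisor of $x$ on $\mathcal{C}$ is $\mathcal{O}+(0,-a_{01}/a_{02})$ and that of $y$ is $\mathcal{O}+(-a_{10}/a_{20},0)$, and the numerator factors $a_{02}y+a_{01}$, $a_{20}x+a_{10}$ kill the poles at the two non-origin points, leaving a double pole at $\mathcal{O}$ with leading term $-a_{10}a_{01}/(xy)\sim z^{-2}$ under $x\sim a_{01}z$, $y\sim -a_{10}z$. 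The derivative identity via the Hamiltonian flow $x'=Q_y$, $y'=-Q_x$ and your cross-check that the linear part $a_{10}a_{01}(a_{01}y-a_{10}x)$ of $\mathcal{R}$ yields the $-2z^{-3}$ term are also right. Two points would make this watertight. First, rather than aiming to ``match the $z^0$-coefficient'' directly, you should argue that $\mathcal{P}\circ\Phi-\wp$ is elliptic with at most a simple pole, hence (residue theorem) has no pole and is constant; the additive constant in $\mathcal{P}$ is then what sets that constant to zero, and that one scalar identity is what the ``extensive but mechanical'' computation must confirm. Second, the normalisation $x'(0)=a_{01}$, i.e.\ that $\Phi$ integrates $(Q_y,-Q_x)$, is exactly the normalisation under which the lattice has $g_2=D_{\mathcal{C}}$, $g_3=-E_{\mathcal{C}}$; without pinning this to Proposition \ref{prop:biquadcubic}, you would only conclude that $\mathcal{P}\circ\Phi$ equals \emph{some} $\wp$, not the one satisfying $(\wp')^2=4\wp^3-D_{\mathcal{C}}\wp+E_{\mathcal{C}}$. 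With those two clarifications the argument is complete modulo the acknowledged polynomial bookkeeping.
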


One of the beauties of the problem at hand is that both maps $p$ and $\Phi^{-1}$ are transcendental, and nevertheless their composition $p\circ\Phi^{-1}$, which has been explicitly written as \eqref{eq:psi}, is \emph{polynomial} in terms of the coefficients of the biquadratic polynomial that defines $\mathcal C$.

\subsection{A $(2,2)$ correspondence and QRT transformations}\label{sec:QRT}
A biquadratic curve $\mathcal C$, given in the plane $\mathbb{P}^1\times\mathbb{P}^1$ by the polynomial \eqref{eq:Qhom}, defines a \emph{$(2,2)$ correspondence}:
for a given point $[x_0:x_1]$ in the first copy of $\mathbb P^1$, there are, in general, two points $[y_0:y_1]$ in the second copy of
$\mathbb P^1$ such that $([x_0:x_1],[y_0:y_1])\in\mathcal{C}$, and vice versa:
for each $[y_0:y_1]$ in the second copy of $\mathbb{P}^1$ there are two points $[x_0:x_1]$ in the first copy such that $([x_0:x_1],[y_0:y_1])\in\mathcal{C}$.

The $(2,2)$ correspondence induces two natural maps on the curve $\mathcal C$ in $\mathbb{P}^1\times\mathbb{P}^1$, see Figure \ref{fig:switches}.
These two maps are generated by the following two maps of the affine part $\mathcal C_A$ of $\mathcal C$:
\begin{itemize}
	\item \emph{the horizontal switch}: $h:(x,y)\mapsto(x',y)$; and
	\item \emph{the vertical switch}: $v:(x,y)\mapsto(x,y')$,
\end{itemize}
where we assume that $x$ and $x'$ are the two solutions of the quadratic equation in $x$ with fixed $y$: $Q(x, y)=0$ and $Q(x', y)=0$, with $Q$ given by \eqref{eq:biquad}.
Similarly,  $y$ and $y'$ are the two solutions of the quadratic equation in $y$ with fixed $x$: $Q(x, y)=0$ and $Q(x, y')=0$.
By applying the Vieta formulas to \eqref{eq:biquad1}, explicit formulas can be written for both switches: $x'=-x - \tilde b(y)/\tilde a(y)$ and  $y'=-y -  b(x)/a(x)$, assuming that $\tilde a(y)\ne 0$ and $a(x)\ne 0$.

Both maps $h$ and $v$ are involutions on the curve $\mathcal C$, i.e.~their squares are the identity map, or in other words, each of them is bijective and equal to its inverse.

A point $x$ in $\mathbb P^1$ is \emph{critical for the projection parallel to the second axis} if the corresponding two points $y, y_1$ coincide, i.e. if $(x, y)$ is a fixed point of the vertical switch
	$$
	v(x, y)=(x,y).
	$$
	Similarly, a point $y$ in $\mathbb P^1$ is \emph{critical for the projection parallel to the first  axis} if the corresponding two points $x, x_1$ coincide, i.e. if $(x, y)$ is a fixed point of the horizontal switch
	$$
	h(x, y)=(x,y).
	$$
	The fixed points of horizontal and vertical switches  are exactly the zeros of $\mathcal D_{Q_x}(y)$ and $\mathcal D_{Q_y}(x)$, respectively, thus each $h$ and $v$ have four such points counting multiplicities.
	Denote by $d_1$ and $d_2$ the type of the critical divisor of the critical points at the first and the second coordinate, respectively.
	The types can be $(1,1,1,1)$, $(2, 1, 1)$, $(2, 2)$, $(3, 1)$, $(4)$, reflecting the structure of zeros, including infinity and counting multiplicity of the  polynomials
	$\mathcal D_{Q_x}(y)$ and $\mathcal D_{Q_y}(x)$.
	The type can be also undefined.
	In Section \ref{sec:singular} we will provide a full correspondence between the types and singular biquadratics.
	Here we just point out that the case of a smooth biquadratic $\mathcal C$ is characterized by
	$$
	d_1=d_2=(1, 1, 1, 1).
	$$
	This means that a biquadratic $\mathcal C$ is a smooth elliptic curve if and only if each of the vertical and the horizontal switches have four distinct fixed points, including points at infinity \cite{DuistermaatBOOK}.

The main object of our study is the composition of these two involutions:
\begin{equation}\label{eq:QRT}
\delta: \mathcal C\rightarrow\mathcal C: \delta= v\circ h.
\end{equation}
One should keep in mind that generally $v$ and $h$ do not commute with each other.

In the modern literature, this map $\delta$ is sometimes called the \emph{QRT transformation}, named after Quispel, Roberts, and Thompson, see \cite{DuistermaatBOOK}, where several examples of applications in particular to discrete integrable systems were provided. There is a very important relationship with the Poncelet  theorem from projective geometry and billiards within conics, see \cites{GrifHar1978,DR2011knjiga} and references therein, where the instances with  $\delta$ being of a finite order play the most significant role.

The main goal of this paper is to describe the biquadratic curves in $\mathbb{P}^1\times\mathbb{P}^1$ for which the order of the QRT map $\delta$ is finite.
While in the existing literature, including the Poncelet theorem and billiards within conics, the underlying $(2,2)$ correspondence, as well as the biquadratic polynomials $Q$ \eqref{eq:biquad1} were symmetric (see e.g.~\cite{DR2011knjiga}*{Section 4.12} or \cite{Sam}*{Theorem F}), here we cover nonsymmetric cases as well.
This is primarily motivated by the study of the finiteness of the groups of random walks in the quarter plane, as defined in Section \ref{sec:grw}, and which we are going to solve in the case of smooth biquadratics next.

\section{Smooth case: QRT transformations and groups of random walks of a finite order}\label{sec:smooth}

In this section, we consider a general case, that is when the biquadratic curve is a smooth elliptic curve, and we will derive the explicit analytic conditions for the finiteness of the order of the corresponding group of random walk.

The involutions given by the horizontal and vertical switches on such a biquadratic curve correspond to central symmetries on the Jacobian $\mathbb{C}/\Lambda$ of the curve.
Thus, their composition $h\circ v$ corresponds to a translation on the Jacobian.
We are interested to find explicit conditions for that translation to be of finite order.

The vector of the translation is explicitly obtained by the following:
\begin{proposition}[\cite{DuistermaatBOOK}]\label{prop:deltagamma}
Let $\mathcal C$ be a smooth biquadratic  in $\mathbb{P}^1\times\mathbb{P}^1$ defined by the polynomial \eqref{eq:Qhom}.
Then, its QRT transformation $\delta$ corresponds to the translation from the point at infinity $[0:1:0]$ to the point $[X_0:Y_0:1]$ in the smooth cubic $\Gamma$ in $\mathbb{P}^2$ given by  \eqref{eq:cubic}, where
\begin{equation}\label{eq:XY}
	\begin{aligned}
X_0
=&\frac{a_{11}^2-4a_{12}a_{10}-4a_{21}a_{01}+8a_{02}a_{20}+8a_{22}a_{00}}{12},\\
Y_0
=&\det(a_{ij}), \quad i,j=0, 1, 2.
\end{aligned}
\end{equation}
\end{proposition}

\begin{remark}
If $\mathcal C$ is singular, but its set of regular points $\mathcal C^{reg}$ non-empty, it is possible in Proposition \ref{prop:deltagamma} to substitute  $\mathcal C$ with a connected component of $\mathcal C^{reg}$ that is contained neither in horizontal nor vertical coordinate line and is invariant with respect to the QRT transformation.
The case of singular biquadratics is presented in detail in Section \ref{sec:singular}.
\end{remark}

Now we formulate the explicit conditions for periodicity of the QRT transformation.

\begin{theorem}\label{th:cayley}
Let $\mathcal C$ be a smooth biquadratic curve given by the polynomial \eqref{eq:biquad},  and $\Gamma$ its corresponding cubic \eqref{eq:cubic}.
Then the following statements hold:
\begin{itemize}
\item[(a)] The QRT transformation on  $\mathcal C$ is of order $n$ if and only if the translation on the cubic $\Gamma$  from the point at infinity $[0:1:0]$ to the point $[X_0:Y_0:1]$ given by \eqref{eq:XY} is of order $n$.
\item[(b)]
The translation on the cubic $\Gamma$ from the point at infinity $[0:0:1]$ to the point $[X_0:Y_0:1]$ is of order $n$ if and only if one of the following cases hold:
\begin{itemize}
	\item [(i)] $n=2$ and $\det(a_{ij})=0$;
	\item [(ii)] $n=2k+1$, $k\ge1$, and
	$$
	\det\begin{pmatrix}
	C_2 & C_3 & \dots & C_{k+1}\\
	C_3 & C_4 & \dots & C_{k+2}\\
	&&\dots& \\
	C_{k+1} & C_{k+2} &\dots & C_{2k}
	\end{pmatrix}
	=0;
	$$
	\item [(iii)] $n=2k$, $k\ge2$, and
	$$
	\det\begin{pmatrix}
		C_3 & C_4 & \dots & C_{k+1}\\
		C_4 & C_5 & \dots & C_{k+2}\\
		&&\dots& \\
		C_{k+1} & C_{k+2} &\dots & C_{2k-1}
	\end{pmatrix}
	=0.
	$$
    \end{itemize}
Here, the entries $C_k$ of the matrices are the coefficients in the following Taylor expansion about point $[X_0:Y_0:1]$ on the curve $\Gamma$:
\begin{equation}\label{eq:taylor}
\sqrt{4x^3- D_{\mathcal C}x+E_{\mathcal C}}
=
C_0+C_1(x-X_0)+C_2(x-X_0)^2+C_3(x-X_0)^3+\dots.
\end{equation}
\item[(c)] Moreover, the coefficients $C_0$, $C_1$, \dots are rational in $a_{ij}$.
\end{itemize}
\end{theorem}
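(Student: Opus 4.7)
The plan is to reduce the statement to Cayley's classical torsion criterion on the smooth cubic $\Gamma$. \emph{Part (a)} is immediate from Proposition \ref{prop:deltagamma}: the birational isomorphism $\Psi|\mathcal C$ conjugates the QRT map $\delta$ to the translation $T_P$ on $\Gamma$ by the point $P := [1:X:Y]$, taking $O := [0:0:1]$ as the identity of the group law. Conjugation preserves order, and $T_P$ has the same order as $P$ in the elliptic group, so both sides of (a) express the same integer.

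\emph{For part (b)}, I recast ``$T_P$ has order $n$'' as ``$nP = O$ in the group law'', equivalently, ``$nP - nO$ is a principal divisor'', equivalently, ``there exists a nonzero $f \in \mathscr{L}(nO)$ with a zero of order $\geq n$ at $P$''. The case $n = 2$ is then immediate: a nontrivial point $P \in \Gamma$ is $2$-torsion if and only if it is fixed by the hyperelliptic involution $(x, y) \mapsto (x, -y)$, i.e., $Y = 0$; by \eqref{eq:XY} this is exactly $\det(a_{ij}) = 0$.

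For $n \geq 3$ with $Y \neq 0$, I will exploit the standard Weierstrass basis of $\mathscr{L}(nO)$: $x^k$ has pole of order $2k$ at $O$ and $x^k y$ has pole of order $2k+3$. Every $f \in \mathscr{L}(nO)$ admits a unique decomposition
$$f = R(x) - y\, S(x), \qquad \deg R \leq \lfloor n/2 \rfloor,\quad \deg S \leq \lfloor (n-3)/2 \rfloor,$$
with a total of $n$ free coefficients (matching $\dim \mathscr{L}(nO) = n$). Taking $u := x - X$ as a local uniformizer at $P$ and substituting the Taylor expansion $y = \sum_{k \geq 0} C_k u^k$ from \eqref{eq:taylor}, the requirement that the first $n$ Taylor coefficients of $f$ vanish yields $n$ linear equations. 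The first $\deg R + 1$ of these are freely solvable by adjusting the coefficients of $R$, leaving a homogeneous Hankel-type system with entries $C_{j-i}$ in the $\deg S + 1$ coefficients of $S$. After reversing the columns, the resulting matrix is precisely the $k \times k$ block in (ii) when $n = 2k+1$, and the $(k-1) \times (k-1)$ block in (iii) when $n = 2k$; nontrivial solvability is equivalent to the vanishing of the stated determinant.

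\emph{For part (c)}, iterated implicit differentiation of $y^2 = 4x^3 - D_{\mathcal C}\, x + E_{\mathcal C}$ gives a recursion that expresses each $C_k = y^{(k)}(X)/k!$ as a rational function of $X, Y, D_{\mathcal C}, E_{\mathcal C}$; by \eqref{eq:XY} and Corollary \ref{cor:discriminants}, all four of these are polynomial in the $a_{ij}$, so each $C_k$ is rational in $a_{ij}$. \emph{Main obstacle.} The bookkeeping for part (b) — identifying the Hankel submatrix with the precise form in (ii) and (iii), tracking parities, and confirming the boundary cases $n = 3, 4$ (where the matrices are $1 \times 1$) — is routine but delicate. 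The genuinely subtle point is that the Hankel derivation assumes $Y \neq 0$ so that $u = x - X$ is a uniformizer at $P$; when $Y = 0$ the uniformizer must be changed, but this locus is precisely the order-$2$ situation isolated in (i), so the argument closes consistently.
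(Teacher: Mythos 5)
Your proof is correct, and it follows the same overall route the paper takes: reduce the QRT periodicity to a torsion condition on the cubic $\Gamma$ via Proposition \ref{prop:deltagamma}, then read off Cayley's classical Hankel criterion from the Riemann--Roch space at the identity of $\Gamma$. The paper's own proof is extremely terse for $n>2$ (it cites Proposition \ref{prop:basis} and refers the reader to \cite{GrifHar1978} and \cite{DR2011knjiga}), so you have essentially filled in the argument it delegates.

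A few remarks on where you and the paper differ in execution. The paper's Proposition \ref{prop:basis} sets up a basis of $\mathscr{L}(2nP_2)$ on the \emph{biquadratic} $\mathcal{C}$, using the ad hoc functions $f,g$ with poles of order $2,3$ at a fixed point $P_2$ of the vertical switch, since on $\mathcal{C}$ the map $\delta$ is a translation by $2(\alpha-\beta)$ and periodicity reads $2nP_1\sim 2nP_2$. You instead work directly in $\mathscr{L}(nO)$ on the \emph{cubic} $\Gamma$ with the Weierstrass basis $\{1,x,\dots,x^{\lfloor n/2\rfloor},y,\dots,x^{\lfloor (n-3)/2\rfloor}y\}$ and condition $nP\sim nO$. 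Your version lines up more directly with the theorem's own statement, since the Taylor expansion \eqref{eq:taylor} is taken at $(X,Y)$ on $\Gamma$; it also makes the parity bookkeeping cleaner ($n=2k+1$ gives a $k\times k$ Hankel block in $C_2,\dots,C_{2k}$; $n=2k$ gives a $(k-1)\times(k-1)$ block in $C_3,\dots,C_{2k-1}$), and I checked that your degree counts and the resulting matrices match (ii) and (iii) exactly, including the boundary cases $n=3,4$ where the matrices are $1\times1$.

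Two minor points worth being explicit about, both of which the paper also glosses over. First, the criterion really detects ``order divides $n$'' (i.e.\ $nP\sim nO$), not ``order exactly $n$''; the paper handles this in Corollary \ref{cor:small} by adjoining nonvanishing conditions. You phrase (b) as ``order $n$'' which inherits the paper's convention, so no complaint, but a reader may trip on it. Second, your observation that $u=x-X$ fails to be a uniformizer precisely when $Y=0$, and that this case is exactly $n=2$ isolated in (i), is exactly the right way to see that the branch-point degeneracy never pollutes the Hankel computation: when $Y=0$ the Taylor coefficients $C_k$ are simply not defined, and the theorem's case (i) takes over. Your argument for (c) via implicit differentiation of $y^2=4x^3-D_{\mathcal{C}}x+E_{\mathcal{C}}$ is equivalent to the paper's (differentiating $\sqrt{4x^3-D_{\mathcal{C}}x+E_{\mathcal{C}}}$ directly, with $C_0=Y=\det(a_{ij})$); both correctly produce rationality in the $a_{ij}$.
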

\begin{proof}
Part (a) follows directly from Proposition \ref{prop:deltagamma}.

Denote $P_{\infty}=[0:1:0]$ and $P=[X_0:Y_0:1]$.
Then $2(P-P_{\infty})\sim0$ if and only if $P$ is a fixed point of the involution $(x,y)\mapsto(x,-y)$ of $\Gamma$, i.e.~if $Y_0=0$.
Thus, part (b)(i) follows from \eqref{eq:XY}.

The conditions for $n>2$ are derived using the method which is described in detail, for example, in \cite{GrifHar1978} and \cite{DR2011knjiga}, which concludes parts (ii) and (iii) of (b).

Finally, the coefficients $C_0$, $C_1$, \dots from \eqref{eq:taylor} are rational in $a_{ij}$, since:
$$
C_n
=
\frac{1}{n!}
\left(
\frac{d^n}{dx^n}\sqrt{4x^3- D_{\mathcal C}x+E_{\mathcal C}}
\right)_{x=X_0},
$$
and $\sqrt{4x^3- D_{\mathcal C}x+E_{\mathcal C}}=Y_0=\det(a_{ij})$.
This proves (c).
\end{proof}	

\begin{corollary}\label{cor:small}
Let $\mathcal C$ be a smooth biquadratic curve given by the polynomial  \eqref{eq:biquad}.
Then its group of random walk $\mathcal H$ is of order $m=2n$ if and only if:
\begin{itemize}
	\item [(i)] $\det(a_{ij})=0$ for $m=4$;
	\item [(ii)] $C_2=0$ for $m=6$;
\item [(iii)] $C_3=0$ and $\det(a_{ij})\ne0$ for $m=8$;
\item [(iv)] $C_3^2=C_2C_4$ for $m=10$;
\item [(v)] $C_4^2=C_3C_5$ and $C_2\ne 0$ for $m=12$.
\end{itemize}
Here, $C_2$, $C_3$, $C_4$, $C_5$ are coefficients in the Taylor expansion \eqref{eq:taylor}.
	Explicitly, they are calculated as follows:
\begin{align*}
C_2&=\frac{- D_{\mathcal{C}}^2-24 D_{\mathcal{C}} X_0^2+48 E_{\mathcal{C}} X_0+48 X_0^4}
{8 \left(4 X_0^3-D_{\mathcal{C}} X_0+E_{\mathcal{C}}\right)^{3/2}}
;\\
C_3&= \frac{-D_{\mathcal{C}}^3+20 D_{\mathcal{C}}^2 X_0^2-16 D_{\mathcal{C}}E_{\mathcal{C}} X_0+80 D_{\mathcal{C}} X_0^4+32E_{\mathcal{C}}^2-320E_{\mathcal{C}} X_0^3-64 X_0^6}
{16 \left(4 X_0^3-D_{\mathcal{C}} X_0+E_{\mathcal{C}}\right)^{5/2}};\\
C_4&= \frac1{128 \left(4 X_0^3-D_{\mathcal{C}} X_0+E_{\mathcal{C}}\right)^{7/2}}
\Big(
-5 D_{\mathcal{C}}^4+80 D_{\mathcal{C}}^3 X_0^2+32 D_{\mathcal{C}}^2 E_{\mathcal{C}} X_0-1120 D_{\mathcal{C}}^2 X_0^4+128 D_{\mathcal{C}} E_{\mathcal{C}}^2
\\&
\qquad\qquad\qquad\qquad
+1792 D_{\mathcal{C}} E_{\mathcal{C}} X_0^3-1792 D_{\mathcal{C}} X_0^6-3840 E_{\mathcal{C}}^2 X_0^2+10752 E_{\mathcal{C}} X_0^5+768 X_0^8
\Big)
;\\
C_5&= \frac1{256 \left(4 X_0^3-D_{\mathcal{C}} X_0+E_{\mathcal{C}}\right)^{9/2}}
\Big(-7 D_{\mathcal{C}}^5+132 D_{\mathcal{C}}^4 X_0^2+96 D_{\mathcal{C}}^3 X_0 \left(E_{\mathcal{C}}-9 X_0^3\right)
\\
&\qquad\qquad\qquad\qquad
+192 D_{\mathcal{C}}^2 \left(E_{\mathcal{C}}^2-10 E_{\mathcal{C}} X_0^3+70 X_0^6\right)-2304 D_{\mathcal{C}} X_0^2 \left(E_{\mathcal{C}}^2+14 E_{\mathcal{C}} X_0^3-5 X_0^6\right)
\\
&\qquad\qquad\qquad\qquad
-3072 X_0 \left(E_{\mathcal{C}}^3-24 E_{\mathcal{C}}^2 X_0^3+30 E_{\mathcal{C}} X_0^6+X_0^9\right)
\Big),
\end{align*}
where $X_0$ is given by \eqref{eq:XY}.
\end{corollary}

\section{Groups of random walks of small and not so small orders}\label{sec:small}

Groups of random walks of small orders were analysed in \cite{RandomWalks}, and analytic conditions were derived there for orders $4, 6$ and $8$.
We note that those conditions in form and the method of derivation differ from the general conditions we derived in Section \ref{sec:smooth}.
Thus, the aim of this section is to discuss the details, provide some examples, and show equivalence, while keeping in mind that in the derivations of \cite{RandomWalks}, the reality properties of transition probabilities were used, while our considerations are free from those restrictions.
We also present the new explicit characterization of random walks with the groups of order $10$ and provide new examples of random walks with the groups of orders $10$, $12$, $14$ and $16$.
No examples of random walks with the groups of such higher orders were known in the literature so far.
Our methodology can be easily used to generate random walks with the groups of any given order.

\subsection{Groups of order $4$}

Here, we want to provide a direct, independent proof  for a biquadratic curve \eqref{eq:biquad},
 that the horizontal and vertical switches generate a group of order four if and only if
$\det M_Q=0$, with
\begin{equation}\label{eq:MQ}
M_Q
=
\begin{pmatrix}
	a_{22} & a_{21} & a_{20}\\
	a_{12} & a_{11} & a_{10}\\
	a_{02} & a_{01} & a_{00}
\end{pmatrix}.
\end{equation}
This was proved for the groups of random walk in  \cite{RandomWalks}, see Example \ref{ex:rw4} below.

\begin{proposition}
The condition $\det M_Q=0$ is preserved by the Moebius transformations on the coordinates $x$, $y$.
\end{proposition}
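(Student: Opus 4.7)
The plan is to observe that $M_Q$ is literally the Gram matrix of the bilinear pairing that defines the homogenized biquadratic $\hat Q$, and that Möbius transformations act on this pairing by invertible linear maps on each side, so $\det M_Q$ is multiplied by a nonzero factor.

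Concretely, first I would write, using the homogenization $\hat Q$ from \eqref{eq:Qhom},
\begin{equation*}
\hat Q(x_0,x_1,y_0,y_1) \;=\; \mathbf{u}(x_0,x_1)^{\top}\, M_Q\, \mathbf{v}(y_0,y_1),
\qquad
\mathbf{u} = \begin{pmatrix} x_1^2\\ x_1 x_0\\ x_0^2 \end{pmatrix},\quad
\mathbf{v} = \begin{pmatrix} y_1^2\\ y_1 y_0\\ y_0^2 \end{pmatrix},
\end{equation*}
so that $M_Q$ is precisely the matrix of the bilinear map
$\mathrm{Sym}^2(\mathbb{C}^2)\times\mathrm{Sym}^2(\mathbb{C}^2)\to\mathbb{C}$ whose zero locus cuts out $\mathcal C$, expressed in the monomial basis.

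Next, let a Möbius transformation on $x$ be realized, as in Figure \ref{fig:P1xP1}, by an invertible linear map $L\in\mathrm{GL}_2(\mathbb{C})$ acting on $(x_0,x_1)$, and similarly let $T\in\mathrm{GL}_2(\mathbb{C})$ realize the Möbius transformation on $y$. Substituting $(x_0,x_1)\mapsto L(\tilde x_0,\tilde x_1)$ and $(y_0,y_1)\mapsto T(\tilde y_0,\tilde y_1)$, the monomial vectors transform by the second symmetric powers:
\begin{equation*}
\mathbf{u} = S\,\tilde{\mathbf{u}},\qquad \mathbf{v} = R\,\tilde{\mathbf{v}},\qquad S := \mathrm{Sym}^2 L,\quad R := \mathrm{Sym}^2 T,
\end{equation*}
where $S,R$ are the $3\times3$ matrices whose entries are the appropriate quadratic monomials in the entries of $L,T$. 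Consequently the matrix of the transformed biquadratic $\tilde Q$ is
\begin{equation*}
M_{\tilde Q} \;=\; S^{\top} M_Q\, R,
\end{equation*}
and therefore
\begin{equation*}
\det M_{\tilde Q} \;=\; \det(S)\,\det(R)\,\det M_Q.
\end{equation*}

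The last step is the classical identity $\det(\mathrm{Sym}^2 L) = (\det L)^3$ (immediate from the eigenvalue description: if $L$ has eigenvalues $\lambda_1,\lambda_2$, then $\mathrm{Sym}^2 L$ has eigenvalues $\lambda_1^2,\lambda_1\lambda_2,\lambda_2^2$, whose product is $(\lambda_1\lambda_2)^3$), so
\begin{equation*}
\det M_{\tilde Q} \;=\; (\det L)^3 (\det T)^3 \det M_Q.
\end{equation*}
Since $\det L\neq 0$ and $\det T\neq 0$ for honest Möbius transformations, the vanishing of $\det M_Q$ is preserved, completing the proof.

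The argument is structural and essentially free of calculation; the only mild obstacle is fixing conventions so that $M_Q$ is exactly $S^{\top}M_Q R$ rather than, say, $S M_Q R^{\top}$. This is handled by consistently writing $\hat Q$ as $\mathbf{u}^{\top}M_Q\mathbf{v}$ in the ordered monomial basis $(x_1^2, x_1x_0, x_0^2)$ and $(y_1^2, y_1y_0, y_0^2)$ in which the rows (resp. columns) of $M_Q$ are indexed by the $x$-degree (resp. $y$-degree) decreasing from $2$ to $0$, matching \eqref{eq:MQ}.
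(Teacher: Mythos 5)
Your proof is correct, and it takes a genuinely different, more conceptual route than the paper. The paper's proof verifies the claim only on a generating set of the M\"obius group: translations $Q(x+\beta,y)$, scalings $Q(\alpha x,y)$, and the inversion $x^2 Q(1/x,y)$, computing directly that $\det M_Q$ gets multiplied by $1$, $\alpha^3$, and $-1$ respectively (the case of transformations in $y$ being analogous by symmetry). You instead identify $M_Q$ as the Gram matrix of $\hat Q$ viewed as a bilinear pairing on $\mathrm{Sym}^2(\mathbb{C}^2)\times\mathrm{Sym}^2(\mathbb{C}^2)$, note that a M\"obius transformation acts on each factor by $\mathrm{Sym}^2$ of an element of $\mathrm{GL}_2$, and invoke $\det(\mathrm{Sym}^2 L)=(\det L)^3$. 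This avoids case-checking, treats both variables at once, and in fact yields the precise covariance law $\det M_{\tilde Q}=(\det L)^3(\det T)^3\det M_Q$, which subsumes the three factors $1$, $\alpha^3$, $-1$ the paper computes (they are exactly $(\det L)^3$ for the upper-triangular, diagonal, and swap matrices realizing those three generators). The paper's route is more elementary and self-contained; yours is more structural and makes the invariance transparent at the representation-theoretic level. Your indexing convention check is also sound: with $\mathbf{u}=(x_1^2,x_1x_0,x_0^2)^{\top}$ and $\mathbf{v}=(y_1^2,y_1y_0,y_0^2)^{\top}$, one has $\mathbf{u}^{\top}M_Q\mathbf{v}=\hat Q$ exactly as written in \eqref{eq:Qhom}, \eqref{eq:MQ}.
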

\begin{proof}
Let $Q_1(x,y)=Q(x+\beta,y)$,
$Q_2(x,y)=Q(\alpha x,y)$, $Q_3(x,y)=x^2Q(1/x,y)$.
Then $\det(M_Q)=\det(M_{Q_1})$,
$\det(M_{Q_2})=\alpha^3\det(M_Q)$, and $\det(M_{Q_3})=-\det(M_Q)$.
\end{proof}

\begin{example}\label{ex:order4}
In the projective plane with homogeneous coordinates $[X:Y:Z]$, consider the following cubic curve, given by its affine equation in the chart $Z=1$:
$$
Y^2=X(\alpha-X)(\beta-X),
\quad\text{with}
\quad
\alpha\neq\beta,\ \alpha\beta\neq0.
$$
Denote by $P_{\infty}$ the point at the infinity of the cubic, and by $P_0$,  $P_{\alpha}$, $P_{\beta}$ the points with coordinates $(0,0)$, $(\alpha,0)$, $(\beta,0)$ respectively.
Let $\ell_{\infty}$, $\ell_0$ be the lines $Z=0$ and $X=0$.
	Note that $\ell_0$ is the tangent line to the cubic at $P_0$ and that it contains also point $P_{\infty}$, while $\ell_{\infty}$ is touching the curve at $P_{\infty}$, which is their triple intersection point.
	
For any $P$ on the curve, there is a natural involution $i_{P}$, which maps any point $Q$ of the curve to the third intersection point of the line $PQ$ with the curve.
	
	We note that $i_{P_{\infty}}$ fixes points $P_{\infty}$, $P_0$, $P_{\alpha}$, $P_{\beta}$, while $i_{P_0}$ maps those four points to $P_{0}$, $P_{\infty}$, $P_{\beta}$, $P_{\alpha}$ respectively.

It can be easily checked directly from definition that the composition $i_{P_0}\circ i_{P_{\infty}}$ is of order $2$, so we can conclude that a group of order $4$ is generated by those two involutions.
	
Another way to see that is to notice that the composition of those involutions is a translation $P_0-P_{\infty}$, which is of order $2$ since $2P_0\sim 2P_{\infty}$ on the Jacobian of the curve, see e.g. \cite{DR2011knjiga}.

Now consider the following transformation:
$[X:Y:Z]\mapsto[X_1:Y_1:Z_1]=[Z:Y:X]$.
That transformation maps $P_{\infty}$, which has coordinates $[0:1:0]$, to itself, $P_0$ to $[1:0:0]$, $P_{\alpha}$ to $[1/\alpha:0:1]$, $P_{\beta}$ to $[1/\beta:0:1]$.

In the affine chart $Z_1=1$, the equation of the curve is:
$$
X_1Y_1^2=(\alpha X_1-1)(\beta X_1-1),
$$
i.e.
$$
-\alpha\beta X_1^2+X_1Y_1^2+(\alpha+\beta)X_1-1=0.
$$
That affine chart can be embedded in $\mathbb{P}^1\times\mathbb{P}^1$, using the following transformation: $(X_1,Y_1)\mapsto([X_1:1],[Y_1:1])=([x:1],[y:1])$, so we get the equation:
$$
Q(x,y)=-\alpha\beta x^2+xy^2+(\alpha+\beta)x-1=0.
$$
Note that this represents blow-ups at $P_0$ and $P_{\infty}$ followed by a blow-down of the preimage of the line at the infinity.
Thus the reflections in $P_{\infty}$ and $P_0$ should then be lifted to the vertical and horizontal switches in the new coordinates, when the affine chart is completed to $\mathbf{P}^1\times\mathbf{P}^1$.

The corresponding matrix $M_Q$ \eqref{eq:MQ}  is:
$$
M_Q=\begin{pmatrix}
0 & 0 & -\alpha\beta\\
1 & 0 & \alpha+\beta\\
0 & 0 & -1
\end{pmatrix},
$$
which obviously has determinant equal to zero.

Let us analyse the fixed points of the vertical switch.
We have:
$$
Q(x,y)=a(x)y^2+b(x)y+c(x),
\quad\text{with}\quad
a(x)=x,\ b(x)=0,\ c(x)=-\alpha\beta x^2+(\alpha+\beta)x-1.
$$
The discriminant with respect to $y$ is:
$$
b^2(x)-4a(x)c(x)=4x(\alpha x-1)(\beta x-1).
$$
Thus the fixed points for the vertical switch are $(0,\infty)$, $(1/\alpha,0)$, $(1/\beta,0)$, $(\infty,\infty)$.

For the horizontal switch, we have:
$$
Q(x,y)=\tilde{a}(y)x^2+\tilde{b}(y)x+\tilde{c}(y),
\quad\text{with}\quad
\tilde{a}(y)=-\alpha\beta,\ \tilde{b}(y)=y^2+(\alpha+\beta),\ \tilde{c}(y)=-1.
$$
The discriminant with respect to $x$ is:
$$
\tilde{b}^2(x)-4\tilde{a}(x)\tilde{c}(x)=(y^2+\alpha+\beta)^2-4\alpha\beta,
$$
thus, the fixed points for the horizontal switch are:
$\left(-\frac{1}{\sqrt{\alpha\beta}},\pm i(\sqrt{\alpha}+\sqrt{\beta})\right)$ and
$\left(-\frac{1}{\sqrt{\alpha\beta}},\pm i(\sqrt{\alpha}-\sqrt{\beta})\right)$.
We can conclude that the biquadratic curve is smooth elliptic if and only if $\alpha\neq\beta$ and $\alpha\beta\neq0$, which agrees with the initial assumptions.
\end{example}

\begin{example}\label{ex:curve-trans}
In the projective plane, consider the cubic curve given by its affine equation:
	$$
	Y^2=(a-X)(b-X)(c-X),
	\quad\text{with}
	\quad
	a\neq b\neq c\neq a,\ abc\neq0.
	$$
Let $P_{\infty}$ be the point at the infinity, and $P_0$ the point with coordinates $(0,\sqrt{abc})$.
Here, unlike the previous example,  $P_0$  is not a branch point any more, thus $2P_0$ is not equivalent to $2P_{\infty}$.
	
	If $[X:Y:Z]$ are the projective coordinates of the plane, consider the following transformation:
	$[X:Y:Z]\mapsto[X_1:Y_1:Z_1]=[Z:Y-Z\sqrt{abc}:X]$.
	That transformation maps $P_{\infty}$, which has coordinates $[0:1:0]$, to itself  and $P_0$ to $[1:0:0]$.
	Thus, the reflections in $P_{\infty}$ and $P_0$ should represent the vertical and horizontal switches in the new coordinates, when the new affine chart is completed to $\mathbb{P}^1\times\mathbb{P}^1$.
	
The curve in the homogeneous coordinates is:
$$
Y^2Z=(aZ-X)(bZ-X)(cZ-X),
$$
and after the transformation:
$$
(Y_1+X_1\sqrt{abc})^2 X_1=(aX_1-Z_1)(bX_1-Z_1)(cX_1-Z_1).
$$
	
In the affine chart $Z_1=1$, the equation is:
	$$
	X_1Y_1^2+2X_1^2Y_1\sqrt{abc}+(ab+ac+bc)X_1^2-(a+b+c)X_1+1=0.
	$$
	The corresponding matrix is:
	$$
	M_Q=\begin{pmatrix}
		0 & 2\sqrt{abc} & ab+ac+bc\\
		1 & 0 & -(a+b+c)\\
		0 & 0 & 1
	\end{pmatrix}.
	$$
The determinant of this matrix equals to $-2\sqrt{abc}\neq0$, thus the QRT transformation is not of order $2$, which agrees with the initial observation that $2(P_0-P_{\infty})\not\sim0$.
\end{example}

From Examples \ref{ex:order4} and \ref{ex:curve-trans} we conclude:

\begin{proposition}\label{prop:order4} In the projective plane, consider the cubic curve:
	$$
	y^2=(a-x)(b-x)(c-x),
	\quad\text{with}
	\quad
	a\neq b\neq c\neq a.
	$$
Let  $P_0$ be its point with coordinates $(0,\sqrt{abc})$, $P_{\infty}$ the point at infinity, and
$Q$ a $(2,2)$ correspondence determined by central reflections in $P_0$ and $P_{\infty}$.
Then the determinant of $M_Q$ \eqref{eq:MQ} is zero if and only if $P_0$ is a branch point of the cubic curve.
\end{proposition}

\begin{example}[A two-coupled processor model, \cite{FayIas0}]\label{ex:que}
A classical example from queuing theory is about two parallel queues with infinite capacities, where arrivals are two independent Poisson processes with parameters $\lambda_1$ and $\lambda_2$. Service times are distributed exponentially with instantaneous service rates $S_1$ and $S_2$, so that when both queues are busy we have $S_i=\mu_i$ for $i=1,2$; when queue $2$ is empty, $S_1=\mu_1^*$;  when queue $1$ is empty, $S_2=\mu_2^*$. The service is first-in-first-out in both queues.

The evolution of the system is described by a two-dimensional continuous Markov process. Its probabilistic kernel is
$$
\frac{xyT(x, y)}{\lambda_1+\lambda_2+\mu_1+\mu_2},
$$
where
$$
T(x, y)=\lambda_1(1-x)+\lambda_2(1-y) + \mu_1\Big(1-\frac{1}{x}\Big)+\mu_2\Big(1-\frac{1}{y}\Big).
$$
The curve $xyT(x, y)=0$ is non-singular for $\mu_{1,2}\ne 0$, $\lambda_{1,2}\ne 0$ and $\lambda_1\ne \mu_1$ or $\lambda_2\ne \mu_2$.
It is known that the group of random walk of the curve $xyT(x, y)=0$ is of order four, see e.g.~\cite{RandomWalks}.
We can verify this using Proposition \ref{prop:order4} as well as Corollary \ref{cor:small} (i) and Theorem \ref{th:cayley} (b(i)), since $\det M_Q=0$ for:
$$
	M_Q=\begin{pmatrix}
		0 & -\mu_1 & 0\\
		-\mu_2 & \lambda_1+\lambda_2+\mu_1+\mu_2 & -\lambda_2\\
		0 & -\lambda_1 & 0
	\end{pmatrix}.
	$$

\end{example}

\begin{example} \label{ex:rw4}
It was shown in \cite{RandomWalks}*{Proposition 4.1.7 and equation (4.1.17)} that for the random walks, the group of random walk is of order four if and only if $\det P=0$ for  $P$ given by \eqref{eq:P}. For random walks, this coincides with conditions given in Corollary \ref{cor:small} (i) and Theorem \ref{th:cayley} (b(i)).
\end{example}

\subsection{Groups of order $6$}

Motivated by \cite{RandomWalks}, see Remark \ref{rem:rw6} below, we want to provide a direct proof of the following:

\begin{proposition}[\cite{RandomWalks}]\label{prop:6}
Let $\mathcal C$ be a smooth biquadratic curve given by its affine equation \eqref{eq:biquad}.
The group generated by its horizontal and vertical switches is of order $6$ if and only if
$\det \Delta_Q=0$, with
\begin{equation}\label{eq:DeltaQ}
	\Delta_Q
	=
\begin{pmatrix}
	\Delta_{11} & \Delta_{21} & \Delta_{12} & \Delta_{22} \\
	\Delta_{12} & \Delta_{22} & \Delta_{13} & \Delta_{23} \\
	\Delta_{21} & \Delta_{31} & \Delta_{22} & \Delta_{32} \\
	\Delta_{22} & \Delta_{32} & \Delta_{23} & \Delta_{33} \\
\end{pmatrix},
\end{equation}
where $\Delta_{ij}$ are the cofactors of the matrix $M_Q$, given by \eqref{eq:MQ}.
\end{proposition}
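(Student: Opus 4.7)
The plan is to reduce the statement to the scalar identity $C_2 = 0$ provided by Corollary \ref{cor:small}(ii), and then recognize both sides of that identity as the same polynomial (up to a nonzero numerical factor) in the nine coefficients $a_{ij}$.

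First I would invoke Corollary \ref{cor:small}(ii): under the smoothness hypothesis $F_{\mathcal C}\neq 0$, the group of random walks has order $6$ exactly when $C_2 = 0$. Since the denominator in the explicit formula for $C_2$ is $8(4X^3 - D_{\mathcal C} X + E_{\mathcal C})^{3/2}$, which is nonzero at the regular point $(X,Y)$ of $\Gamma$, vanishing of $C_2$ reduces to vanishing of the numerator
$$N(a) := 48 X^4 - 24 D_{\mathcal C} X^2 + 48 E_{\mathcal C} X - D_{\mathcal C}^2.$$
This is a geometrically meaningful object: for $f(x) = 4x^3 - D_{\mathcal C} x + E_{\mathcal C}$, a direct computation shows $N(a) = 2 f(X) f''(X) - (f'(X))^2$, which is (a scalar multiple of) the $3$-division polynomial of the Weierstrass cubic $\Gamma : y^2 = f(x)$ evaluated at $x = X$. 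Hence $N(a) = 0$ is precisely the condition that $P = [1:X:Y]$ be an inflection point of $\Gamma$, equivalently a nontrivial $3$-torsion point, equivalently, via Proposition \ref{prop:deltagamma}, the condition that the QRT translation has order $3$, i.e.\ that $|\mathcal{H}(\mathcal C)| = 6$.

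Next I would match this against $\det(\Delta_Q)$ as polynomials in the $a_{ij}$. Substituting the degree-$2$ formula for $X$ from \eqref{eq:XY} together with the degree-$4$ and degree-$6$ formulas for $D_{\mathcal C}$ and $E_{\mathcal C}$ from Corollary \ref{cor:discriminants}, the expression $N(a)$ becomes homogeneous of total degree $8$ in the coefficients $a_{ij}$. On the other hand each cofactor $\Delta_{ij}$ is a $2\times 2$ minor of $M_Q$ and so is homogeneous of degree $2$, whence the $4\times 4$ determinant $\det(\Delta_Q)$ is also homogeneous of degree $8$. The aim is therefore to establish the polynomial identity
$$N(a) = c \cdot \det(\Delta_Q)$$
for an explicit nonzero scalar $c$, which by the preceding paragraph yields Proposition \ref{prop:6}.

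The hard part will be verifying this identity. Two avenues present themselves. The structural route is to argue that both $N(a)$ and $\det(\Delta_Q)$ cut out the same hypersurface in coefficient space, namely the $3$-torsion locus of the elliptic pencil of biquadratics; combined with their matching degrees this forces proportionality, provided one can rule out extraneous factors (for example a power of the discriminant $F_{\mathcal C}$). The more robust route, and the one I would actually execute, is a direct symbolic expansion in a computer algebra system: insert the closed forms for $X$, $D_{\mathcal C}$, $E_{\mathcal C}$ into $N(a)$, expand $\det(\Delta_Q)$ from the definition \eqref{eq:DeltaQ}, and verify that their ratio is a numerical constant, which can finally be pinned down by evaluating both sides on a single biquadratic known to satisfy the order-$6$ condition, e.g.\ a simple perturbation of the degenerate examples from Section~\ref{sec:QRT}. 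An independent sanity check would be to confirm that, specialised to the random-walk setting \eqref{eq:P}, the identity recovers the explicit condition recorded in \cite{RandomWalks} (cf.\ Example~\ref{ex:rw6}).
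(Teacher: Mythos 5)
Your argument is correct, and it follows a genuinely different path from the paper's. You work directly in the nine coefficients $a_{ij}$: starting from Corollary~\ref{cor:small}(ii), you isolate the numerator $N(a)=48X^{4}-24D_{\mathcal C}X^{2}+48E_{\mathcal C}X-D_{\mathcal C}^{2}=2f(X)f''(X)-(f'(X))^{2}$, identify it (correctly) as the flex/3-division condition at $P=[1:X:Y]$, note that $N(a)$ and $\det(\Delta_Q)$ are both homogeneous of degree~$8$ in $a_{ij}$, and then propose a head-on symbolic expansion to establish proportionality. The paper instead first proves that $\det(\Delta_Q)$ transforms by a nonzero scalar under the elementary M\"obius generators $x\mapsto x+\beta$, $x\mapsto\alpha x$, $x\mapsto 1/x$; this licenses reducing to the three-parameter normal form $y^{2}=(a-x)(b-x)(c-x)$ of Example~\ref{ex:curve-trans}, where both $C_{2}$ and $\det(\Delta_Q)$ collapse to the degree-$4$ symmetric expression $4abc(a+b+c)-(ab+ac+bc)^{2}$ and the match is immediate. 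The trade-off is clear: your route is conceptually more direct and avoids the reduction machinery, at the cost of a far heavier expansion (a degree-$8$ form in $9$ variables versus degree~$4$ in $3$), which you sensibly delegate to a CAS. The exact scalar you would find, $N(a)=16\det(\Delta_Q)$, is equivalent to the relation $C_{2}=2\det(\Delta_Q)/\det(a_{ij})^{3}$, which the paper records as a separate follow-up proposition and also proves ``by direct calculation''---so your route is essentially the one the authors use to double-check theirs. Your ``structural'' alternative (same hypersurface, same degree, hence proportional) is, as you note, not by itself airtight, since equal zero sets and equal degrees do not force proportionality without an irreducibility argument and control of multiplicities; and one should also check the corner case $\det(a_{ij})=0$, where $C_{2}$ is undefined but $\det(\Delta_Q)$ still makes sense---that case is handled because $Y=0$ forces order~$4$, and $N(a)=Y=0$ simultaneously would make $P$ both $2$- and $3$-torsion, hence the identity, impossible on a smooth $\Gamma$.
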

\begin{proof}
According to Corollary \ref{cor:small}, the group is of order $6$ if and only if $C_2=0$.
Note that the explicit expression for $C_2$ is also given in that corollary, and one can directly show that:
\begin{equation}\label{eq:C2}
C_2=\frac{2\det \Delta_Q}{\det(a_{ij})^3},
\end{equation}
which completes the proof.
\end{proof}

\begin{example}
Using notation from Example \ref{ex:curve-trans}, we get that $3P_0\sim3P_{\infty}$ is equivalent to $C_2=0$, where:
$$
\sqrt{(a-x)(b-x)(c-x)}=C_0+C_1x+C_2x^2+\dots
$$
is the Taylor series about $x=0$.
A straightforward calculation gives:
$$
C_2=\frac{4 a b c (a+b+c)-(a b+a c+b c)^2}{4(abc)^{3/2}}.
$$

Let $\Delta_{ij}$ be cofactors of the $3\times3$ matrix from Example \ref{ex:curve-trans}.
Then:
$$
\begin{vmatrix}
	\Delta_{11} & \Delta_{21} & \Delta_{12} & \Delta_{22} \\
	\Delta_{12} & \Delta_{22} & \Delta_{13} & \Delta_{23} \\
	\Delta_{21} & \Delta_{31} & \Delta_{22} & \Delta_{32} \\
	\Delta_{22} & \Delta_{32} & \Delta_{23} & \Delta_{33} \\
\end{vmatrix}
=
4 a b c (a+b+c)-(a b+a c+b c)^2.
$$
\end{example}

\begin{remark}\label{rem:rw6}
In \cite{RandomWalks}*{Proposition 4.1.8}, it was shown that the group of random walks is of order $6$ if and only if $\det \Delta_Q=0$, with $\Delta_Q$ given by \eqref{eq:DeltaQ}.
That proof uses additional assumptions on the entries of matrix $P$, from \eqref{eq:P}, that follow from their probabilistic nature.
In our proofs of Proposition \ref{prop:6}, we do not use those additional assumptions.
\end{remark}

\subsection{Groups of order $8$}

 We are now going to describe the biquadratic curves that have groups generated by horizontal and vertical switches of order $8$.

\begin{proposition} \label{prop:order8dir}
Let $\mathcal C$ be a biquadratic curve given by \eqref{eq:biquad}.
Then the the group generated by its horizontal and vertical switches is of order $8$ if and only if
\begin{align*}
4608\det(a_{ij})^4
=&
\frac{1}{12}
\Big(
\left(8 a_{00} a_{22}-4 a_{01} a_{21}+8 a_{02} a_{20}-4 a_{10} a_{12}+a_{11}^2
\right)^2
\\&\quad\quad
-
\left(
4 (a_{00} a_{22}+a_{01} a_{21}+a_{02} a_{20})-2 a_{10} a_{12}-a_{11}^2
\right)^2
\\&\quad\quad
+12 (a_{10} a_{11}-2 (a_{00} a_{21}+a_{01} a_{20}))
(a_{11} a_{12}-2 (a_{01} a_{22}+a_{02} a_{21}))
\\&\quad\quad
-12 \left(a_{10}^2-4 a_{00} a_{20}\right)
 \left(a_{12}^2-4 a_{02} a_{22}\right)
 \Big)
 \times
\\&\
\times
\Bigg(
576 (\det(a_{ij}))^2
\left({8 a_{00} a_{22}-4 a_{01} a_{21}+8 a_{02} a_{20}-4 a_{10} a_{12}+a_{11}^2}\right)
\\&\quad\quad
-\Big(
\left(
8 a_{00} a_{22}-4 a_{01} a_{21}+8 a_{02} a_{20}-4 a_{10} a_{12}+a_{11}^2
\right)^2
\\&\quad\quad\quad\quad
-\left(4 (a_{00} a_{22}+a_{01} a_{21}+a_{02} a_{20})-2 a_{10} a_{12}-a_{11}^2\right)^2
\\&\quad\quad\quad\quad
+12 (a_{10} a_{11}-2 (a_{00} a_{21}+a_{01} a_{20})) (a_{11} a_{12}-2 (a_{01} a_{22}+a_{02} a_{21}))
\\&\quad\quad\quad\quad
-12 \left(a_{10}^2-4 a_{00} a_{20}\right) \left(a_{12}^2-4 a_{02} a_{22}\right)
\Big)^2
\Bigg).
\end{align*}
\end{proposition}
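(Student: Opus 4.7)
The plan is to invoke Corollary \ref{cor:small}(iii), which reduces the claim to showing that the single condition $C_3 = 0$ (together with $\det(a_{ij})\neq 0$) is equivalent to the polynomial identity stated in the proposition. Write $D := D_{\mathcal{C}}$, $E := E_{\mathcal{C}}$, and $Y := \det(a_{ij})$. By Proposition \ref{prop:deltagamma}, the point $[1:X:Y]$ lies on the cubic $\Gamma$ of \eqref{eq:cubic}, so we have the fundamental relation
$$Y^2 = 4X^3 - DX + E.$$
The formula for $C_3$ tabulated in Corollary \ref{cor:small} has denominator $16(4X^3 - DX + E)^{5/2} = 16 Y^5$, which is nonzero in the regime of interest. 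Thus $C_3 = 0$ is equivalent to the vanishing of the numerator polynomial
$$N(D,E,X) := -D^3 + 20 D^2 X^2 - 16 D E X + 80 D X^4 + 32 E^2 - 320 E X^3 - 64 X^6.$$

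The next step is to eliminate $E$ using the cubic relation, i.e.\ substitute $E = Y^2 - 4X^3 + DX$ into $N$. A direct term-by-term expansion and regrouping produces the compact factored form
$$N \;=\; 32 Y^4 \;+\; (12X^2 - D)^3 \;-\; 48 X Y^2 (12X^2 - D),$$
so that $N = 0$ is equivalent to
$$32 Y^4 \;=\; (12X^2 - D)\bigl[48 X Y^2 - (12X^2 - D)^2\bigr].$$
Multiplying through by $144$ and grouping factors of $12$ yields precisely the shape
$$4608\, Y^4 \;=\; \tfrac{1}{12}\bigl[12(12X^2 - D)\bigr]\Bigl[576\, Y^2 \cdot 12X - \bigl[12(12X^2 - D)\bigr]^2\Bigr].$$

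To finish I would identify, in terms of the $a_{ij}$, the factor $12X$ and the compound quantity $12(12X^2 - D_{\mathcal{C}})$ with the precise expressions appearing in the proposition. From Proposition \ref{prop:deltagamma}, $12 X = 8 a_{00}a_{22} - 4 a_{01}a_{21} + 8 a_{02}a_{20} - 4 a_{10}a_{12} + a_{11}^2$, which matches the first polynomial squared inside the factor $A$ of the proposition. Setting $\gamma := a_{11}^2 - 4a_{00}a_{22} - 4a_{01}a_{21} - 4a_{02}a_{20} + 2 a_{10}a_{12}$, one observes that the second squared polynomial inside $A$ equals $-\gamma$, so its square is $\gamma^2$; and Corollary \ref{cor:discriminants} gives $12 D_{\mathcal{C}} = \gamma^2 - 12 m_1 m_2 + 12 d_1 d_2$, where $m_1, m_2, d_1, d_2$ are the four bracketed factors displayed there. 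Combining these observations, $A = (12X)^2 - 12 D_{\mathcal{C}} = 12(12X^2 - D_{\mathcal{C}})$, which matches the skeleton derived above and completes the equivalence.

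The principal obstacle is the algebraic bookkeeping in the two polynomial manipulations: verifying the compact factorisation of $N$ after using the cubic relation, and matching $12(12X^2 - D_{\mathcal{C}})$ term by term against the stated expression $A$ via Corollary \ref{cor:discriminants}. Both are mechanical but tedious, and fall out deterministically once the structural skeleton above is fixed. Note that the implicit assumption $Y \neq 0$ in passing from $N = 0$ to $C_3 = 0$ is harmless in the smooth case: if $Y = 0$ then the proposition's equation forces $A = 0$, but $Y = 0$ together with $12X^2 = D_{\mathcal{C}}$ would force $E_{\mathcal{C}} = 8X^3$, hence $D_{\mathcal{C}}^3 - 27 E_{\mathcal{C}}^2 = 0$, contradicting the smoothness of $\Gamma$.
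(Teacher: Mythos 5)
Your proof is correct and follows the same route as the paper: reduce to $C_3=0$ via Corollary \ref{cor:small}(iii), eliminate $E_{\mathcal{C}}$ using the cubic relation $Y^2=4X^3-D_{\mathcal{C}}X+E_{\mathcal{C}}$, and then identify the resulting polynomial in $X$, $D_{\mathcal{C}}$, $Y$ with the stated expression via \eqref{eq:XY} and Corollary \ref{cor:discriminants}. The paper's proof leaves the substitution and algebra implicit, whereas you supply the clean intermediate factorization $N=32Y^4+(12X^2-D)^3-48XY^2(12X^2-D)$ together with a correct observation that $Y=0$ is excluded by smoothness, which accounts for the caveat $\det(a_{ij})\ne0$ in Corollary \ref{cor:small}(iii).
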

\begin{proof}
According to Corollary \ref{cor:small}, the group is of order $8$ if and only if $C_3=0$.
The expression for $C_3$ is also given in Corollary \ref{cor:small}, so the statement is obtained by substituting $D_{\mathcal{C}}$ and $E_{\mathcal{C}}$, which are given in Theorem \ref{th:cayley}, and $X$, given in \eqref{eq:XY}.
\end{proof}

In \cite{RandomWalks}*{Proposition 4.1.11}, it was proved that the group of the random walk is of order $8$ if and only if $\det \Omega_Q=0$, where:
\begin{equation}\label{eq:omega}
\Omega_Q=
\left(
	\begin{array}{ccc}
		M_1 & M_2 & M_3 \\
		\Delta_{32}^2-\Delta_{31} \Delta_{33} & \Delta_{21} \Delta_{33}-2 \Delta_{22} \Delta_{32}+\Delta_{23} \Delta_{31} & \Delta_{22}^2-\Delta_{21} \Delta_{23} \\
		\Delta_{22}^2-\Delta_{21} \Delta_{23} & \Delta_{11} \Delta_{23}-2 \Delta_{12} \Delta_{22}+\Delta_{13} \Delta_{21} & \Delta_{12}^2-\Delta_{11} \Delta_{13} \\
	\end{array}
	\right),
\end{equation}
with
	\begin{align*}
		&M_1=-\Delta_{21} \Delta_{33}+2 \Delta_{22} \Delta_{32}-\Delta_{23} \Delta_{31},
		\\
		&M_2=\Delta_{11} \Delta_{33}-2 \left(\Delta_{12} \Delta_{32}-\Delta_{21} \Delta_{23}+\Delta_{22}^2\right)+\Delta_{13} \Delta_{31},
		\\
		&M_3=-\Delta_{11} \Delta_{23}+2 \Delta_{12} \Delta_{22}-\Delta_{13} \Delta_{21},
	\end{align*}
	and $\Delta_{ij}$ being the cofactors of the matrix $M_Q$, given by \eqref{eq:MQ}.

While the proof in \cite{RandomWalks} relies on the specific properties of the coefficients of the biquadratic, we provide here another proof, that holds for arbitrary non-singular biquadratic curve.

\begin{proposition}\label{prop:order8}
The group generated by the horizontal and vertical switches of the biquadratic curve $\mathcal{C}_A$ \eqref{eq:biquad} is of order $8$ if and only if the determinant of the matrix $\Omega_Q$ \eqref{eq:omega} vanishes.
\end{proposition}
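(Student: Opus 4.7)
The plan is to follow the template of the proof of Proposition \ref{prop:6}: reduce the vanishing of $\det(\Omega_Q)$ to the vanishing of $C_3$ from Corollary \ref{cor:small}(iii) (which characterizes order $8$), first by a Möbius-invariance argument, and then by an explicit polynomial identity in a normal form. As in the proof of Proposition \ref{prop:6}, I would first verify how $\det(\Omega_Q)$ transforms under $Q_1(x,y)=Q(x+\beta,y)$, $Q_2(x,y)=Q(\alpha x,y)$, $Q_3(x,y)=x^2 Q(1/x,y)$, and their analogs in $y$. Since the cofactors $\Delta_{ij}$ of $M_Q$ transform in a controlled way under these substitutions, and the entries of $\Omega_Q$ are quadratic in the $\Delta_{ij}$, one gets $\det(\Omega_{Q_1})=\det(\Omega_Q)$, $\det(\Omega_{Q_2})=\alpha^N\det(\Omega_Q)$ for a definite exponent $N$, and $\det(\Omega_{Q_3})=\pm\det(\Omega_Q)$. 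This makes the locus $\{\det(\Omega_Q)=0\}$ Möbius-invariant in each variable, so the proof reduces to a single normal form.

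Using these invariances, I would bring the biquadratic to the normal form of Example \ref{ex:curve-trans},
$$Q(x,y)=xy^2+2\sqrt{abc}\,x^2y+(ab+ac+bc)x^2-(a+b+c)x+1,$$
corresponding to the cubic $y^2=(a-x)(b-x)(c-x)$, with the QRT map realized as translation by $P_0-P_\infty$, $P_0=(0,\sqrt{abc})$. By Theorem \ref{th:cayley}(b)(iii) with $k=2$, the QRT order is $4$ (equivalently, the group order is $8$) if and only if $C_3=0$ in the Taylor expansion $\sqrt{(a-x)(b-x)(c-x)}=C_0+C_1 x+C_2 x^2+C_3 x^3+\cdots$ about $x=0$, analogously to how the $C_2=0$ criterion was used for order $6$.

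The heart of the argument is then an explicit computation in the normal form: the matrix $M_Q$ of Example \ref{ex:curve-trans} has completely explicit cofactors $\Delta_{ij}(a,b,c)$, which when substituted into \eqref{eq:omega} produce a $3\times 3$ matrix whose entries are polynomials in $a,b,c,\sqrt{abc}$. The aim is to verify a polynomial identity of the shape
$$\det(\Omega_Q)=\kappa(a,b,c)\cdot C_3(a,b,c),$$
where $\kappa$ is nonzero on the smooth stratum $a\ne b\ne c\ne a$, $abc\ne 0$. Together with the Möbius invariance of the first step, this yields $\det(\Omega_Q)=0\Leftrightarrow C_3=0$ for an arbitrary smooth biquadratic, and Corollary \ref{cor:small}(iii) then finishes the proof.

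The main obstacle is the sheer volume of symbolic computation in the last step: $\Omega_Q$ is a $3\times 3$ matrix whose entries are quadratic in the cofactors $\Delta_{ij}$ (already cubic in $a_{ij}$), so in the normal form the entries of $\Omega_Q$ have degree up to six in $a,b,c$ and $\det(\Omega_Q)$ has degree approximately eighteen; extracting $C_3$ as an explicit factor is routine but heavy and is best carried out with a CAS. A secondary delicate point is to check that the proportionality factor $\kappa$ does not vanish on the smooth locus, so that $\det(\Omega_Q)=0$ is a sharp equivalence — in particular, distinguishing the genuine order-$8$ case from the order-$4$ case ($\det(a_{ij})=0$) captured by Corollary \ref{cor:small}(i).
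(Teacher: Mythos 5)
Your strategy is mathematically sound, but it is not the route the paper actually takes for this proposition. The paper's proof of Proposition~\ref{prop:order8} is a single direct computation: it substitutes $D_{\mathcal C}$, $E_{\mathcal C}$ and $X$ (all polynomial in the nine $a_{ij}$) into the formula for $C_3$ from Corollary~\ref{cor:small} and verifies the global rational identity
$$C_3=-\frac{2\det(\Omega_Q)}{\det(a_{ij})^5},$$
with no reduction to a normal form and no appeal to M\"obius invariance of $\det(\Omega_Q)$. What you propose instead replicates the structure of the paper's proof of Proposition~\ref{prop:6} (M\"obius-covariance of the determinant, then a three-parameter computation in the normal form of Example~\ref{ex:curve-trans}). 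Both are legitimate; your version trades the heavy nine-variable identity for a much lighter computation in $(a,b,c)$, but at the cost of two extra verifications the paper sidesteps: (i) that $\det(\Omega_Q)$ really does transform by a nonzero scalar under the substitutions $Q(x+\beta,y)$, $Q(\alpha x,y)$, $x^2Q(1/x,y)$ and their analogues in $y$ -- plausible but not stated anywhere for $\Omega_Q$ -- and (ii) that every smooth biquadratic is M\"obius-equivalent to a member of the family of Example~\ref{ex:curve-trans}, i.e.\ that the pair $(\Gamma, P)$ can always be normalized so that $P$ has $x$-coordinate $0$. One concrete advantage of the paper's global identity is that it resolves your last worry for free: the proportionality factor is exactly $\kappa=-\tfrac12\det(a_{ij})^5$ (in the normal form, $16(abc)^{5/2}$), so the equivalence $\det(\Omega_Q)=0 \Leftrightarrow C_3=0$ holds precisely on the locus $\det(a_{ij})\ne0$, which is the same extra condition already present in Corollary~\ref{cor:small}(iii) to separate order $8$ from the order-$4$ case.
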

\begin{proof}
According to Corollary \ref{cor:small}, the group is of order $8$ if and only if $C_3=0$.
Using the expression for $C_3$ from that Corollary \ref{cor:small} and substituting $D_{\mathcal{C}}$ and $E_{\mathcal{C}}$, which are given in Theorem \ref{th:cayley}, and $X_0$, given in \eqref{eq:XY}, into it, we calculate:
\begin{equation}\label{eq:C3}
C_3=-\frac{2\det\Omega_Q}{\det(a_{ij})^5},
\end{equation}
which immediately implies the statement.
\end{proof}

\begin{example}
Consider random walk with the following matrix $P$ \eqref{eq:P}:
$$
P=\left(
\begin{array}{ccc}
	\frac{1}{4}-\frac{1}{2}  \sqrt{\sqrt{5}-2} & 0 & \frac{1}{4} \\
	0 & -1 & 0 \\
	\frac{1}{2} \sqrt{\sqrt{5}-2}+\frac{1}{4} & 0 & \frac{1}{4} \\
\end{array}
\right).
$$
The corresponding cubic curve   $\Gamma$, given by \eqref{eq:cubic}, is:
$$
\Gamma: y^2=4x^3-\frac{1}{12} \left(7-3 \sqrt{5}\right)x+\frac{1}{432} \left(9 \sqrt{5}-20\right).
$$
Note that the curve $\Gamma$ is smooth, since the cubic polynomial in $x$ on the righthand side of its equation has three distinct zeroes:
$$
-\dfrac{1}{12},
\quad
\dfrac{7-3 \sqrt{5}}{24},
\quad
\dfrac{3 \sqrt{5}-5}{24}.
$$
Form \eqref{eq:XY}, we have:
$$
X_0=\frac16,
\quad
Y_0=\frac{\sqrt{\sqrt{5}-2}}{4}.
$$

One can calculate directly that $C_3=0$ using the formula from Proposition \ref{prop:order8dir}, thus the group is of order $8$.
On the other hand, the cofactors are:
$$
\left(
\begin{array}{ccc}
	-\frac{1}{4} & 0 & \frac12\sqrt{\sqrt{5}-2}+\frac{1}{4} \\
	0 & -\frac{1}{4} \sqrt{\sqrt{5}-2} & 0 \\
	\frac{1}{4} & 0 & \frac12\sqrt{\sqrt{5}-2}-\frac{1}{4} \\
\end{array}
\right),
$$
and the matrix $\Omega_Q$ \eqref{eq:omega} is:
$$
\Omega_Q=\frac1{16}\left(
\begin{array}{ccc}
	0 & 2\left(3-\sqrt{5}\right) & 0
	\\
	1-2 \sqrt{\sqrt{5}-2} & 0 & \sqrt{5}-2
	\\
	\sqrt{5}-2 & 0 & 2 \sqrt{\sqrt{5}-2}+1
\end{array}
\right).
$$
The determinant of that matrix equals $0$, in accordance with Proposition \ref{prop:order8}.
\end{example}

\subsection{Groups of order 10}

Here we look at random walks with groups of order $10$ more closely, derive a new characterization and provide new examples.

\begin{proposition} \label{prop:order10dir}
Let $\mathcal C$ be a biquadratic curve given by \eqref{eq:biquad}.
Then we have:
\begin{itemize}
\item[(i)] The group generated by the horizontal and vertical switches of the curve is of order $10$ if and only if
\begin{equation}\label{eq:cay10}
	\frac{2\det\Omega_Q^2}{\det\Delta_Q\det(a_{ij})^7}=C_4,
\end{equation}
where $\Omega_Q$ is given in \eqref{eq:omega}, $\Delta_Q$ is given in \eqref{eq:DeltaQ}, and
\begin{equation}\label{eq:C4}
	C_4=
	-\frac{5}{8\det(a_{ij})^7}
	\hat X^4 +\frac{3}{4\det(a_{ij})^5} \hat B_1\hat X^2
	+\frac1{\det(a_{ij})^3}\hat C_1.
\end{equation}
where
\begin{align*}
	\hat X=\;&a_{02} \left(2 a_{00} a_{21}^2-4 a_{00} a_{20} a_{22}-2 a_{01} a_{20} a_{21}+2 a_{10}^2 a_{22}-a_{10} a_{11} a_{21}-2 a_{10} a_{12} a_{20}+a_{11}^2 a_{20}\right)
	\\
	&-a_{01} (2 a_{00} a_{21} a_{22}+a_{10} a_{11} a_{22}-2 a_{10} a_{12} a_{21}+a_{11} a_{12} a_{20})+2 a_{01}^2 a_{20} a_{22}+2 a_{02}^2 a_{20}^2
	\\
	&+a_{00}
	\left(a_{22} \left(2 a_{00} a_{22}+a_{11}^2\right)-a_{12} (2 a_{10} a_{22}+a_{11} a_{21})+2 a_{12}^2 a_{20}
	\right);
	\\
	\hat B_1=\;&8 a_{00} a_{22}-4 a_{01} a_{21}+8 a_{02} a_{20}-4 a_{10} a_{12}+a_{11}^2;
	\\
	\hat C_1=\;&
	a_{11}^2 (a_{01} a_{21}-4 a_{00} a_{22}-4 a_{02} a_{20}+a_{10} a_{12})-\frac{a_{11}^4}{8}
	\\&
	+2 a_{11} (a_{00} a_{12} a_{21}+a_{01} a_{10} a_{22}+a_{01} a_{12} a_{20}
	+a_{02} a_{10} a_{21})
	\\
	&-2 \Big(
	6 a_{00}^2 a_{22}^2
	-2 a_{10} a_{12} (3 a_{00} a_{22}-2 a_{01} a_{21}+3 a_{02} a_{20})
	\\&\qquad
	+2 a_{00}
	\left(2 a_{02} a_{20} a_{22}-3 a_{01} a_{21} a_{22}
	+a_{02} a_{21}^2+a_{12}^2 a_{20}
	\right)
	\\
	&\qquad
	+2 a_{01}^2 a_{20} a_{22}+a_{01}^2 a_{21}^2
	-6 a_{01} a_{02} a_{20} a_{21}+6 a_{02}^2 a_{20}^2+a_{10}^2 \left(2 a_{02} a_{22}+a_{12}^2\right)
	\Big).
\end{align*}
\item[(ii)]
Equivalently, the group is of order $10$ if and only if
\begin{equation}\label{eq:hatX}
	5\hat X^2
	=
	3\det(a_{ij})\hat B_1
	\pm
	\sqrt{
		9\det(a_{ij})^2\hat B_1^2
		-
		40\left(\frac{2\det(\Omega_Q)^2}{\det(\Delta_Q)}-\det(a_{ij})^4\hat C_1\right)
	}.
\end{equation}
\item[(iii)]
If the coefficients $a_{ij}$ are real and the group is of order $10$, then the following inequalities are satisfied:
\begin{equation}\label{eq:dis}
	\det(a_{ij})^2\hat B_1^2
	\ge
	\frac{40}{9}
	\left(\frac{2\det(\Omega_Q)^2}{\det(\Delta_Q)}-\det(a_{ij})^4\hat C_1\right),
\end{equation}
\begin{equation*}
	3\det(a_{ij})\hat B_1\pm \sqrt{9\det(a_{ij})^2\hat B_1^2-40\Big(\frac{2\det(\Omega_Q)^2}{\det(\Delta_Q)}-\det(a_{ij})^4\hat C_1\Big)}\ge 0.
\end{equation*}
Moreover, the equation
$$
\frac{5}{8}
\hat Y^2 -\frac{3}{4}\det(a_{ij})^2 \hat B_1\hat Y
-\det(a_{ij})^4\hat C_1+\frac{2\det(\Omega_Q)^2}{\det(\Delta_Q)}=0,
$$
has at least one nonnegative solution $\hat Y$ if and only if the inequality \eqref{eq:dis} is satisfied and
\begin{equation}\label{eq:ineq23}
	\hat B_1\ge 0 \quad\text{or}\quad \hat C_1\ge\frac{2\det(\Omega_Q)^2}{\det(\Delta_Q)\det(a_{ij})^4}.
\end{equation}
\end{itemize}
\end{proposition}
\begin{proof}
According to Corollary \ref{cor:small}, the group is of order $10$ if and only if $C_3^2=C_2C_4$.
Substituting there the expressions \eqref{eq:C2} and \eqref{eq:C3} for $C_2$ and $C_3$, we get the condition \eqref{eq:cay10}.
Relation \eqref{eq:C4} is obtained from Corollary \ref{cor:small} and substituting $D_{\mathcal{C}}$ and $E_{\mathcal{C}}$, which are given in Theorem \ref{th:cayley}, and $X_0$, given in \eqref{eq:XY}, into it.
\end{proof}

\begin{example}\label{ex:order10}
Consider random walk with the following matrix P \eqref{eq:P}:
	$$
	P=\left(
	\begin{array}{ccc}
		\frac{1}{4}-\alpha & 0 & \frac{1}{4} \\
		0 & -1 & 0 \\
		\frac14+\alpha & 0 & \frac{1}{4} \\
	\end{array}
	\right),
	\quad\text{where}\quad |\alpha|\le\frac14.
	$$
The transition probabilities for such random walks are:
$$
p_{11}=\frac14-\alpha,
\quad
p_{10}=p_{01}=p_{0,-1}=p_{-1,0}=p_{00}=0,
\quad
p_{1,-1}=p_{-1,-1}=\frac14,
\quad
p_{-1,1}=\frac14+\alpha.
$$
The condition for $5$-periodicity of the corresponding QRT map, according to Corollary \ref{cor:small}, is:
$$
\left(64 \alpha^6-192 \alpha^5-112 \alpha^4-32 \alpha^3+28 \alpha^2+4 \alpha-1\right)
\left(64 \alpha^6+192 \alpha^5-112 \alpha^4+32 \alpha^3+28 \alpha^2-4 \alpha-1\right)
=
0.
$$	
Note that, for
\begin{equation}\label{eq:P5}
P_5(x)
=
64 x^6-192 x^5-112 x^4-32 x^3+ x^2+4 x-1
\end{equation}
 we have that
$P_5(-1/4)$, $P_5(1/4)$, $P_5(4)$ are positive, $P_5(0)$, $P_5(1)$ are negative, while the discriminant of the polynomial is also negative.
That means the polynomial has exactly four real roots, each in one of the intervals $(-1/4,0)$, $(0,1/4)$, $(1/4,1)$, $(1,4)$, and two complex conjugated roots.
We can conclude that we obtained two random walks with the groups of order $10$, corresponding to the two real values of $\alpha$ for which $P_5(\alpha)=0$ and $|\alpha|<1/4$.

Now, suppose that $\alpha$ is the root of $P_5$ in $(0,1/4)$.
The corresponding biquadratic curve is:
\begin{equation}\label{eq:biquad10}
Q(x,y)
=
\left(\frac{1}{4}-\alpha\right) x^2 y^2
+
\left(\alpha+\frac{1}{4}\right) y^2
+\frac{x^2}{4}
-x y
+\frac{1}{4}
=
0.
\end{equation}
A direct check shows that this curve is smooth and of genus $1$, and that its real part has two connected components, which are symmetric to each other with respect to the origin, as shown in the lefthand side of Figure \ref{fig:Q10}.
Notice that each of the two components is invariant for the QRT transformation.
In the righthand side of Figure \ref{fig:Q10}, the lower left component of the curve is zoomed in and the orbit of one point for the group of the random walk is represented.
The case when $\alpha$ is the root of $P_5$ which lies in $(-1/4,0)$ can be discussed similarly.
\end{example}
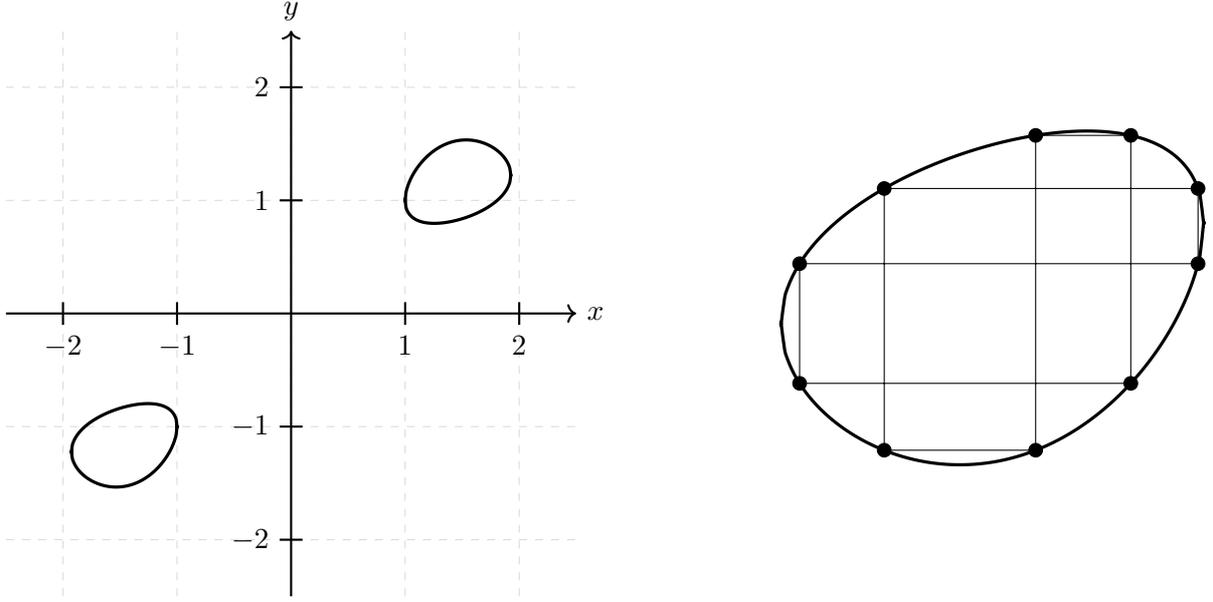
\begin{figure}[h]
	\centering
	\begin{tikzpicture}[scale=1.5]
		
\draw[very thin,color=gray!30,dashed] (-2.5,-2.5) grid (2.5,2.5);

\draw[thick,->] (-2.5,0) -- (2.5,0) node[right] {$x$};
\draw[thick,->] (0,-2.5) -- (0,2.5) node[above] {$y$};

\draw[thick](0.1,1)--(-0.1,1)node[left]{$1$};
\draw[thick](0.1,2)--(-0.1,2)node[left]{$2$};
\draw[thick](0.1,-1)--(-0.1,-1)node[left]{$-1$};
\draw[thick](0.1,-2)--(-0.1,-2)node[left]{$-2$};

\draw[thick](1,0.1)--(1,-0.1)node[below]{$1$};
\draw[thick](2,0.1)--(2,-0.1)node[below]{$2$};
\draw[thick](-1,0.1)--(-1,-0.1)node[below]{$-1$};
\draw[thick](-2,0.1)--(-2,-0.1)node[below]{$-2$};

\draw [very thick] plot [smooth cycle] coordinates{(-1.92576,-1.22252)(-1.91805,-1.16397)(-1.91033,-1.1399)(-1.90262,-1.12155)(-1.89491,-1.10618)(-1.88719,-1.09273)(-1.87948,-1.08063)(-1.87176,-1.06958)(-1.86405,-1.05936)(-1.85633,-1.04982)(-1.84862,-1.04085)(-1.8409,-1.03237)(-1.83319,-1.02431)(-1.82547,-1.01664)(-1.81776,-1.0093)(-1.81004,-1.00226)(-1.80233,-0.995501)(-1.79461,-0.988989)(-1.7869,-0.982708)(-1.77918,-0.976639)(-1.77147,-0.970767)(-1.76376,-0.96508)(-1.75604,-0.959564)(-1.74833,-0.954211)(-1.74061,-0.94901)(-1.7329,-0.943954)(-1.72518,-0.939034)(-1.71747,-0.934244)(-1.70975,-0.929579)(-1.70204,-0.925032)(-1.69432,-0.920598)(-1.68661,-0.916274)(-1.67889,-0.912054)(-1.67118,-0.907936)(-1.66346,-0.903915)(-1.65575,-0.899988)(-1.64803,-0.896153)(-1.64032,-0.892406)(-1.63261,-0.888746)(-1.62489,-0.88517)(-1.61718,-0.881675)(-1.60946,-0.878261)(-1.60175,-0.874924)(-1.59403,-0.871664)(-1.58632,-0.868479)(-1.5786,-0.865367)(-1.57089,-0.862328)(-1.56317,-0.85936)(-1.55546,-0.856463)(-1.54774,-0.853634)(-1.54003,-0.850874)(-1.53231,-0.848182)(-1.5246,-0.845556)(-1.51689,-0.842997)(-1.50917,-0.840504)(-1.50146,-0.838077)(-1.49374,-0.835715)(-1.48603,-0.833417)(-1.47831,-0.831185)(-1.4706,-0.829017)(-1.46288,-0.826914)(-1.45517,-0.824875)(-1.44745,-0.822902)(-1.43974,-0.820993)(-1.43202,-0.81915)(-1.42431,-0.817372)(-1.41659,-0.81566)(-1.40888,-0.814015)(-1.40116,-0.812438)(-1.39345,-0.810928)(-1.38574,-0.809486)(-1.37802,-0.808115)(-1.37031,-0.806813)(-1.36259,-0.805583)(-1.35488,-0.804426)(-1.34716,-0.803343)(-1.33945,-0.802335)(-1.33173,-0.801403)(-1.32402,-0.800551)(-1.3163,-0.799778)(-1.30859,-0.799088)(-1.30087,-0.798482)(-1.29316,-0.797962)(-1.28544,-0.797532)(-1.27773,-0.797193)(-1.27001,-0.796949)(-1.2623,-0.796803)(-1.25459,-0.796758)(-1.24687,-0.796818)(-1.23916,-0.796987)(-1.23144,-0.797269)(-1.22373,-0.79767)(-1.21601,-0.798194)(-1.2083,-0.798847)(-1.20058,-0.799636)(-1.19287,-0.800568)(-1.18515,-0.801651)(-1.17744,-0.802892)(-1.16972,-0.804302)(-1.16201,-0.805891)(-1.15429,-0.807672)(-1.14658,-0.809658)(-1.13886,-0.811864)(-1.13115,-0.814308)(-1.12344,-0.817011)(-1.11572,-0.819995)(-1.10801,-0.823288)(-1.10029,-0.826923)(-1.09258,-0.830939)(-1.08486,-0.835382)(-1.07715,-0.84031)(-1.06943,-0.845797)(-1.06172,-0.851936)(-1.054,-0.85885)(-1.04629,-0.86671)(-1.03857,-0.875757)(-1.03086,-0.886366)
(-1.02314,-0.89916)(-1.01543,-0.915355)(-1.00771,-0.938075)(-1.,-1.)(-1.,-1.)(-1.00771,-1.07075)(-1.01543,-1.10218)(-1.02314,-1.12698)(-1.03086,-1.14827)(-1.03857,-1.16727)(-1.04629,-1.1846)(-1.054,-1.20064)(-1.06172,-1.21563)(-1.06943,-1.22973)(-1.07715,-1.24308)(-1.08486,-1.25577)(-1.09258,-1.26786)(-1.10029,-1.27943)(-1.10801,-1.29051)(-1.11572,-1.30115)(-1.12344,-1.31138)(-1.13115,-1.32123)(-1.13886,-1.33071)(-1.14658,-1.33986)(-1.15429,-1.34869)(-1.16201,-1.35722)(-1.16972,-1.36545)(-1.17744,-1.37341)(-1.18515,-1.3811)(-1.19287,-1.38854)(-1.20058,-1.39573)(-1.2083,-1.40268)(-1.21601,-1.4094)(-1.22373,-1.4159)(-1.23144,-1.42218)(-1.23916,-1.42825)(-1.24687,-1.43411)(-1.25459,-1.43977)(-1.2623,-1.44524)(-1.27001,-1.45052)(-1.27773,-1.4556)(-1.28544,-1.46051)(-1.29316,-1.46523)(-1.30087,-1.46978)(-1.30859,-1.47415)(-1.3163,-1.47835)(-1.32402,-1.48238)(-1.33173,-1.48625)(-1.33945,-1.48996)(-1.34716,-1.4935)(-1.35488,-1.49688)(-1.36259,-1.50011)(-1.37031,-1.50318)(-1.37802,-1.50611)(-1.38574,-1.50887)(-1.39345,-1.51149)(-1.40116,-1.51396)(-1.40888,-1.51629)(-1.41659,-1.51847)(-1.42431,-1.5205)(-1.43202,-1.52239)(-1.43974,-1.52414)(-1.44745,-1.52575)(-1.45517,-1.52722)(-1.46288,-1.52855)(-1.4706,-1.52974)(-1.47831,-1.5308)(-1.48603,-1.53171)(-1.49374,-1.53249)(-1.50146,-1.53314)(-1.50917,-1.53365)(-1.51689,-1.53402)(-1.5246,-1.53426)(-1.53231,-1.53436)(-1.54003,-1.53433)(-1.54774,-1.53416)(-1.55546,-1.53386)(-1.56317,-1.53343)(-1.57089,-1.53285)(-1.5786,-1.53215)(-1.58632,-1.5313)(-1.59403,-1.53032)(-1.60175,-1.5292)(-1.60946,-1.52794)(-1.61718,-1.52654)(-1.62489,-1.52501)(-1.63261,-1.52333)(-1.64032,-1.5215)(-1.64803,-1.51953)(-1.65575,-1.51741)(-1.66346,-1.51515)(-1.67118,-1.51273)(-1.67889,-1.51015)(-1.68661,-1.50742)(-1.69432,-1.50453)(-1.70204,-1.50148)(-1.70975,-1.49826)(-1.71747,-1.49486)(-1.72518,-1.49129)(-1.7329,-1.48753)(-1.74061,-1.48359)(-1.74833,-1.47945)(-1.75604,-1.4751)(-1.76376,-1.47055)(-1.77147,-1.46577)(-1.77918,-1.46076)(-1.7869,-1.4555)(-1.79461,-1.44998)(-1.80233,-1.44418)(-1.81004,-1.43809)(-1.81776,-1.43167)(-1.82547,-1.42491)(-1.83319,-1.41776)(-1.8409,-1.41019)(-1.84862,-1.40215)(-1.85633,-1.39357)(-1.86405,-1.38438)(-1.87176,-1.37447)(-1.87948,-1.36368)(-1.88719,-1.35181)(-1.89491,-1.33854)(-1.90262,-1.32331)(-1.91033,-1.30506)(-1.91805,-1.28105)(-1.92576,-1.22252)}
;

\draw [very thick] plot [smooth cycle] coordinates{(1.92576,1.22252)(1.91805,1.16397)(1.91033,1.1399)(1.90262,1.12155)(1.89491,1.10618)(1.88719,1.09273)(1.87948,1.08063)(1.87176,1.06958)(1.86405,1.05936)(1.85633,1.04982)(1.84862,1.04085)(1.8409,1.03237)(1.83319,1.02431)(1.82547,1.01664)(1.81776,1.0093)(1.81004,1.00226)(1.80233,0.995501)(1.79461,0.988989)(1.7869,0.982708)(1.77918,0.976639)(1.77147,0.970767)(1.76376,0.96508)(1.75604,0.959564)(1.74833,0.954211)(1.74061,0.94901)(1.7329,0.943954)(1.72518,0.939034)(1.71747,0.934244)(1.70975,0.929579)(1.70204,0.925032)(1.69432,0.920598)(1.68661,0.916274)(1.67889,0.912054)(1.67118,0.907936)(1.66346,0.903915)(1.65575,0.899988)(1.64803,0.896153)(1.64032,0.892406)(1.63261,0.888746)(1.62489,0.88517)(1.61718,0.881675)(1.60946,0.878261)(1.60175,0.874924)(1.59403,0.871664)(1.58632,0.868479)(1.5786,0.865367)(1.57089,0.862328)(1.56317,0.85936)(1.55546,0.856463)(1.54774,0.853634)(1.54003,0.850874)(1.53231,0.848182)(1.5246,0.845556)(1.51689,0.842997)(1.50917,0.840504)(1.50146,0.838077)(1.49374,0.835715)(1.48603,0.833417)(1.47831,0.831185)(1.4706,0.829017)(1.46288,0.826914)(1.45517,0.824875)(1.44745,0.822902)(1.43974,0.820993)(1.43202,0.81915)(1.42431,0.817372)(1.41659,0.81566)(1.40888,0.814015)(1.40116,0.812438)(1.39345,0.810928)(1.38574,0.809486)(1.37802,0.808115)(1.37031,0.806813)(1.36259,0.805583)(1.35488,0.804426)(1.34716,0.803343)(1.33945,0.802335)(1.33173,0.801403)(1.32402,0.800551)(1.3163,0.799778)(1.30859,0.799088)(1.30087,0.798482)(1.29316,0.797962)(1.28544,0.797532)(1.27773,0.797193)(1.27001,0.796949)(1.2623,0.796803)(1.25459,0.796758)(1.24687,0.796818)(1.23916,0.796987)(1.23144,0.797269)(1.22373,0.79767)(1.21601,0.798194)(1.2083,0.798847)(1.20058,0.799636)(1.19287,0.800568)(1.18515,0.801651)(1.17744,0.802892)(1.16972,0.804302)(1.16201,0.805891)(1.15429,0.807672)(1.14658,0.809658)(1.13886,0.811864)(1.13115,0.814308)(1.12344,0.817011)(1.11572,0.819995)(1.10801,0.823288)(1.10029,0.826923)(1.09258,0.830939)(1.08486,0.835382)(1.07715,0.84031)(1.06943,0.845797)(1.06172,0.851936)(1.054,0.85885)(1.04629,0.86671)(1.03857,0.875757)(1.03086,0.886366)(1.02314,0.89916)(1.01543,0.915355)(1.00771,0.938075)(1.,1.)(1.,1.)(1.00771,1.07075)
(1.01543,1.10218)(1.02314,1.12698)(1.03086,1.14827)(1.03857,1.16727)(1.04629,1.1846)(1.054,1.20064)(1.06172,1.21563)(1.06943,1.22973)(1.07715,1.24308)(1.08486,1.25577)(1.09258,1.26786)(1.10029,1.27943)(1.10801,1.29051)(1.11572,1.30115)(1.12344,1.31138)(1.13115,1.32123)(1.13886,1.33071)(1.14658,1.33986)(1.15429,1.34869)(1.16201,1.35722)(1.16972,1.36545)(1.17744,1.37341)(1.18515,1.3811)(1.19287,1.38854)(1.20058,1.39573)(1.2083,1.40268)(1.21601,1.4094)(1.22373,1.4159)(1.23144,1.42218)(1.23916,1.42825)(1.24687,1.43411)(1.25459,1.43977)(1.2623,1.44524)(1.27001,1.45052)(1.27773,1.4556)(1.28544,1.46051)(1.29316,1.46523)(1.30087,1.46978)(1.30859,1.47415)(1.3163,1.47835)(1.32402,1.48238)(1.33173,1.48625)(1.33945,1.48996)(1.34716,1.4935)(1.35488,1.49688)(1.36259,1.50011)(1.37031,1.50318)(1.37802,1.50611)(1.38574,1.50887)(1.39345,1.51149)(1.40116,1.51396)(1.40888,1.51629)(1.41659,1.51847)(1.42431,1.5205)(1.43202,1.52239)(1.43974,1.52414)(1.44745,1.52575)(1.45517,1.52722)(1.46288,1.52855)(1.4706,1.52974)(1.47831,1.5308)(1.48603,1.53171)(1.49374,1.53249)(1.50146,1.53314)(1.50917,1.53365)(1.51689,1.53402)(1.5246,1.53426)(1.53231,1.53436)(1.54003,1.53433)(1.54774,1.53416)(1.55546,1.53386)(1.56317,1.53343)(1.57089,1.53285)(1.5786,1.53215)(1.58632,1.5313)(1.59403,1.53032)(1.60175,1.5292)(1.60946,1.52794)(1.61718,1.52654)(1.62489,1.52501)(1.63261,1.52333)(1.64032,1.5215)(1.64803,1.51953)(1.65575,1.51741)(1.66346,1.51515)(1.67118,1.51273)(1.67889,1.51015)(1.68661,1.50742)(1.69432,1.50453)(1.70204,1.50148)(1.70975,1.49826)(1.71747,1.49486)(1.72518,1.49129)(1.7329,1.48753)(1.74061,1.48359)(1.74833,1.47945)(1.75604,1.4751)(1.76376,1.47055)(1.77147,1.46577)(1.77918,1.46076)(1.7869,1.4555)(1.79461,1.44998)(1.80233,1.44418)(1.81004,1.43809)(1.81776,1.43167)(1.82547,1.42491)(1.83319,1.41776)(1.8409,1.41019)(1.84862,1.40215)(1.85633,1.39357)(1.86405,1.38438)(1.87176,1.37447)(1.87948,1.36368)(1.88719,1.35181)(1.89491,1.33854)(1.90262,1.32331)(1.91033,1.30506)(1.91805,1.28105)(1.92576,1.22252)}
;

	\begin{scope}[scale=4,shift={(3,1.2)}]
	\coordinate(p1)at(-1.7,-1.5023);
	\coordinate(p2)at(-1.7,-0.92385);
	\coordinate(p3)at(-1.01216,-0.92385);
	\coordinate(p4)at(-1.01216,-1.09001);
	\coordinate(p5)at(-1.88552,-1.09001);
	\coordinate(p6)at(-1.88552,-1.35449);
	\coordinate(p7)at(-1.15951,-1.35449);
	\coordinate(p8)at(-1.15951,-0.806446);
	\coordinate(p9)at(-1.36805,-0.806446);
	\coordinate(p10)at(-1.36805,-1.5023);
	
	\draw[fill=black](p1) circle (0.015);		
	\draw[fill=black](p2) circle (0.015);		
	\draw[fill=black](p3) circle (0.015);		
	\draw[fill=black](p4) circle (0.015);		
	\draw[fill=black](p5) circle (0.015);		
	\draw[fill=black](p6) circle (0.015);		
	\draw[fill=black](p7) circle (0.015);		
	\draw[fill=black](p8) circle (0.015);		
	\draw[fill=black](p9) circle (0.015);		
	\draw[fill=black](p10) circle (0.015);

	\draw(p1)--(p2)--(p3)--(p4)--(p5)--(p6)--(p7)--(p8)--(p9)--(p10)--cycle;

\draw [very thick] plot [smooth cycle] coordinates{(-1.92576,-1.22252)(-1.91805,-1.16397)(-1.91033,-1.1399)(-1.90262,-1.12155)(-1.89491,-1.10618)(-1.88719,-1.09273)(-1.87948,-1.08063)(-1.87176,-1.06958)(-1.86405,-1.05936)(-1.85633,-1.04982)(-1.84862,-1.04085)(-1.8409,-1.03237)(-1.83319,-1.02431)(-1.82547,-1.01664)(-1.81776,-1.0093)(-1.81004,-1.00226)(-1.80233,-0.995501)(-1.79461,-0.988989)(-1.7869,-0.982708)(-1.77918,-0.976639)(-1.77147,-0.970767)(-1.76376,-0.96508)(-1.75604,-0.959564)(-1.74833,-0.954211)(-1.74061,-0.94901)(-1.7329,-0.943954)(-1.72518,-0.939034)(-1.71747,-0.934244)(-1.70975,-0.929579)(-1.70204,-0.925032)(-1.69432,-0.920598)(-1.68661,-0.916274)(-1.67889,-0.912054)(-1.67118,-0.907936)(-1.66346,-0.903915)(-1.65575,-0.899988)(-1.64803,-0.896153)(-1.64032,-0.892406)(-1.63261,-0.888746)(-1.62489,-0.88517)(-1.61718,-0.881675)(-1.60946,-0.878261)(-1.60175,-0.874924)(-1.59403,-0.871664)(-1.58632,-0.868479)(-1.5786,-0.865367)(-1.57089,-0.862328)(-1.56317,-0.85936)(-1.55546,-0.856463)(-1.54774,-0.853634)(-1.54003,-0.850874)(-1.53231,-0.848182)(-1.5246,-0.845556)(-1.51689,-0.842997)(-1.50917,-0.840504)(-1.50146,-0.838077)(-1.49374,-0.835715)(-1.48603,-0.833417)(-1.47831,-0.831185)(-1.4706,-0.829017)(-1.46288,-0.826914)(-1.45517,-0.824875)(-1.44745,-0.822902)(-1.43974,-0.820993)(-1.43202,-0.81915)(-1.42431,-0.817372)(-1.41659,-0.81566)(-1.40888,-0.814015)(-1.40116,-0.812438)(-1.39345,-0.810928)(-1.38574,-0.809486)(-1.37802,-0.808115)(-1.37031,-0.806813)(-1.36259,-0.805583)(-1.35488,-0.804426)(-1.34716,-0.803343)(-1.33945,-0.802335)(-1.33173,-0.801403)(-1.32402,-0.800551)(-1.3163,-0.799778)(-1.30859,-0.799088)(-1.30087,-0.798482)(-1.29316,-0.797962)(-1.28544,-0.797532)(-1.27773,-0.797193)(-1.27001,-0.796949)(-1.2623,-0.796803)(-1.25459,-0.796758)(-1.24687,-0.796818)(-1.23916,-0.796987)(-1.23144,-0.797269)(-1.22373,-0.79767)(-1.21601,-0.798194)(-1.2083,-0.798847)(-1.20058,-0.799636)(-1.19287,-0.800568)(-1.18515,-0.801651)(-1.17744,-0.802892)(-1.16972,-0.804302)(-1.16201,-0.805891)(-1.15429,-0.807672)(-1.14658,-0.809658)(-1.13886,-0.811864)(-1.13115,-0.814308)(-1.12344,-0.817011)(-1.11572,-0.819995)(-1.10801,-0.823288)(-1.10029,-0.826923)(-1.09258,-0.830939)(-1.08486,-0.835382)(-1.07715,-0.84031)(-1.06943,-0.845797)(-1.06172,-0.851936)(-1.054,-0.85885)(-1.04629,-0.86671)(-1.03857,-0.875757)(-1.03086,-0.886366)(-1.02314,-0.89916)(-1.01543,-0.915355)(-1.00771,-0.938075)(-1.,-1.)(-1.,-1.)(-1.00771,-1.07075)(-1.01543,-1.10218)(-1.02314,-1.12698)(-1.03086,-1.14827)(-1.03857,-1.16727)(-1.04629,-1.1846)(-1.054,-1.20064)(-1.06172,-1.21563)(-1.06943,-1.22973)(-1.07715,-1.24308)(-1.08486,-1.25577)
(-1.09258,-1.26786)(-1.10029,-1.27943)(-1.10801,-1.29051)(-1.11572,-1.30115)(-1.12344,-1.31138)(-1.13115,-1.32123)(-1.13886,-1.33071)(-1.14658,-1.33986)(-1.15429,-1.34869)(-1.16201,-1.35722)(-1.16972,-1.36545)(-1.17744,-1.37341)(-1.18515,-1.3811)(-1.19287,-1.38854)(-1.20058,-1.39573)(-1.2083,-1.40268)(-1.21601,-1.4094)(-1.22373,-1.4159)(-1.23144,-1.42218)(-1.23916,-1.42825)(-1.24687,-1.43411)(-1.25459,-1.43977)(-1.2623,-1.44524)(-1.27001,-1.45052)(-1.27773,-1.4556)(-1.28544,-1.46051)(-1.29316,-1.46523)(-1.30087,-1.46978)(-1.30859,-1.47415)(-1.3163,-1.47835)(-1.32402,-1.48238)(-1.33173,-1.48625)(-1.33945,-1.48996)(-1.34716,-1.4935)(-1.35488,-1.49688)(-1.36259,-1.50011)(-1.37031,-1.50318)(-1.37802,-1.50611)(-1.38574,-1.50887)(-1.39345,-1.51149)(-1.40116,-1.51396)(-1.40888,-1.51629)(-1.41659,-1.51847)(-1.42431,-1.5205)(-1.43202,-1.52239)(-1.43974,-1.52414)(-1.44745,-1.52575)(-1.45517,-1.52722)(-1.46288,-1.52855)(-1.4706,-1.52974)(-1.47831,-1.5308)(-1.48603,-1.53171)(-1.49374,-1.53249)(-1.50146,-1.53314)(-1.50917,-1.53365)(-1.51689,-1.53402)(-1.5246,-1.53426)(-1.53231,-1.53436)(-1.54003,-1.53433)(-1.54774,-1.53416)(-1.55546,-1.53386)(-1.56317,-1.53343)(-1.57089,-1.53285)(-1.5786,-1.53215)(-1.58632,-1.5313)(-1.59403,-1.53032)(-1.60175,-1.5292)(-1.60946,-1.52794)(-1.61718,-1.52654)(-1.62489,-1.52501)(-1.63261,-1.52333)(-1.64032,-1.5215)(-1.64803,-1.51953)(-1.65575,-1.51741)(-1.66346,-1.51515)(-1.67118,-1.51273)(-1.67889,-1.51015)(-1.68661,-1.50742)(-1.69432,-1.50453)(-1.70204,-1.50148)(-1.70975,-1.49826)(-1.71747,-1.49486)(-1.72518,-1.49129)(-1.7329,-1.48753)(-1.74061,-1.48359)(-1.74833,-1.47945)(-1.75604,-1.4751)(-1.76376,-1.47055)(-1.77147,-1.46577)(-1.77918,-1.46076)(-1.7869,-1.4555)(-1.79461,-1.44998)(-1.80233,-1.44418)(-1.81004,-1.43809)(-1.81776,-1.43167)(-1.82547,-1.42491)(-1.83319,-1.41776)(-1.8409,-1.41019)(-1.84862,-1.40215)(-1.85633,-1.39357)(-1.86405,-1.38438)(-1.87176,-1.37447)(-1.87948,-1.36368)(-1.88719,-1.35181)(-1.89491,-1.33854)(-1.90262,-1.32331)(-1.91033,-1.30506)(-1.91805,-1.28105)(-1.92576,-1.22252)}
	;
\end{scope}

\end{tikzpicture}

	\caption{Example \ref{ex:order10}. In the left: The biquadratic curve \eqref{eq:biquad10}, where $\alpha\in(0,1/4)$ is a root of polynomial $P_5$ given by \eqref{eq:P5}.
	The approximate value of that root is $\alpha\approx0.14381$.
	 In the right: The lower left component of the curve, together with an orbit of the group generated by the horizontal and vertical switches is shown.
		Since the QRT transform is of order $5$, the orbit consists of $10$ points.}
	\label{fig:Q10}
\end{figure}

\begin{example}\label{ex:order10b}
	Consider random walk with the following matrix P \eqref{eq:P}:
	\begin{equation}\label{eq:matrix10}
	P=\left(
	\begin{array}{ccc}
		1/4-\alpha & 1/10 & 1/10 \\
		0 & -9/10 & 0 \\
		1/4+\alpha & 0 & 1/5
	\end{array}
	\right),
	\quad\text{where}\quad |\alpha|\le\frac14.
	\end{equation}
The transition probabilities for such random walks are:
		\begin{equation}\label{eq:prob10}
		p_{11}=\frac14-\alpha,
		\,\,
		p_{10}=p_{1,-1}=p_{00}=\frac1{10},
		\,\, p_{01}=p_{0,-1}=p_{-1,0}=0,
		\,\,
		p_{-1,1}=\frac14+\alpha,
		\,\,
		p_{-1,-1}=\frac15.
		\end{equation}
	The condition for $5$-periodicity of the corresponding QRT map, according to Corollary \ref{cor:small}, is $Q_5(\alpha)=0$, where:
\begin{equation}\label{eq:Q5}
\begin{aligned}
Q_5(x)
=&\
720^6 x^{12}
+6^{11}\cdot20^6\cdot11 x^{11}
-3496\cdot12^9\cdot20^4 x^{10}
-508430088732672000 x^9
\\&
-362150426035814400 x^8+995139332834918400 x^7-671623654047580160 x^6
\\&
-248737097743994880 x^5+211388346237807360 x^4+217796690291328 x^3
\\&
-12588011050241664 x^2+696471951573144 x+137820649612347
.
\end{aligned}
\end{equation}
This polynomial has two roots in $(-1/4,0)$ and one root in $(0,1/4)$.	
For each of those roots, the corresponding biquadratic curve is:
	\begin{equation}\label{eq:biquad10b}
		Q(x,y)
		=
		\left(\frac{1}{4}-\alpha\right) x^2 y^2
		+
		\frac1{10}x^2y
		+
		\frac1{10}x^2
		+
		\left(\alpha+\frac{1}{4}\right) y^2
		-\frac9{10}x y
		+\frac{1}{5}
		=
		0,
	\end{equation}
which is smooth and of genus $1$, thus we got three examples of random walks with the groups of order $10$.
For the root which is in $(0,1/4)$, the curve is illustrated in Figure \ref{fig:Q10b}, together with the orbit of one point for the group of the random walk.
	\begin{figure}[h]
		\centering
		\begin{tikzpicture}[scale=1.5]
			
			\draw[very thin,color=gray!30,dashed] (-5.5,-4.5) grid (1.5,2.5);
			
			\draw[thick,->] (-5.5,0) -- (1.5,0) node[right] {$x$};
			\draw[thick,->] (0,-4.5) -- (0,2.5) node[above] {$y$};
			
			\draw[thick](0.1,1)--(-0.1,1)node[left]{$1$};
			\draw[thick](0.1,2)--(-0.1,2)node[left]{$2$};
			\draw[thick](-0.1,-1)--(0.1,-1)node[right]{$-1$};
			\draw[thick](-0.1,-2)--(0.1,-2)node[right]{$-2$};
			\draw[thick](-0.1,-3)--(0.1,-3)node[right]{$-3$};
			\draw[thick](-0.1,-4)--(0.1,-4)node[right]{$-4$};

			\draw[thick](1,0.1)--(1,-0.1)node[below]{$5$};
			\draw[thick](-1,-0.1)--(-1,0.1)node[above]{$-5$};
			\draw[thick](-2,-0.1)--(-2,0.1)node[above]{$-10$};
			\draw[thick](-3,-0.1)--(-3,0.1)node[above]{$-15$};
\draw[thick](-4,-0.1)--(-4,0.1)node[above]{$-20$};
\draw[thick](-5,-0.1)--(-5,0.1)node[above]{$-25$};

\draw [very thick] plot [smooth cycle] coordinates{
(-5.21654,-1.4913)(-5.16581,-1.59677)(-5.11507,-1.6432)(-5.06433,-1.6802)(-5.01359,-1.71245)(-4.96285,-1.74173)(-4.91211,-1.769)(-4.86137,-1.79478)(-4.81063,-1.81944)(-4.75989,-1.84322)(-4.70915,-1.86631)(-4.65841,-1.88885)(-4.60767,-1.91094)(-4.55694,-1.93267)(-4.5062,-1.95411)(-4.45546,-1.97532)(-4.40472,-1.99635)(-4.35398,-2.01725)(-4.30324,-2.03806)(-4.2525,-2.0588)(-4.20176,-2.0795)(-4.15102,-2.10021)(-4.10028,-2.12094)(-4.04954,-2.14171)(-3.9988,-2.16255)(-3.94807,-2.18348)(-3.89733,-2.20451)(-3.84659,-2.22568)(-3.79585,-2.24699)(-3.74511,-2.26846)(-3.69437,-2.29011)(-3.64363,-2.31196)(-3.59289,-2.33403)(-3.54215,-2.35632)(-3.49141,-2.37886)(-3.44067,-2.40167)(-3.38993,-2.42475)(-3.3392,-2.44814)(-3.28846,-2.47183)(-3.23772,-2.49586)(-3.18698,-2.52023)(-3.13624,-2.54497)(-3.0855,-2.57009)(-3.03476,-2.59561)(-2.98402,-2.62156)(-2.93328,-2.64793)(-2.88254,-2.67477)(-2.8318,-2.70208)(-2.78106,-2.72989)(-2.73033,-2.75821)(-2.67959,-2.78706)(-2.62885,-2.81648)(-2.57811,-2.84647)(-2.52737,-2.87706)(-2.47663,-2.90826)(-2.42589,-2.94011)(-2.37515,-2.97262)(-2.32441,-3.00581)(-2.27367,-3.03969)(-2.22293,-3.0743)(-2.17219,-3.10964)(-2.12146,-3.14574)(-2.07072,-3.1826)(-2.01998,-3.22023)(-1.96924,-3.25864)(-1.9185,-3.29784)(-1.86776,-3.33782)(-1.81702,-3.37856)(-1.76628,-3.42005)(-1.71554,-3.46224)(-1.6648,-3.5051)(-1.61406,-3.54856)(-1.56332,-3.59253)(-1.51259,-3.63689)(-1.46185,-3.68151)(-1.41111,-3.72618)(-1.36037,-3.77067)(-1.30963,-3.81469)(-1.25889,-3.85786)(-1.20815,-3.89971)(-1.15741,-3.93967)(-1.10667,-3.97703)(-1.05593,-4.01089)(-1.00519,-4.04018)(-0.954454,-4.06358)(-0.903715,-4.07947)(-0.852976,-4.08588)(-0.802237,-4.08044)(-0.751497,-4.06032)(-0.700758,-4.02216)(-0.650019,-3.962)(-0.59928,-3.87534)(-0.548541,-3.75706)(-0.497802,-3.60161)(-0.447062,-3.40318)(-0.396323,-3.15594)(-0.345584,-2.85443)(-0.294845,-2.49357)(-0.244106,-2.06733)(-0.193367,-1.56006)(-0.142627,-0.724307)(-0.193367,-0.378432)(-0.244106,-0.323526)(-0.294845,-0.303125)(-0.345584,-0.296939)(-0.396323,-0.298078)(-0.447062,-0.303429)(-0.497802,-0.311357)(-0.548541,-0.320921)(-0.59928,-0.331543)(-0.650019,-0.342851)(-0.700758,-0.354598)(-0.751497,-0.366615)(-0.802237,-0.378784)(-0.852976,-0.391022)
(-0.903715,-0.403269)(-0.954454,-0.415484)(-1.00519,-0.427635)(-1.05593,-0.439702)(-1.10667,-0.451669)(-1.15741,-0.463527)(-1.20815,-0.475268)(-1.25889,-0.486888)(-1.30963,-0.498387)(-1.36037,-0.509762)(-1.41111,-0.521015)(-1.46185,-0.532148)(-1.51259,-0.543164)(-1.56332,-0.554064)(-1.61406,-0.564852)(-1.6648,-0.575532)(-1.71554,-0.586107)(-1.76628,-0.596582)(-1.81702,-0.60696)(-1.86776,-0.617245)(-1.9185,-0.627442)(-1.96924,-0.637554)(-2.01998,-0.647585)(-2.07072,-0.65754)(-2.12146,-0.667423)(-2.17219,-0.677237)(-2.22293,-0.686987)(-2.27367,-0.696677)(-2.32441,-0.706309)(-2.37515,-0.71589)(-2.42589,-0.725421)(-2.47663,-0.734908)(-2.52737,-0.744354)(-2.57811,-0.753763)(-2.62885,-0.763138)(-2.67959,-0.772485)(-2.73033,-0.781806)(-2.78106,-0.791105)(-2.8318,-0.800387)(-2.88254,-0.809655)(-2.93328,-0.818914)(-2.98402,-0.828168)(-3.03476,-0.837421)(-3.0855,-0.846677)(-3.13624,-0.85594)(-3.18698,-0.865216)(-3.23772,-0.874509)(-3.28846,-0.883823)(-3.3392,-0.893165)(-3.38993,-0.902538)(-3.44067,-0.911949)(-3.49141,-0.921404)(-3.54215,-0.930908)(-3.59289,-0.940468)(-3.64363,-0.950091)(-3.69437,-0.959784)(-3.74511,-0.969554)(-3.79585,-0.979411)(-3.84659,-0.989363)(-3.89733,-0.999419)(-3.94807,-1.00959)(-3.9988,-1.01989)(-4.04954,-1.03033)(-4.10028,-1.04092)(-4.15102,-1.05168)(-4.20176,-1.06263)(-4.2525,-1.07378)(-4.30324,-1.08516)(-4.35398,-1.09679)(-4.40472,-1.1087)(-4.45546,-1.12093)(-4.5062,-1.13351)(-4.55694,-1.14649)(-4.60767,-1.15992)(-4.65841,-1.17387)(-4.70915,-1.18843)(-4.75989,-1.20369)(-4.81063,-1.21979)(-4.86137,-1.2369)(-4.91211,-1.25528)(-4.96285,-1.27528)(-5.01359,-1.29743)(-5.06433,-1.32267)(-5.11507,-1.35279)(-5.16581,-1.39247)};
			
\draw [very thick] plot [smooth cycle] coordinates{
	(0.18693,0.762394)(0.193983,0.635372)(0.201035,0.595285)(0.208088,0.569412)(0.21514,0.550834)(0.222193,0.536877)(0.229245,0.526176)(0.236297,0.517927)(0.24335,0.511608)(0.250402,0.506863)(0.257455,0.503434)(0.264507,0.50113)(0.271559,0.499806)(0.278612,0.499349)(0.285664,0.499669)(0.292717,0.500695)(0.299769,0.50237)(0.306822,0.504647)(0.313874,0.507489)(0.320926,0.510865)(0.327979,0.514753)(0.335031,0.519133)(0.342084,0.523991)(0.349136,0.529318)(0.356188,0.535108)(0.363241,0.541357)(0.370293,0.548068)(0.377346,0.555244)(0.384398,0.562893)(0.391451,0.571027)(0.398503,0.57966)(0.405555,0.588813)(0.412608,0.598508)(0.41966,0.608775)(0.426713,0.61965)(0.433765,0.631174)(0.440817,0.6434)(0.44787,0.65639)(0.454922,0.67022)(0.461975,0.684982)(0.469027,0.700795)(0.47608,0.717806)(0.483132,0.736207)(0.490184,0.756254)(0.497237,0.778296)(0.504289,0.802842)(0.511342,0.830668)(0.518394,0.863082)(0.525446,0.902632)(0.532499,0.955871)(0.539551,1.09139)(0.532499,1.23405)(0.525446,1.29393)(0.518394,1.33962)(0.511342,1.37764)(0.504289,1.41054)(0.497237,1.43958)(0.490184,1.46554)(0.483132,1.48889)(0.47608,1.50997)(0.469027,1.52901)(0.461975,1.54618)(0.454922,1.5616)(0.44787,1.57538)(0.440817,1.58758)(0.433765,1.59825)(0.426713,1.60744)(0.41966,1.61516)(0.412608,1.62145)(0.405555,1.62632)(0.398503,1.62976)(0.391451,1.63178)(0.384398,1.63238)(0.377346,1.63155)(0.370293,1.62927)(0.363241,1.62553)(0.356188,1.62031)(0.349136,1.61359)(0.342084,1.60534)(0.335031,1.59555)(0.327979,1.58416)(0.320926,1.57116)(0.313874,1.5565)(0.306822,1.54014)(0.299769,1.52202)(0.292717,1.5021)(0.285664,1.4803)(0.278612,1.45656)(0.271559,1.43079)(0.264507,1.40288)(0.257455,1.37271)(0.250402,1.34011)(0.24335,1.30489)(0.236297,1.26678)(0.229245,1.22542)(0.222193,1.18027)(0.21514,1.13053)(0.208088,1.07482)(0.201035,1.01048)(0.193983,0.930573)};

\coordinate(q1)at(-4,-2.16206);
\coordinate(q2)at(-4,-1.02013);
\coordinate(q3)at(-0.15244,-1.02013);
\coordinate(q4)at(-0.15244,-0.525572);
\coordinate(q5)at(-1.43181,-0.525572);
\coordinate(q6)at(-1.43181,-3.70796);
\coordinate(q7)at(-0.531049,-3.70796);
\coordinate(q8)at(-0.531049,-0.31748);
\coordinate(q9)at(-0.254685,-0.31748);
\coordinate(q10)at(-0.254685,-2.16206);

\draw[fill=black](q1) circle (0.05);		
\draw[fill=black](q2) circle (0.05);		
\draw[fill=black](q3) circle (0.05);		
\draw[fill=black](q4) circle (0.05);		
\draw[fill=black](q5) circle (0.05);		
\draw[fill=black](q6) circle (0.05);		
\draw[fill=black](q7) circle (0.05);		
\draw[fill=black](q8) circle (0.05);		
\draw[fill=black](q9) circle (0.05);		
\draw[fill=black](q10) circle (0.05);

\draw(q1)--(q2)--(q3)--(q4)--(q5)--(q6)--(q7)--(q8)--(q9)--(q10)--cycle;

		\end{tikzpicture}
		\caption{Example \ref{ex:order10b}. The biquadratic curve \eqref{eq:biquad10b}, where $\alpha\in(0,1/4)$ is a root of polynomial $Q_5$ given by \eqref{eq:Q5}. The value of that root is approximately $\alpha\approx0.20557$.			
	The orbit of a point in the group generated by horizontal and vertical switches is also shown, and it consists of $10$ points, which means that the corresponding group of random walk is of order $10$. Note that the scales along the axes are different.}
		\label{fig:Q10b}
	\end{figure}
	\end{example}

It seems that examples of random walks with the groups of orders higher than $10$
were not known in the literature. Our method can easily be used to construct explicit examples of random walks with the groups of any given order.
We illustrate this by constructing random walks with the groups of orders $12$, $14$, and  $16$ in the next Section \ref{sec:1214}.

\subsection{Groups of order 12, 14, 16}\label{sec:1214}

In this section, we construct examples of the random walk with the groups of order higher than $10$.

\begin{example}[Random walks with group of order 12]\label{ex:order12}
Consider random walks with the following matrix P \eqref{eq:P}:
\begin{equation}\label{eq:matrix}
	P=\left(
	\begin{array}{ccc}
		\frac{1}{4}-\alpha & 0 & \frac{3}{10} \\
		0 & -1 & 0 \\
		\frac14+\alpha & 0 & \frac{1}{5} \\
	\end{array}
	\right),
	\quad\text{where}\quad |\alpha|\le\frac14.
\end{equation}
The transition probabilities for such random walks are:
$$
p_{11}=\frac14-\alpha,
\quad
p_{10}=p_{01}=p_{0,-1}=p_{-1,0}=p_{00}=0,
\quad
p_{1,-1}=\frac3{10},
\quad
p_{-1,-1}=\frac15,
\quad
p_{-1,1}=\frac14+\alpha.
$$

The condition for $6$-periodicity of the corresponding QRT map, given in Corollary \ref{cor:small}, factorizes to the product of the conditions for $2$- and $3$-periodicity, and $P_{6}(\alpha)=0$, with:
\begin{equation}\label{eq:P6}
\begin{aligned}
P_6(x)
=\ &
256\cdot10^8 x^8
+1024\cdot10^{7} x^7
+149248\cdot10^{6}x^6
-96512\cdot10^5 x^5
\\&
-32552\cdot10^6 x^4+5505472000 x^3+6006692800 x^2-305663840 x-284217599.
\end{aligned}
\end{equation}
This polynomial has exactly two real roots: one in each of the intervals $(-1/4,0)$ and $(0,1/4)$.
Since for those choices of roots, the corresponding biquadratic curve:
\begin{equation}\label{eq:biquad12}
	Q(x,y)
	=
	\left(\frac{1}{4}-\alpha\right) x^2 y^2
	+
	\left(\alpha+\frac{1}{4}\right) y^2
	+\frac{3x^2}{10}
	-x y
	+\frac{1}{5}
	=
	0
\end{equation}
is smooth and of genus $1$, we get here two examples of random walks with the groups of order $12$.

In both examples, the real part of the biquadratic curve \eqref{eq:biquad12} has two connected components, which are symmetric to each other with respect to the origin, as shown in Figure \ref{fig:Q12}.
Notice that each of the two components will be invariant for the QRT transformation.
	In the figure, two orbits are shown, each in one of the connected components.
\end{example}
\begin{figure}[h]
	\centering
	\begin{tikzpicture}[scale=0.3]
		
		\draw[very thin,color=gray!30,dashed,step=10] (-25,-21) grid (25,21);
		
		\draw[thick,->] (-25,0) -- (25,0) node[right] {$x$};
		\draw[thick,->] (0,-21) -- (0,21) node[above] {$y$};
		
		\draw[thick](0.5,10)--(-0.5,10)node[left]{$10$};
		\draw[thick](0.5,20)--(-0.5,20)node[left]{$20$};
		\draw[thick](0.5,-10)--(-0.5,-10)node[left]{$-10$};
		\draw[thick](0.5,-20)--(-0.5,-20)node[left]{$-20$};
		
		\draw[thick](10,0.5)--(10,-0.5)node[below]{$10$};
		\draw[thick](20,0.5)--(20,-0.5)node[below]{$20$};
		\draw[thick](-10,0.5)--(-10,-0.5)node[below]{$-10$};
		\draw[thick](-20,0.5)--(-20,-0.5)node[below]{$-20$};

		\draw [very thick] plot [smooth cycle]
	 coordinates{
	 	(1.,1.)(1.20844,0.779791)(1.41687,0.782719)(1.62531,0.815909)(1.83374,0.862871)(2.04218,0.917691)(2.25061,0.977552)(2.45905,1.0409)(2.66748,1.1068)(2.87592,1.17463)(3.08435,1.244)(3.29279,1.31459)(3.50122,1.38622)(3.70966,1.45872)(3.91809,1.53197)(4.12653,1.60589)(4.33496,1.68041)(4.5434,1.75547)(4.75184,1.83104)(4.96027,1.90709)(5.16871,1.98358)(5.37714,2.0605)(5.58558,2.13784)(5.79401,2.21558)(6.00245,2.29373)(6.21088,2.37227)(6.41932,2.4512)(6.62775,2.53052)(6.83619,2.61024)(7.04462,2.69036)(7.25306,2.77088)(7.46149,2.85181)(7.66993,2.93316)(7.87836,3.01494)(8.0868,3.09714)(8.29524,3.1798)(8.50367,3.2629)(8.71211,3.34648)(8.92054,3.43053)(9.12898,3.51508)(9.33741,3.60014)(9.54585,3.68572)(9.75428,3.77184)(9.96272,3.85852)(10.1712,3.94577)(10.3796,4.03362)(10.588,4.12208)(10.7965,4.21118)(11.0049,4.30093)(11.2133,4.39136)(11.4218,4.4825)(11.6302,4.57437)(11.8386,4.66699)(12.0471,4.76041)(12.2555,4.85463)(12.4639,4.94971)(12.6724,5.04566)(12.8808,5.14253)(13.0892,5.24036)(13.2977,5.33917)(13.5061,5.43903)(13.7146,5.53996)(13.923,5.64202)(14.1314,5.74525)(14.3399,5.84972)(14.5483,5.95547)(14.7567,6.06257)(14.9652,6.17109)(15.1736,6.2811)(15.382,6.39267)(15.5905,6.50589)(15.7989,6.62084)(16.0073,6.73762)(16.2158,6.85634)(16.4242,6.97712)(16.6326,7.10007)(16.8411,7.22534)(17.0495,7.35309)(17.258,7.48347)(17.4664,7.61669)(17.6748,7.75294)(17.8833,7.89248)(18.0917,8.03557)(18.3001,8.18251)(18.5086,8.33367)(18.717,8.48943)(18.9254,8.65029)(19.1339,8.81681)(19.3423,8.98964)(19.5507,9.16961)(19.7592,9.35771)(19.9676,9.55519)(20.176,9.76367)(20.3845,9.98527)(20.5929,10.2229)(20.8014,10.4808)(21.0098,10.7652)
(21.2182,11.087)
	 	(21.4267,11.4671)
	 	(21.6351,11.959)
	 	(21.8435,13.1244)
	 	(21.6351,14.2384)(21.4267,14.6762)(21.2182,14.9993)(21.0098,15.2613)(20.8014,15.4831)(20.5929,15.6754)(20.3845,15.8444)(20.176,15.9944)(19.9676,16.1282)(19.7592,16.2478)(19.5507,16.3549)(19.3423,16.4506)(19.1339,16.5358)(18.9254,16.6114)(18.717,16.6779)(18.5086,16.7359)(18.3001,16.7857)(18.0917,16.8278)(17.8833,16.8624)(17.6748,16.8898)(17.4664,16.9102)(17.258,16.9238)(17.0495,16.9307)(16.8411,16.9311)(16.6326,16.9252)(16.4242,16.9129)(16.2158,16.8945)(16.0073,16.8699)(15.7989,16.8393)(15.5905,16.8028)(15.382,16.7602)(15.1736,16.7119)(14.9652,16.6576)(14.7567,16.5976)(14.5483,16.5318)(14.3399,16.4602)(14.1314,16.3829)(13.923,16.2999)(13.7146,16.2112)(13.5061,16.1169)(13.2977,16.017)(13.0892,15.9114)(12.8808,15.8002)(12.6724,15.6834)(12.4639,15.561)(12.2555,15.4331)(12.0471,15.2996)(11.8386,15.1606)(11.6302,15.0161)(11.4218,14.8662)(11.2133,14.7107)(11.0049,14.5498)(10.7965,14.3835)(10.588,14.2118)(10.3796,14.0347)(10.1712,13.8523)(9.96272,13.6646)(9.75428,13.4716)(9.54585,13.2734)(9.33741,13.07)(9.12898,12.8615)(8.92054,12.6478)(8.71211,12.4291)(8.50367,12.2054)(8.29524,11.9767)(8.0868,11.7431)(7.87836,11.5046)(7.66993,11.2614)(7.46149,11.0135)(7.25306,10.7609)(7.04462,10.5037)(6.83619,10.242)(6.62775,9.97592)(6.41932,9.70545)(6.21088,9.4307)(6.00245,9.15177)(5.79401,8.86872)(5.58558,8.58165)(5.37714,8.29064)(5.16871,7.99577)(4.96027,7.69715)(4.75184,7.39484)(4.5434,7.08893)(4.33496,6.77951)(4.12653,6.46664)(3.91809,6.15039)(3.70966,5.83081)(3.50122,5.50793)(3.29279,5.18177)(3.08435,4.8523)(2.87592,4.51944)(2.66748,4.18303)(2.45905,3.8428)(2.25061,3.49828)(2.04218,3.14867)(1.83374,2.79256)(1.62531,2.42727)(1.41687,2.04704)(1.20844,1.63553)}
		;

		\draw [very thick] plot [smooth cycle]
coordinates{
	(-1.,-1.)(-1.20844,-0.779791)(-1.41687,-0.782719)(-1.62531,-0.815909)(-1.83374,-0.862871)(-2.04218,-0.917691)(-2.25061,-0.977552)(-2.45905,-1.0409)(-2.66748,-1.1068)(-2.87592,-1.17463)(-3.08435,-1.244)(-3.29279,-1.31459)(-3.50122,-1.38622)(-3.70966,-1.45872)(-3.91809,-1.53197)(-4.12653,-1.60589)(-4.33496,-1.68041)(-4.5434,-1.75547)(-4.75184,-1.83104)(-4.96027,-1.90709)(-5.16871,-1.98358)(-5.37714,-2.0605)(-5.58558,-2.13784)(-5.79401,-2.21558)(-6.00245,-2.29373)(-6.21088,-2.37227)(-6.41932,-2.4512)(-6.62775,-2.53052)(-6.83619,-2.61024)(-7.04462,-2.69036)(-7.25306,-2.77088)(-7.46149,-2.85181)(-7.66993,-2.93316)(-7.87836,-3.01494)(-8.0868,-3.09714)(-8.29524,-3.1798)(-8.50367,-3.2629)(-8.71211,-3.34648)(-8.92054,-3.43053)(-9.12898,-3.51508)(-9.33741,-3.60014)(-9.54585,-3.68572)(-9.75428,-3.77184)(-9.96272,-3.85852)(-10.1712,-3.94577)(-10.3796,-4.03362)(-10.588,-4.12208)(-10.7965,-4.21118)(-11.0049,-4.30093)(-11.2133,-4.39136)(-11.4218,-4.4825)(-11.6302,-4.57437)(-11.8386,-4.66699)(-12.0471,-4.76041)(-12.2555,-4.85463)(-12.4639,-4.94971)(-12.6724,-5.04566)(-12.8808,-5.14253)(-13.0892,-5.24036)(-13.2977,-5.33917)(-13.5061,-5.43903)(-13.7146,-5.53996)(-13.923,-5.64202)(-14.1314,-5.74525)(-14.3399,-5.84972)(-14.5483,-5.95547)(-14.7567,-6.06257)(-14.9652,-6.17109)(-15.1736,-6.2811)(-15.382,-6.39267)(-15.5905,-6.50589)(-15.7989,-6.62084)(-16.0073,-6.73762)(-16.2158,-6.85634)(-16.4242,-6.97712)(-16.6326,-7.10007)(-16.8411,-7.22534)(-17.0495,-7.35309)(-17.258,-7.48347)(-17.4664,-7.61669)(-17.6748,-7.75294)(-17.8833,-7.89248)(-18.0917,-8.03557)(-18.3001,-8.18251)(-18.5086,-8.33367)(-18.717,-8.48943)(-18.9254,-8.65029)(-19.1339,-8.81681)(-19.3423,-8.98964)(-19.5507,-9.16961)(-19.7592,-9.35771)(-19.9676,-9.55519)(-20.176,-9.76367)(-20.3845,-9.98527)(-20.5929,-10.2229)(-20.8014,-10.4808)(-21.0098,-10.7652)(-21.2182,-11.087)(-21.4267,-11.4671)(-21.6351,-11.959)
	(-21.8435,-13.1244)(-21.6351,-14.2384)(-21.4267,-14.6762)(-21.2182,-14.9993)(-21.0098,-15.2613)(-20.8014,-15.4831)(-20.5929,-15.6754)(-20.3845,-15.8444)(-20.176,-15.9944)(-19.9676,-16.1282)(-19.7592,-16.2478)(-19.5507,-16.3549)(-19.3423,-16.4506)(-19.1339,-16.5358)(-18.9254,-16.6114)(-18.717,-16.6779)(-18.5086,-16.7359)(-18.3001,-16.7857)(-18.0917,-16.8278)(-17.8833,-16.8624)(-17.6748,-16.8898)(-17.4664,-16.9102)(-17.258,-16.9238)(-17.0495,-16.9307)(-16.8411,-16.9311)(-16.6326,-16.9252)(-16.4242,-16.9129)(-16.2158,-16.8945)(-16.0073,-16.8699)(-15.7989,-16.8393)(-15.5905,-16.8028)(-15.382,-16.7602)(-15.1736,-16.7119)(-14.9652,-16.6576)(-14.7567,-16.5976)(-14.5483,-16.5318)(-14.3399,-16.4602)(-14.1314,-16.3829)(-13.923,-16.2999)(-13.7146,-16.2112)(-13.5061,-16.1169)(-13.2977,-16.017)(-13.0892,-15.9114)(-12.8808,-15.8002)(-12.6724,-15.6834)(-12.4639,-15.561)(-12.2555,-15.4331)(-12.0471,-15.2996)(-11.8386,-15.1606)(-11.6302,-15.0161)(-11.4218,-14.8662)(-11.2133,-14.7107)(-11.0049,-14.5498)(-10.7965,-14.3835)(-10.588,-14.2118)(-10.3796,-14.0347)(-10.1712,-13.8523)(-9.96272,-13.6646)(-9.75428,-13.4716)(-9.54585,-13.2734)(-9.33741,-13.07)(-9.12898,-12.8615)(-8.92054,-12.6478)(-8.71211,-12.4291)(-8.50367,-12.2054)(-8.29524,-11.9767)(-8.0868,-11.7431)(-7.87836,-11.5046)(-7.66993,-11.2614)(-7.46149,-11.0135)(-7.25306,-10.7609)(-7.04462,-10.5037)(-6.83619,-10.242)(-6.62775,-9.97592)(-6.41932,-9.70545)(-6.21088,-9.4307)(-6.00245,-9.15177)(-5.79401,-8.86872)(-5.58558,-8.58165)(-5.37714,-8.29064)(-5.16871,-7.99577)(-4.96027,-7.69715)(-4.75184,-7.39484)(-4.5434,-7.08893)(-4.33496,-6.77951)(-4.12653,-6.46664)(-3.91809,-6.15039)(-3.70966,-5.83081)(-3.50122,-5.50793)(-3.29279,-5.18177)(-3.08435,-4.8523)(-2.87592,-4.51944)(-2.66748,-4.18303)(-2.45905,-3.8428)(-2.25061,-3.49828)(-2.04218,-3.14867)(-1.83374,-2.79256)(-1.62531,-2.42727)(-1.41687,-2.04704)(-1.20844,-1.63553)}
;

\coordinate(a1)at(20,9.58682);
\coordinate(a2)at(20,16.1084);
\coordinate(a3)at(13.4879,16.1084);
\coordinate(a4)at(13.4879,5.43023);
\coordinate(a5)at(3.45138,5.43023);
\coordinate(a6)at(3.45138,1.36901);
\coordinate(a7)at(1.09218,1.36901);
\coordinate(a8)at(1.09218,0.814758);
\coordinate(a9)at(1.6195,0.814758);
\coordinate(a10)at(1.6195,2.41692);
\coordinate(a11)at(6.32893,2.41692);
\coordinate(a12)at(6.32893,9.58682);

\draw[fill=black](a1) circle (0.25);		
\draw[fill=black](a2) circle (0.25);		
\draw[fill=black](a3) circle (0.25);		
\draw[fill=black](a4) circle (0.25);		
\draw[fill=black](a5) circle (0.25);		
\draw[fill=black](a6) circle (0.25);		
\draw[fill=black](a7) circle (0.25);		
\draw[fill=black](a8) circle (0.25);		
\draw[fill=black](a9) circle (0.25);		
\draw[fill=black](a10) circle (0.25);		
\draw[fill=black](a11) circle (0.25);		
\draw[fill=black](a12) circle (0.25);		

\draw(a1)--(a2)--(a3)--(a4)--(a5)--(a6)--(a7)--(a8)--(a9)--(a10)--(a11)--(a12)--cycle;

\coordinate(b1)at(-21,-15.2725);
\coordinate(b2)at(-21,-10.7511);
\coordinate(b3)at(-7.24509,-10.7511);
\coordinate(b4)at(-7.24509,-2.7678);
\coordinate(b5)at(-1.81942,-2.7678);
\coordinate(b6)at(-1.81942,-0.859349);
\coordinate(b7)at(-1.04017,-0.859349);
\coordinate(b8)at(-1.04017,-1.22075);
\coordinate(b9)at(-3.01494,-1.22075);
\coordinate(b10)at(-3.01494,-4.74183);
\coordinate(b11)at(-12.0058,-4.74183);
\coordinate(b12)at(-12.0058,-15.2725);

\draw[fill=black](b1) circle (0.25);		
\draw[fill=black](b2) circle (0.25);		
\draw[fill=black](b3) circle (0.25);		
\draw[fill=black](b4) circle (0.25);		
\draw[fill=black](b5) circle (0.25);		
\draw[fill=black](b6) circle (0.25);		
\draw[fill=black](b7) circle (0.25);		
\draw[fill=black](b8) circle (0.25);		
\draw[fill=black](b9) circle (0.25);		
\draw[fill=black](b10) circle (0.25);		
\draw[fill=black](b11) circle (0.25);		
\draw[fill=black](b12) circle (0.25);		

\draw(b1)--(b2)--(b3)--(b4)--(b5)--(b6)--(b7)--(b8)--(b9)--(b10)--(b11)--(b12)--cycle;
		
	\end{tikzpicture}
	\caption{Example \ref{ex:order12}. The biquadratic curve \eqref{eq:biquad12}, for $\alpha\in(0,1/4)$ being a root of the polynomial $P_6$ given by \eqref{eq:P6}.  The root has approximate value $\alpha\approx0.24930$.  Since the QRT transformation is of order $6$, the orbit of each point in the corresponding group of random walk consists of $12$ points. Here, two such orbits are shown. }
	\label{fig:Q12}
\end{figure}
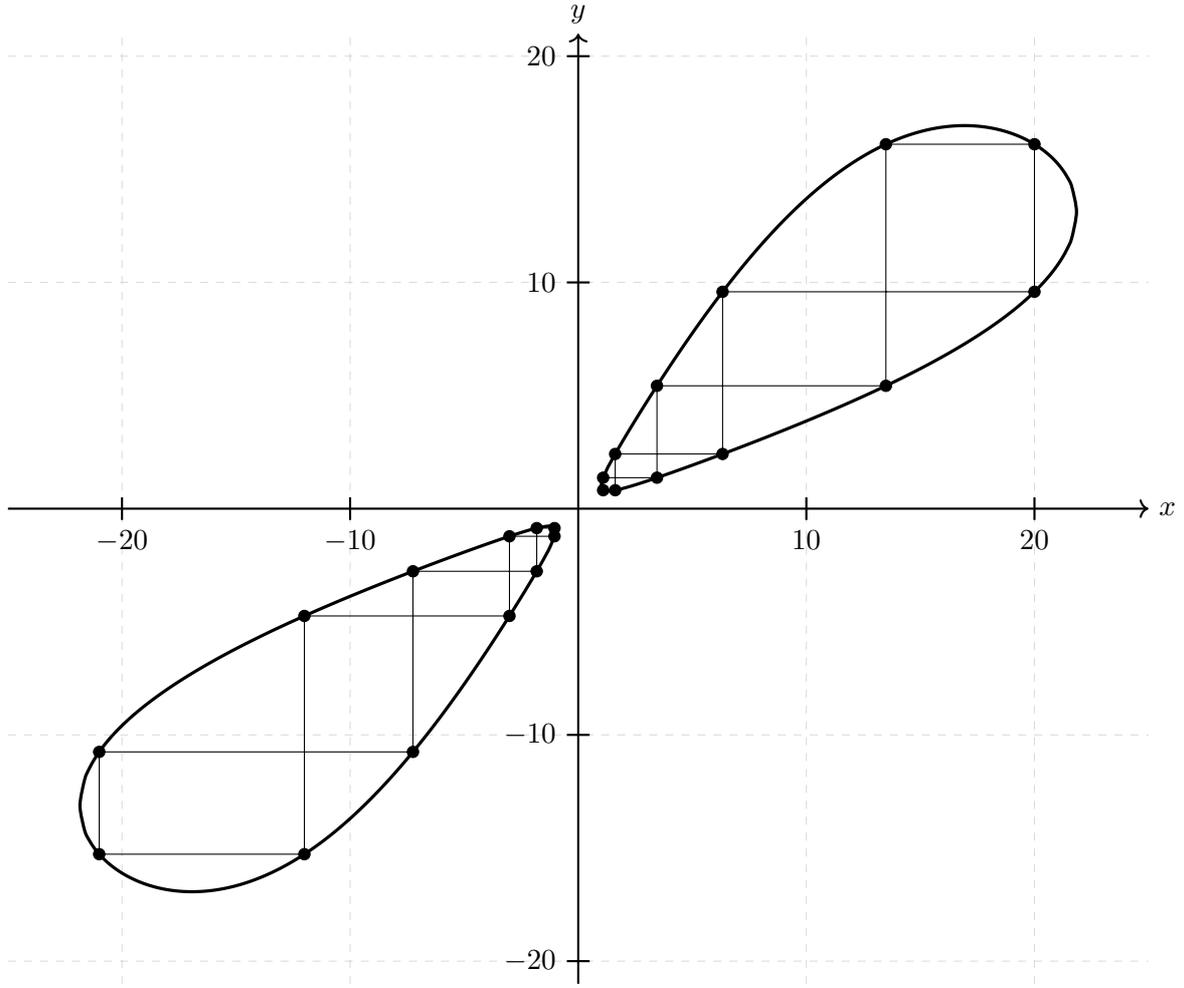

\begin{example}[Random walks with group of order 14]\label{ex:order14}
Consider random walks with the matrix \eqref{eq:matrix}.
The condition for $7$-periodicity of the corresponding QRT map, obtained using Theorem \ref{th:cayley}, is of the form $P_{7}(\alpha)\cdot Q_7(\alpha)=0$, with:
\begin{align*}
P_7(x)
=&\
20^{12} x^{12}
-132\cdot20^{11} x^{11}
-3054\cdot20^{10} x^{10}
-1537\cdot20^{10} x^9
+33220992\cdot10^{9} x^8
\\&
+5169528\cdot20^{7} x^7
-30266436\cdot{20}^6 x^6
+479310648\cdot{20}^5 x^5
+2661358104\cdot10^5 x^4
\\&
-1641246361\cdot20^4 x^3
-39080282181600 x^2+8579231661360 x+1275445223281,
\end{align*}
and
\begin{align*}
Q_7(x)
=&\
20^{12} x^{12}
+108\cdot20^{11} x^{11}
-2014\cdot20^{10} x^{10}
+2675\cdot20^{10} x^9
+1188495\cdot20^8 x^8
-3606312\cdot20^7 x^7
\\&
-77738916\cdot20^6 x^6
-227633832\cdot20^5 x^5
+644039196\cdot10^6x^4
+1077020579\cdot20^4 x^3
\\&
-36482843685600 x^2-7213762397840 x+530637311521.
\end{align*}
One can check that $P_7$ has six real zeros, four of which are in $(-1/4,1/4)$, while $Q_7$ also has six real zeros with three of them in $(-1/4,1/4)$.
The biquadratic curve \eqref{eq:biquad12} is smooth and of genus $1$ for all of those roots, thus here we have seven distinct examples of random walks with the group of order $14$.
In Figure \ref{fig:Q14}, one of those examples is shown.
\end{example}
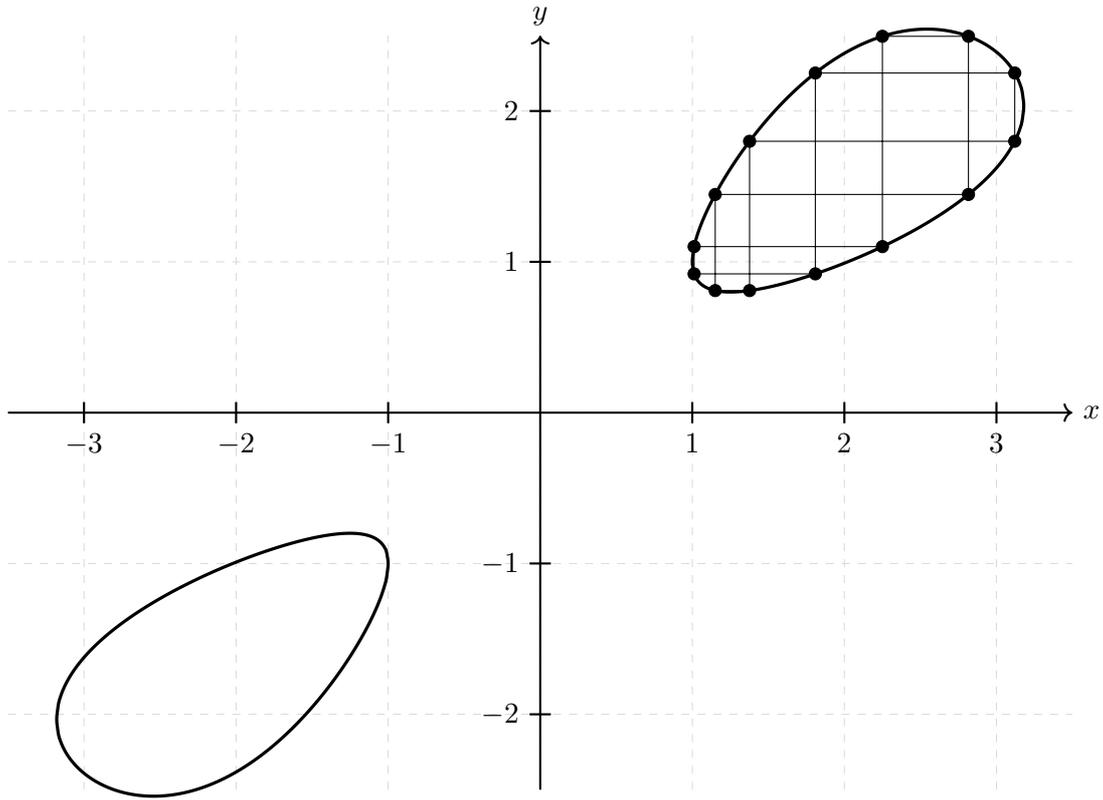
\begin{figure}[h]
	\centering
	\begin{tikzpicture}[scale=2]
		
	\draw[very thin,color=gray!30,dashed,step=1] (-3.5,-2.5) grid (3.5,2.5);
		
	\draw[thick,->] (-3.5,0) -- (3.5,0) node[right] {$x$};
	\draw[thick,->] (0,-2.5) -- (0,2.5) node[above] {$y$};
		
	\draw[thick](0.07,1)--(-0.07,1)node[left]{$1$};
	\draw[thick](0.07,2)--(-0.07,2)node[left]{$2$};
	\draw[thick](0.07,-1)--(-0.07,-1)node[left]{$-1$};
	\draw[thick](0.07,-2)--(-0.07,-2)node[left]{$-2$};
		
		\draw[thick](1,0.07)--(1,-0.07)node[below]{$1$};
		\draw[thick](2,0.07)--(2,-0.07)node[below]{$2$};
		\draw[thick](-1,0.07)--(-1,-0.07)node[below]{$-1$};
		\draw[thick](-2,0.07)--(-2,-0.07)node[below]{$-2$};
		\draw[thick](3,0.07)--(3,-0.07)node[below]{$3$};
\draw[thick](-3,0.07)--(-3,-0.07)node[below]{$-3$};

\coordinate(c1)at(2.25,1.1009);
\coordinate(c2)at(2.25,2.49537);
\coordinate(c3)at(2.81583,2.49537);
\coordinate(c4)at(2.81583,1.4467);
\coordinate(c5)at(1.15064,1.4467);
\coordinate(c6)at(1.15064,0.809376);
\coordinate(c7)at(1.37659,0.809376);
\coordinate(c8)at(1.37659,1.79938);
\coordinate(c9)at(3.12029,1.79938);
\coordinate(c10)at(3.12029,2.25189);
\coordinate(c11)at(1.80899,2.25189);
\coordinate(c12)at(1.80899,0.920199);
\coordinate(c13)at(1.01207,0.920199);
\coordinate(c14)at(1.01207,1.1009);

\draw(c1)--(c2)--(c3)--(c4)--(c5)--(c6)--(c7)--(c8)--(c9)--(c10)--(c11)--(c12)--(c13)--(c14)--cycle;

\draw[fill=black](c1) circle (0.04);		
\draw[fill=black](c2) circle (0.04);		
\draw[fill=black](c3) circle (0.04);		
\draw[fill=black](c4) circle (0.04);		
\draw[fill=black](c5) circle (0.04);		
\draw[fill=black](c6) circle (0.04);		
\draw[fill=black](c7) circle (0.04);		
\draw[fill=black](c8) circle (0.04);		
\draw[fill=black](c9) circle (0.04);		
\draw[fill=black](c10) circle (0.04);		
\draw[fill=black](c11) circle (0.04);		
\draw[fill=black](c12) circle (0.04);		
\draw[fill=black](c13) circle (0.04);		
\draw[fill=black](c14) circle (0.04);

		\draw [very thick] plot [smooth cycle]
coordinates{(1.,1.)(1.0109,0.923664)(1.0218,0.897573)(1.0327,0.879716)(1.0436,0.866106)(1.05449,0.8552)(1.06539,0.846207)(1.07629,0.838658)(1.08719,0.832247)
(1.09809,0.826762)(1.10899,0.82205)(1.11989,0.817994)(1.13079,0.814504)(1.14168,0.811508)(1.15258,0.80895)(1.16348,0.806781)(1.17438,0.804962)(1.18528,0.803459)(1.19618,0.802243)(1.20708,0.801291)(1.21798,0.800581)(1.22887,0.800094)(1.23977,0.799814)(1.25067,0.799726)(1.26157,0.799818)(1.27247,0.800077)(1.28337,0.800495)(1.29427,0.801061)(1.30517,0.801766)(1.31606,0.802604)(1.32696,0.803568)(1.33786,0.80465)(1.34876,0.805845)(1.35966,0.807148)(1.37056,0.808554)(1.38146,0.810059)(1.39236,0.811657)(1.40325,0.813346)(1.41415,0.815121)(1.42505,0.81698)(1.43595,0.818919)(1.44685,0.820935)(1.45775,0.823026)(1.46865,0.825188)(1.47955,0.827421)(1.49044,0.829721)(1.50134,0.832086)(1.51224,0.834515)(1.52314,0.837005)(1.53404,0.839556)(1.54494,0.842164)(1.55584,0.84483)(1.56674,0.847551)(1.57763,0.850326)(1.58853,0.853153)(1.59943,0.856032)(1.61033,0.858962)(1.62123,0.861941)(1.63213,0.864968)(1.64303,0.868043)(1.65393,0.871164)(1.66482,0.874331)(1.67572,0.877542)(1.68662,0.880798)(1.69752,0.884097)(1.70842,0.887439)(1.71932,0.890823)(1.73022,0.894249)(1.74112,0.897716)(1.75201,0.901223)(1.76291,0.90477)(1.77381,0.908357)(1.78471,0.911983)(1.79561,0.915647)
(1.80651,0.91935)(1.81741,0.923091)(1.82831,0.926869)(1.8392,0.930685)(1.8501,0.934538)(1.861,0.938428)(1.8719,0.942354)(1.8828,0.946317)(1.8937,0.950316)(1.9046,0.954352)(1.9155,0.958423)(1.92639,0.96253)(1.93729,0.966672)(1.94819,0.970851)(1.95909,0.975065)(1.96999,0.979314)(1.98089,0.983599)(1.99179,0.987919)(2.00269,0.992275)(2.01358,0.996667)(2.02448,1.00109)(2.03538,1.00556)(2.04628,1.01005)(2.05718,1.01459)(2.06808,1.01916)(2.07898,1.02376)(2.08988,1.0284)(2.10077,1.03308)(2.11167,1.0378)(2.12257,1.04255)(2.13347,1.04734)(2.14437,1.05216)(2.15527,1.05702)(2.16617,1.06192)(2.17707,1.06686)(2.18796,1.07184)(2.19886,1.07685)(2.20976,1.0819)(2.22066,1.08699)(2.23156,1.09212)(2.24246,1.0973)(2.25336,1.10251)(2.26426,1.10776)(2.27515,1.11305)(2.28605,1.11839)(2.29695,1.12376)(2.30785,1.12918)(2.31875,1.13464)(2.32965,1.14015)(2.34055,1.1457)(2.35145,1.1513)(2.36234,1.15694)(2.37324,1.16262)(2.38414,1.16836)(2.39504,1.17414)(2.40594,1.17997)(2.41684,1.18585)(2.42774,1.19178)(2.43864,1.19776)(2.44953,1.20379)(2.46043,1.20988)(2.47133,1.21601)(2.48223,1.22221)(2.49313,1.22846)
(2.50403,1.23476)(2.51493,1.24113)(2.52583,1.24755)(2.53672,1.25404)(2.54762,1.26058)(2.55852,1.26719)(2.56942,1.27387)(2.58032,1.28061)(2.59122,1.28742)(2.60212,1.2943)(2.61302,1.30125)(2.62391,1.30827)(2.63481,1.31537)(2.64571,1.32255)(2.65661,1.32981)(2.66751,1.33715)(2.67841,1.34457)(2.68931,1.35208)(2.70021,1.35968)(2.7111,1.36737)(2.722,1.37516)(2.7329,1.38305)(2.7438,1.39104)(2.7547,1.39913)(2.7656,1.40734)(2.7765,1.41566)(2.7874,1.42409)(2.79829,1.43265)(2.80919,1.44134)(2.82009,1.45017)(2.83099,1.45913)(2.84189,1.46823)(2.85279,1.47749)(2.86369,1.48691)(2.87459,1.4965)(2.88548,1.50627)(2.89638,1.51622)(2.90728,1.52637)(2.91818,1.53672)(2.92908,1.5473)(2.93998,1.55811)(2.95088,1.56918)(2.96178,1.58051)(2.97267,1.59213)(2.98357,1.60406)(2.99447,1.61633)(3.00537,1.62897)(3.01627,1.64201)(3.02717,1.65549)(3.03807,1.66945)(3.04897,1.68396)(3.05986,1.69908)(3.07076,1.7149)(3.08166,1.73152)(3.09256,1.74908)(3.10346,1.76775)(3.11436,1.78779)(3.12526,1.80954)(3.13616,1.83356)(3.14705,1.86074)(3.15795,1.89287)(3.16885,1.93452)
(3.17975, 2.03365)
(3.16885,2.12996)(3.15795,2.16874)(3.14705,2.19794)(3.13616,2.22215)(3.12526,2.24313)(3.11436,2.2618)(3.10346,2.27869)(3.09256,2.29417)(3.08166,2.30847)(3.07076,2.32177)(3.05986,2.33422)(3.04897,2.34592)(3.03807,2.35694)(3.02717,2.36736)(3.01627,2.37724)(3.00537,2.38661)(2.99447,2.39553)(2.98357,2.40402)(2.97267,2.41211)(2.96178,2.41983)(2.95088,2.4272)(2.93998,2.43424)(2.92908,2.44097)(2.91818,2.4474)(2.90728,2.45354)(2.89638,2.45942)(2.88548,2.46503)(2.87459,2.4704)(2.86369,2.47552)(2.85279,2.48042)(2.84189,2.48508)(2.83099,2.48953)(2.82009,2.49377)(2.80919,2.49781)(2.79829,2.50164)(2.7874,2.50528)(2.7765,2.50873)(2.7656,2.51199)(2.7547,2.51507)(2.7438,2.51798)(2.7329,2.52071)(2.722,2.52326)(2.7111,2.52566)(2.70021,2.52788)(2.68931,2.52994)(2.67841,2.53185)(2.66751,2.53359)(2.65661,2.53518)(2.64571,2.53662)(2.63481,2.5379)(2.62391,2.53903)(2.61302,2.54002)(2.60212,2.54086)(2.59122,2.54156)(2.58032,2.54211)(2.56942,2.54252)(2.55852,2.54279)(2.54762,2.54292)(2.53672,2.54291)(2.52583,2.54276)(2.51493,2.54248)(2.50403,2.54206)(2.49313,2.5415)(2.48223,2.54081)(2.47133,2.53999)(2.46043,2.53904)(2.44953,2.53795)(2.43864,2.53674)(2.42774,2.53539)(2.41684,2.53391)(2.40594,2.53231)(2.39504,2.53057)(2.38414,2.52871)(2.37324,2.52671)(2.36234,2.5246)(2.35145,2.52235)(2.34055,2.51997)(2.32965,2.51747)(2.31875,2.51485)(2.30785,2.51209)(2.29695,2.50921)(2.28605,2.50621)(2.27515,2.50308)(2.26426,2.49982)(2.25336,2.49644)(2.24246,2.49294)(2.23156,2.4893)(2.22066,2.48555)(2.20976,2.48167)(2.19886,2.47766)(2.18796,2.47354)(2.17707,2.46928)(2.16617,2.4649)(2.15527,2.4604)(2.14437,2.45577)(2.13347,2.45102)(2.12257,2.44614)(2.11167,2.44114)(2.10077,2.43602)(2.08988,2.43077)(2.07898,2.42539)(2.06808,2.41989)(2.05718,2.41427)(2.04628,2.40852)(2.03538,2.40264)(2.02448,2.39664)(2.01358,2.39051)(2.00269,2.38426)(1.99179,2.37788)(1.98089,2.37138)(1.96999,2.36475)(1.95909,2.35799)(1.94819,2.3511)(1.93729,2.34409)(1.92639,2.33695)(1.9155,2.32969)(1.9046,2.32229)(1.8937,2.31477)(1.8828,2.30712)(1.8719,2.29934)(1.861,2.29143)(1.8501,2.28339)(1.8392,2.27522)(1.82831,2.26692)(1.81741,2.25849)(1.80651,2.24993)(1.79561,2.24123)(1.78471,2.23241)(1.77381,2.22345)(1.76291,2.21436)(1.75201,2.20514)(1.74112,2.19578)(1.73022,2.18629)(1.71932,2.17666)(1.70842,2.16689)(1.69752,2.15699)(1.68662,2.14695)(1.67572,2.13678)(1.66482,2.12646)(1.65393,2.11601)(1.64303,2.10541)(1.63213,2.09468)(1.62123,2.0838)(1.61033,2.07278)(1.59943,2.06161)(1.58853,2.0503)(1.57763,2.03885)(1.56674,2.02724)(1.55584,2.01549)(1.54494,2.00359)(1.53404,1.99153)(1.52314,1.97932)(1.51224,1.96696)(1.50134,1.95445)(1.49044,1.94177)(1.47955,1.92893)(1.46865,1.91594)(1.45775,1.90278)(1.44685,1.88945)(1.43595,1.87596)(1.42505,1.86229)(1.41415,1.84845)(1.40325,1.83443)(1.39236,1.82024)(1.38146,1.80586)(1.37056,1.79129)(1.35966,1.77653)(1.34876,1.76158)(1.33786,1.74643)(1.32696,1.73107)(1.31606,1.7155)(1.30517,1.69971)(1.29427,1.6837)(1.28337,1.66746)(1.27247,1.65098)(1.26157,1.63425)(1.25067,1.61727)(1.23977,1.60001)(1.22887,1.58247)(1.21798,1.56464)(1.20708,1.54649)(1.19618,1.52801)(1.18528,1.50918)(1.17438,1.48997)(1.16348,1.47036)(1.15258,1.45031)(1.14168,1.42979)(1.13079,1.40874)(1.11989,1.38711)(1.10899,1.36483)(1.09809,1.34181)(1.08719,1.31793)(1.07629,1.29304)(1.06539,1.26692)(1.05449,1.23928)(1.0436,1.20964)(1.0327,1.17722)(1.0218,1.14047)(1.0109,1.0954)
};

		\draw [very thick] plot [smooth cycle]
coordinates{(-1.,-1.)(-1.0109,-0.923664)(-1.0218,-0.897573)(-1.0327,-0.879716)(-1.0436,-0.866106)(-1.05449,-0.8552)(-1.06539,-0.846207)(-1.07629,-0.838658)(-1.08719,-0.832247)
	(-1.09809,-0.826762)(-1.10899,-0.82205)(-1.11989,-0.817994)(-1.13079,-0.814504)(-1.14168,-0.811508)(-1.15258,-0.80895)(-1.16348,-0.806781)(-1.17438,-0.804962)(-1.18528,-0.803459)(-1.19618,-0.802243)(-1.20708,-0.801291)(-1.21798,-0.800581)(-1.22887,-0.800094)(-1.23977,-0.799814)(-1.25067,-0.799726)(-1.26157,-0.799818)(-1.27247,-0.800077)(-1.28337,-0.800495)(-1.29427,-0.801061)(-1.30517,-0.801766)(-1.31606,-0.802604)(-1.32696,-0.803568)(-1.33786,-0.80465)(-1.34876,-0.805845)(-1.35966,-0.807148)(-1.37056,-0.808554)(-1.38146,-0.810059)(-1.39236,-0.811657)(-1.40325,-0.813346)(-1.41415,-0.815121)(-1.42505,-0.81698)(-1.43595,-0.818919)(-1.44685,-0.820935)(-1.45775,-0.823026)(-1.46865,-0.825188)(-1.47955,-0.827421)(-1.49044,-0.829721)(-1.50134,-0.832086)(-1.51224,-0.834515)(-1.52314,-0.837005)(-1.53404,-0.839556)(-1.54494,-0.842164)(-1.55584,-0.84483)(-1.56674,-0.847551)(-1.57763,-0.850326)(-1.58853,-0.853153)(-1.59943,-0.856032)(-1.61033,-0.858962)(-1.62123,-0.861941)(-1.63213,-0.864968)(-1.64303,-0.868043)(-1.65393,-0.871164)(-1.66482,-0.874331)(-1.67572,-0.877542)(-1.68662,-0.880798)(-1.69752,-0.884097)(-1.70842,-0.887439)(-1.71932,-0.890823)(-1.73022,-0.894249)(-1.74112,-0.897716)(-1.75201,-0.901223)(-1.76291,-0.90477)(-1.77381,-0.908357)(-1.78471,-0.911983)(-1.79561,-0.915647)
	(-1.80651,-0.91935)(-1.81741,-0.923091)(-1.82831,-0.926869)(-1.8392,-0.930685)(-1.8501,-0.934538)(-1.861,-0.938428)(-1.8719,-0.942354)(-1.8828,-0.946317)(-1.8937,-0.950316)(-1.9046,-0.954352)(-1.9155,-0.958423)(-1.92639,-0.96253)(-1.93729,-0.966672)(-1.94819,-0.970851)(-1.95909,-0.975065)(-1.96999,-0.979314)(-1.98089,-0.983599)(-1.99179,-0.987919)(-2.00269,-0.992275)(-2.01358,-0.996667)(-2.02448,-1.00109)(-2.03538,-1.00556)(-2.04628,-1.01005)(-2.05718,-1.01459)(-2.06808,-1.01916)(-2.07898,-1.02376)(-2.08988,-1.0284)(-2.10077,-1.03308)(-2.11167,-1.0378)(-2.12257,-1.04255)(-2.13347,-1.04734)(-2.14437,-1.05216)(-2.15527,-1.05702)(-2.16617,-1.06192)(-2.17707,-1.06686)(-2.18796,-1.07184)(-2.19886,-1.07685)(-2.20976,-1.0819)(-2.22066,-1.08699)(-2.23156,-1.09212)(-2.24246,-1.0973)(-2.25336,-1.10251)(-2.26426,-1.10776)(-2.27515,-1.11305)(-2.28605,-1.11839)(-2.29695,-1.12376)(-2.30785,-1.12918)(-2.31875,-1.13464)(-2.32965,-1.14015)(-2.34055,-1.1457)(-2.35145,-1.1513)(-2.36234,-1.15694)(-2.37324,-1.16262)(-2.38414,-1.16836)(-2.39504,-1.17414)(-2.40594,-1.17997)(-2.41684,-1.18585)(-2.42774,-1.19178)(-2.43864,-1.19776)(-2.44953,-1.20379)(-2.46043,-1.20988)(-2.47133,-1.21601)(-2.48223,-1.22221)(-2.49313,-1.22846)
	(-2.50403,-1.23476)(-2.51493,-1.24113)(-2.52583,-1.24755)(-2.53672,-1.25404)(-2.54762,-1.26058)(-2.55852,-1.26719)(-2.56942,-1.27387)(-2.58032,-1.28061)(-2.59122,-1.28742)(-2.60212,-1.2943)(-2.61302,-1.30125)(-2.62391,-1.30827)(-2.63481,-1.31537)(-2.64571,-1.32255)(-2.65661,-1.32981)(-2.66751,-1.33715)(-2.67841,-1.34457)(-2.68931,-1.35208)(-2.70021,-1.35968)(-2.7111,-1.36737)(-2.722,-1.37516)(-2.7329,-1.38305)(-2.7438,-1.39104)(-2.7547,-1.39913)(-2.7656,-1.40734)(-2.7765,-1.41566)(-2.7874,-1.42409)(-2.79829,-1.43265)(-2.80919,-1.44134)(-2.82009,-1.45017)(-2.83099,-1.45913)(-2.84189,-1.46823)(-2.85279,-1.47749)(-2.86369,-1.48691)(-2.87459,-1.4965)(-2.88548,-1.50627)(-2.89638,-1.51622)(-2.90728,-1.52637)(-2.91818,-1.53672)(-2.92908,-1.5473)(-2.93998,-1.55811)(-2.95088,-1.56918)(-2.96178,-1.58051)(-2.97267,-1.59213)(-2.98357,-1.60406)(-2.99447,-1.61633)(-3.00537,-1.62897)(-3.01627,-1.64201)(-3.02717,-1.65549)(-3.03807,-1.66945)(-3.04897,-1.68396)(-3.05986,-1.69908)(-3.07076,-1.7149)(-3.08166,-1.73152)(-3.09256,-1.74908)(-3.10346,-1.76775)(-3.11436,-1.78779)(-3.12526,-1.80954)(-3.13616,-1.83356)(-3.14705,-1.86074)(-3.15795,-1.89287)
	(-3.16885,-1.93452)
	(-3.17975, -2.03365)
	(-3.16885,-2.12996)(-3.15795,-2.16874)(-3.14705,-2.19794)(-3.13616,-2.22215)(-3.12526,-2.24313)(-3.11436,-2.2618)(-3.10346,-2.27869)(-3.09256,-2.29417)(-3.08166,-2.30847)(-3.07076,-2.32177)(-3.05986,-2.33422)(-3.04897,-2.34592)(-3.03807,-2.35694)(-3.02717,-2.36736)(-3.01627,-2.37724)(-3.00537,-2.38661)(-2.99447,-2.39553)(-2.98357,-2.40402)(-2.97267,-2.41211)(-2.96178,-2.41983)(-2.95088,-2.4272)(-2.93998,-2.43424)(-2.92908,-2.44097)(-2.91818,-2.4474)(-2.90728,-2.45354)(-2.89638,-2.45942)(-2.88548,-2.46503)(-2.87459,-2.4704)(-2.86369,-2.47552)(-2.85279,-2.48042)(-2.84189,-2.48508)(-2.83099,-2.48953)(-2.82009,-2.49377)(-2.80919,-2.49781)(-2.79829,-2.50164)(-2.7874,-2.50528)(-2.7765,-2.50873)(-2.7656,-2.51199)(-2.7547,-2.51507)(-2.7438,-2.51798)(-2.7329,-2.52071)(-2.722,-2.52326)(-2.7111,-2.52566)(-2.70021,-2.52788)(-2.68931,-2.52994)(-2.67841,-2.53185)(-2.66751,-2.53359)(-2.65661,-2.53518)(-2.64571,-2.53662)(-2.63481,-2.5379)(-2.62391,-2.53903)(-2.61302,-2.54002)(-2.60212,-2.54086)(-2.59122,-2.54156)(-2.58032,-2.54211)(-2.56942,-2.54252)(-2.55852,-2.54279)(-2.54762,-2.54292)(-2.53672,-2.54291)(-2.52583,-2.54276)(-2.51493,-2.54248)(-2.50403,-2.54206)(-2.49313,-2.5415)(-2.48223,-2.54081)(-2.47133,-2.53999)(-2.46043,-2.53904)(-2.44953,-2.53795)(-2.43864,-2.53674)(-2.42774,-2.53539)(-2.41684,-2.53391)(-2.40594,-2.53231)(-2.39504,-2.53057)(-2.38414,-2.52871)(-2.37324,-2.52671)(-2.36234,-2.5246)(-2.35145,-2.52235)(-2.34055,-2.51997)(-2.32965,-2.51747)(-2.31875,-2.51485)(-2.30785,-2.51209)(-2.29695,-2.50921)(-2.28605,-2.50621)(-2.27515,-2.50308)(-2.26426,-2.49982)(-2.25336,-2.49644)(-2.24246,-2.49294)(-2.23156,-2.4893)(-2.22066,-2.48555)(-2.20976,-2.48167)(-2.19886,-2.47766)(-2.18796,-2.47354)(-2.17707,-2.46928)(-2.16617,-2.4649)(-2.15527,-2.4604)(-2.14437,-2.45577)(-2.13347,-2.45102)(-2.12257,-2.44614)(-2.11167,-2.44114)(-2.10077,-2.43602)(-2.08988,-2.43077)(-2.07898,-2.42539)(-2.06808,-2.41989)(-2.05718,-2.41427)(-2.04628,-2.40852)(-2.03538,-2.40264)(-2.02448,-2.39664)(-2.01358,-2.39051)(-2.00269,-2.38426)(-1.99179,-2.37788)(-1.98089,-2.37138)(-1.96999,-2.36475)(-1.95909,-2.35799)(-1.94819,-2.3511)(-1.93729,-2.34409)(-1.92639,-2.33695)(-1.9155,-2.32969)(-1.9046,-2.32229)(-1.8937,-2.31477)(-1.8828,-2.30712)(-1.8719,-2.29934)(-1.861,-2.29143)(-1.8501,-2.28339)(-1.8392,-2.27522)(-1.82831,-2.26692)(-1.81741,-2.25849)(-1.80651,-2.24993)(-1.79561,-2.24123)(-1.78471,-2.23241)(-1.77381,-2.22345)(-1.76291,-2.21436)(-1.75201,-2.20514)(-1.74112,-2.19578)(-1.73022,-2.18629)(-1.71932,-2.17666)(-1.70842,-2.16689)(-1.69752,-2.15699)(-1.68662,-2.14695)(-1.67572,-2.13678)(-1.66482,-2.12646)(-1.65393,-2.11601)(-1.64303,-2.10541)(-1.63213,-2.09468)(-1.62123,-2.0838)(-1.61033,-2.07278)(-1.59943,-2.06161)(-1.58853,-2.0503)(-1.57763,-2.03885)(-1.56674,-2.02724)(-1.55584,-2.01549)(-1.54494,-2.00359)(-1.53404,-1.99153)(-1.52314,-1.97932)(-1.51224,-1.96696)(-1.50134,-1.95445)(-1.49044,-1.94177)(-1.47955,-1.92893)(-1.46865,-1.91594)(-1.45775,-1.90278)(-1.44685,-1.88945)(-1.43595,-1.87596)(-1.42505,-1.86229)(-1.41415,-1.84845)(-1.40325,-1.83443)(-1.39236,-1.82024)(-1.38146,-1.80586)(-1.37056,-1.79129)(-1.35966,-1.77653)(-1.34876,-1.76158)(-1.33786,-1.74643)(-1.32696,-1.73107)(-1.31606,-1.7155)(-1.30517,-1.69971)(-1.29427,-1.6837)(-1.28337,-1.66746)(-1.27247,-1.65098)(-1.26157,-1.63425)(-1.25067,-1.61727)(-1.23977,-1.60001)(-1.22887,-1.58247)(-1.21798,-1.56464)(-1.20708,-1.54649)(-1.19618,-1.52801)(-1.18528,-1.50918)(-1.17438,-1.48997)(-1.16348,-1.47036)(-1.15258,-1.45031)(-1.14168,-1.42979)(-1.13079,-1.40874)(-1.11989,-1.38711)(-1.10899,-1.36483)(-1.09809,-1.34181)(-1.08719,-1.31793)(-1.07629,-1.29304)(-1.06539,-1.26692)(-1.05449,-1.23928)(-1.0436,-1.20964)(-1.0327,-1.17722)(-1.0218,-1.14047)(-1.0109,-1.0954)
};

	\end{tikzpicture}
	\caption{The biquadratic curve \eqref{eq:biquad12}, for $\alpha$ being the largest root of the polynomial $P_7$ which lies in $(-1/4,1/4)$. The approximate value of that root is $\alpha\approx0.21907$. Since the QRT transformation is of order $7$, the orbit of each point in the corresponding group of random walk consists of $14$ points. One orbit is shown. }
	\label{fig:Q14}
\end{figure}

\begin{example}[Random walks with group of order 16]\label{ex:order16}
Consider random walks with the matrix of the form as in Example \ref{ex:order10b}, see \eqref{eq:matrix10}.
Theorem \ref{th:cayley} gives that the condition of $8$-periodicity of the QRT transformation is equivalent to $P_8(\alpha)=0$, with
\begin{align*}
P_8(x)=&\
720^{12} x^{24}
+48\cdot720^{11} x^{23}
+12173088\cdot720^{10}x^{22}
+261726093\cdot6^5\cdot240^{10}x^{21}
\\&
-302090057\cdot162^2\cdot240^{10} x^{20}
-342221260129056\cdot1440^{7} x^{19}
+296459946313775748\cdot480^7 x^{18}
\\&
+1118545314702907752\cdot5760^5 x^{17}
-280331580296674545363\cdot11520^4x^{16}
\\&
-7373756024596349563986\cdot3840^4x^{15}
-252915006814253850702967235149824000 x^{14}
\\&
+852630401570800608206184851177472000 x^{13}
+1380087227897215680351747388211200 x^{12}
\\&
-94156379331381933840523329955430400 x^{11}
+17920120671486543086006490051379200 x^{10}
\\&
-7336759131940740122237302996992000 x^9
-1659061223344425240385415387873280 x^8
\\&
+3081652542867863291605737456992256 x^7
-336812496532651996260994153930752 x^6
\\&
-290159641899959711486508873031680 x^5
+56921711506423502988585171870720 x^4
\\&
+9632396294650986598746653863680 x^3
-2557401128040288900995832717216 x^2
\\&
-65643464566444059311731288128 x+29930352989898719170714742775.
\end{align*}
Among real roots of this polynomial, there are three which lie in $(-1/4,1/4)$, and the biquadratic curve \eqref{eq:biquad10b} is smooth of genus $1$ for each of them, thus here we have three examples of random walks with the group of order $16$, with the transition probabilities given in \eqref{eq:prob10}.
See illustration in Figure \ref{fig:Q16}.
\end{example}
	\begin{figure}[h]
	\centering
	\begin{tikzpicture}[scale=3]
		
\draw[very thin,color=gray!30,dashed] (-2.5,-1.5) grid (1.5,1.5);
		
		\draw[thick,->] (-2.5,0) -- (1.5,0) node[right] {$x$};
		\draw[thick,->] (0,-1.5) -- (0,1.5) node[above] {$y$};
		
	\draw[thick](-0.05,1)--(0.05,1)node[right]{$1$};
	\draw[thick](-0.05,-1)--(0.01,-1)node[right]{$-1$};

		\draw[thick](1,-0.05)--(1,0.05)node[above]{$1$};
		\draw[thick](-1,-0.05)--(-1,0.05)node[above]{$-1$};
		\draw[thick](-2,-0.05)--(-2,0.05)node[above]{$-2$};

		\draw [very thick] plot [smooth cycle] coordinates{
(-2.54679,-0.577146)(-2.52577,-0.632762)(-2.50475,-0.658088)(-2.48372,-0.678569)(-2.4627,-0.696606)(-2.44168,-0.713126)(-2.42066,-0.728603)(-2.39963,-0.743318)(-2.37861,-0.757455)(-2.35759,-0.77114)(-2.33656,-0.784466)(-2.31554,-0.797503)(-2.29452,-0.810307)(-2.2735,-0.822919)(-2.25247,-0.835377)(-2.23145,-0.847709)(-2.21043,-0.85994)(-2.18941,-0.872093)(-2.16838,-0.884184)(-2.14736,-0.89623)(-2.12634,-0.908246)(-2.10532,-0.920244)(-2.08429,-0.932234)(-2.06327,-0.944228)(-2.04225,-0.956234)(-2.02122,-0.968262)(-2.0002,-0.980318)(-1.97918,-0.99241)(-1.95816,-1.00454)(-1.93713,-1.01673)(-1.91611,-1.02896)(-1.89509,-1.04126)(-1.87407,-1.05362)(-1.85304,-1.06605)(-1.83202,-1.07856)(-1.811,-1.09114)(-1.78998,-1.1038)(-1.76895,-1.11655)(-1.74793,-1.12939)(-1.72691,-1.14232)(-1.70588,-1.15534)(-1.68486,-1.16846)(-1.66384,-1.18167)(-1.64282,-1.19499)(-1.62179,-1.2084)(-1.60077,-1.22192)(-1.57975,-1.23555)(-1.55873,-1.24927)(-1.5377,-1.2631)(-1.51668,-1.27704)(-1.49566,-1.29107)(-1.47464,-1.3052)(-1.45361,-1.31944)(-1.43259,-1.33376)(-1.41157,-1.34818)(-1.39055,-1.36268)(-1.36952,-1.37727)(-1.3485,-1.39193)(-1.32748,-1.40665)(-1.30645,-1.42143)(-1.28543,-1.43626)(-1.26441,-1.45113)(-1.24339,-1.46601)(-1.22236,-1.4809)(-1.20134,-1.49578)(-1.18032,-1.51063)(-1.1593,-1.52542)(-1.13827,-1.54013)(-1.11725,-1.55474)(-1.09623,-1.5692)(-1.07521,-1.58348)(-1.05418,-1.59755)(-1.03316,-1.61135)(-1.01214,-1.62484)(-0.991115,-1.63795)(-0.970092,-1.65062)(-0.949069,-1.66278)(-0.928047,-1.67436)(-0.907024,-1.68525)(-0.886001,-1.69535)(-0.864979,-1.70456)(-0.843956,-1.71274)(-0.822933,-1.71974)(-0.801911,-1.72542)(-0.780888,-1.72957)(-0.759865,-1.732)(-0.738843,-1.73247)(-0.71782,-1.73072)(-0.696797,-1.72642)(-0.675775,-1.71923)(-0.654752,-1.70874)(-0.63373,-1.69445)(-0.612707,-1.67577)(-0.591684,-1.65198)(-0.570662,-1.62215)(-0.549639,-1.58505)(-0.528616,-1.53893)(-0.507594,-1.48104)(-0.486571,-1.40635)(-0.465548,-1.30257)
(-0.444526,-1.04689)(-0.465548,-0.819538)(-0.486571,-0.739317)(-0.507594,-0.683801)(-0.528616,-0.641048)(-0.549639,-0.60638)(-0.570662,-0.577384)(-0.591684,-0.552617)(-0.612707,-0.531142)(-0.63373,-0.512307)(-0.654752,-0.49564)(-0.675775,-0.480785)(-0.696797,-0.467467)(-0.71782,-0.455471)(-0.738843,-0.444622)(-0.759865,-0.434777)(-0.780888,-0.425819)(-0.801911,-0.41765)(-0.822933,-0.410185)(-0.843956,-0.403353)(-0.864979,-0.397095)(-0.886001,-0.391356)(-0.907024,-0.38609)(-0.928047,-0.381258)(-0.949069,-0.376824)(-0.970092,-0.372755)(-0.991115,-0.369025)(-1.01214,-0.365609)(-1.03316,-0.362484)(-1.05418,-0.35963)(-1.07521,-0.357029)(-1.09623,-0.354666)(-1.11725,-0.352525)(-1.13827,-0.350593)(-1.1593,-0.348858)(-1.18032,-0.347309)(-1.20134,-0.345936)(-1.22236,-0.34473)(-1.24339,-0.343683)(-1.26441,-0.342787)(-1.28543,-0.342034)(-1.30645,-0.341419)(-1.32748,-0.340936)(-1.3485,-0.340578)(-1.36952,-0.340343)(-1.39055,-0.340224)(-1.41157,-0.340217)(-1.43259,-0.34032)(-1.45361,-0.340528)(-1.47464,-0.340839)(-1.49566,-0.341249)(-1.51668,-0.341756)(-1.5377,-0.342358)(-1.55873,-0.343052)(-1.57975,-0.343838)(-1.60077,-0.344713)(-1.62179,-0.345676)(-1.64282,-0.346726)(-1.66384,-0.347861)(-1.68486,-0.349083)(-1.70588,-0.350389)(-1.72691,-0.351779)(-1.74793,-0.353255)(-1.76895,-0.354814)(-1.78998,-0.356459)(-1.811,-0.35819)(-1.83202,-0.360007)(-1.85304,-0.361912)(-1.87407,-0.363906)(-1.89509,-0.36599)(-1.91611,-0.368167)(-1.93713,-0.370439)(-1.95816,-0.372808)(-1.97918,-0.375279)(-2.0002,-0.377853)(-2.02122,-0.380535)(-2.04225,-0.38333)(-2.06327,-0.386243)(-2.08429,-0.38928)(-2.10532,-0.392447)(-2.12634,-0.395753)(-2.14736,-0.399206)(-2.16838,-0.402816)(-2.18941,-0.406595)(-2.21043,-0.410557)(-2.23145,-0.414718)(-2.25247,-0.419097)(-2.2735,-0.423716)(-2.29452,-0.428604)(-2.31554,-0.433792)(-2.33656,-0.439324)(-2.35759,-0.445252)(-2.37861,-0.451643)(-2.39963,-0.458589)(-2.42066,-0.466214)(-2.44168,-0.4747)(-2.4627,-0.484326)(-2.48372,-0.495564)(-2.50475,-0.50934)(-2.52577,-0.528053)			
};
		
	\draw [very thick] plot [smooth cycle] coordinates{
(0.569046,0.96874)(0.574656,0.893962)(0.580266,0.863778)(0.585876,0.841178)(0.591486,0.822584)(0.597096,0.806595)(0.602706,0.792484)(0.608316,0.779816)(0.613927,0.768304)(0.619537,0.757747)(0.625147,0.747998)(0.630757,0.738945)(0.636367,0.730499)(0.641977,0.722592)(0.647587,0.715164)(0.653197,0.708169)(0.658807,0.701566)(0.664417,0.695322)(0.670028,0.689407)(0.675638,0.683796)(0.681248,0.678467)(0.686858,0.6734)(0.692468,0.668579)(0.698078,0.663987)(0.703688,0.659612)(0.709298,0.65544)(0.714908,0.651462)(0.720518,0.647666)(0.726129,0.644044)(0.731739,0.640588)(0.737349,0.637289)(0.742959,0.634142)(0.748569,0.631139)(0.754179,0.628275)(0.759789,0.625544)(0.765399,0.622942)(0.771009,0.620464)(0.776619,0.618106)(0.78223,0.615864)(0.78784,0.613734)(0.79345,0.611713)(0.79906,0.609798)(0.80467,0.607987)(0.81028,0.606276)(0.81589,0.604663)(0.8215,0.603146)(0.82711,0.601723)(0.832721,0.600392)(0.838331,0.599152)(0.843941,0.598)(0.849551,0.596936)(0.855161,0.595958)(0.860771,0.595066)(0.866381,0.594258)(0.871991,0.593533)(0.877601,0.592891)(0.883211,0.592331)(0.888822,0.591854)(0.894432,0.591457)(0.900042,0.591143)(0.905652,0.590909)(0.911262,0.590758)(0.916872,0.590688)(0.922482,0.590701)(0.928092,0.590797)(0.933702,0.590976)(0.939312,0.59124)(0.944923,0.591591)(0.950533,0.592028)(0.956143,0.592555)(0.961753,0.593172)(0.967363,0.593882)(0.972973,0.594687)(0.978583,0.595591)(0.984193,0.596595)(0.989803,0.597704)(0.995413,0.598921)(1.00102,0.600251)(1.00663,0.6017)(1.01224,0.603272)(1.01785,0.604974)(1.02346,0.606815)(1.02907,0.608801)(1.03468,0.610945)(1.04029,0.613256)(1.0459,0.615749)(1.05151,0.618439)(1.05712,0.621345)(1.06273,0.624491)(1.06834,0.627904)
(1.07395,0.631618)(1.07957,0.635677)(1.08518,0.640137)(1.09079,0.645074)(1.0964,0.650588)(1.10201,0.65683)(1.10762,0.664027)(1.11323,0.672563)(1.11884,0.683189)(1.12445,0.697842)(1.13006,0.736951)(1.12445,0.78136)(1.11884,0.801329)(1.11323,0.817286)(1.10762,0.831167)(1.10201,0.843723)(1.0964,0.855337)(1.09079,0.866238)(1.08518,0.876574)(1.07957,0.886446)(1.07395,0.89593)(1.06834,0.90508)(1.06273,0.913941)(1.05712,0.922546)(1.05151,0.930922)(1.0459,0.939092)(1.04029,0.947074)(1.03468,0.954885)(1.02907,0.962536)(1.02346,0.970038)(1.01785,0.977403)(1.01224,0.984636)(1.00663,0.991745)(1.00102,0.998737)(0.995413,1.00562)(0.989803,1.01239)(0.984193,1.01905)(0.978583,1.02562)(0.972973,1.03208)(0.967363,1.03845)(0.961753,1.04473)(0.956143,1.05091)(0.950533,1.057)(0.944923,1.063)(0.939312,1.06891)(0.933702,1.07473)(0.928092,1.08047)(0.922482,1.08611)(0.916872,1.09166)(0.911262,1.09713)(0.905652,1.1025)(0.900042,1.10778)(0.894432,1.11297)(0.888822,1.11807)(0.883211,1.12307)(0.877601,1.12797)(0.871991,1.13278)(0.866381,1.13749)(0.860771,1.14209)(0.855161,1.14659)(0.849551,1.15098)(0.843941,1.15526)(0.838331,1.15943)(0.832721,1.16349)(0.82711,1.16743)(0.8215,1.17124)(0.81589,1.17493)(0.81028,1.17849)(0.80467,1.18191)(0.79906,1.1852)(0.79345,1.18834)(0.78784,1.19134)(0.78223,1.19418)(0.776619,1.19686)(0.771009,1.19938)(0.765399,1.20173)(0.759789,1.20391)(0.754179,1.20589)(0.748569,1.20769)(0.742959,1.20929)(0.737349,1.21068)(0.731739,1.21185)(0.726129,1.21279)(0.720518,1.21349)(0.714908,1.21395)(0.709298,1.21415)(0.703688,1.21407)(0.698078,1.2137)(0.692468,1.21303)(0.686858,1.21203)(0.681248,1.2107)(0.675638,1.209)(0.670028,1.20692)(0.664417,1.20443)(0.658807,1.2015)(0.653197,1.1981)(0.647587,1.19419)(0.641977,1.18972)(0.636367,1.18464)(0.630757,1.17889)(0.625147,1.1724)(0.619537,1.16508)(0.613927,1.1568)(0.608316,1.14741)(0.602706,1.13672)(0.597096,1.12442)(0.591486,1.11008)(0.585876,1.09296)(0.580266,1.07166)(0.574656,1.04259)
};
		
\coordinate(r1)at(-1.5,-1.28816);
\coordinate(r2)at(-1.5,-0.341345);
\coordinate(r3)at(-1.30932,-0.341345);
\coordinate(r4)at(-1.30932,-1.41941);
\coordinate(r5)at(-0.489841,-1.41941);
\coordinate(r6)at(-0.489841,-0.72952);
\coordinate(r7)at(-2.41937,-0.72952);
\coordinate(r8)at(-2.41937,-0.465727);
\coordinate(r9)at(-0.699717,-0.465727);
\coordinate(r10)at(-0.699717,-1.72718);
\coordinate(r11)at(-0.793935,-1.72718);
\coordinate(r12)at(-0.793935,-0.420662);
\coordinate(r13)at(-2.25973,-0.420662);
\coordinate(r14)at(-2.25973,-0.831095);
\coordinate(r15)at(-0.463253,-0.831095);
\coordinate(r16)at(-0.463253,-1.28816);

\draw[fill=black](r1) circle (0.02);		
\draw[fill=black](r2) circle (0.02);		
\draw[fill=black](r3) circle (0.02);		
\draw[fill=black](r4) circle (0.02);		
\draw[fill=black](r5) circle (0.02);		
\draw[fill=black](r6) circle (0.02);		
\draw[fill=black](r7) circle (0.02);		
\draw[fill=black](r8) circle (0.02);		
\draw[fill=black](r9) circle (0.02);		
\draw[fill=black](r10) circle (0.02);		
\draw[fill=black](r11) circle (0.02);		
\draw[fill=black](r12) circle (0.02);		
\draw[fill=black](r13) circle (0.02);		
\draw[fill=black](r14) circle (0.02);		
\draw[fill=black](r15) circle (0.02);		
\draw[fill=black](r16) circle (0.02);

\draw(r1)--(r2)--(r3)--(r4)--(r5)--(r6)--(r7)--(r8)--(r9)--(r10)--(r11)--(r12)--(r13)--(r14)--(r15)--(r16)--cycle;

\coordinate(ra1)at(1,0.6);
\coordinate(ra2)at(1,1.);
\coordinate(ra3)at(0.570025,1.);
\coordinate(ra4)at(0.570025,0.937332);
\coordinate(ra5)at(1.04712,0.937332);
\coordinate(ra6)at(1.04712,0.616316);
\coordinate(ra7)at(0.781075,0.616316);
\coordinate(ra8)at(0.781075,1.19475);
\coordinate(ra9)at(0.648346,1.19475);
\coordinate(ra10)at(0.648346,0.714194);
\coordinate(ra11)at(1.12826,0.714194);
\coordinate(ra12)at(1.12826,0.7614);
\coordinate(ra13)at(0.617543,0.7614);
\coordinate(ra14)at(0.617543,1.16225);
\coordinate(ra15)at(0.83445,1.16225);
\coordinate(ra16)at(0.83445,0.6);

\draw[fill=black](ra1) circle (0.02);		
\draw[fill=black](ra2) circle (0.02);		
\draw[fill=black](ra3) circle (0.02);		
\draw[fill=black](ra4) circle (0.02);		
\draw[fill=black](ra5) circle (0.02);		
\draw[fill=black](ra6) circle (0.02);		
\draw[fill=black](ra7) circle (0.02);		
\draw[fill=black](ra8) circle (0.02);		
\draw[fill=black](ra9) circle (0.02);		
\draw[fill=black](ra10) circle (0.02);		
\draw[fill=black](ra11) circle (0.02);		
\draw[fill=black](ra12) circle (0.02);		
\draw[fill=black](ra13) circle (0.02);		
\draw[fill=black](ra14) circle (0.02);		
\draw[fill=black](ra15) circle (0.02);		
\draw[fill=black](ra16) circle (0.02);

\draw(ra1)--(ra2)--(ra3)--(ra4)--(ra5)--(ra6)--(ra7)--(ra8)--(ra9)--(ra10)--(ra11)--(ra12)--(ra13)--(ra14)--(ra15)--(ra16)--cycle;

	\end{tikzpicture}
	\caption{Example \ref{ex:order16}. The biquadratic curve \eqref{eq:biquad10b} and two orbits of the group generated by horizontal and vertical switches.
	Each orbit consists of $16$ points, since $\alpha$ is a root of polynomial $P_8$.
		That polynomial has three roots in $(-1/4,1/4)$ and for this illustration we chose the middle one, which has approximate value $\alpha\approx-0.12324$.
}
	\label{fig:Q16}
\end{figure}
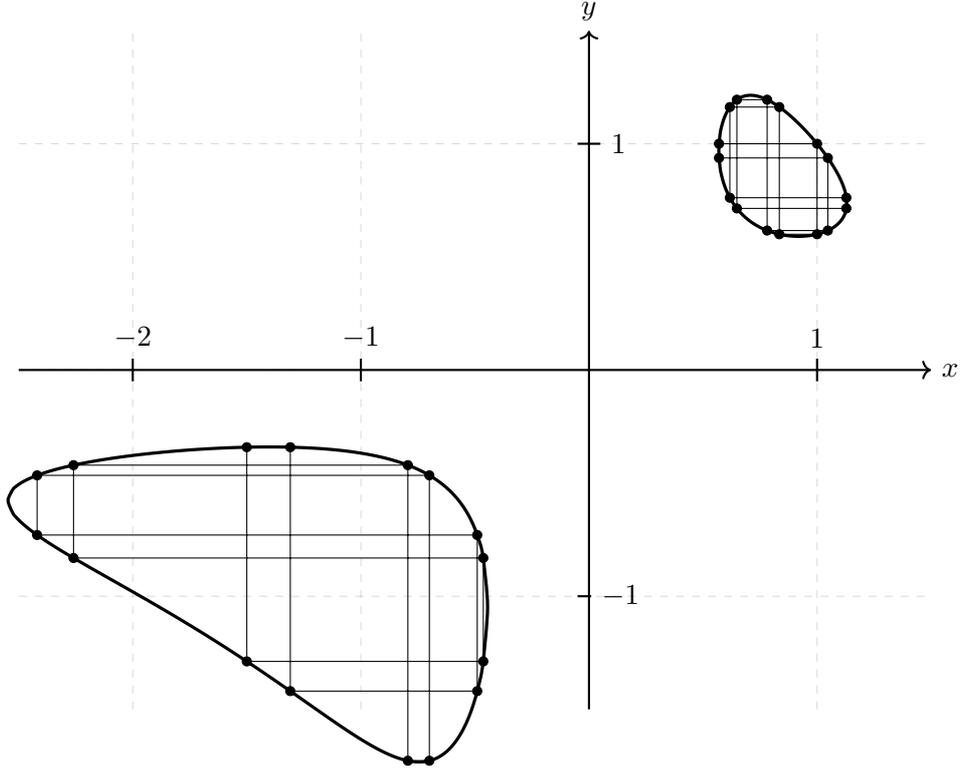

\section{Singular cases: QRT transformations and groups of random walks of a finite order}\label{sec:singular}

The discussion in Sections \ref{sec:smooth} and \ref{sec:small} relates to the \emph{nonsingular $(2,2)$ correspondences}, i.e.~the cases when the biquadratic curve is a smooth elliptic curve.
In order to complete that discussion, here we focus to singular cases, which happen when the curve is either an irreducible non-smooth biquadratic curve, or a reducible biquadratic curve.

\subsection{General considerations of singular cases}

A biquadratic curve is singular if it either contains some singular points or is reducible.
The possible case of irreducible singular biquadratics are:
\begin{itemize}
\item[(i)] irreducible curve with one ordinary double point. In this case we have: $d_1=d_2=(2, 1,1)$;
\item[(ii)] irreducible curve with one cusp point; $d_1=d_2=(3, 1)$.
\end{itemize}

Each reducible biquadratic in $\mathbb P^1\times \mathbb P^1$ is the union of the curves which can be of the following types:

\begin{itemize}
	\item $(1,1)$ correspondence in $\mathbb P^1\times \mathbb P^1$.
	Such a correspondence is the graph of a M\"obius transformation in $\mathbb P^1$ and the corresponding curve is a conic, which we denote by $\mathcal L$.
	Note that two such conics always have two common points, counting multiplicities, i.e.~they either intersect transversally at two point or have one point of tangency.
	\item If in \eqref{eq:biquad1}, we have $a(x)\equiv 0$, then the correspondence is  $(2,1)$ in  $\mathbb P^1\times \mathbb P^1$ and the we denote the corresponding twisted cubic curve by $\mathcal T_1$.
	\item Similarly, if in \eqref{eq:biquad1}, the coefficient $\tilde a(y)$ vanishes, then the correspondence is  $(1,2)$ in  $\mathbb P^1\times \mathbb P^1$ and we denote the corresponding twisted cubic curve by $\mathcal T_2$.
	\item Vertical lines $\{x_0\}\times \mathbb P^1$, which we denote by $\mathcal V$.
	\item Horizontal lines $\mathbb P^1\times \{y_0\}$, denoted by $\mathcal H$.
\end{itemize}

Now we can provide the exhaustive list of reducible biquadratics in
$\mathbb P^1\times \mathbb P^1$ with the types of their critical divisors:

\begin{itemize}
\item[(iii)] the  union of two conics $\mathcal{L}_1\cup\mathcal{L}_2$ with two common points; $d_1=d_2=(2,2)$;
\item[(iv)] the union of two conics $\mathcal{L}_1\cup\mathcal{L}_2$ with a point of tangency; $d_1=d_2=(4)$;
\item[(v)] a double conic $2\mathcal{L}$; $d_1$ and $d_2$ are undefined;
\item[(vi)] the union of a conic, and a horizontal and a vertical line $\mathcal{L}\cup\mathcal{H}\cup\mathcal{V}$, such that the intersection point of the lines does not belong to the conic; $d_1=d_2=(2,2)$;
\item[(vii)] the union of a conic, and a horizontal and a vertical line $\mathcal{L}\cup\mathcal{H}\cup\mathcal{V}$, such that the intersection point of the lines lies on the conic; $d_1=d_2=(4)$;
\item[(viii)] the union of two distinct horizontal and two distinct  vertical lines
$\mathcal{H}_1\cup\mathcal{H}_2\cup\mathcal{V}_1\cup\mathcal{V}_2$; $d_1=d_2=(2,2)$;
\item[(ix)] the union of one double horizontal and one double vertical line $2\mathcal{H}\cup2\mathcal{V}$; $d_1$ and $d_2$ are undefined;
\item[(x)] the union of two distinct horizontal and one double vertical line $\mathcal{H}_1\cup\mathcal{H}_2\cup2\mathcal{V}$; $d_1=(4)$, $d_2$ is undefined;
\item[(xi)] the union of a double horizontal and two distinct  vertical lines $2\mathcal{H}\cup\mathcal{V}_1\cup\mathcal{V}_2$; $d_1$ is undefined, $d_2=(4)$;
\item[(xii)] the union of a horizontal line and a $(1,2)$ correspondence defined by a twisted cubic
$\mathcal{H}\cup\mathcal T_2$, where $\mathcal{H}$ is not tangent to the cubic; $d_1=(2,2)$, $d_2=(2,1,1)$;
\item[(xiii)] the union of a horizontal line and a $(1,2)$ correspondence defined by a twisted cubic
$\mathcal{H}\cup\mathcal T_2$, where the line is tangent to the cubic; $d_1=(4)$, $d_2=(3,1)$;
\item[(xiv)] the union of a vertical line and a $(2,1)$ correspondence defined by a twisted cubic $\mathcal{V}\cup\mathcal T_1$, where the line is not tangent to the cubic; $d_1=(2,1,1)$, $d_2=(2,2)$;
\item[(xv)] the union of a vertical line and a $(2,1)$ correspondence defined by a twisted cubic $\mathcal{V}\cup\mathcal T_1$, where $\mathcal{V}$ is tangent to the cubic; $d_1=(3,1)$, $d_2=(4)$.
\end{itemize}

The following result goes back to Frobenius:

\begin{proposition}[\cites{Fr,Sam}] A $(2,2)$ correspondence in $\mathbb P^1\times \mathbb P^1$ that defines a singular curve is symmetrizable in cases (i)--(ix) and not symmetrizable in the remaining cases (x)--(xv).
\end{proposition}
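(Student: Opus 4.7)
The plan is to prove both directions by tracking how the critical divisor types of the biquadratic transform under M\"obius reparametrizations on one factor. \emph{Necessary condition:} if a M\"obius transformation $f$ applied to $y$ symmetrizes $Q$, so that $\tilde Q(x,y):=Q(x,f(y))$ satisfies $\tilde Q(x,y)=\tilde Q(y,x)$, then the critical divisors of $\tilde Q$ on the two factors coincide as divisors on $\mathbb{P}^1$. Since $f$ carries the critical divisor $d_2$ of $Q$ to that of $\tilde Q$ on the second factor while $d_1$ is unchanged, $d_1$ and $d_2$ must be projectively equivalent on $\mathbb{P}^1$, and in particular they must be of the same type in the list above.

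This immediately rules out the cases $(x)$--$(xv)$: inspection of the types in each of these shows that $d_1$ and $d_2$ either differ, or one is defined while the other is not, and no M\"obius transformation on one factor can reconcile this.

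For cases $(i)$--$(ix)$ I would exhibit a symmetric representative in each $\mathrm{PGL}_2\times \mathrm{PGL}_2$-orbit (this action preserves the symmetrizability class, since a transformation in $x$ can be absorbed into $f$). For the reducible cases this is done via the component structure: the two-conics configurations $(iii)$--$(iv)$ can be normalized so that the two nodes, respectively the tangency point, lie on the diagonal $x=y$; the conic-plus-line cases $(vi)$--$(vii)$ and the four-line case $(viii)$ are symmetrized by choosing $f$ that pairs the horizontal components with the vertical ones. The double conic in $(v)$ defines a $(1-1)$ correspondence, i.e.\ a M\"obius map, and any such map is conjugate to an involution; case $(ix)$ is handled analogously on the level of lines. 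The irreducible singular cases $(i)$ (nodal) and $(ii)$ (cuspidal) admit essentially unique normal forms in $\mathbb{P}^1\times \mathbb{P}^1$ up to the $\mathrm{PGL}_2\times\mathrm{PGL}_2$-action, and in each such normal form a symmetric representative can be written down directly.

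The main obstacle is that several of the cases $(i)$--$(ix)$ share the same critical divisor type: $(iii)$, $(vi)$, $(viii)$ all have $d_1=d_2=(2,2)$, and $(iv)$, $(vii)$ share $d_1=d_2=(4)$. The type alone is therefore not a complete discriminator, and one must carry along the finer geometric data -- incidences of components and positions of the singular points relative to the critical divisor -- and verify symmetrizability within each subtype. The cleanest path, following Frobenius \cite{Fr} as presented in \cite{Sam}, is to derive an explicit canonical form for each of the fifteen cases, from which symmetrizability (and its failure in $(x)$--$(xv)$) is read off directly.
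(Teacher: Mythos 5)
The paper does not prove this proposition; it is presented as a citation to Frobenius via Samuel, so there is no internal proof to compare against. Your proposal supplies a genuine necessity argument that the paper does not record, and it is sound: a M\"obius reparametrization $f$ of the second factor multiplies $\mathcal D_{Q_y}(x)$ by a nonzero scalar (hence fixes $d_1$ as a divisor) and replaces $\mathcal D_{Q_x}(y)$ by its pullback under $f$ (hence fixes $d_2$ as a type); a symmetric biquadratic has $d_1=d_2$ as divisors, so symmetrizability forces $d_1$ and $d_2$ to have the same type, which fails in $(x)$--$(xv)$. This cleanly handles the negative half.

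For the positive half you only sketch, and you yourself defer to the Frobenius--Samuel canonical forms, so your blueprint ultimately coincides with the one the paper invokes. Two minor points. First, your worry that the critical type fails to separate $(iii)$, $(vi)$, $(viii)$ (and $(iv)$, $(vii)$) is immaterial for sufficiency: you are not trying to decide symmetrizability from the type, you only need to exhibit a symmetrizing $f$ in each of the nine geometric cases, and the component structure already distinguishes them. Second, in case $(v)$ the claim ``any M\"obius map is conjugate to an involution'' is false (conjugation preserves the order of an element), and in any event the relevant operation is composition $f^{-1}\circ\phi$, not conjugation; the correct and elementary fix is to take $f=\phi$, which turns the double conic into the double diagonal. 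For the line-and-conic cases $(vi)$, $(vii)$ you assert an $f$ that simultaneously pairs the horizontal line with the vertical one and makes the transformed M\"obius graph an involution; this system is indeed solvable (take $f=\phi\circ g$ for an involution $g$ sending $x_0$ to $\phi^{-1}(y_0)$), but that verification should be written out.
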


\begin{example}
Let us consider a general irreducible nonsymmetric $(2,2)$ correspondence \eqref{eq:biquad} with a double point.

First, we demonstrate how to symmetrize such a curve.
Without losing generality, we can assume that the double point is placed at $(0,0)$, which means:
	$$
	a_{00}=a_{10}=a_{01}=0.
	$$
Applying the M\"obius transformation $y_1=y/(ry+s)$ we get:
	$$
	a_{22}x^2y^2+a_{21}x^2y(ry+s)+a_{20}x^2(ry+s)^2+a_{12}xy^2+a_{11}xy(ry+s)=0.
	$$
	The last correspondence is symmetric if the following relations hold:
	$$
	a_{21}s+2a_{20}rs-a_{12}-a_{11}r=0, \quad a_{20}s^2=a_{02}.
	$$
Notice that both $a_{20}$ and $a_{02}$ are nonzero, as otherwise the correspondence would be reducible.
Thus there are two distinct solutions in $s$ of the last equation, which are opposite to each other.
The first equation is linear in $r$, with the coefficient with $r$ equal to $2a_{20}s-a_{11}$.
This coefficient is nonzero for at least one of the two values for $s=\pm \sqrt{a_{02}/a_{20}}$, so we conclude that there is a pair $(r,s)$ such that the M\"obius transformation maps the curve to a symmetric one.
	
For $2a_{20}s=a_{11}$, we get $4a_{20}a_{02}=a_{11}^2$, which gives that the singularity at the origin is a cusp.
Otherwise, the curve has an ordinary double point there.
\end{example}

\begin{theorem}\label{th:doublepoint}
Consider a $(2,2)$ correspondence with a double point at $(0,0)$:
\begin{equation}\label{eq:singular}
\mathcal{C}:\ a_{22}x^2y^2+a_{21}x^2y+a_{20}x^2+a_{12}xy^2+a_{11}xy=0.
\end{equation}
Then we have:
\begin{itemize}
\item
If $a_{11}^2\ne4a_{20}a_{02}$, then the singularity at the origin is an ordinary double point and the QRT transformation of $\mathcal C$ is $n$-periodic if and only if there exists a natural number $m$, such that
\begin{equation}\label{eq:nsing}
\frac{a_{11}^2}{4a_{20}a_{02}}=\cos^2\Big(\frac{m}{n}\pi\Big).
\end{equation}
\item
If $a_{11}^2=4a_{20}a_{02}$, then the singularity is a cusp and the QRT transformation of $\mathcal C$ is not periodic.
\end{itemize}
\end{theorem}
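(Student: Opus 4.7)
The plan is to identify the singularity type at the origin from the tangent cone, transfer the QRT map to the associated cubic using Proposition \ref{prop:deltagamma} on the invariant component of $\mathcal{C}^{\mathrm{reg}}$ (as allowed by the remark following that proposition), and then compute the translation on the smooth locus of the resulting singular cubic, which carries an algebraic group structure isomorphic to either $\mathbb{G}_m$ or $\mathbb{G}_a$.

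The tangent cone to $\mathcal{C}$ at the origin is the binary quadratic $a_{20}x^2+a_{11}xy+a_{02}y^2$, whose discriminant is exactly $\Delta:=a_{11}^2-4a_{20}a_{02}$. This gives two distinct tangent directions (ordinary node) when $\Delta\ne 0$ and a single one (cusp) when $\Delta=0$. Next I would substitute $a_{00}=a_{10}=a_{01}=0$ into Corollary \ref{cor:discriminants} and equation \eqref{eq:XY}, which collapses the long formulas to
\[
D_{\mathcal{C}}=\frac{\Delta^{2}}{12},\quad E_{\mathcal{C}}=\frac{\Delta^{3}}{216},\quad X=\frac{a_{11}^{2}+8a_{20}a_{02}}{12},\quad Y=\det(a_{ij})=-a_{11}a_{20}a_{02}.
\]
The cubic $\Gamma$ therefore factors as $y^{2}=4(x-\Delta/12)^{2}(x+\Delta/6)$, with a node at $(\Delta/12,0)$ when $\Delta\ne 0$ and a cusp at the origin when $\Delta=0$, so that the singularity type on $\Gamma$ matches that on $\mathcal{C}$.

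In the nodal case, I would shift the node to the origin, arriving at $y^{2}=4x^{2}(x+\Delta/4)$, and parametrize via $x=\tau^{2}-\Delta/4$, $y=2\tau(\tau^{2}-\Delta/4)$. The point at infinity of $\Gamma$ corresponds to $\tau=\infty$, while the shifted translation target corresponds to $\tau=-a_{11}/2$. The Möbius map $z=(\tau-\sqrt{\Delta}/2)/(\tau+\sqrt{\Delta}/2)$ realizes $\Gamma^{\mathrm{reg}}\cong\mathbb{G}_m$ with identity at $z=1$, and the QRT transformation then reads $z\mapsto\kappa z$ with
\[
\kappa=\frac{a_{11}+\sqrt{\Delta}}{a_{11}-\sqrt{\Delta}},\qquad \kappa+\kappa^{-1}=\frac{a_{11}^{2}}{a_{20}a_{02}}-2.
\]
Periodicity $\kappa^{n}=1$ is equivalent to $\kappa=e^{2\pi im/n}$ for some integer $m$, hence to $\kappa+\kappa^{-1}=2\cos(2\pi m/n)$. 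Combining this with the half-angle identity $2+2\cos(2\pi m/n)=4\cos^{2}(m\pi/n)$ yields $a_{11}^{2}/(a_{20}a_{02})=4\cos^{2}(m\pi/n)$, i.e.\ \eqref{eq:nsing}.

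In the cuspidal case the smooth locus of $y^{2}=4x^{3}$ is isomorphic to $\mathbb{G}_a$ via $u\mapsto(1/u^{2},2/u^{3})$, with identity at $u=0$, and the translation target $[1:X:Y]$ corresponds to $u=-2/a_{11}$, which is nonzero because irreducibility of $\mathcal{C}$ forces $a_{20},a_{02},a_{11}\ne 0$ under $\Delta=0$. A nonzero translation on $\mathbb{G}_a$ has infinite order, so the QRT is not periodic. I expect the main technical obstacle to be the explicit computation of $\kappa$ in the nodal case together with the bookkeeping between the shift, the rational parametrization, and the Möbius chart identifying $\Gamma^{\mathrm{reg}}$ with $\mathbb{G}_m$; the applicability of Proposition \ref{prop:deltagamma} in our singular setting then follows because $\mathcal{C}^{\mathrm{reg}}$ is connected (the node does not disconnect a rational curve) and QRT-invariant.
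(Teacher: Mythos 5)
Your proposal is correct and follows essentially the same route as the paper's proof: pass from the biquadratic to the (singular) Weierstrass cubic via Corollary~\ref{cor:discriminants} and \eqref{eq:XY}, normalize, and reduce $n$-periodicity to $\kappa^n=1$ with $\kappa=(a_{11}+\sqrt{\Delta})/(a_{11}-\sqrt{\Delta})$ — the paper reaches the identical condition $(a_{11}+\sqrt{\Delta})^n=(a_{11}-\sqrt{\Delta})^n$ through the parabola normalization $\tilde Y^2 = 6\tilde X-4a_{02}a_{20}+a_{11}^2$ rather than your $\tau$-parametrization, but these are the same computation in different coordinates. One genuine improvement in your write-up: for the cuspidal case the paper merely remarks ``Item (ii) follows from there as well,'' which is not quite right on its face since $\Delta=0$ makes the equation $(a_{11}+\sqrt{\Delta})^n=(a_{11}-\sqrt{\Delta})^n$ trivially true; your explicit $\mathbb{G}_a$ argument (nonzero translation $u\mapsto u-2/a_{11}$ on the additive group has infinite order) is the clean way to close that case, and the observation that irreducibility forces $a_{11},a_{20},a_{02}\ne 0$ under $\Delta=0$ is the right justification that the translation is indeed nonzero.
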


\begin{proof}
We calculate the invariants of the biquadratic curve \eqref{eq:singular}:
$$
D=\frac{1}{12} \left(a_{11}^2-4 a_{02} a_{20}\right)^2,
\quad
E=\frac{1}{216} \left(a_{11}^2-4 a_{02} a_{20}\right)^3,
$$
so we obtain the equation of the corresponding cubic:
$$
\Gamma_s: Y^2
=
\frac{1}{216}
\left(12X+4 a_{02} a_{20}-a_{11}^2\right)^2
\left(6 X-4 a_{02} a_{20}+a_{11}^2\right).
$$
Note that, indeed, this curve has a double point with the coordinates $(X_d,Y_d)=((a_{11}^2-4a_{02}a_{20})/12,0)$, which is a cusp for $a_{11}^2-4a_{02}a_{20}=0$ or an ordinary double point otherwise.

Now, the QRT-transformation on the original biquadratic curve \eqref{eq:singular} is $n$-periodic if and only if the shift by the divisor $Q_0-Q_{\infty}$ on the obtained cubic $\Gamma_s$ is of order $n$, where $Q_0$ is the point with coordinates $(X_0,Y_0)$:
$$
X_0=\frac{1}{12} \left(8 a_{02} a_{20}+a_{11}^2\right),
\quad
Y_0=-a_{02} a_{11} a_{20},
$$	
while $Q_{\infty}$ is the point at infinity.

The normalization of  $\Gamma_s$  is:
$$
\tilde{Y}^2=6\tilde{X}-4 a_{02} a_{20}+a_{11}^2,
$$
with
$
\pi:
(\tilde{X},\tilde{Y})
\mapsto
(X,Y)
=
\left(
\tilde X,\tilde{Y}(12\tilde{X}+4a_{02}a_{20}-a_{11}^2)/\sqrt{216}
\right).
$
The point $(X_0,Y_0)$ is the image of
$$
(\tilde{X}_0,\tilde{Y}_0)
=
\left(
X_0,\frac{Y_0\sqrt{216}}{12X_0+4a_{02}a_{20}-a_{11}^2}
\right)
=
\left(
\frac{1}{12} \left(8 a_{02} a_{20}+a_{11}^2\right),
\frac{-a_{11}\sqrt{6}}{2}
\right),
$$
while the preimages of $(X_d,Y_d)$ are
$
\left(
(a_{11}^2-4a_{02}a_{20})/12,\pm\sqrt{3(a_{11}^2-4a_{02}a_{20})/2}
\right)
$.
Thus, the $n$-pe\-riodi\-ci\-ty of the QRT map is equivalent to the condition:
$$
\left(
\frac{a_{11}\sqrt{6}}{2}
+
\sqrt{\frac{3(a_{11}^2-4a_{02}a_{20})}2}
\right)^n
=
\left(
\frac{a_{11}\sqrt{6}}{2}
-\sqrt{\frac{3(a_{11}^2-4a_{02}a_{20})}2}
\right)^n,
$$
which is equivalent to:
$$
\left(
a_{11}
+
\sqrt{a_{11}^2-4a_{02}a_{20}}
\right)^n
=
\left(
a_{11}
-
\sqrt{a_{11}^2-4a_{02}a_{20}}
\right)^n.
$$
A direct calculation gives \eqref{eq:nsing}.
The second item follows from there as well.
\end{proof}

\begin{remark}
It is interesting to note that in  Theorem \ref{th:doublepoint}, the conditions, such as  \eqref{eq:nsing}, do not depend on $a_{22}, a_{21}$, and $a_{12}$.
\end{remark}

\begin{example}
Consider the following biquadratic curve:
$$
x^2y^2+2x^2y+x^2+3xy^2-xy+y^2=0.
$$
We have $a_{00}=a_{01}=a_{10}=0$ and
$\dfrac{a_{11}^2}{4a_{20}a_{02}}=\cos^2(\pi/3)$, thus Theorem \ref{th:doublepoint} implies that the QRT-transformation is $3$-periodic.
Indeed, by a direct calculation one obtains that consecutive iterations of the QRT-map give the following points:
$$
\left(-1,\frac{3-\sqrt{13}}{2} \right),
\left(\frac{-\sqrt{13}-7}{18},
\frac{3+\sqrt{13}}{2}\right),
\left(\frac{\sqrt{13}-7}{18},-\frac{1}{4}\right),
\left(-1,\frac{3-\sqrt{13}}{2} \right),
\dots.
$$
\end{example}

\begin{example}\label{ex:double-point}
Consider a cubic curve of the form $4\mu^2=(1+\lambda)(1+\alpha\lambda)^2$, where  $\alpha\neq1$ is a constant.

Denote by $P_0$ and $P_{\infty}$ the points with coordinates $(\lambda,\mu)=(0,1/4)$ and $(\lambda,\mu)=(\infty,\infty)$ respectively.
According to \cite{FlattoBOOK} (see also \cite{DR2025rcd}*{Theorem 2.7}), the shift by the divisor $P_0-P_{\infty}$ is of order $n$ if and only if $\alpha=\cos^2\dfrac{\pi m}{n}$, where $m$ and $n$ are positive integers.

The homogeneous equation in the projective plane of the curve is:
$4\mu^2\nu=(\nu+\lambda)(\nu+\alpha\lambda)^2$.
Taking the change:
$[\lambda:\mu:\nu]\mapsto[x:y:t]$, with $x=\nu$, $y=2\mu-\nu$, $t=\lambda$,
we get, in the affine chart $t=1$, the following curve:
$2 x^2 y + x y^2-(2\alpha+1)x^2  -\alpha(\alpha+2)x-\alpha^2  =0$, which is irreducible curve with ordinary double point $(-\alpha,\alpha)$, thus of type (i).
Writing the equation in the chart $(1/x,1/y)$, the transformation from Proposition \ref{prop:transformation} then gives:
$$
Y^2
=
 -(4/27) (\alpha^2 -\alpha  - 3 X)^2
(2 \alpha^2-2 \alpha  + 3 X).
$$

Denote by $Q_0$ one of the points with of the curve with $X$-coordinate equal to zero and by $Q_{\infty}$ the points at the infinity.
According to Proposition \ref{prop:deltagamma}, the QRT transformation is equivalent to the shift by the divisor $Q_0-Q_{\infty}$.
Note that the mapping
$
(\mu,\nu)
=
\left(
\dfrac{X}{\alpha^2}
-
\dfrac{\alpha+2}{3 \alpha}
,
\dfrac{Y}{4i\alpha^2}
\right)
$
transforms the last cubic curve into the initial one, while taking points $Q_0$, $Q_{\infty}$ to $P_0$, $P_{\infty}$ respectively.

\end{example}

\begin{example}
Now, consider the cubic curve from Example \ref{ex:double-point}, but with $\alpha=1$: $4\mu^2=(1+\lambda)^3$.
The transformation from that example gives: $2 x^2 y-3 x^2+x y^2-3 x-1=0$, which is irreducible curve with cusp at $(-1,1)$, thus it is of type (ii).
Translating the coordinate system so that the cusp will be at the origin, we get the equation: $2 x^2 y+xy^2-x^2-2 x y-y^2=0$, where $a_{02}=a_{20}=-1$ and $a_{11}=-2$, so, as expected, this example agrees with the case (ii) of Theorem \ref{th:doublepoint}, so the QRT-transformation is not periodic there.
That also agrees with
 \cite{FlattoBOOK}*{Theorem 11.7}.
\end{example}

Theorem \ref{th:doublepoint} can be used to find the condition for finite groups of some special cases of random walks.
For example, some such cases were discussed in
\cite{FayRas}*{Theorem 1.4}, and we show in Example \ref{ex:rangen0} and Corollary \ref{cor:connection} how that condition can be derived from our Theorem \ref{th:doublepoint}.

\begin{example}[Random walks with zero drift]\label{ex:rangen0}
\emph{The drift $M$} of a random walk is defined as:
$$
{\bf M}=\left(\sum_{-1\le j,k\le 1}jp_{jk}, \sum_{-1\le j,k\le 1}kp_{jk}\right).
$$
The condition ${\bf M}={\bf 0}$ implies that the underlying biquadratic is of genus $0$, see \cite{FayRas,RandomWalks}.
Denote by $R$ the correlation coefficient of the random walk, defined by
$$
R=\frac{\sum_{-1\le j,k\le 1}jkp_{jk}}{(\sum_{-1\le j,k\le 1}j^2p_{jk})^{1/2}(\sum_{-1\le j,k\le 1}k^2p_{jk})^{1/2}}
$$
and the angle $\theta$:
$$
\theta=\arccos(-R).
$$
Then \cite{FayRas}*{Theorem 1.4}, see also \cite{RandomWalks}*{Theorem 7.1}, states that for ${\bf M}=0$, the group of random walk is finite if and only if $\theta/\pi$ is rational and in that case the order is equal to
$$
2\min\{\ell\in\mathbb Z^+\mid\ell\theta/\pi \in \mathbb Z\}.
$$

In Corollary \ref{cor:connection} below, we show that these results about random walks with the zero drift follow from Theorem \ref{th:doublepoint}.
A random walk is called singular in \cite{RandomWalks} if its biquadratic is either reducible or of degree $1$ in at least one of the variables. In the nonsingular case, according to \cite{RandomWalks}*{Lemma 2.3.10}, the biquadratic is of genus zero if and only if one of the following conditions is satisfied:
\begin{itemize}
\item ${\bf M}=0$;
\item $p_{10}=p_{11}=p_{01}=0$;
\item
$p_{10}=p_{1,-1}=p_{0,-1}=0$;
\item
$p_{-1,0}=p_{-1,-1}=p_{0,-1}=0$;
\item
$p_{01}=p_{-1,0}=p_{-1,1}=0$.
\end{itemize}
According to \cite{RandomWalks}*{Theorem 7.1}, in all four listed cases  with ${\bf M}\ne {\bf 0}$, the groups of random walks are of infinite order.
\end{example}

\begin{corollary}\label{cor:connection}
	Consider a biquadratic curve \eqref{eq:biquad} satisfying:
	\begin{equation}\label{eq:drift0}
		\begin{aligned}
			&a_{00}+a_{01}+a_{02}+a_{10}+a_{11}+a_{12}+a_{20}+a_{21}+a_{22}=0,
			\\
			&a_{00}+a_{01}+a_{02}=a_{20}+a_{21}+a_{22},
			\\
			&a_{00}+a_{10}+a_{20}=a_{02}+a_{12}+a_{22}.
		\end{aligned}
	\end{equation}
	Then, the QRT map is $n$-periodic if and only if:
	$$
	\frac{(a_{00}-a_{02}-a_{20}+a_{22})^2}
	{4(a_{20}+a_{21}+a_{22})(a_{02}+a_{12}+a_{22})}
	=
	\cos^2\left(\frac{m\pi}n\right).
	$$
\end{corollary}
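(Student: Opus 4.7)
The plan is to reduce the statement to Theorem \ref{th:doublepoint} by recognizing that the three conditions \eqref{eq:drift0} assert precisely that the point $(1,1)$ is a double point of the biquadratic curve, and then translating that double point to the origin. Indeed, condition \eqref{eq:drift0}.1 reads $Q(1,1)=0$. Subtracting twice \eqref{eq:drift0}.2 from \eqref{eq:drift0}.1 gives
$(a_{10}+a_{11}+a_{12})+2(a_{20}+a_{21}+a_{22})=0$,
which is $Q_x(1,1)=0$; combining \eqref{eq:drift0}.1 and \eqref{eq:drift0}.3 similarly yields $Q_y(1,1)=0$. Thus $(1,1)$ is a singular point of multiplicity two.

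Next, I introduce the translated biquadratic $\tilde Q(x,y):=Q(x+1,y+1)$, whose double point now sits at $(0,0)$. Translation preserves both the horizontal and vertical fibrations of $\mathbb{P}^1\times\mathbb{P}^1$, so the switches $h,v$ of $\tilde Q$ are conjugate to those of $Q$, and the QRT map has the same order for both. Since $\tilde a_{ij}=\frac{1}{i!\,j!}\partial_x^i\partial_y^j Q(1,1)$, a direct evaluation yields
$\tilde a_{20}=a_{20}+a_{21}+a_{22}$ and $\tilde a_{02}=a_{02}+a_{12}+a_{22}$,
while $\tilde a_{11}=\partial_x\partial_y Q(1,1)=\sum_{i,j}ij\,a_{ij}$.

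The only genuine calculation is to express $\tilde a_{11}$ in the desired closed form. Expanding $(1-i)(1-j)=1-i-j+ij$ and summing against $a_{ij}$ gives the identity
$\sum_{i,j}(1-i)(1-j)\,a_{ij}\;=\;Q(1,1)\,-\,Q_x(1,1)\,-\,Q_y(1,1)\,+\,\tilde a_{11}.$
Under \eqref{eq:drift0}, the first three terms on the right-hand side vanish, while the left-hand side equals $a_{00}-a_{02}-a_{20}+a_{22}$. Hence
$\tilde a_{11}=a_{00}-a_{02}-a_{20}+a_{22}.$

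It remains only to apply Theorem \ref{th:doublepoint} to $\tilde Q$, giving the $n$-periodicity criterion $\tilde a_{11}^2/(4\tilde a_{20}\tilde a_{02})=\cos^2(m\pi/n)$ for some $m\in\mathbb{N}$, and to substitute the three expressions just computed to recover the formula in the statement. The degenerate case $\tilde a_{11}^2=4\tilde a_{20}\tilde a_{02}$ (a cusp) corresponds to a non-periodic QRT map by the second part of Theorem \ref{th:doublepoint}. There is no serious obstacle: the conceptual step is recognizing the three hypotheses as double-point conditions, and the technical step is the one-line identity that rewrites $\partial_x\partial_y Q(1,1)$ as $a_{00}-a_{02}-a_{20}+a_{22}$ on the locus \eqref{eq:drift0}.
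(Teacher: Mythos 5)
Your proof is correct and follows the same route as the paper's: identify $(1,1)$ as a double point from conditions \eqref{eq:drift0}, translate it to the origin, compute the new coefficients $\tilde a_{20},\tilde a_{02},\tilde a_{11}$, and invoke Theorem~\ref{th:doublepoint}. Your Taylor-coefficient identity $\tilde a_{11}=\partial_x\partial_y Q(1,1)=a_{00}-a_{02}-a_{20}+a_{22}$ is a cleaner way to obtain what the paper derives by direct substitution; the only blemish is a trivial slip (``subtracting twice'' should be subtracting once) in the linear combination giving $Q_x(1,1)=0$, which does not affect the argument.
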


\begin{proof}
	It can be straightforwardly calculated that $(x,y)=(1,1)$ is a double point of that biquadratic curve.
	
	Moreover, let $(\tilde{x},\tilde{y})=(x-1,y-1)$ be a new coordinate system.
	Then the equation of the biquadratic curve in that coordinate system is:
	\begin{align*}
		0=&(a_{11} +2 a_{12}+2 a_{21}+4 a_{22}) \tilde{x} \tilde{y}
		+
		(a_{20}+a_{21}+a_{22}) \tilde{x}^2
		+
		(a_{02}+a_{12}+a_{22}) \tilde{y}^2
		\\&
		+
		(a_{12}+2 a_{22}) \tilde{x} \tilde{y}^2
		+
		(a_{21}+2 a_{22}) \tilde{x}^2 \tilde{y}
		+
		a_{22} \tilde{x}^2 \tilde{y}^2.
	\end{align*}
	The relations \eqref{eq:drift0} imply the following for the coefficient multiplying $\tilde{x}\tilde{y}$:
	$$
	a_{11} +2 a_{12}+2 a_{21}+4 a_{22}
	=
	a_{00}-a_{02}-a_{20}+a_{22}.
	$$
	Now, applying Theorem \ref{th:doublepoint}, we get that the QRT map is $n$-periodic if and only if:
	$$
	\frac{(a_{00}-a_{02}-a_{20}+a_{22})^2}
	{4(a_{20}+a_{21}+a_{22})(a_{02}+a_{12}+a_{22})}
	=
	\cos^2\left(\frac{m\pi}n\right).
	$$
\end{proof}

	\begin{example}
		In the projective plane, suppose that a smooth conic $\mathcal{C}$ and two points $C_1$, $C_2$ are given, such that the points do not lie on the conic.
		Consider polygons inscribed in $\mathcal{C}$ such that its sides alternately contain $C_1$ and $C_2$.
		Such polygons were discussed and their periodicity analysed in \cite{DR2025rcd}*{Section 3.1}.
		
		Here, we choose a coordinate system such that $C_1$, $C_2$ are the points at the infinity corresponding to the horizontal and vertical directions.
		Then, notice that the conic $\mathcal{C}$ can be considered as a biquadratic curve and that the sides of the polygons correspond to the horizontal and vertical switches on $\mathcal{C}$.
		The matrix of that biquadratic curve is of the form:
		$$
		\begin{pmatrix}
			0 & 0 & a_{20}\\
			0 & a_{11} & a_{10}\\
			a_{02} & a_{01} & a_{00}
		\end{pmatrix},
		$$
		with $a_{20}a_{02}\neq0$.
		According to Theorem \ref{th:doublepoint}, the condition for $n$-periodicity of the QRT-transformation is given by \eqref{eq:nsing}.
		Notice that this equation also implies $a_{11}^2<4a_{20}a_{02}$, which means that, in the chosen coordinate system, conic $\mathcal{C}$ is an ellipse, or equivalently, that the line $C_1C_2$ is disjoint with $\mathcal{C}$, which is in agreement with \cite{DR2025rcd}*{Proposition 3.4}.

		Now, let us apply an affine transformation which fixes point $C_1$ and maps the conic $\mathcal{C}$ to a circle.
		For example, the transformations of the following form fix the point $C_1$:
		$$
		x=\alpha x_1+\beta y_1,
		\quad
		y=y_1,
		\quad\text{with }\alpha\neq0.
		$$
The quadratic terms in the equation of $\mathcal{C}$ after the transformation are:
$$
a_{20}(\alpha x_1+\beta y_1)^2
+
a_{11}(\alpha x_1+\beta y_1)y_1
+
a_{02}y_1^2
=
a_{20}\alpha^2 x_1^2
+
\alpha(2a_{20}\beta+a_{11})x_1y_1
+
(a_{20}\beta^2+a_{11}\beta+a_{02})y_1^2,
$$
so we got a circle if and only if:
$$
a_{20}\alpha^2=a_{20}\beta^2+a_{11}\beta+a_{02}
\quad\text{and}\quad
2a_{20}\beta+a_{11}=0,
$$
so we get:
$$
\alpha=\pm\frac{\sqrt{4 a_{02} a_{20}-a_{11}^2}}{2 a_{20}},
\quad
\beta=-\frac{a_{11}}{2a_{20}}.
$$
That transformation maps point $C_2$, which has projective coordinates $[0:1:0]$ to the point with coordinates:
$$
\left[\pm\frac{a_{11}}{\sqrt{4 a_{02} a_{20}-a_{11}^2}}:1:0\right].
$$
Thus, according to \cite{DR2025rcd}*{Proposition 3.7}, Poncelet polygons inscribed in $\mathcal{C}$ and circumscribed about the pair of points $C_1,C_2$ have $2n$ sides if and only if:
$$
\arctan\left(\frac{\sqrt{4 a_{02} a_{20}-a_{11}^2}}{|a_{11}|}\right)
\in
\left\{
\frac{k\pi}{n}
\mid
1\le k<2n,\ (k,2n)=1
\right\}.
$$
Thus we have:
$$
\tan^2\left(\frac{k\pi}{n}\right)
=
\frac{4a_{02}a_{20}}{a_{11}^2}-1.
$$
Applying a trigonometric identity, we get:
$$
\cos^2\left(\frac{k\pi}n\right)
=
\frac{1}
{1+\tan^2\left(\frac{k\pi}{n}\right)}
=
\frac{a_{11}^2}{4a_{02}a_{20}},
$$
which is exactly \eqref{eq:nsing}.
\end{example}

We covered by the previous statements and examples in this section the cases of irreducible biquadratic curves.
Now, we will discuss the reducible ones.

All cases (vi)--(xv) in the list above contain a vertical or a horizontal line. Thus, in each of these cases, there is no meaningful way to define a QRT transformation, and consequently, it is not possible to study periodicity of the QRT transformation in such cases.
Thus, we consider here separately each of the cases (iii)--(v).

\paragraph*{Case (iii).} Consider two M\"obius transformations
$$
\phi_j(u)=\frac{\alpha_j u +\beta_j}{\gamma_j u + \delta_j}, \quad j=1,2,
$$
associated with the singular $(2,2)$ correspondence
\begin{equation}\label{eq:iii}
\mathcal C_{iii}: (\alpha_1 u +\beta_1-\gamma_1 uv -\delta_1 v)(\alpha_2 u +\beta_2-\gamma_2 uv -\delta_2 v)=0.
\end{equation}

The QRT transformation in this case is:
\begin{equation*}
(u_1, v_1)
=
(u_1,\phi_1(u_1))
\mapsto
\left(\phi_2^{-1}(\phi_1(u_1)), \phi_1(u_1)\right)
\mapsto
\left(
\phi_2^{-1}(\phi_1(u_1)),
\left(\phi_1\left(\phi_2^{-1}(\phi_1(u_1))\right)\right)
\right).
\end{equation*}
\begin{proposition}\label{prop:ordQRTsing} The QRT transformation in  case (iii) has a period $N$ for all $u_1$ if and only if $(\phi_2^{-1}\circ \phi_1)^N=\Id$.
\end{proposition}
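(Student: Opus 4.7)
My plan is to analyze the iterated QRT map directly using the explicit formula \eqref{eq:iiiQRT} and to exploit the fact that each irreducible component $L_j$ of $\mathcal C_{iii}$ is the graph of the M\"obius transformation $\phi_j$. In particular, every point of $L_1\cup L_2$ has a well-defined $u$-coordinate, and the dynamics of $\delta$ can be read off entirely from the action on that coordinate.

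First I would unpack, as recorded in \eqref{eq:iiiQRT}, what each switch does to a point of $L_1$. For $(u_1,\phi_1(u_1))\in L_1$ the horizontal switch moves along the horizontal line $y=\phi_1(u_1)$ to the unique other point of $\mathcal C_{iii}$ on that line, which lies on $L_2$ and has $u$-coordinate $\phi_2^{-1}(\phi_1(u_1))$; the vertical switch then carries this point back to $L_1$ by replacing the $v$-coordinate $\phi_2(\tilde u)$ with $\phi_1(\tilde u)$. Setting $\psi:=\phi_2^{-1}\circ\phi_1$, this gives
\begin{equation*}
\delta(u_1,\phi_1(u_1))=\bigl(\psi(u_1),\,\phi_1(\psi(u_1))\bigr),
\end{equation*}
so that $\delta|_{L_1}$ is conjugate, via the $u$-coordinate, to the M\"obius transformation $\psi$ acting on $\mathbb P^1$. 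By an immediate induction, $\delta^{N}(u_1,\phi_1(u_1))=(\psi^{N}(u_1),\phi_1(\psi^{N}(u_1)))$. The same computation on $L_2$ gives $\delta|_{L_2}$ conjugate to $\phi_1^{-1}\circ\phi_2=\psi^{-1}$, which has the same order as $\psi$.

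From this explicit description the claimed equivalence is immediate. If $\delta^N=\Id$ on $\mathcal C_{iii}$, then restricting to $L_1$ gives $\psi^N(u)=u$ for every $u\in\mathbb P^1$, and a nontrivial element of $\mathrm{PGL}(2,\mathbb C)$ has at most two fixed points, hence $\psi^N=\Id$. Conversely, $\psi^N=\Id$ yields $\delta^N=\Id$ on $L_1$ by the displayed formula, and the equivalent identity $\psi^{-N}=\Id$ gives the same on $L_2$.

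There is essentially no technical obstacle; the only bookkeeping point I would address is the behaviour at the two points of $L_1\cap L_2$. At such a point the two branches of the biquadratic correspondence coincide, so both switches act as the identity and so does $\delta$; correspondingly, the points of $L_1\cap L_2$ are exactly the fixed points of $\psi=\phi_2^{-1}\circ\phi_1$ on $\mathbb P^1$, which is consistent with, and does not affect, the equivalence.
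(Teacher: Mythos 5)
Your proof is correct and follows essentially the same approach as the paper: reading the QRT map off the $u$-coordinate on each component of the reducible biquadratic, where $\delta|_{L_1}$ becomes $\psi=\phi_2^{-1}\circ\phi_1$ (and the paper's equation~\eqref{eq:equiv} is exactly the observation that $\psi^N(u_1)=u_1$ iff the $v$-coordinate returns, i.e.~$\phi_1(\psi^N(u_1))=\phi_1(u_1)$). You are somewhat more explicit than the paper — spelling out the conjugacy, treating $L_2$ where $\delta$ is conjugate to $\psi^{-1}$, and noting that $L_1\cap L_2$ consists of the fixed points of $\psi$ — but the underlying argument is the same.
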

\begin{proof}
We have:
\begin{equation}\label{eq:equiv}
(\phi_2^{-1}\circ \phi_1)^N(u_1)
=
u_1
\quad \Longleftrightarrow \quad
\phi_1\left((\phi_2^{-1}\circ \phi_1)^N(u_1)\right)
=
\phi_1(u_1)
\quad \Longleftrightarrow \quad
(\phi_1\circ \phi_2^{-1})^N(\phi_1(u_1))
=
\phi_1(u_1).
\end{equation}
Thus, $(\phi_2^{-1}\circ \phi_1)^N=\Id$ if and only if $(\phi_1\circ \phi_2^{-1})^N=\Id$. The last two equivalent conditions are necessary and sufficient that the $N$-th iteration of the QRT map is the identity.
\end{proof}

From \eqref{eq:equiv} we see that in case (iii) it is possible for an $N$-th iteration of the QRT transformation to have a fixed point $(u_1,\phi_1(u_1))$ as soon as $u_1$ is a fixed point of $(\phi_2^{-1}\circ \phi_1)^N$.

\begin{example}\label{ex:xy-axes}
We consider \cite{DR2025rcd}*{Section 3.3}.
We fix two lines in the real plane, say $x$-axis and $y$-axis and two points $C_1=(p_1, q_1)$, $C_2=(p_2, q_2)$ which do not lie on the axes, $p_1q_1p_2q_2\neq0$.
We define two transformations $\phi_1$ and $\phi_2$ as the projections of the $x$-axis to the $y$-axis from $C_1$ and $C_2$ respectively.

Then
$$
\phi_j(x)=\frac{q_jx}{x-p_j}, \quad j=1, 2,
$$
and
$$
\phi_2^{-1}(y)=\frac{p_2y}{y-q_2}.
$$
The associated biquadratic curve is decomposable:
$$
{\mathcal C}_{iii}\ :\ (xy-p_1y-q_1x)(xy-p_2y-q_2x)=0.
$$
According to Proposition \ref{prop:ordQRTsing}, the QRT transformation has period $N$ if and only if
$$
\delta(x)=(\phi_2^{-1}\circ\phi_1)(x)=\frac{p_2q_1x}{(q_1-q_2)x+q_2p_1},
$$
has the order $N$.
Since one can show by induction that:
$$
\delta^N(x)
=
\frac{(p_2q_1)^Nx}
{(q_1-q_2)\frac{(p_2q_1)^N-(q_2p_1)^N}{p_2q_1-q_2p_1}x+(q_2p_1)^N},
$$
we have that $\delta^N=\Id$ implies $(p_2q_1)^N=(q_2p_1)^N$.
Since we deal here with the real numbers, that implies $(p_2q_1)^2=(q_2p_1)^2$, thus $\delta^2=\Id$,
which
is in alignment with \cite{DR2025rcd}*{Section 3.3}.
We came to the conclusion that
the QRT transformation is an identity if and only if $q_1=q_2$ and $p_2=p_1$, while it is of order $N=2$ if and only if $p_2q_1=\pm q_2p_1$ and $(p_1,q_1)\neq(p_2,q_2)$.

Consider the case $p_1q_2=1=-p_2q_1$.
The intersections of the line $C_1C_2$ with the coordinate axes are points:
$$
D_1=\Big(\frac{2p_1}{1-q_1p_1}, 0\Big)
\quad\text{and}\quad
D_2=\Big(0, \frac{2q_1}{q_1p_1+1}\Big),
$$
which are shown in Figure \ref{fig:xy-C1C2}.
	\begin{figure}[h]
	\begin{center}
		\begin{tikzpicture}[scale=1]

\draw[thick,gray!50] (-2,1)--(0,-1.5)--(1,0.5);

\draw[thick,->](-3,0)--(6,0)node[right]{$x$};
\draw[thick,->](0,-2)--(0,2)node[above]{$y$};
\draw[very thick](5,-0.166667)--(-3,1.166667);

\draw[fill=black] (1,0.5) circle [radius=0.07] node [above]{$C_1$};
\draw[fill=black] (-2,1) circle [radius=0.07] node [above]{$C_2$};
\draw[fill=black] (0,0.666667) circle [radius=0.07] node [below left]{$D_2$};
\draw[fill=black] (4,0) circle [radius=0.07] node [below]{$D_1$};

		\end{tikzpicture}
	\end{center}\caption{Example \ref{ex:xy-axes}: The pairs $C_1$, $C_2$ and $D_1$, $D_2$ are harmonically conjugated.}\label{fig:xy-C1C2}
\end{figure}
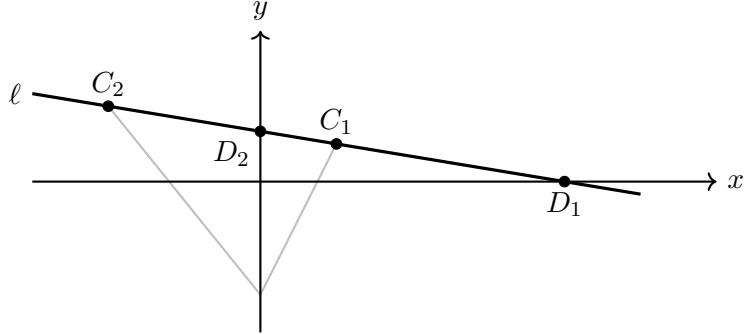

In the accordance with \cite{DR2025rcd}*{Section 3.3}, we can verify by direct calculation that the pair $(D_1, D_2)$ is harmonically-conjugated with the pair $(C_1, C_2)$.
Finally, in this case the biquadratic curve $\mathcal C_{iii}$ takes the form:
$$
x^2y^2-\frac{p_1q_1+1}{p_1}x^2y-\frac{p_1q_1-1}{q_1}xy^2 +\frac{q_1}{p_1}x^2-\frac{p_1}{q_1}y^2=0.
$$
That is not a random walk biquadratic, because, for example, the coefficients with $x^2$ and $y^2$ are of opposite signs.
\end{example}

\paragraph*{Case (iv).}
We consider the case where two conics given by \eqref{eq:iii} are tangent to each other.
Without loss of generality, we may assume that a point of their intersection is the origin.
Under that assumption, the two conics are:
$$
\alpha_1 u-uv-\delta_1 v=0
\quad\text{and}\quad
\alpha_2 u-uv-\delta_2 v=0.
$$
The tangency condition additionally gives: $\alpha_1/\delta_1=\alpha_2/\delta_2$, i.e.~there is $\lambda\neq0$ such that $\alpha_2=\lambda\alpha_1$ and $\delta_2=\lambda\delta_1$.
Thus the corresponding biquadratic curve is:
$$
\alpha_1^2 \lambda u^2
-2 \alpha_1 \delta_1 \lambda u v
-\alpha_1 (\lambda+1) u^2 v
+\delta_1^2 \lambda v^2
+\delta_1(\lambda+1)u v^2
+u^2 v^2
=0.
$$
A direct verification shows that $d_1=d_2=(4)$.

\paragraph*{Case (v).} In this case $\phi_1=\phi_2$ and the QRT map is the identity.

\subsection{Applications to enumerative combinatorics}\label{sec:comb}

The enumeration  of lattice walks occupies an important part of enumerative combinatorics. There has been a significant progress made recently in the quite complex study of lattice walks in the quarter plane. The kernel method and the group of random walks played prominent roles in this advancement. From \cite{BMM}, it is known that there are $79$ nonequivalent nontrivial walks with small steps in the quarter plane. We are going to provide new independent proofs of some of the results; see \cites{BMM, KuRas,KaYa}.

Denote by $\mathcal{S}\subset \{-1, 0, 1\}^2\setminus \{(0,0)\}$, the set of one-step vectors that defines a given walk in the quarter plane.
\emph{The generating polynomial} of the steps in $\mathcal{S}$ is:
$$
S(x,y)=\sum_{(i,j)\in\mathcal{S}}x^iy^j.
$$
One can assign three groups with every $S$:
\begin{itemize}
\item $G(\mathcal S)$ as the group generated by two involutions $\alpha$ and $\beta$ that keep the function $S(x, y)$ invariant:
$$
\alpha(x, y) = (x, y^{-1}A_{-1}(x)A^{-1}_1(x)),
\quad
\beta(x,y)=(x^{-1}B_{-1}(y)B^{-1}_1(y), y),
$$
where
$$
S(x, y)=A_{-1}(x)y^{-1}+A_0+A_1(x)y=B_{-1}(y)x^{-1}+B_0(y)+B_1(y)x.
$$
In \cite{BMM}, $G(\mathcal S)$ is called \emph{the group of the walk};
\item
$W(\mathcal S)$ as the group generated by the horizontal and vertical switches of the curve $xyS(x, y)=0$; and
\item
$\mathcal H(\mathcal S, t)$ as the group generated by the horizontal and vertical switches of the curve $\mathcal K_t: xy(1-tS(x, y))=0$.
\end{itemize}
As mentioned in \cite{RandomWalks}, the order of $\mathcal H(\mathcal{S}, t)$, for $t\ne 0$ is less or equal to the order of $G(S)$.
It was shown in  \cite{BMM}, that there are $23$ out of $79$ walks $S$ for which the group $G(S)$ is finite.
We will refer to these walks as $\mathcal S_j$, $j=1,\dots, 23$, according to \cite{BMM}*{Tables 1, 2, and 3}.

\begin{proposition}\label{prop:finite}
For each $j\in\{1,\dots,23\}$:
\begin{itemize}
\item[(i)] \cite{BMM}
The group $G(\mathcal S_j)$ of the walks in the quarter plane is finite. Moreover, its order is:
$$
|G(\mathcal S_j)|
=
\begin{cases}
4, &\text{for }1\le j\le16;\\
6, &\text{for }17\le j\le21;\\
8, &\text{for } j\in\{22,23\}.
\end{cases}
$$
\item[(ii)]
For the walks in the quarter plane, the group $W(\mathcal S_j)$ is finite and the orders of the groups $\mathcal H(\mathcal S_j, t)$ for $t\ne 0$ do not depend on $t$.
The orders of those groups are:
$$
|W(\mathcal S_j)|=|\mathcal{H}(\mathcal S_j,t)|
=
\begin{cases}
4, &\text{for }1\le j\le16;\\
6, &\text{for }17\le j\le21;\\
8, &\text{for }22\le j\le23.
\end{cases}
$$
Note that $j\in\{22,23\}$, the curve $xyS_j(x,y)=0$ has a horizontal component,  thus the group $W(\mathcal S_j)$ is defined on the other component of the curve.
All the curves ${\mathcal K}_{j,0}=xy$, for $j=1, \dots, 23$ consist of a horizontal and a vertical component; thus the QRT transformation is not defined for $t=0$.
\end{itemize}
\end{proposition}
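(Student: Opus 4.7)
Part (i) is classical and originally proved in \cite{BMM} by a direct case analysis of the 23 step sets with finite walk groups; the plan is to adopt it as given and focus the work on part (ii).

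For part (ii), the plan is to read off the orders of $\mathcal{H}(S_j,t)$ and $W(S_j)$ directly from the determinantal criteria established earlier, applied to explicit biquadratic polynomials. First, for each walk $S_j$, I would form the biquadratic
$$
Q_t(x,y) \;=\; xy - t\cdot xy\,S_j(x,y) \;=\; xy - t\!\!\sum_{(k,\ell)\in S_j}\! x^{k+1}y^{\ell+1},
$$
so that the coefficients $a_{rs}$ are affine in $t$ with $a_{11}=1$ (since $(0,0)\notin S_j$) and $a_{rs}\in\{0,-t\}$ according to whether $(r-1,s-1)\in S_j$. The claim that $|\mathcal{H}(S_j,t)|$ is independent of $t\ne 0$ then reduces to verifying polynomial identities in~$t$.

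The next step is to tabulate, for each of the 23 step sets (listed in Tables 1--3 of \cite{BMM}), the $3\times 3$ coefficient matrix $M_{Q_t}$ from \eqref{eq:MQ}. For $1\le j\le 16$, I would check that $\det M_{Q_t}$, viewed as a polynomial in $t$ of degree at most three, vanishes identically; Corollary~\ref{cor:small}(i) then forces $|\mathcal{H}(S_j,t)|=4$ for every $t$ in the smooth stratum, which contains all $t\ne 0$ outside a finite exceptional set where $F_{\mathcal{C}}=0$, and on that exceptional set continuity of the group order under analytic deformation (or a direct appeal to Section~\ref{sec:singular}) preserves the conclusion. For $17\le j\le 21$, the analogous plan is to verify $\det\Delta_{Q_t}\equiv 0$ in $t$ while $\det M_{Q_t}\not\equiv 0$, yielding $|\mathcal{H}(S_j,t)|=6$ by Corollary~\ref{cor:small}(ii) together with Proposition~\ref{prop:6}. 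For $j\in\{22,23\}$, the expectation is $\det\Omega_{Q_t}\equiv 0$ with $\det M_{Q_t}$ and $\det\Delta_{Q_t}$ not identically vanishing, giving $|\mathcal{H}(S_j,t)|=8$ by Proposition~\ref{prop:order8}.

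The orders of the companion groups $W(S_j)$ follow by the same determinantal test applied directly to the biquadratic $xyS_j(x,y)$, which is the leading coefficient in $t$ of $Q_t$; the same vanishing patterns of $\det M_Q$, $\det\Delta_Q$, and $\det\Omega_Q$ transfer. The curves $xyS_j(x,y)=0$ for $j\in\{22,23\}$ acquire an extra horizontal component because the corresponding step sets contain no strictly downward step, so every monomial of $xyS_j$ carries a positive power of $y$; this is exactly what distinguishes the last two indices and why $W(S_j)$ is not recorded for $j=22,23$. At $t=0$, the biquadratic $Q_0=xy$ degenerates to the union of the two coordinate lines, which is case (ix) of Section~\ref{sec:singular}; the critical divisors $d_1,d_2$ are undefined there and no meaningful QRT transformation can be attached to it.

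The main obstacle is not conceptual but combinatorial: one has to exhibit 23 coefficient matrices and carry out up to three polynomial-identity checks for each. Each identity is a polynomial in $t$ of bounded degree with small integer coefficients, so the verification is mechanical and easily automated symbolically, but it is the only labor-intensive ingredient of the argument.
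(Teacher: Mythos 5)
Your approach matches the paper's essentially verbatim: for each step set you form the biquadratic $\mathcal{K}_{j,t}=xy-t\,xyS_j$, read off the coefficient matrix $M_{Q_t}$ (and, as needed, $\Delta_{Q_t}$ and $\Omega_{Q_t}$), and observe that the relevant determinantal criterion from Corollary~\ref{cor:small} or Propositions~\ref{prop:6}, \ref{prop:order8} holds identically in $t$; the paper does exactly this calculation case by case, with $j=1$ standing in for $j=1,\dots,16$ and all five of $j=17,\dots,21$ written out. Your observations at $t=0$ (case (ix) of Section~\ref{sec:singular}, QRT undefined) and about the horizontal component of $xyS_j$ for $j\in\{22,23\}$ also match the paper's remarks.

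One caveat: the phrase \emph{continuity of the group order under analytic deformation} is not a valid argument --- the order of a QRT map is an integer-valued function of the parameters that genuinely jumps on the locus where the fiber becomes singular (e.g.\ the translation may become non-periodic there), so there is no semicontinuity to invoke. The correct resolution is the one you offer in parentheses, a direct appeal to the singular-case results, and this is precisely what the paper does: for $j\in\{22,23\}$ the exceptional values are isolated ($t=\pm 1/4$, besides $t=0$), the curve acquires an ordinary double point there, and Theorem~\ref{th:doublepoint} is applied to verify the order is still $8$. If you keep the parenthetical alternative and discard the continuity claim, your argument is complete and aligns with the paper's.
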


\begin{proof}
(i)
The proofs for orders of $G(\mathcal S_j)$, for $j=1, \dots, 23$ are contained in \cite{BMM}, see Tables 1--3 therein.

(ii)
\emph{Case $1\le j\le16$.} We present the case $j=1$ here. The cases $j=2,\dots, 23$ are analogous. The curves for $j=1$ are:
$$
S_1(x, y)=x+y+x^{-1}+y^{-1}, \quad xyS_1(x,y)=x^2y+xy^2+x+y,
$$
and
$$
\mathcal K_{1,t}(x, y)=xy-t(x^2y+xy^2+x+y),
$$
The corresponding matrices from Theorem \ref{th:cayley} (b(i)) are
$$
	M_{S_1}=\begin{pmatrix}
		0 & 1 & 0\\
		1 & 0 & 1\\
		0 & 1 & 0
	\end{pmatrix},
    \quad M_{\mathcal K_1}=\begin{pmatrix}
		0 & -t & 0\\
		-t& 1 & -t\\
		0 & -t & 0
	\end{pmatrix}.
	$$

Obviously, $\det(M_{S_1})=0$ and $\det(M_{\mathcal K_1})=0$.
Thus, the orders of the groups $W(\mathcal S_1)$ and $\mathcal H(\mathcal S_1, t)$ are equal to four.
The same calculation applies to $j=2, \dots, 16.$

\emph{Case $17\le j\le21$.}
We have:
\begin{align*}
S_{17}(x, y)&=y + x^{-1} + xy^{-1},
\\
xyS_{17}(x, y)&=xy^2 + y + x^2,
\\
\mathcal{K}_{17,t}(x,y)&=xy-t(xy^2 + y + x^2).
\end{align*}
The corresponding matrices are:
$$
M_{S_{17}}
=
\begin{pmatrix}
0 & 0 & 1\\
1 & 0 & 0\\
0 & 1 & 0
\end{pmatrix},
\quad
M_{\mathcal{K}_{17}}
=
\begin{pmatrix}
	0 & 0 & -t\\
	-t & 1 & 0\\
	0 & -t & 0
\end{pmatrix},
$$
and
$$
\Delta_{S_{17}}=
\left(
\begin{array}{cccc}
	0 & 1 & 0 & 0 \\
	0 & 0 & 1 & 0 \\
	1 & 0 & 0 & 1 \\
	0 & 1 & 0 & 0 \\
\end{array}
\right),
\quad
\Delta_{\mathcal{K}_{17}}
=
\left(
\begin{array}{cccc}
	0 & t^2 & 0 & 0 \\
	0 & 0 & t^2 & 0 \\
	t^2 & t & 0 & t^2 \\
	0 & t^2 & 0 & 0 \\
\end{array}
\right).
$$
It is easy to see that
$\det(\Delta_{S_{17}})=\det(\Delta_{\mathcal{K}_{17}})=0$.

Then:
\begin{align*}
S_{18}(x, y)&=x+ y + x^{-1} +y^{-1}+ xy^{-1}+ x^{-1}y,
\\
xyS_{18}(x,y)&=x^2y+ xy^2 + y +x+ x^2+ y^2,
\\
\mathcal{K}_{18,t}(x,y)&=xy-t(x^2y+ xy^2 + y +x+ x^2+ y^2),
\end{align*}
with the matrices:
\begin{gather*}
M_{S_{18}}
=
\begin{pmatrix}
	0 & 1 & 1\\
	1 & 0 & 1\\
	1 & 1 & 0
\end{pmatrix},
\quad
M_{\mathcal{K}_{18}}
=
\begin{pmatrix}
	0 & -t & -t\\
	-t & 1 & -t\\
	-t & -t & 0
\end{pmatrix},
\\
\Delta_{S_{18}}=
\left(
\begin{array}{cccc}
	-1 & 1 & 1 & -1 \\
	1 & -1 & 1 & 1 \\
	1 & 1 & -1 & 1 \\
	-1 & 1 & 1 & -1 \\
\end{array}
\right),
\quad
\Delta_{\mathcal{K}_{18}}
=
\left(
\begin{array}{cccc}
	-t^2 & t^2 & t^2 & -t^2 \\
	t^2 & -t^2 & t^2+t & t^2 \\
	t^2 & t^2+t & -t^2 & t^2 \\
	-t^2 & t^2 & t^2 & -t^2 \\
\end{array}
\right).
\end{gather*}
Again, the determinants of the last two matrices are zero.

Next:
\begin{align*}
S_{19}(x, y)&=y^{-1} + x^{-1} + xy,\\
xyS_{19}(x, y)&=x + y + x^2y^2,\\
\mathcal{K}_{19,t}(x,y)&=xy-t(x + y + x^2y^2).
\end{align*}
The matrices:
\begin{gather*}
	M_{S_{19}}
	=
	\begin{pmatrix}
		1 & 0 & 0\\
		0 & 0 & 1\\
		0 & 1 & 0
	\end{pmatrix},
	\quad
	M_{\mathcal{K}_{19}}
	=
	\begin{pmatrix}
		-t & 0 & 0\\
		0 & 1 & -t\\
		0 & -t & 0
	\end{pmatrix},
	\\
	\Delta_{S_{19}}=
	\left(
	\begin{array}{cccc}
		-1 & 0 & 0 & 0 \\
		0 & 0 & 0 & -1 \\
		0 & 0 & 0 & -1 \\
		0 & -1 & -1 & 0 \\
	\end{array}
	\right),
	\quad
	\Delta_{\mathcal{K}_{19}}
	=
	\left(
	\begin{array}{cccc}
		-t^2 & 0 & 0 & 0 \\
		0 & 0 & 0 & -t^2 \\
		0 & 0 & 0 & -t^2 \\
		0 & -t^2 & -t^2 & -t \\
	\end{array}
	\right).
\end{gather*}
The last two matrices have zero determinants.

Then:
\begin{align*}
S_{20}(x, y)&=y + x + x^{-1}y^{-1},\\
xyS_{20}(x, y)&=xy^2 + x^2y + 1,\\
\mathcal{K}_{20,t}(x,y)&=xy-t(xy^2 + x^2y + 1),	
\end{align*}
with the matrices:
\begin{gather*}
	M_{S_{20}}
	=
	\begin{pmatrix}
		0 & 1 & 0\\
		1 & 0 & 0\\
		0 & 0 & 1
	\end{pmatrix},
	\quad
	M_{\mathcal{K}_{20}}
	=
	\begin{pmatrix}
		0 & -t & 0\\
		-t & 1 & 0\\
		0 & 0 & -t
	\end{pmatrix},
	\\
	\Delta_{S_{20}}=
	\left(
	\begin{array}{cccc}
		0 & -1 & -1 & 0 \\
		-1 & 0 & 0 & 0 \\
		-1 & 0 & 0 & 0 \\
		0 & 0 & 0 & -1 \\
	\end{array}
	\right),
	\quad
	\Delta_{\mathcal{K}_{20}}
	=
	\left(
	\begin{array}{cccc}
		-t & -t^2 & -t^2 & 0 \\
		-t^2 & 0 & 0 & 0 \\
		-t^2 & 0 & 0 & 0 \\
		0 & 0 & 0 & -t^2 \\
	\end{array}
	\right).
\end{gather*}
The determinants of the last two matrices are obviously zero.

Finally, we have:
\begin{align*}
S_{21}(x, y)&=x+ y + x^{-1} +y^{-1}+ xy+ x^{-1}y^{-1},
\\
xyS_{21}(x, y)&=x^2y+ xy^2 + y +x+ x^2y^2+ 1,
\\
\mathcal{K}_{21,t}(x,y)&=xy-t(x^2y+ xy^2 + y +x+ x^2y^2+ 1),
\end{align*}
with the matrices:
\begin{gather*}
	M_{S_{21}}
	=
	\begin{pmatrix}
		1 & 1 & 0\\
		1 & 0 & 1\\
		0 & 1 & 1
	\end{pmatrix},
	\quad
	M_{\mathcal{K}_{21}}
	=
	\begin{pmatrix}
		-t & -t & 0\\
		-t & 1 & -t\\
		0 & -t & -t
	\end{pmatrix},
	\\
	\Delta_{S_{21}}=
	\left(
	\begin{array}{cccc}
		-1 & -1 & -1 & 1 \\
		-1 & 1 & 1 & -1 \\
		-1 & 1 & 1 & -1 \\
		1 & -1 & -1 & -1 \\
	\end{array}
	\right),
	\quad
	\Delta_{\mathcal{K}_{21}}
	=
	\left(
	\begin{array}{cccc}
		-t^2-t & -t^2 & -t^2 & t^2 \\
		-t^2 & t^2 & t^2 & -t^2 \\
		-t^2 & t^2 & t^2 & -t^2 \\
		t^2 & -t^2 & -t^2 & -t^2-t \\
	\end{array}
	\right).
\end{gather*}
Again, the determinants of the last two matrices are equal to zero, so we can conclude that all groups $W(\mathcal S_j)$ and $\mathcal H(\mathcal S_j, t)$ for $j=17, \dots, 21$ are of order six.

\emph{Case $j\in\{22,23\}$.}
For $j=22$, we have:
\begin{align*}
	S_{22}(x, y)&=x + x^{-1}+  xy^{-1}+  x^{-1}y;\\
	xyS_{22}(x,y)&=x^2y+y+x^2+y^2=(x^2+y)(y+1);\\
	\mathcal{K}_{22,t}(x,y)&=xy-t(x^2y+y+x^2+y^2),
\end{align*}
so the corresponding matrices are:
\begin{gather*}
M_{S_{22}}=
\begin{pmatrix}
	0&1&1\\
	0&0&0\\
	1&1&0
\end{pmatrix},
\quad
M_{\mathcal{K}_{22}}=
\begin{pmatrix}
	0&-t&-t\\
	0&1&0\\
	-t&-t&0
\end{pmatrix},
\\
\Delta_{S_{22}}=
\left(
\begin{array}{cccc}
	0 & 1 & 0 & -1 \\
	0 & -1 & 0 & 1 \\
	1 & 0 & -1 & 0 \\
	-1 & 0 & 1 & 0 \\
\end{array}
\right),
\quad
\Delta_{\mathcal{K}_{22}}=
\left(
\begin{array}{cccc}
	0 & t^2 & 0 & -t^2 \\
	0 & -t^2 & t & t^2 \\
	t^2 & t & -t^2 & 0 \\
	-t^2 & 0 & t^2 & 0 \\
\end{array}
\right).
\end{gather*}
We have: $\det(M_{S_{22}})=\det(\Delta_{S_{22}})=0$,
$\det(M_{\mathcal{K}_{22}})=-t^2$, $\det(\Delta_{\mathcal{K}_{22}})=t^6$.

We note that all curves $\mathcal{K}_{22,t}=0$ are smooth, except for $t\in\{1/4,-1/4,0\}$.
For $t=0$, the curve is $xy=0$.
For $t=1/4$, the curve has double point at $(x,y)=(1,1)$, while for $t=-1/4$, it has double point at $(x,y)=(-1,1)$.

The Eisenstein invariants for $\mathcal{K}_{22,t}$ are:
$$
D=\frac{1}{12} \left(16 t^4-16 t^2+1\right),
\quad
E=\frac{1}{216} \left(64 t^6+120 t^4-24 t^2+1\right).
$$
The condition of Theorem \ref{th:cayley} for the translation of order $4$ is satisfied for all $t\notin\{-1/4, 0, 1/4\}$, thus, the group $\mathcal H(\mathcal S_{22}, t)$ is of order $8$ whenever the curve is smooth.

We check that the order of $\mathcal H(\mathcal S_{22}, t)$ for $t=\pm1/4$ is $8$,  using Theorem \ref{th:doublepoint}.

For $j=23$, the consideration repeats the one for $j=22$. We have:
\begin{align*}
S_{23}(x, y)&=x+x^{-1} + xy+ x^{-1}y^{-1},
\\
xyS_{23}(x, y)&=x^2y+y + x^2y^2+ 1=(x^2y+1)(y+1),
\\
\mathcal{K}_{23,t}(x,y)&=xy-t(x^2y+y + x^2y^2+ 1).
\end{align*}
The matrices are:
\begin{gather*}
	M_{S_{23}}=
	\begin{pmatrix}
		1&1&0\\
		0&0&0\\
		0&1&1
	\end{pmatrix},
	\quad
	M_{\mathcal{K}_{23}}=
	\begin{pmatrix}
		-t&-t&0\\
		0&1&0\\
		0&-t&-t
	\end{pmatrix},
	\\
	\Delta_{S_{23}}=
	\left(
	\begin{array}{cccc}
		0 & -1 & 0 & 1 \\
		0 & 1 & 0 & -1 \\
		-1 & 0 & 1 & 0 \\
		1 & 0 & -1 & 0 \\
	\end{array}
	\right),
	\quad
	\Delta_{\mathcal{K}_{23}}=
	\left(
	\begin{array}{cccc}
		-t & -t^2 & 0 & t^2 \\
		0 & t^2 & 0 & -t^2 \\
		-t^2 & 0 & t^2 & 0 \\
		t^2 & 0 & -t^2 & -t \\
	\end{array}
	\right).
\end{gather*}
The condition of Theorem of \ref{th:cayley} for a translation of order $4$ is always satisfied for $t\notin\{-1/4, 0, 1/4\}$ and for $t=\pm 1/4$ Theorem \ref{th:doublepoint} gives that that the translation is of order $4$ as well.

Thus, the orders of the groups $W(\mathcal S_j)$ and $\mathcal H(\mathcal S_j, t)$, for $t\ne 0$, for $j=22, 23$ are equal to eight.
\end{proof}

\section{Planar four-bar links}
\subsection{Four-bar links and their planar configurations}
In this section, following Darboux \cite{Dar},  we consider $4$-bar links and their configurations in the Euclidean plane, see also \cite{GN} and \cite{DuistermaatBOOK}.

\begin{figure}[h]
	\centering
	\includegraphics[width=5cm, height=5 cm]{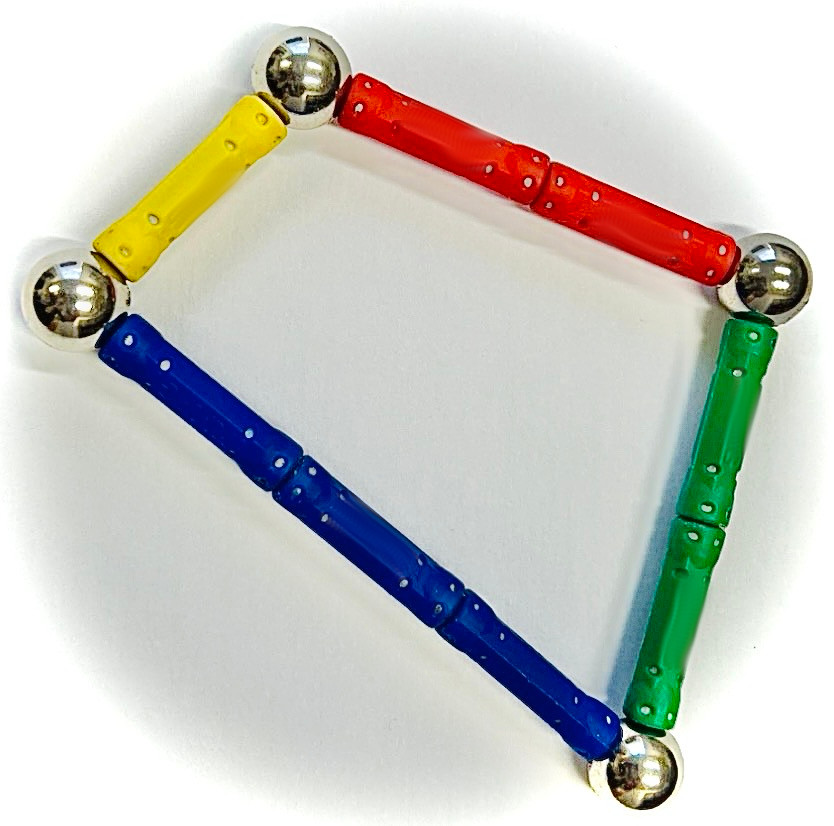}
	\caption{A planar configuration for the four-bar link $(3, 2, 2, 1)$.}\label{fig:4barlink}
\end{figure}

\begin{definition}\label{def:4bar}
A \emph{$4$-bar link} is a $4$-string of positive numbers $(a, b, c, d)$.
A \emph{planar configuration} of a $4$-bar link $(a,b,c,d)$ is a  closed planar polygonal line $V_1V_2V_3V_4$ whose edges have lengths $a=|V_1V_2|$, $b=|V_2V_3|$, $c=|V_3V_4|$, and $d=|V_4V_1|$, see Figure \ref{fig:4barlink}.
\end{definition}

\begin{remark}
A necessary and sufficient condition for the existence of a planar configuration of a $4$-bar link $(a,b,c,d)$ is the ``triangle inequality'':
$$
\max\{a, b, c, d\}<\frac{1}{2}(a+b+c+d).
$$
\end{remark}

Following Darboux \cite{Dar}, we consider planar configurations of  a $4$-bar link $(a,b,c,d)$ up to orientation-preserving isometries of the Euclidean plane (see also \cite{Far}).
Thus, choosing the appropriate coordinate system, we can assume that $V_1=(0,0)$ and $V_2=(a, 0)$. Then $V_3$ lies on the circle $C(V_2, b)$, centered at $V_2$ with radius $b$ and similarly, $V_4$
lies on the circle $C(V_1, d)$, centered at the origin with radius $d$. The pair $(V_3, V_4)\in C(V_2, b)\times C(V_1, d)$ satisfies an additional distance relation: $c=|V_3V_4|$.
We will use that to parametrize all planar configurations of a given $4$-bar link.

First, denote by $\varphi$, $\psi$ the angles between the sides $V_2V_3$, $V_1V_4$ with the line $V_1V_2$, as shown in Figure \ref{fig:4bar-angles}
\begin{figure}[h]
	\centering
	\begin{tikzpicture}[scale=1.3]
		\coordinate (V1) at (0,0);
		\coordinate (V2) at (2,0);
		\coordinate (V2b) at (3,0);
		\coordinate (V3) at (2.5,0.866025);
		\coordinate (V4) at (1.17962, 2.36823);

		\draw[very thick] (V1)--(V2)node[midway, below]{$a$}--(V3)node[midway, left]{$b$}--(V4)node[midway, right]{$c$}--cyclenode[midway, left]{$d$};
		\draw[thick,gray!30](V2)--(V2b);
		
		\pic [draw, ->, thick, "$\psi$", angle eccentricity=1.5] {angle = V2--V1--V4};
		\pic [draw, ->, thick, "$\varphi$", angle eccentricity=1.5] {angle = V2b--V2--V3};
		
		\draw[fill=black] (V1) circle [radius=0.05] node [below left]{$V_1$};
		\draw[fill=black] (V2) circle [radius=0.05] node [below right]{$V_2$};
		\draw[fill=black] (V3) circle [radius=0.05] node [above right]{$V_3$};
		\draw[fill=black] (V4) circle [radius=0.05] node [above]{$V_4$};

	\end{tikzpicture}
	\caption{Parametrization of a configuration $V_1V_2V_3V_4$ by the angles $\varphi$, $\psi$.}\label{fig:4bar-angles}
\end{figure}
Then $V_3=(a+b\cos \varphi, b\sin\varphi)$, $V_4=(d\cos \psi, d\sin\psi)$.
Denoting $x=\tan(\varphi/2)$, $y=-\tan(\psi/2)$, we have:
\begin{equation}\label{eq:4bar-param}
\cos \psi =\frac{x^2-1}{x^2+1},\quad \sin \psi =\frac{2x}{x^2+1},
	\quad
	\cos \varphi =-\frac{y^2-1}{y^2+1}, \quad \sin \varphi  =-\frac{2y}{y^2+1}.
\end{equation}
The distance relation $c=|V_3V_4|$ then gives the following $(2,2)$ correspondence:
\begin{equation}\label{eq:4bar22}
		L:\ ((a+b+d)^2-c^2)x^2y^2+((a+b-d)^2-c^2)x^2 +((a-b+d)^2-c^2)y^2
		+8bdxy +(a-b-d)^2-c^2=0.
\end{equation}

\begin{remark}
Unless $b=d$, the $(2,2)$ correspondence \eqref{eq:4bar22} is non-symmetric.
\end{remark}

\begin{remark}\label{rem:opor}
The correspondence $L$ is centrally symmetric with respect to the origin, i.e.~if $(x,y)\in L$ then $(-x,-y)\in L$.
We note that the two configurations corresponding to the points $(x,y)$ and $(-x,-y)$ are symmetric to each other with respect to the line $V_1V_2$.
Thus, they are congruent, but of the opposite orientations. This induces a natural involution among the configurations of $4$-bar links, which plays an important role in the general theory, in particular in topological considerations, see \cite{Far}. We will return to it in Section \ref{sec:semiper}.
\end{remark}

\begin{remark}\label{rem:singular}
	The discriminant $F_L$ of the biquadratic $L$ given by \eqref{eq:4bar22} is
	\begin{align*}
		F_L=&\,2^{24}(abcd)^4(a+b+c+d)(a-b+c+d)(a+b-c+d)(a+b+c-d)
		\times\\
		&\times
		(a-b-c+d)(a-b+c-d)(a+b-c-d)(a-b-c-d).
	\end{align*}
We note that $F_L=0$ if and only if
either
one of the quantities $a$, $b$, $c$, $d$ equals zero, or one of those quantities equals the sum of the remaining three, or the sum of two of them equals the sum of the remaining two.
Assuming that $a$, $b$, $c$, $d$ are all positive, we can see that the discriminant vanishes in the limit cases of the triangle inequality or when  the sum of two sides equals the sum of the remaining pair of sides.
\end{remark}

Next, in Section \ref{sec:4bar-darboux} we introduce involutions and so-called \emph{Darboux transformations}, which occur naturally on the variety of the configurations of a $4$-bar link, then  we investigate their periodicity in in Section \ref{sec:4bar-periodic} and other properties in the remaining sections.

\subsection{Darboux transformations and their periodicity}\label{sec:4bar-darboux}

Let $V_1V_2V_3V_4$ be a configuration of a given $4$-bar link.
Suppose that $V_4'$ is the point symmetric to $V_4$ with respect to the line $V_1V_3$.
Then the map $h:V_1V_2V_3V_4\mapsto V_1V_2V_3V_4'$ is an involution on the variety of all configurations of the $4$-bar link.
Similarly, if $V_3'$ is the point symmetric to $V_3$ with respect to $V_2V_4$, the map $v:V_1V_2V_3V_4\mapsto V_1V_2V_3'V_4$ is also an involution.
See Figure \ref{fig:4bar-involutions}.
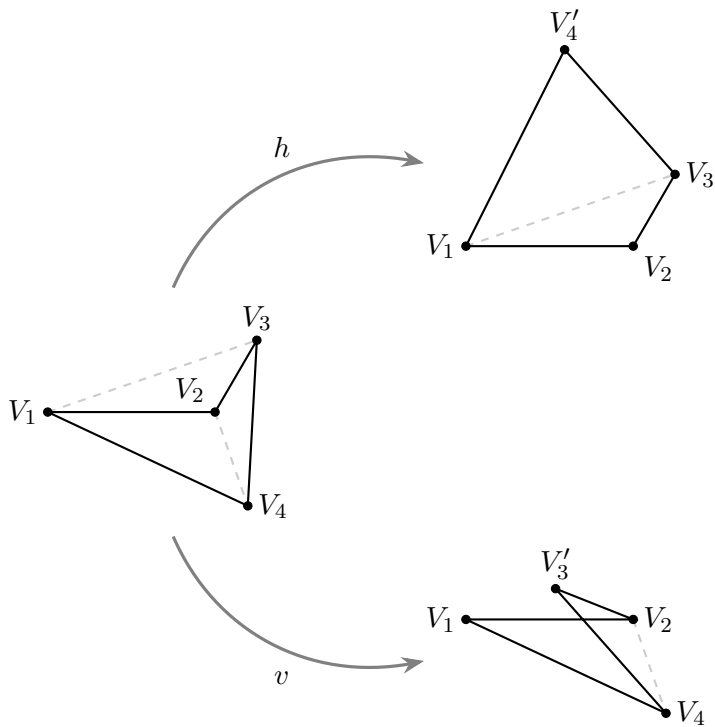
\begin{figure}[h]
\centering
	\begin{tikzpicture}[scale=1.1]

 \draw [-Stealth,very thick, gray] (1.5,1.5) to [bend left=40] (4.5,3);
\node at (2.8,3.2){$h$};

 \draw [-Stealth,very thick, gray] (1.5,-1.5) to [bend left=-40] (4.5,-3);
\node at (2.8,-3.2){$v$};

\draw[thick,dashed,gray!40](0,0)--(2.5,0.866025);
\draw[thick,dashed,gray!40](2,0)--(2.39181, -1.13105);

\draw[thick](0,0)--(2,0)--(2.5,0.866025)--(2.39181, -1.13105)--cycle;

\draw[fill=black] (0,0) circle [radius=0.05] node [left]{$V_1$};
\draw[fill=black] (2,0) circle [radius=0.05] node [above left]{$V_2$};
\draw[fill=black] (2.5,0.866025) circle [radius=0.05] node [above]{$V_3$};
\draw[fill=black] (2.39181, -1.13105) circle [radius=0.05] node [right]{$V_4$};

\begin{scope}[shift={(5,2)}]
\draw[thick,dashed,gray!40](0,0)--(2.5,0.866025);

\draw[thick](0,0)--(2,0)--(2.5,0.866025)--(1.17962, 2.36823)--cycle;

\draw[fill=black] (0,0) circle [radius=0.05] node [left]{$V_1$};
\draw[fill=black] (2,0) circle [radius=0.05] node [below right]{$V_2$};
\draw[fill=black] (2.5,0.866025) circle [radius=0.05] node [right]{$V_3$};
\draw[fill=black] (1.17962, 2.36823) circle [radius=0.05] node [above]{$V_4'$};
\end{scope}

\begin{scope}[shift={(5,-2.5)}]
\draw[thick,dashed,gray!40](2,0)--(2.39181, -1.13105);

\draw[thick](0,0)--(2,0)--(1.07143, 0.371154)--(2.39181, -1.13105)--cycle;

\draw[fill=black] (0,0) circle [radius=0.05] node [left]{$V_1$};
\draw[fill=black] (2,0) circle [radius=0.05] node [right]{$V_2$};
\draw[fill=black] (1.07143, 0.371154) circle [radius=0.05] node [above]{$V_3'$};
\draw[fill=black] (2.39181, -1.13105) circle [radius=0.05] node [right]{$V_4$};
\end{scope}

	\end{tikzpicture}
	\caption{Involutions $h$ and $v$ on the configurations of  $4$-bar link.}\label{fig:4bar-involutions}
\end{figure}

\begin{definition}
The \emph{Darboux} transformation $\delta$ of the $4$-bar link configurations is the composition of the involutions $h$ and $v$: $\delta=v\circ h$.
\end{definition}

\begin{proposition}
The involutions $h$ and $v$ correspond to the horizontal and vertical switches on the biquadratic curve $L$ given by \eqref{eq:4bar22}.
Therefore, the Darboux transformation is an instance of the QRT transformations.
\end{proposition}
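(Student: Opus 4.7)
The plan is to match each geometric involution on the variety of $4$-bar configurations pointwise with one of the algebraic switches on $L$; the claim about the Darboux transformation will then be immediate from \eqref{eq:QRT}. The main geometric input is the elementary fact that a circle is invariant under reflection across any line through its center. In particular, $C(V_1,d)$ and $C(V_3,c)$ are both preserved by reflection across $V_1V_3$, and $C(V_2,b)$ and $C(V_4,c)$ are both preserved by reflection across $V_2V_4$.

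For the involution $h$ I would argue as follows. Since $h$ fixes $V_1$, $V_2$ and $V_3$, the angle $\varphi$, and hence the coordinate $x=\tan(\varphi/2)$, is unchanged. The new vertex $V_4'=h(V_4)$ still lies on $C(V_1,d)$ and on $C(V_3,c)$ by the observation above, so $V_1V_2V_3V_4'$ is a legitimate configuration and the associated pair $(x,y')$ satisfies \eqref{eq:4bar22}. Because $L(x,\cdot)$ is generically a quadratic in $y$ with exactly two roots, and $y'\ne y$ whenever $V_4$ does not lie on the line $V_1V_3$, the value $y'$ must be the second root of $L(x,\cdot)=0$. This is precisely the definition of one of the two switches from Section~\ref{sec:QRT}; an entirely analogous argument, interchanging the roles of $V_3,V_4$ and of the two diagonals, identifies $v$ with the other switch. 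Composing these identifications shows that the Darboux transformation realizes the QRT map $\delta$ of \eqref{eq:QRT} on $L$.

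The main technical point I expect to require some care is handling the degenerate strata. The inequality $y'\ne y$ fails exactly when $C(V_1,d)$ and $C(V_3,c)$ are tangent, which should coincide with the vanishing of the appropriate discriminant in \eqref{eq:discxy}, i.e.\ with the fixed-point locus of the corresponding switch, so the identification extends by continuity. As a routine consistency check one should also verify that substituting the half-angle expressions \eqref{eq:4bar-param} into $|V_3V_4|^2=c^2$ reproduces equation \eqref{eq:4bar22}; this confirms that the chosen parametrization does indeed map the two intersection points of the relevant pair of circles onto the two roots of the corresponding quadratic slice of $L$, completing the identification.
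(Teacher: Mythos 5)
Your argument is correct and supplies the proof the paper leaves implicit: reflecting $V_4$ across $V_1V_3$ preserves both circles $C(V_1,d)$ and $C(V_3,c)$, so the reflected configuration is again a valid one for the same link, and since it shares the $x$-coordinate with the original, its $y$-coordinate must be the second root of the vertical slice of $L$; the argument for the other involution is symmetric, and the degenerate strata are handled by the discriminant/continuity observation you make. One small point worth flagging: with the parametrization in \eqref{eq:4bar-param}, the geometric involution $h$ fixes $\varphi$ and hence $x$, so it realizes the \emph{vertical} switch of Section~\ref{sec:QRT}, while the geometric $v$ realizes the \emph{horizontal} switch, which is the opposite of what a literal parallel reading of the proposition's wording would suggest. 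You were careful to say only ``one of the two switches,'' which is exactly the right level of caution; the swap is harmless for the conclusion since $h\circ v$ and $v\circ h$ are inverse to each other and both are QRT maps on $L$.
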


The dynamics of a different type of iterations of quadrilaterals was studied in \cite{BH}.

\subsubsection{Periodic Darboux transformations}\label{sec:4bar-periodic}

Following Darboux \cite{Dar}, we say that the Darboux transformation is $n$ periodic, if after $n$ iterations, a quadrilateral maps to a quadrilateral congruent to the initial one and of the same orientation.
Now, we are going to describe  all periodic Darboux transformations,  with the biquadratic $L$ being an elliptic curve. Darboux proved in \cite{Dar}, {\it the poristic property} of periodicity of Darboux transformations: the period of the Darboux transformation is a universal property of a given link, not dependent on the choice of a particular polygonal configuration.

The following proposition describes $2$-periodic $4$-bar links and it goes back to the original paper of Darboux.
\begin{proposition}[\cite{Dar}]\label{prop:2Darboux}
 The four-bar link $(a, b,c,d)$ has a $2$-periodic Darboux transformation if and only if:
\begin{equation}\label{eq:2Darbouxperiodic}
a^2+c^2=b^2+d^2.
\end{equation}
\end{proposition}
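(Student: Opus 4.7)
My plan is to reduce the statement directly to the criterion already established in Theorem \ref{th:cayley}(b(i)) (equivalently, Corollary \ref{cor:small}(i)). Since the Darboux transformation has just been identified as the QRT map $\delta = v\circ h$ on the biquadratic curve $L$ from \eqref{eq:4bar22}, and we are in the elliptic (smooth) case, 2-periodicity of the Darboux transformation is equivalent to $\delta$ having order dividing $2$ on $\mathcal C = L$, which, by that criterion, is equivalent to $\det(M_L)=0$ for the $3\times 3$ coefficient matrix \eqref{eq:MQ}.

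Reading off the coefficients of \eqref{eq:4bar22}, the matrix takes the block-diagonal form
$$
M_L = \begin{pmatrix} (a+b+d)^2-c^2 & 0 & (a+b-d)^2-c^2 \\ 0 & 8bd & 0 \\ (a-b+d)^2-c^2 & 0 & (a-b-d)^2-c^2 \end{pmatrix},
$$
so that $\det M_L = 8bd\cdot\bigl[((a+b+d)^2-c^2)((a-b-d)^2-c^2) - ((a+b-d)^2-c^2)((a-b+d)^2-c^2)\bigr]$. To make the bracket collapse cleanly, I would substitute $u=b+d$ and $v=b-d$, so the four factors become $(a\pm u)^2-c^2$ and $(a\pm v)^2-c^2$. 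Each pair simplifies:
$$
((a+u)^2-c^2)((a-u)^2-c^2) = (a^2+u^2-c^2)^2 - 4a^2u^2,
$$
and analogously for $v$. Subtracting and using $u^2-v^2 = 4bd$ together with $u^2+v^2 = 2(b^2+d^2)$ gives
$$
\det M_L = 64\, b^2 d^2 \, (b^2+d^2-a^2-c^2).
$$

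Since $b,d>0$ by Definition \ref{def:4bar}, the vanishing of $\det M_L$ is equivalent to $a^2+c^2 = b^2+d^2$, which proves the proposition. No deep step is hidden here: the main obstacle is purely organizational, namely keeping the four-term determinant tidy, and the symmetric substitution $u=b\pm d$, $v=b\mp d$ handles it in one line. One might also want to briefly note that in the excluded non-elliptic limits (where some of the triangle inequalities degenerate and $F_L=0$) the periodicity question has to be treated by the singular analysis of Section \ref{sec:singular}; within the elliptic regime, the argument above is complete.
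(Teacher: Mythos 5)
Your proof is correct and coincides with the paper's ``first way'' proof of Proposition \ref{prop:2Darboux}: you form the coefficient matrix $M_L$ of the biquadratic \eqref{eq:4bar22} and show $\det M_L=64\,b^2d^2\,(b^2+d^2-a^2-c^2)$, then invoke Theorem~\ref{th:cayley}(b)(i). (The paper's $M_L$ in \eqref{eq:ML} is the $180^\circ$-rotation of your matrix, which has the same determinant; the paper also records a second, purely geometric proof via orthogonality of the diagonals, which you do not attempt.)
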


\begin{proof}
We will prove this in two ways.
	
\emph{First way, different from the proofs from \cite{Dar} and \cite{Izm}}.
The following matrix corresponds to the biquadratic \eqref{eq:4bar22}:
\begin{equation}\label{eq:ML}
M_L=
\begin{pmatrix}
	(a+b+d)^2-c^2 & 0 & (a+b-d)^2-c^2 \\
	0 & 8 b d & 0 \\
	(a-b+d)^2-c^2 & 0 & (a-b-d)^2-c^2
\end{pmatrix}.
\end{equation}
The statement follows immediately from:
$
\det(M_L)=-64 b^2 d^2 \left(a^2-b^2+c^2-d^2\right).
$

\smallskip

\emph{Second way.}
For another proof, which was also indicated in \cite{Izm}, recall the following known statement from elementary geometry:
For a given quadrilateral, the sum of the squares of one pair of opposite sides equals the sum of the squares of the other pair of opposite sides if and only if its diagonals are orthogonal to each other.

Using that, the statement follows from the fact that two axial symmetries with non-parallel axes commute if and only if their axes are orthogonal to each other.
\end{proof}

\begin{example}
Proposition \ref{prop:2Darboux} is illustrated in Figure \ref{fig:2Darboux}.
The diagonals of each quadrilateral in the sequence are orthogonal to each other.
As an interesting consequence, we have that thus their intersection points remains unchanged by the involutions.
\end{example}
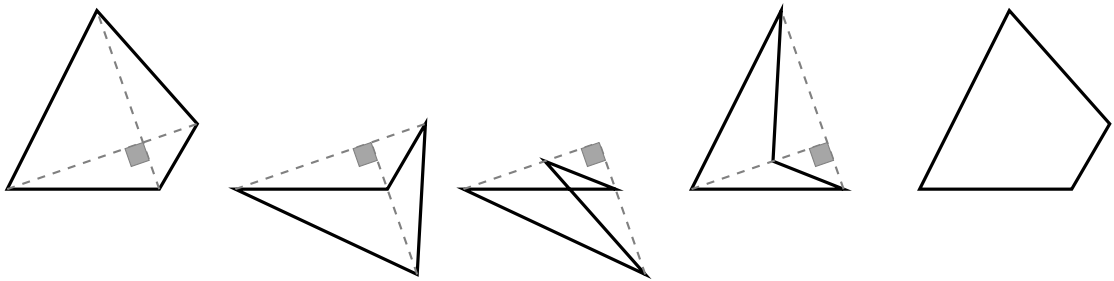
\begin{figure}[h]
	\begin{center}
		\begin{tikzpicture}
\draw[very thick] (0,0) -- (2,0)--(2.5,0.866025)--(1.17962, 2.36823)--cycle;
			
\draw[thick,gray,dashed](0,0) -- (2.5,0.866025);
\draw[thick,gray,dashed](2,0)--(1.17962, 2.36823);

\coordinate (A) at (2.,0) {};
\coordinate (B) at (0, 0) {};
\coordinate (0) at (1.78571, 0.618589) {};

\tkzMarkRightAngle[draw=gray,size=.25,fill=gray!70](A,0,B);

\draw[very thick,shift={(3,0)}] (0,0) -- (2,0)--(2.5,0.866025)--(2.39181, -1.13105)--cycle;
			
\draw[thick,gray,dashed,shift={(3,0)}](0,0) -- (2.5,0.866025);
\draw[thick,gray,dashed,,shift={(3,0)}](2.39181, -1.13105)--(1.78571, 0.618589);
			
\coordinate (A1) at (5,0) {};
\coordinate (B1) at (3, 0) {};
\coordinate (01) at (4.78571, 0.618589) {};

\tkzMarkRightAngle[draw=gray,size=.25,fill=gray!70](A1,01,B1);

\draw[very thick,shift={(6,0)}] (0,0) -- (2,0)--(1.07143, 0.371154)--(2.39181, -1.13105)--cycle;
	
\draw[thick,gray,dashed,shift={(6,0)}](0,0) -- (1.78571, 0.618589);
\draw[thick,gray,dashed,,shift={(6,0)}](2.39181, -1.13105)--(1.78571, 0.618589);
			
\coordinate (A2) at (8,0) {};
\coordinate (B2) at (6, 0) {};
\coordinate (02) at (7.78571, 0.618589) {};

\tkzMarkRightAngle[draw=gray,size=.25,fill=gray!70](A2,02,B2);

\draw[very thick,shift={(9,0)}] (0,0) -- (2,0)--(1.07143, 0.371154)--(1.17962, 2.36823)--cycle;

\draw[thick,gray,dashed,shift={(9,0)}](0,0) -- (1.78571, 0.618589);
\draw[thick,gray,dashed,,shift={(9,0)}](2, 0)--(1.17962, 2.36823);

\coordinate (A3) at (11,0) {};
\coordinate (B3) at (9, 0) {};
\coordinate (03) at (10.78571, 0.618589) {};

\tkzMarkRightAngle[draw=gray,size=.25,fill=gray!70](A3,03,B3);

			\draw[very thick,shift={(12,0)}] (0,0) -- (2,0)--(2.5,0.866025)--(1.17962, 2.36823)--cycle;
			
		\end{tikzpicture}
	\end{center}\caption{A $2$-periodic Darboux transformation: $a=2$, $b=1$, $c=2$, $d=\sqrt{7}$. }\label{fig:2Darboux}
\end{figure}

\begin{proposition}[\cite{Izm}]\label{prop:3Darboux}
The Darboux transformation of a four-bar link $(a, b,c,d)$ is $3$-periodic if and only if:
$$
b^2d^2(a^2 -b^2+ c^2 - d^2)^2
=
(a^2 c^2-b^2d^2)^2.
$$
\end{proposition}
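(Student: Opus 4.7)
The plan is to apply Proposition \ref{prop:6} to the biquadratic curve $L$ of \eqref{eq:4bar22}: 3-periodicity of the Darboux transformation is equivalent to $\det(\Delta_L)=0$, where $\Delta_L$ is the cofactor matrix \eqref{eq:DeltaQ} built from $M_L$ of \eqref{eq:ML}. Thus the task reduces to an explicit evaluation of that determinant.

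I would exploit the exceptional sparsity of $M_L$. Setting
\begin{equation*}
p=(a-b-d)^2-c^2,\ \ q=(a-b+d)^2-c^2,\ \ r=(a+b-d)^2-c^2,\ \ s=(a+b+d)^2-c^2,\ \ t=8bd,
\end{equation*}
the middle row and middle column of $M_L$ are zero except for the central entry $t$. A routine cofactor calculation then yields $\Delta_{12}=\Delta_{21}=\Delta_{23}=\Delta_{32}=0$, $\Delta_{11}=ts$, $\Delta_{13}=-tr$, $\Delta_{31}=-tq$, $\Delta_{33}=tp$, and $\Delta_{22}=ps-qr=:m$. Plugging these into \eqref{eq:DeltaQ}, the matrix $\Delta_L$ decouples into a direct sum of two $2\times 2$ blocks, after the permutation that groups indices $\{1,4\}$ and $\{2,3\}$, giving
\begin{equation*}
\det(\Delta_L)=(t^2 ps - m^2)(m^2 - t^2 qr)=:A\cdot B.
\end{equation*}

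The main step is then a short polynomial identity for $AB$. I would use $A+B=t^2(ps-qr)=t^2 m$ together with the trivial identity $4AB=(A+B)^2-(A-B)^2$, so that it suffices to compute $A-B=t^2(ps+qr)-2m^2$. With the shorthand $K=a^2+b^2+d^2-c^2$ one checks directly that $m=-8bd(a^2-b^2+c^2-d^2)$, so $m^2=t^2(a^2-b^2+c^2-d^2)^2$, and
\begin{equation*}
ps=(K+2bd)^2-4a^2(b+d)^2,\qquad qr=(K-2bd)^2-4a^2(b-d)^2.
\end{equation*}
The crucial cancellation is then
\begin{equation*}
ps+qr-\tfrac{2m^2}{t^2} \;=\; 8b^2d^2-8a^2c^2 \;=\; -8(a^2c^2-b^2d^2);
\end{equation*}
the $K^2$-contributions in $ps+qr$ cancel against those in $2(K-2a^2)^2$, and the surviving cross-terms collapse using $K-a^2-b^2-d^2=-c^2$. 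Substituting back produces
\begin{equation*}
\det(\Delta_L)=16\,t^4\bigl[\,b^2d^2(a^2-b^2+c^2-d^2)^2-(a^2c^2-b^2d^2)^2\,\bigr],
\end{equation*}
and since $t=8bd\neq 0$ for any genuine four-bar link, vanishing of $\det(\Delta_L)$ is equivalent to the claimed relation.

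I expect the identity $ps+qr-2m^2/t^2=-8(a^2c^2-b^2d^2)$ to be the main obstacle: a brute-force expansion of $ps\cdot qr$ produces a polynomial of total degree eight in $a,b,c,d$ whose dependence on $c$ is very opaque, whereas the $(A+B)^2-(A-B)^2$ trick reduces everything to a rank-one calculation where the $c$-dependence enters only through the combination $K-2a^2=-(a^2-b^2+c^2-d^2)$.
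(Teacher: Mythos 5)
Your proof is correct and takes essentially the same route as the paper's first proof: invoke Proposition \ref{prop:6}, reducing $3$-periodicity to $\det(\Delta_L)=0$, and then evaluate that determinant explicitly. The one thing you add is the organization of the calculation: the paper just presents $\det(\Delta_L)/(8bd)^4$ as a factored product (presumably produced by a CAS), whereas you expose the $2\times2$ block decomposition $\det(\Delta_L)=(t^2ps-m^2)(m^2-t^2qr)$ and then use $4AB=(A+B)^2-(A-B)^2$ together with the clean identities $m=ps-qr=-8bd(a^2-b^2+c^2-d^2)$, $ps=(K+2bd)^2-4a^2(b+d)^2$, $qr=(K-2bd)^2-4a^2(b-d)^2$, and $ps+qr-2m^2/t^2=-8(a^2c^2-b^2d^2)$, all of which check out and make the final expression $16t^4\bigl[b^2d^2(a^2-b^2+c^2-d^2)^2-(a^2c^2-b^2d^2)^2\bigr]$ hand-verifiable.
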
	

We provide two proofs, different from \cite{Izm}.

\begin{proof}
\emph{First way.}	
The cofactors of $M_L$, given by equation \eqref{eq:ML}, are:
\begin{align*}
\Delta_{11}&=	8 b d \left((a+b+d)^2-c^2\right),
&
\Delta_{12}&=0,
&
\Delta_{13}&=8 b d \left(c^2-(a+b-d)^2\right),
\\
\Delta_{21}&=0,
&
\Delta_{22}&=8bd(b^2-a^2-  c^2 + d^2),
&
\Delta_{23}&=0, \\
\Delta_{31}&=8 b d \left(c^2-(a-b+d)^2\right),
&
\Delta_{32}&=0, &
\Delta_{33}&=8 b d \left((a-b-d)^2-c^2\right).
\end{align*}
Thus
$$
\Delta_L=
8bd
\begin{pmatrix}
(a+b+d)^2-c^2 & 0 & 0 & -a^2+ b^2-  c^2 + d^2
\\
0 & -a^2+ b^2-  c^2 + d^2 & c^2-(a+b-d)^2 & 0
\\
0 & c^2-(a-b+d)^2 & -a^2+ b^2-  c^2 + d^2 & 0
\\
-a^2+ b^2-  c^2 + d^2 & 0 & 0 & (a-b-d)^2-c^2
\end{pmatrix},
$$
so
\begin{align*}
\frac{\det(\Delta_L)}{(8bd)^4}
&=
16 \left(a^2 b d+a^2 c^2-b^3 d-b^2 d^2+b c^2 d-b d^3\right)
\left(a^2 b d-a^2 c^2-b^3 d+b^2 d^2+b c^2 d-b d^3\right)
\\
&=
16
\left(
b^2d^2(a^2 -b^2+ c^2 - d^2)^2
-
(a^2 c^2-b^2d^2)^2
\right)
,
\end{align*}
which immediately implies the statement.

\emph{Second way.}
The Eisenstein invariants of the bi\-qua\-dra\-tic \eqref{eq:4bar22} are:
\begin{align*}
	D_L=&
	\frac{16}{3}
	\Bigg(\left(a^4-2 a^2 \left(b^2+c^2+d^2\right)+b^4-2 b^2 \left(c^2+d^2\right)+\left(c^2-d^2\right)^2\right)^2
	\\&\qquad
	+3 \left((a+b-d)^2-c^2\right) \left((a-b+d)^2-c^2\right) \left((a-b-d)^2-c^2\right) \left((a+b+d)^2-c^2\right)\Bigg),
	\\
	E_L=&\frac{64}{27} \left(a^4-2 a^2 \left(b^2+c^2+d^2\right)+b^4-2 b^2 \left(c^2+d^2\right)+\left(c^2-d^2\right)^2\right)
	\times
	\\&\times
	\Bigg(
	9 \left((a+b-d)^2-c^2\right) \left((a-b+d)^2-c^2\right) \left((-a+b+d)^2-c^2\right) \left((a+b+d)^2-c^2\right)
	\\&\quad\quad
	-\left(a^4-2 a^2 \left(b^2+c^2+d^2\right)+b^4-2 b^2 \left(c^2+d^2\right)+\left(c^2-d^2\right)^2\right)^2
	\Bigg),
\end{align*}
while the value of the coordinate $X_0$ from \eqref{eq:XY} is:
\begin{equation}\label{eq:X0semi}
X_0=\frac{4}{3} \left(a^4-2 a^2 \left(b^2+c^2+d^2\right)+b^4-2 b^2 \left(c^2-5 d^2\right)+\left(c^2-d^2\right)^2\right).
\end{equation}
Then, the condition for $3$-periodicity is equivalent to $C_2=0$, where
$$
\sqrt{4 x^3 - D_L x + E_L}=C_0+C_1(x-X_0)+C_2(x-X_0)^2+C_3(x-X_0)^3+\dots.
$$
The direct calculation gives:
$$
C_2=
\frac{b^2 d^2 \left(a^2-b^2+c^2-d^2\right)^2-(a^2 c^2-b^2 d^2)^2}{2 b^3 d^3 \left(a^2-b^2+c^2-d^2\right)^3},
$$
which completes the second proof of Proposition \ref{prop:3Darboux}.
\end{proof}

\begin{example}
A $3$-periodic Darboux transformation is shown in Figure \ref{fig:3Darboux}.
\end{example}
	\begin{figure}[h]
		\begin{center}
			\begin{tikzpicture}[scale=0.3]
				\draw[thick] (0,0) -- (4.5,0)--(5.5, 1.73205)--(3.83904, 5.3709)--cycle;
				
				\draw[thick,shift={(6,0)}] (0,0) -- (4.5,0)--(5.5, 1.73205)--(6.22385, -2.20191)--cycle;
				
				\draw[thick,shift={(13,0)}] (0,0) -- (4.5,0)--(2.57858, -0.555101)--(6.22385, -2.20191)--cycle;
				
				\draw[thick,shift={(19,0)}] (0,0) -- (4.5,0)--(2.57858, -0.555101)--(6.57858, -0.554124)--cycle;

				\draw[thick,shift={(25,0)}] (0,0) -- (4.5,0)--(3.10989, 1.43792)--(6.57858, -0.554124)--cycle;

				\draw[thick,shift={(32,0)}] (0,0) -- (4.5,0)--(3.10989, 1.43792)--(3.83904, 5.3709)--cycle;

				\draw[thick,shift={(38,0)}] (0,0) -- (4.5,0)--(5.5, 1.73205)--(3.83904, 5.3709)--cycle;

			\end{tikzpicture}
		\end{center}\caption{A $3$-periodic Darboux transformation. }\label{fig:3Darboux}
	\end{figure}
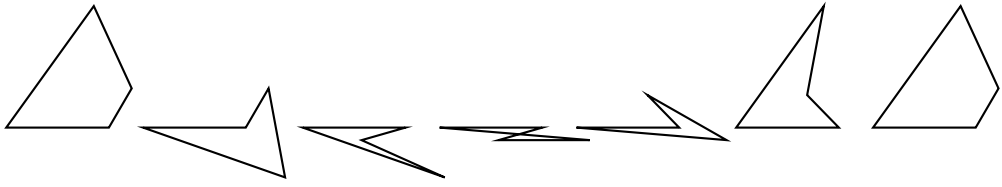
	
\begin{proposition}\label{prop:Darboux4}
The necessary and sufficient condition for $4$-periodicity of the  Darboux transformation of a four-bar link $(a, b,c,d)$ is:
$$
ac=bd
\quad\text{or}\quad
K_4=0,
$$
with
\begin{align*}
K_4=&
a^6 c^2+a^4 \left(b^2 \left(d^2-2 c^2\right)-2 c^2 d^2\right)
+b^2 d^2 \left(b^4-2 b^2 c^2+\left(c^2-d^2\right)^2\right)
\\&\
+a^2 \left(b^4 \left(c^2-2 d^2\right)-2 b^2 \left(c^4-4 c^2 d^2+d^4\right)+\left(c^3-c d^2\right)^2\right)
.
\end{align*}
\end{proposition}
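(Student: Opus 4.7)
The plan is to invoke Corollary \ref{cor:small}(iii), which asserts that the QRT transformation (and hence the Darboux transformation) on a smooth biquadratic has order exactly $4$ if and only if $C_3=0$ and $\det(M_Q)\neq 0$, with $C_3$ read off from the Taylor expansion \eqref{eq:taylor}. It therefore suffices to compute $C_3$ explicitly for the biquadratic $L$ of \eqref{eq:4bar22} and exhibit the factorization of its numerator in terms of $a,b,c,d$.

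The biquadratic $L$ is remarkably sparse: only the four corner coefficients $a_{00},a_{20},a_{02},a_{22}$ together with the center coefficient $a_{11}=8bd$ are non-zero, while $a_{01}=a_{10}=a_{12}=a_{21}=0$. A direct expansion gives $\det(M_L)=a_{11}(a_{00}a_{22}-a_{20}a_{02})=-64\,b^2d^2(a^2-b^2+c^2-d^2)$, so the hypothesis $\det(M_L)\neq 0$ merely excludes the $2$-periodic locus of Proposition \ref{prop:2Darboux}. The invariants $D_L$, $E_L$ and the coordinate $X$ from \eqref{eq:XY} have already been recorded in Remark \ref{rem:kejli3}, so no further Eisenstein calculation is needed. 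Substituting these into the formula for $C_3$ from Corollary \ref{cor:small} and clearing the denominator $\bigl(4X^3-D_LX+E_L\bigr)^{5/2}=Y^5$, which is non-zero under our assumption, reduces $C_3=0$ to a single polynomial equation in $a,b,c,d$.

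The essential claim to verify is then that this polynomial splits, up to a non-vanishing constant, as the product $(a^2c^2-b^2d^2)\cdot K_4$ with $K_4$ as in the statement. Since the 4-bar lengths satisfy $a,b,c,d>0$, one has $ac+bd>0$, hence $a^2c^2-b^2d^2$ vanishes if and only if $ac=bd$; this yields the stated disjunction. An equivalent, somewhat more structural route is via Proposition \ref{prop:order8}: the condition $\det(\Omega_Q)=0$ is the same as $C_3=0$, and the sparsity pattern of $M_L$ forces $\Delta_{12}=\Delta_{21}=\Delta_{23}=\Delta_{32}=0$, so the $3\times 3$ matrix $\Omega_Q$ collapses to a block-sparse form whose determinant expansion exposes the same factorization more transparently.

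The main obstacle is purely computational: although every ingredient ($D_L$, $E_L$, $X$, and the numerator polynomial giving $C_3$) is known in closed form, the substituted numerator is a polynomial of fairly large total degree in $a,b,c,d$, and recognizing its clean split into $(a^2c^2-b^2d^2)\cdot K_4$ is a routine but laborious symbolic manipulation. Once this factorization is verified, the conclusion of the proposition is immediate from Corollary \ref{cor:small}(iii).
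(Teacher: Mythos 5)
Your proposal is correct and follows essentially the same route as the paper: invoke Corollary \ref{cor:small}(iii), substitute the $D_L$, $E_L$, $X$ already recorded in Remark \ref{rem:kejli3} into the formula for $C_3$, and observe that the numerator of the resulting rational function in $a,b,c,d$ factors as $(ac-bd)(ac+bd)K_4$, with $ac+bd>0$ giving the stated disjunction. The paper simply asserts the factored form
$C_3=\dfrac{(ac-bd)(ac+bd)K_4}{32\,b^4 d^4\,(a^2-b^2+c^2-d^2)^5}$
rather than narrating the clearing of the denominator, and it does not separately flag the exclusion of the $2$-periodic locus $\det(M_L)=0$, but these are presentational differences, not a different argument.
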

\begin{proof}
The condition for $4$-periodicity is equivalent to $C_3=0$, where $C_3$ is as in the proof of Proposition \ref{prop:3Darboux}, i.e:
$$
C_3=
\frac{(a c - b d) (a c + b d)K_4 }{32 b^4 d^4 (a^2 - b^2 + c^2 - d^2)^5}.
$$
Since $ac+bd>0$ for $a$, $b$, $c$, $d$ being the lengths of the sides of the $4$-link, that completes the proof.
\end{proof}	

\begin{remark}\label{rem:4a}
The first condition from Proposition \ref{prop:Darboux4} coincides with the first of the conditions from \cite{Izm}*{Proposition 5.6}.
However, our condition $K_4=0$ is not equivalent to the second condition from there, and moreover, our simulations could not confirm the validity of that second condition of \cite{Izm}*{Proposition 5.6}.
\end{remark}

\begin{example}
The condition $ac=bd$ which gives $4$-periodic Darboux transformation is illustrated in Figure \ref{fig:4Darboux}.
\end{example}
	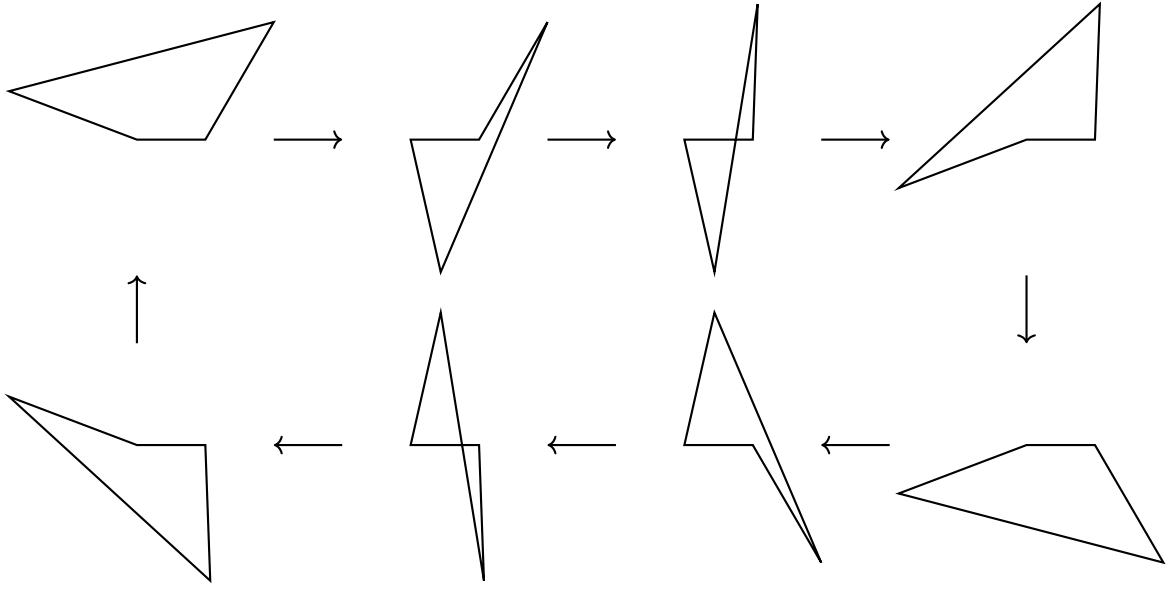
\begin{figure}[h]
	\begin{center}
		\begin{tikzpicture}[scale=0.9]
			\draw[thick] (0,0) -- (1,0)--(2., 1.73205)--(-1.86825, 0.713893)--cycle;
			
			\draw[thick,->] (2,0)--(3,0);
			
			\draw[thick,shift={(4,0)}] (0,0) -- (1,0)--(2., 1.73205)--(0.439678, -1.95107)--cycle;
			
			\draw[thick,->] (6,0)--(7,0);
			
			\draw[thick,shift={(8,0)}] (0,0) -- (1,0)--(1.07143, 1.998722)--(0.439678, -1.95107)--cycle;
			
			\draw[thick,->] (10,0)--(11,0);

			\draw[thick,shift={(13,0)}] (0,0) -- (1,0)--(1.07143, 1.998722)--(-1.86825, -0.713893)--cycle;
			
			\draw[thick,->] (13,-2)--(13,-3);

			\draw[thick,shift={(13,-4.5)}] (0,0) -- (1,0)--(2., -1.73205)--(-1.86825, -0.713893)--cycle;
			
			\draw[thick,<-] (10,-4.5)--(11,-4.5);

			\draw[thick,shift={(8,-4.5)}] (0,0) -- (1,0)--(2., -1.73205)--(0.439678, 1.95107)--cycle;
			
			\draw[thick,<-] (6,-4.5)--(7,-4.5);

			\draw[thick,shift={(4,-4.5)}] (0,0) -- (1,0)--(1.07143, -1.99872)--(0.439678, 1.95107)--cycle;
			
			\draw[thick,<-] (2,-4.5)--(3,-4.5);

			\draw[thick,shift={(0,-4.5)}] (0,0) -- (1,0)--(1.07143, -1.99872)--(-1.86825, 0.713893)--cycle;
			
			\draw[thick,<-] (0,-2)--(0,-3);
			
		\end{tikzpicture}
	\end{center}\caption{A $4$-periodic Darboux transformation, $a=1$, $b=2$, $c=4$, $d=2$. }\label{fig:4Darboux}
\end{figure}

\begin{example}
	The condition $K_4=0$ which also gives $4$-periodic Darboux transformation is illustrated in Figure \ref{fig:4Darboux2}.
\end{example}
\begin{figure}[h]
	\begin{center}
		\begin{tikzpicture}[scale=0.8]
			\draw[thick] (0,0) -- (1,0)--(-0.732051, 1.)--(-3.72263, 0.762416)--cycle;
			
			\draw[thick,->] (2,0)--(3,0);

			\draw[thick,shift={(4,0)}] (0,0) -- (1,0)--(-0.732051, 1.)--(0.398088, 3.77899)--cycle;
			
			\draw[thick,->] (6,0)--(7,0);

			\draw[thick,shift={(8,0)}] (0,0) -- (1,0)--(2.33567, 1.48862)--(0.398088, 3.77899)--cycle;
			
			\draw[thick,->] (8,-1)--(8,-2);

			\draw[thick,shift={(8,-3)}] (0,0) -- (1,0)--(2.33567, 1.48862)--(3.59365, -1.23488)--cycle;

			\draw[thick,->] (8,-4)--(8,-5);

\draw[thick,shift={(8,-6)}] (0,0) -- (1,0)--(0.686443, -1.97527)--(3.59365, -1.23488)--cycle;

\draw[thick,<-] (6,-6)--(7,-6);

\draw[thick,shift={(4,-6)}] (0,0) -- (1,0)--(0.686443, -1.97527)--(-2.05337, -3.19733)--cycle;

			\draw[thick,<-] (2,-6)--(3,-6);

\draw[thick,shift={(0,-6)}] (0,0) -- (1,0)--(-0.958737, -0.404163)--(-2.05337, -3.19733)--cycle;

			\draw[thick,<-] (0,-4)--(0,-5);

\draw[thick,shift={(0,-3)}] (0,0) -- (1,0)--(-0.958737, -0.404163)--(-3.72263, 0.762416)--cycle;

\draw[thick,->] (0,-2)--(0,-1);

		\end{tikzpicture}
	\end{center}\caption{A $4$-periodic Darboux transformation. }\label{fig:4Darboux2}
\end{figure}

\begin{proposition}
	The Darboux transformation of a four-bar link $(a, b,c,d)$  is $5$-periodic if and only if:
\begin{gather*}
b^2 d^2 \left(a^2-b^2+c^2-d^2\right)^2
\left(
a^2 c^2 \left(a^2-b^2+c^2-d^2\right)^2
-
(a^2c^2-b^2d^2)^2
\right)^2
\\=\\
(a^2c^2-b^2d^2)^2
\left(
(a^2c^2-b^2d^2)^2
-
b^2 d^2 \left(a^2-b^2+c^2-d^2\right)^2
\right)^2.
\end{gather*}
\end{proposition}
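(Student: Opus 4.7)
By Corollary \ref{cor:small}(iv), the group of the random walk has order $10$---equivalently, the QRT (i.e., Darboux) transformation has order $5$---if and only if $C_3^2 = C_2 C_4$, where $C_2, C_3, C_4$ are the coefficients of the Taylor expansion \eqref{eq:taylor} at the point $(X,Y)$ associated with the biquadratic curve. The plan is to specialize this criterion to the $4$-bar link biquadratic \eqref{eq:4bar22} and to verify that $C_3^2 - C_2 C_4 = 0$ reduces, after clearing a common denominator and a nonvanishing factor, to exactly the polynomial identity stated in the proposition.

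The first step is to reuse the explicit data already computed in the proofs of Propositions \ref{prop:2Darboux}, \ref{prop:3Darboux}, and \ref{prop:Darboux4}: the value $Y = \det(M_L) = -64\, b^2 d^2 R$ with $R := a^2 - b^2 + c^2 - d^2$, the expressions for $D_L, E_L$, and $X$ recorded in Remark \ref{rem:kejli3}, and the closed forms
\[
C_2 = \frac{b^2 d^2 R^2 - (a^2 c^2 - b^2 d^2)^2}{2\, b^3 d^3 R^3}, \qquad C_3 = \frac{(a^2 c^2 - b^2 d^2)\, K_4}{32\, b^4 d^4 R^5}.
\]
Writing $N_2 := b^2 d^2 R^2 - (a^2 c^2 - b^2 d^2)^2$ and $A := a^2 c^2 R^2 - (a^2 c^2 - b^2 d^2)^2$, the target identity of the proposition can be restated as $(a^2 c^2 - b^2 d^2)^2 N_2^2 = b^2 d^2 R^2 A^2$. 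The three building blocks $N_2$, $A$, and $a^2 c^2 - b^2 d^2$ are precisely those that appear in the numerators of $C_2$ and $C_3$, which makes the announced form of the condition structurally natural.

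The second step is to compute $C_4$ by substituting $D_L$, $E_L$, $X$ into the explicit formula for $C_4$ from Corollary \ref{cor:small}, using $(4X^3 - D_L X + E_L)^{7/2} = Y^7$. The final step is to clear denominators in $C_3^2 - C_2 C_4$ and factor the resulting polynomial numerator. I expect it to split as a product of a manifestly nonvanishing factor (a monomial in $b, d, R$, possibly together with a positive polynomial factor such as a power of $ac+bd$, all nonzero under the triangle inequalities and non-degeneracy of the $4$-bar link) with exactly $(a^2 c^2 - b^2 d^2)^2 N_2^2 - b^2 d^2 R^2 A^2$.

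The main obstacle is the algebraic size: the generic $C_4$ has a numerator of high degree in $X, D_L, E_L$, and after specialization yields a lengthy polynomial in $a,b,c,d$, so the expansion and factorization of $C_3^2 - C_2 C_4$ is infeasible by hand. I would carry out the computation in a computer algebra system, verify that the extraneous factor is a product of sign-definite terms, and read off the claimed polynomial identity.
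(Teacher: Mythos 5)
Your proposal is correct and follows essentially the same path as the paper: both reduce $5$-periodicity to the Hankel condition $C_2C_4-C_3^2=0$ via Theorem~\ref{th:cayley}(b)(ii) (equivalently Corollary~\ref{cor:small}(iv)), substitute the closed forms of $D_L$, $E_L$, $X$, $Y$ from Remark~\ref{rem:kejli3} and the preceding periodicity propositions, and factor the numerator. The paper's computation records
$$\det\begin{pmatrix}C_2&C_3\\C_3&C_4\end{pmatrix}=\frac{A^2-B^2}{1024\,b^8 d^8 R^{10}},\quad A=(a^2c^2-b^2d^2)\big((a^2c^2-b^2d^2)^2-b^2d^2R^2\big),\ B=bdR\big(a^2c^2R^2-(a^2c^2-b^2d^2)^2\big),$$
with $R=a^2-b^2+c^2-d^2$, which is exactly your restatement $P^2N_2^2-b^2d^2R^2A^2=0$; the only ``extraneous factor'' is the denominator $1024\,b^8d^8R^{10}$, and no additional $(ac+bd)$ factor appears. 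So your expectation about what has to be checked and how is right, and the CAS verification is precisely how the paper's formula was obtained.
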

\begin{proof}
	The condition is equivalent to: $\det\begin{pmatrix}
		C_2&C_3\\C_3&C_4
	\end{pmatrix}=0$,
	where $C_2$, $C_3$, $C_4$ are as in the proof of Proposition \ref{prop:3Darboux}.
We calculate:
$$
\det\begin{pmatrix}
	C_2&C_3\\C_3&C_4
\end{pmatrix}=
\frac{A^2-B^2}{1024 b^8 d^8 \left(a^2-b^2+c^2-d^2\right)^{10}},
$$
with
\begin{align*}
&A=
(a^2c^2-b^2d^2)
\left(
(a^2c^2-b^2d^2)^2
-
b^2 d^2 \left(a^2-b^2+c^2-d^2\right)^2
\right),
\\
&B=
b d \left(a^2-b^2+c^2-d^2\right)
\left(
a^2 c^2 \left(a^2-b^2+c^2-d^2\right)^2
-
(a^2c^2-b^2d^2)^2
\right),
\end{align*}
which gives the statement.
\end{proof}

\begin{example}
	A $5$-periodic Darboux transformation is illustrated in Figure \ref{fig:5Darboux}.
\end{example}
\begin{figure}[h]
	\begin{center}
		\begin{tikzpicture}[scale=0.9]
			\draw[thick] (0,0) -- (1,0)--(2.73205, 1)--(0.28997, 1.53501)--cycle;
			
			\draw[thick,->] (1.5,0)--(2.5,0);
		
			\draw[thick, shift={(3,0)}] (0,0) -- (1,0)--(2.73205, 1)--(1.2124, -0.985108)--cycle;

		\draw[thick,->] (5.5,0)--(6.5,0);

			\draw[thick, shift={(8,0)}] (0,0) -- (1,0)--(-0.99023, 0.197444)--(1.2124, -0.985108)--cycle;

	\draw[thick,->] (9.5,0)--(10.5,0);

			\draw[thick, shift={(12,0)}] (0,0) -- (1,0)--(-0.99023, 0.197444)--(1.49751, 0.444776)--cycle;

	\draw[thick,->] (12,-0.5)--(12,-1.5);

			\draw[thick, shift={(12,-2)}] (0,0) -- (1,0)--(0.974162, -1.99983)--(1.49751, 0.444776)--cycle;

\draw[thick,->] (12,-3.5)--(12,-4.5);

			\draw[thick, shift={(12,-5)}] (0,0) -- (1,0)--(0.974162, -1.99983)--(-1.27334, -0.904963)--cycle;

\draw[thick,->] (11,-5)--(10,-5);

\draw[thick, shift={(8.5,-5)}] (0,0) -- (1,0)--(-0.393151, 1.43497)--(-1.27334, -0.904963)--cycle;

\draw[thick,->] (7,-5)--(6,-5);

\draw[thick, shift={(4,-5)}] (0,0) -- (1,0)--(-0.393151, 1.43497)--(1.55678, -0.12957)--cycle;

\draw[thick,->] (3,-5)--(2,-5);

\draw[thick, shift={(0,-5)}] (0,0) -- (1,0)--(-0.883571, -0.67243)--(1.55678, -0.12957)--cycle;

\draw[thick,<-] (1,-3.5)--(1,-4.5);

\draw[thick, shift={(0,-2.5)}] (0,0) -- (1,0)--(-0.883571, -0.67243)--(0.28997, 1.53501)--cycle;			

\draw[thick,->] (1,-1.5)--(1,-0.5);

		\end{tikzpicture}
	\end{center}\caption{A $5$-periodic Darboux transformation. }\label{fig:5Darboux}
\end{figure}

\begin{proposition}\label{prop:Darboux6}
	The necessary and sufficient condition for $6$-periodicity of the Darboux transformation of a $4$-bar link $(a, b, c, d)$ is:
$$
a^2 c^2 \left(a^2-b^2+c^2-d^2\right)^2=
(a^2 c^2-b^2 d^2)^2
\quad\text{or}\quad
K_6=0,
$$
where
\begin{align*}
K_6=&a^{10}c^2 \left(b^2 d^2+c^4\right)
-a^8 c^2 \left(4 b^4 d^2+b^2 \left(2 c^4-3 c^2 d^2+4 d^4\right)+c^6+2 c^4 d^2\right)
\\&
+a^6 c^2 \left(6 b^6 d^2+b^4 \left(c^4-10 c^2 d^2+11 d^4\right)-2 b^2 \left(c^6-9 c^4 d^2+5 c^2 d^4-3 d^6\right)+c^4 \left(c^2-d^2\right)^2\right)
\\&
+a^4 b^2 d^2 \Big(b^4 \left(11 c^4-10 c^2 d^2+d^4\right)-4 b^6 c^2-2 b^2 \left(5 c^6-c^4 d^2+5 c^2 d^4\right)
\\&\qquad\qquad\quad
+\left(3 c^2-4 d^2\right) \left(c^3-c d^2\right)^2\Big)
\\&
+a^2 b^2 d^2 \Big(b^8 c^2+b^6 \left(3 c^2 d^2-4 c^4-2 d^4\right)+2 b^4 \left(3 c^6-5 c^4 d^2+9 c^2 d^4-d^6\right)
\\&
\qquad\qquad\quad
-b^2 \left(4 c^2-3 d^2\right) \left(c^3-c d^2\right)^2+c^2 \left(c^2-d^2\right)^4\Big)
\\&
+b^6 d^6 \left(b^4-b^2 \left(2 c^2+d^2\right)+\left(c^2-d^2\right)^2\right).
\end{align*}
	
\end{proposition}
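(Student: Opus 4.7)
The plan mirrors the second proof in Remark \ref{rem:kejli3} and the proof of Proposition \ref{prop:Darboux4}. Apply Theorem \ref{th:cayley}(b)(iii) with $n = 2k = 6$, i.e.\ $k = 3$: the QRT transformation on the smooth biquadratic $L$ is of order dividing $6$ if and only if
\[
\det\begin{pmatrix} C_3 & C_4 \\ C_4 & C_5 \end{pmatrix} \;=\; C_3 C_5 - C_4^2 \;=\; 0,
\]
where the Taylor coefficients $C_3, C_4, C_5$ (of $\sqrt{4x^3 - D_L x + E_L}$ about $x = X$) are supplied by Corollary \ref{cor:small}. Since the Darboux transformation coincides with the QRT transformation on $L$, this criterion governs $6$-periodicity.

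I then use Remark \ref{rem:kejli3} for the explicit expressions of $D_L$, $E_L$, and the coordinate $X$ as polynomials in $a, b, c, d$, together with the identity $Y = \det(M_L) = -64\, b^2 d^2 (a^2 - b^2 + c^2 - d^2)$ from the first proof of Proposition \ref{prop:2Darboux}, which satisfies $Y^2 = 4X^3 - D_L X + E_L$. Since each $C_k$ carries $Y^k$ in its denominator, after multiplying $C_3 C_5 - C_4^2$ through by the appropriate positive power of $b^2 d^2 (a^2 - b^2 + c^2 - d^2)$ one obtains a polynomial identity in $a, b, c, d$.

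The key step is to verify that this resulting polynomial factors (up to a nonzero numerical constant and a positive power of $bd$) as
\[
\bigl(a^2 c^2 (a^2 - b^2 + c^2 - d^2)^2 - (a^2 c^2 - b^2 d^2)^2\bigr) \cdot K_6,
\]
with $K_6$ as in the statement. Given this factorization, and since $a, b, c, d > 0$ and $L$ is smooth, the vanishing of $C_3 C_5 - C_4^2$ is equivalent to the vanishing of one of the two bracketed factors, which is exactly what we want to prove.

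The principal obstacle is the sheer size of the symbolic computation: the expression for $C_5$ involves $X^8$ in its numerator and $X$ itself is degree $4$ in $a, b, c, d$, so after substitution the numerator of $C_3 C_5 - C_4^2$ has very large total degree in four variables, and the exact factorization must be carried out via computer algebra. Useful sanity checks include: the polynomial must vanish on the $2$-periodic locus $a^2 + c^2 = b^2 + d^2$ of Proposition \ref{prop:2Darboux} and on the $3$-periodic locus of Proposition \ref{prop:3Darboux}, since both kinds of links are automatically $6$-periodic; and it must be invariant under the cyclic symmetry $(a, b, c, d) \mapsto (c, d, a, b)$, which relabels the $4$-bar link while preserving $L$ up to swapping the two switches and hence preserving the period of the QRT map.
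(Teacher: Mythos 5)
Your overall strategy coincides with the paper's: invoke Theorem~\ref{th:cayley}(b)(iii) with $k=3$ to reduce $6$-periodicity to $C_3C_5-C_4^2=0$, substitute the explicit $D_L$, $E_L$, $X$ from Remark~\ref{rem:kejli3} and $Y=\det(M_L)$, and factor. However, the factorization you predict is missing a factor, and this is not a cosmetic omission. The paper computes
$$
\det\begin{pmatrix}C_3&C_4\\C_4&C_5\end{pmatrix}
=\frac{C_2\, K_6\,\bigl((a^2c^2-b^2d^2)^2-a^2c^2(a^2-b^2+c^2-d^2)^2\bigr)}
{131072\, b^{10} d^{10} (a^2-b^2+c^2-d^2)^{11}},
$$
so the numerator carries \emph{three} nontrivial factors: the $3$-semi-periodicity factor $(a^2c^2-b^2d^2)^2-a^2c^2(a^2-b^2+c^2-d^2)^2$, the polynomial $K_6$, \emph{and} $C_2$ (whose numerator is the $3$-periodicity polynomial $b^2d^2(a^2-b^2+c^2-d^2)^2-(a^2c^2-b^2d^2)^2$ from Proposition~\ref{prop:3Darboux}). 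You claim the numerator factors as just the first two, which contradicts your own proposed sanity check: a $3$-periodic link is automatically $6$-periodic, so $C_3C_5-C_4^2$ must vanish on the locus of Proposition~\ref{prop:3Darboux}, yet neither $K_6$ nor the $3$-semi-periodicity factor contains that locus (they are distinct irreducible conditions). The vanishing on the $3$-periodic locus is precisely what the $C_2$ factor accounts for.

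Once the $C_2$ factor is acknowledged, a further logical step is needed that your write-up also elides: the determinant criterion from Theorem~\ref{th:cayley} detects ``$6$ is a period,'' not ``$6$ is the minimal period,'' so the case $C_2=0$ must be discarded because it gives the strictly smaller period $3$. That is exactly why the statement of Proposition~\ref{prop:Darboux6} lists only the two remaining factors. So the conclusion does match the paper, but your chain of reasoning as written does not deliver it: you need to (i) predict and verify the correct three-factor decomposition, and then (ii) argue that the $C_2=0$ branch is excluded by minimality of the period.
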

\begin{proof}
	The condition is obtained from $\det\begin{pmatrix}
		C_3&C_4\\C_4&C_5
	\end{pmatrix}=0$,
	where $C_3$, $C_4$, $C_5$ are as in the proof of Proposition \ref{prop:3Darboux}.
Namely, we have:
$$
	\det\begin{pmatrix}
		C_3&C_4\\C_4&C_5
	\end{pmatrix}=
	\frac
	{C_2K_6 \left((a^2 c^2-b^2 d^2)^2 - a^2 c^2\left(a^2-b^2+c^2-d^2\right)^2\right)
		 }
	{131072 b^{10} d^{10} \left(a^2-b^2+c^2-d^2\right)^{11}}.
$$
If $C_2=0$ is satisfied, then the transformation is $3$-periodic, thus we get the stated conditions from the last equality.
\end{proof}

The first of the conditions from Proposition \ref{prop:Darboux6} was derived in \cite{Izm}*{Proposition 5.7.}.

\begin{example}
A $6$-periodic Darboux transformation, with the link
	 satisfying the condition $K_6=0$ from Proposition \ref{prop:Darboux6} is illustrated in Figure \ref{fig:6Darboux}.
Another $6$-periodic Darboux transformation, obtained by using the first condition from Proposition \ref{prop:Darboux6} is shown in Figure \ref{fig:6Darbouxb}.
\end{example}
\begin{figure}[h]
	\begin{center}
		\begin{tikzpicture}[scale=0.8]
			\draw[thick] (0,0) -- (1,0)--(2.73205, 1.)--(1.45028, 3.71239)--cycle;
			
			\draw[thick,->] (2,0)--(3,0);
			
			\draw[thick, shift={(3.5,0)}] (0,0) -- (1,0)--(2.73205, 1)--(3.50417, -1.89894)--cycle;
			
			\draw[thick,->,shift={(5,0)}] (2,0)--(3,0);

\draw[thick, shift={(8.5,0)}] (0,0) -- (1,0)--(0.504417, -1.93763)--(3.50417, -1.89894)--cycle;

			\draw[thick,->,shift={(9.5,0)}] (2,0)--(3,0);

\draw[thick, shift={(14,0)}] (0,0) -- (1,0)--(0.504417, -1.93763)--(-2.13342, -3.36655)--cycle;

			\draw[thick,->] (14,-3)--(14,-4);

\draw[thick, shift={(14,-4.5)}] (0,0) -- (1,0)--(-0.897143, -0.633126)--(-2.13342, -3.36655)--cycle;

			\draw[thick,->,shift={(0,-3)}] (14,-3)--(14,-4);

\draw[thick, shift={(14,-8)}] (0,0) -- (1,0)--(-0.897143, -0.633126)--(-3.8868, -0.881998)--cycle;

			\draw[thick,->,shift={(0,-6)}] (14,-3)--(14,-4);

\draw[thick, shift={(14,-11)}] (0,0) -- (1,0)--(-0.998774, -0.0700309)--(-3.8868, -0.881998)--cycle;

			\draw[thick,->,shift={(9.5,0)}] (1,-11)--(0,-11);

\draw[thick, shift={(7,-11)}] (0,0) -- (1,0)--(-0.998774, -0.0700309)--(-3.97185, 0.330973)--cycle;

\draw[thick,<-] (7,-9.5)--(7,-10.5);

\draw[thick, shift={(7,-9)}] (0,0) -- (1,0)--(-0.971854, 0.334353)--(-3.97185, 0.330973)--cycle;

\draw[thick,<-] (7,-7.5)--(7,-8.5);

\draw[thick, shift={(7,-7)}] (0,0) -- (1,0)--(-0.971854, 0.334353)--(-3.33477, 2.18277)--cycle;

\draw[thick,<-] (7,-5.5)--(7,-6.5);

\draw[thick, shift={(7,-5)}] (0,0) -- (1,0)--(-0.442763, 1.38508)--(-3.33477, 2.18277)--cycle;

\draw[thick,<-] (2,-3)--(3,-3);

\draw[thick, shift={(0,-5)}] (0,0) -- (1,0)--(-0.442763, 1.38508)--(1.45028, 3.71239)--cycle;

\draw[thick,->] (0,-2)--(0,-1);

		\end{tikzpicture}
	\end{center}\caption{A $6$-periodic Darboux transformation. }\label{fig:6Darboux}
\end{figure}

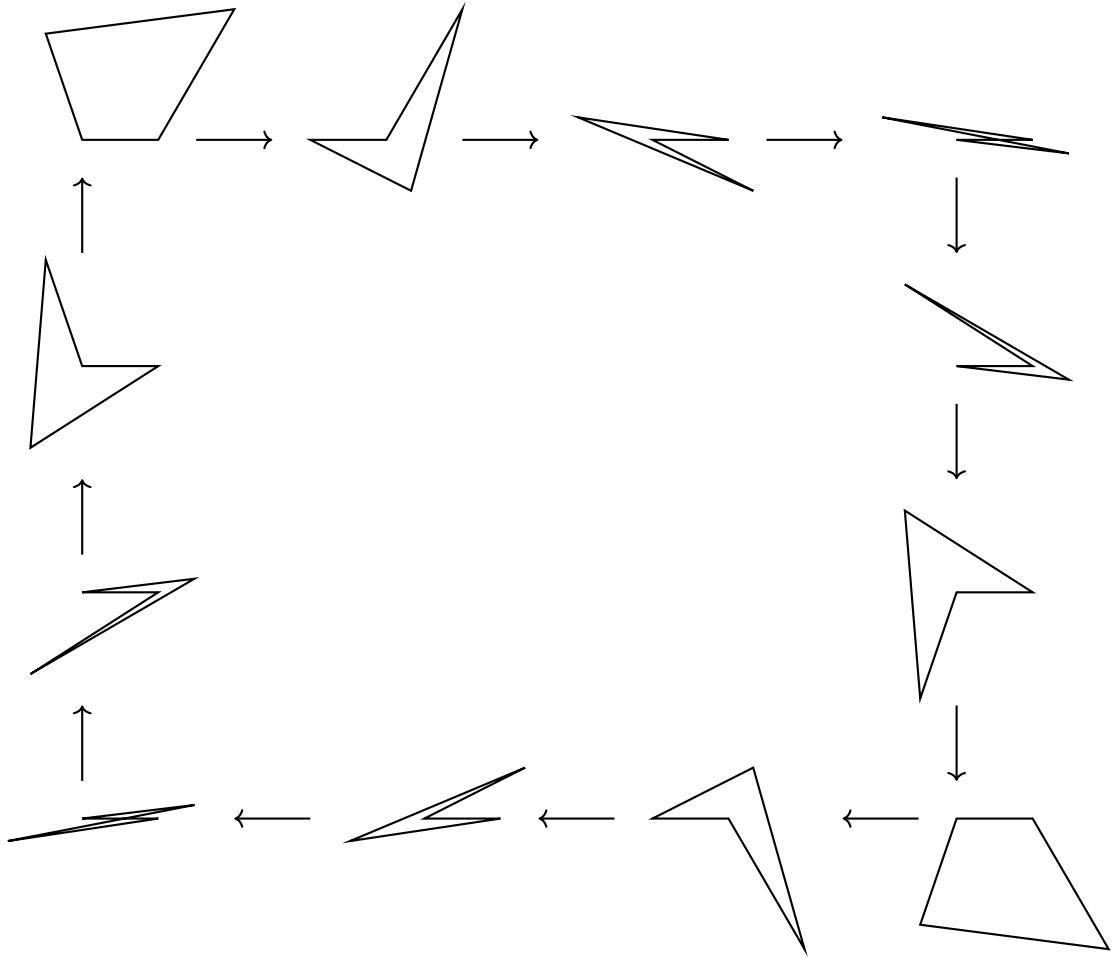
\begin{figure}[h]
	\begin{center}
		\begin{tikzpicture}[scale=1]
			\draw[thick] (0,0) -- (1,0)--(2., 1.73205)--(-0.478897, 1.4079)--cycle;
			
			\draw[thick,->] (1.5,0)--(2.5,0);
			
			\draw[thick,shift={(3,0)}] (0,0) -- (1,0)--(2., 1.73205)--(1.32505, -0.675114)--cycle;
			
				\draw[thick,->,shift={(3,0)}] (2,0)--(3,0);
			
			\draw[thick,shift={(7.5,0)}] (0,0) -- (1,0)--(-0.977613, 0.298404)--(1.32505, -0.675114)--cycle;
			
			\draw[thick,->,shift={(7,0)}] (2,0)--(3,0);
			
			\draw[thick,shift={(11.5,0)}] (0,0) -- (1,0)--(-0.977613, 0.298404)--(1.4762, -0.179932)--cycle;
			
			\draw[thick,->,shift={(11.5,-.5)}] (0,0)--(0,-1);
			
				\draw[thick,shift={(11.5,-3)}] (0,0) -- (1,0)--(-0.680803, 1.08393)--(1.4762, -0.179932)--cycle;
			
			\draw[thick,->,shift={(11.5,-3.5)}] (0,0)--(0,-1);
			
			\draw[thick,shift={(11.5,-6)}] (0,0) -- (1,0)--(-0.680803, 1.08393)--(-0.478897, -1.4079)--cycle;
			
				\draw[thick,->,shift={(11.5,-7.5)}] (0,0)--(0,-1);
			
			\draw[thick,shift={(11.5,-9)}] (0,0) -- (1,0)--(2., -1.73205)--(-0.478897, -1.4079)--cycle;
			
				\draw[thick,->,shift={(10,-9)}] (1,0)--(0,0);
				
					\draw[thick,shift={(7.5,-9)}] (0,0) -- (1,0)--(2., -1.73205)--(1.32505, 0.675114)--cycle;

				\draw[thick,->,shift={(6,-9)}] (1,0)--(0,0);
				
					\draw[thick,shift={(4.5,-9)}] (0,0) -- (1,0)--(-0.977613, -0.298404)--(1.32505, 0.675114)--cycle;
			
			\draw[thick,->,shift={(2,-9)}] (1,0)--(0,0);
			
			\draw[thick,shift={(0,-9)}] (0,0) -- (1,0)--(-0.977613, -0.298404)--(1.4762, 0.179932)--cycle;
			
				\draw[thick,<-,shift={(0,-7.5)}] (0,0)--(0,-1);
				
				\draw[thick,shift={(0,-6)}] (0,0) -- (1,0)--(-0.680803, -1.08393)--(1.4762, 0.179932)--cycle;

\draw[thick,<-,shift={(0,-4.5)}] (0,0)--(0,-1);

\draw[thick,shift={(0,-3)}] (0,0) -- (1,0)--(-0.680803, -1.08393)--(-0.478897, 1.4079)--cycle;

\draw[thick,<-,shift={(0,-.5)}] (0,0)--(0,-1);
			
		\end{tikzpicture}
	\end{center}\caption{A $6$-periodic Darboux transformation, with $a=1$, $b=2$, $c=5/2$, $d=\sqrt{115/13}/2$. }\label{fig:6Darbouxb}
\end{figure}

\subsection{Semi-periodicity for four-bar links}\label{sec:semiper}
We introduce and study here a new, natural kind of periodicity for $4$-bar links, which we are going to call {\it semi-periodicity}.
Let us recall Remark \ref{rem:opor}, where we observed that the correspondence $L$ is centrally symmetric.

\begin{definition}
We  say that the Darboux transformation is \emph{semi-periodic with the semi-period $k$} if its $k$-th iteration maps a quadrilateral $V_1V_2V_3V_4$ to the quadrilateral which is symmetric to $V_1V_2V_3V_4$ with respect to the side $V_1V_2$.

We  also say that a centrally symmetric $(2,2)$ correspondence is \emph{semi-periodic with the semi-period $k$} if the $k$-th iteration of its QRT map is the symmetry with respect to the origin.
\end{definition}

\begin{remark}\label{rem:quasi}
If a Darboux transformation is semi-periodic with the semi-period $k$, then it is periodic with period $n=2k$.
\end{remark}

Now we want to give a characterization of the semi-periodicity of $4$-bar links.
To achieve that, we will look into a more general question of semi-periodicity of the centrally symmetric $(2,2)$ correspondences.
The general form of such correspondences is:
\begin{equation}\label{eq:centrallysymmetric}
\mathcal C\ :\ Q(x, y)=a_{22}x^2y^2+a_{11}xy + a_{20}x^2+a_{02}y^2+a_{00}=0.
\end{equation}

To  a centrally-symmetric $(2,2)$ correspondence $\mathcal C$ \eqref{eq:centrallysymmetric} we assign another  $(2,2)$ correspondence $\mathcal {\hat C}$ in the following way.
Rewrite \eqref{eq:centrallysymmetric} as:
$$
a_{22}x^2y^2 + a_{20}x^2+a_{02}y^2+a_{00}=-a_{11}xy,
$$
then square both sides of the equation, and substitute $u:=x^2$ and $v:=y^2$, which gives:
\begin{equation}\label{eq:secondary}
\begin{aligned}
	\mathcal {\hat C}\ :\ \hat Q(u, v)=&\,a^2_{22}u^2v^2 +2a_{22}a_{20}u^2v + 2a_{22}a_{02}uv^2 + (2a_{22}a_{00}+2a_{02}a_{20}-a_{11}^2)uv
	\\&
	+a^2_{20}u^2 +a^2_{02}v^2
	+2a_{20}a_{00}u + 2a_{02}a_{00}v +a^2_{00}=0.
\end{aligned}	|
\end{equation}

\begin{definition}
The $(2,2)$ correspondence $\mathcal {\hat C}$ \eqref{eq:secondary} is called \emph{the secondary $(2,2)$ correspondence} of a centrally-symmetric $(2,2)$ correspondence $\mathcal C$ \eqref{eq:centrallysymmetric}.
The corresponding cubic \eqref{eq:cubic}
$$
\hat \Gamma: \mu^2=4\lambda^3 - \hat g_2\lambda - \hat g_3,
$$
where
$$
\hat g_2=D_{\mathcal {\hat C}}, \quad g_3=-E_{\mathcal {\hat C}},
$$
is called \emph{the secondary cubic} of the  centrally-symmetric $(2,2)$ correspondence $\mathcal C$ \eqref{eq:centrallysymmetric}.
\end{definition}

\begin{theorem}\label{th:ksemiperiodic}
Let $\mathcal C$ be a centrally-symmetric $(2,2)$ correspondence  given by \eqref{eq:centrallysymmetric}, with $a_{11}\neq0$.
Then $\mathcal C$ is $k$-semi-periodic if and only if it is not $k$-periodic and its secondary $(2,2)$ correspondence
$\hat{\mathcal{C}}$ \eqref{eq:secondary} is $k$-periodic.
\end{theorem}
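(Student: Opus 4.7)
The plan is to exhibit a rational two-to-one cover $\pi\colon\mathcal C_A\to\mathcal {\hat C}_A$ that intertwines the QRT maps on the two curves, and then read off the equivalence from elementary deck-theoretic considerations. I would first define $\pi(x,y):=(x^2,y^2)$. The centrally symmetric relation $a_{22}x^2y^2+a_{20}x^2+a_{02}y^2+a_{00}=-a_{11}xy$, after squaring and substituting $u=x^2$, $v=y^2$, recovers exactly \eqref{eq:secondary}, so $\pi$ sends $\mathcal C_A$ into $\mathcal {\hat C}_A$. A generic fiber consists of exactly two points $\{(x,y),\sigma(x,y)\}$ with $\sigma(x,y):=(-x,-y)$: of the four candidate preimages $(\pm\sqrt{u},\pm\sqrt{v})$ of $(u,v)\in\mathcal {\hat C}_A$, the constraint $a_{22}uv+a_{20}u+a_{02}v+a_{00}=\pm a_{11}\sqrt{uv}$ pins down the sign of the product $xy$, and the hypothesis $a_{11}\neq 0$ is precisely what keeps this constraint nontrivial. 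Thus $\pi$ is a genuine $2\!:\!1$ cover whose deck involution is $\sigma$.

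Next I would establish $\pi\circ v=\hat v\circ\pi$ and $\pi\circ h=\hat h\circ\pi$, hence $\pi\circ\delta^k=\hat\delta^k\circ\pi$ for every $k\ge 1$. For the vertical switch, Vieta applied to $Q(x,\cdot)$ gives $y+y_1$ and $yy_1$ as rational functions of $x$, from which $y^2+y_1^2$ and $y^2 y_1^2$ are readily computed. On the other side, reading off the coefficients of $\hat Q(u,\cdot)$ from \eqref{eq:secondary} with $u=x^2$ yields the clean factorizations $\hat a(u)=(a_{22}u+a_{02})^2$, $\hat c(u)=(a_{20}u+a_{00})^2$, and $\hat b(u)=2(a_{22}u+a_{02})(a_{20}u+a_{00})-a_{11}^2 u$, which render the match $-\hat b/\hat a=y^2+y_1^2$, $\hat c/\hat a=(yy_1)^2$ transparent. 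The horizontal case is identical by the symmetry of the construction.

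Both implications then follow. ($\Rightarrow$) If $\delta^k=\sigma$, then $\sigma$ is not the identity on the irreducible curve $\mathcal C_A$ (the fixed locus of $\sigma$ on the affine part is the single point $(0,0)$), whence $\mathcal C_A$ is not $k$-periodic; moreover $\hat\delta^k\circ\pi=\pi\circ\sigma=\pi$ forces $\hat\delta^k=\Id$. ($\Leftarrow$) If $\hat\delta^k=\Id$ and $\mathcal C_A$ is not $k$-periodic, then for every $p\in\mathcal C_A$ one has $\delta^k(p)\in\pi^{-1}(\pi(p))=\{p,\sigma(p)\}$; the two closed subvarieties $\{p\in\mathcal C_A:\delta^k(p)=p\}$ and $\{p\in\mathcal C_A:\delta^k(p)=\sigma(p)\}$ cover $\mathcal C_A$, so irreducibility forces one of them to be all of $\mathcal C_A$, and the first is ruled out by hypothesis, leaving $\delta^k=\sigma$.

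The main obstacle I anticipate is making the intertwining $\pi\circ v=\hat v\circ\pi$ fully rigorous: the perfect-square factorizations of $\hat a$ and $\hat c$ are not cosmetic but are exactly what guarantees that the vertical switch on $\mathcal {\hat C}_A$ is the $\pi$-image of the vertical switch on $\mathcal C_A$, with the middle coefficient absorbing the $-a_{11}^2 u$ correction. A secondary subtlety in the reverse direction is the invocation of irreducibility of $\mathcal C_A$---generic and implicit in the statement, but needing explicit mention so that a regular self-map agreeing pointwise with either $\Id$ or $\sigma$ must globally equal one of them.
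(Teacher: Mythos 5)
Your proof is correct and follows essentially the same route as the paper's: both proofs rest on the observation that the squaring map $\pi(x,y)=(x^2,y^2)$ intertwines the QRT transformations of $\mathcal C_A$ and $\hat{\mathcal C}_A$ and is $2\!:\!1$ with deck involution $\sigma$, thanks to $a_{11}\neq 0$. The paper merely asserts the intertwining (``the QRT-transformation on $\hat{\mathcal C}_A$ maps $(x^2,y^2)$ to $(x_1^2,y_1^2)$''), whereas you make it explicit by computing $\hat a(u)=(a_{22}u+a_{02})^2$, $\hat c(u)=(a_{20}u+a_{00})^2$, $\hat b(u)=2(a_{22}u+a_{02})(a_{20}u+a_{00})-a_{11}^2u$ and matching Vieta data; that is a genuine improvement in rigor. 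The more substantive difference is in the backward direction. The paper argues pointwise that $(x_k,y_k)$ must be either $(x,y)$ or $(-x,-y)$ and then dismisses the first option ``since $\mathcal C_A$ is not $k$-periodic'' --- a step that only rules out the first equality holding \emph{identically}, not at a given point, and so implicitly leans on the poristic property of the elliptic QRT map (i.e.\ that $\delta^k$ and $\sigma$ are translations, so agreement at one point forces agreement everywhere). Your alternative closes this gap more transparently: the two closed loci $\{\delta^k=\Id\}$ and $\{\delta^k=\sigma\}$ cover the irreducible curve, so one must be all of $\mathcal C_A$, and the hypothesis kills the first. This costs you the explicit mention of irreducibility (which the paper also uses silently) but frees you from invoking the translation structure of $\delta$. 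Both are valid; yours is the more self-contained and careful write-up of the same underlying idea.
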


\begin{proof}
First, notice that, if the QRT-transformation on $\mathcal{C}$ maps $(x,y)$ to $(x_1,y_1)$, then the QRT-transformation on $\hat{\mathcal{C}}$ maps $(x^2,y^2)$ to $(x_1^2,y_1^2)$.

Suppose now the $\mathcal{C}$ is $k$-semi-periodic.
Since $\mathcal{C}$ contains more than one point, it will not be $k$-periodic.
We have that $k$-th iterate of its QRT-transformation maps $(x,y)$ to $(-x,-y)$.
Then the $k$-th iterate of the QRT-transformation of $\hat{\mathcal{C}}$ maps $(x^2,y^2)$ to itself, so we have $k$-periodicity.

Now, suppose that $\mathcal {\hat C}$ \eqref{eq:secondary} is $k$-periodic, and $\mathcal{C}$ is not.
Let $(x_k,y_k)$ be the image of the point $(x,y)$ by the $k$-th iterate of the QRT-transformation of $\mathcal{C}$.
 Due to the $k$-periodicity of $\hat{\mathcal{C}}$, we will have $x^2=x_k^2$ and $y^2=y_k^2$.
If $a_{11}\neq0$, that will imply $(x,y)=(x_k,y_k)$ or $(x,y)=(-x_k,-y_k)$.
The first equality cannot hold since $\mathcal{C}$ is not $k$-periodic, thus the second one is true, implying $k$-semi-periodicity.
Let us observe that for every QRT trajectory $(u_k, v_k)$ of $\hat{\mathcal{C}}$, there exists a QRT trajectory $(x_k, y_k)$ of $\mathcal{C}$, such that $u_k=x_k^2$ and $v_k=y_k^2$.
This follows from the first observation, the fact that both correspondences are $(2,2)$ , and $a_{11}\ne 0$.
\end{proof}

\begin{lemma}\label{lem:secondlinkage}
The secondary $(2,2)$ correspondence of \eqref{eq:4bar22} is:
$$
\begin{aligned}
\hat L\ :\
&(c^2 - (a - b - d)^2)^2 +
2 (c^2 - (a + b - d)^2) (c^2 - (a - b - d)^2) u
\\
& + (c^2 - (a + b -
d)^2)^2 u^2 +
2 (c^2 - (a - b + d)^2) (c^2 - (a - b - d)^2) v
\\
&+
4 (a^4 + b^4 + (c^2 - d^2)^2 - 2 a^2 (b^2 + c^2 + d^2) -
2 b^2 (c^2 + 5 d^2)) u v
\\& +
2 (c^2 - (a + b - d)^2) (c^2 - (a + b + d)^2) u^2 v + (c^2 - (a -
b + d)^2)^2 v^2
\\&+
2 (c^2 - (a - b + d)^2) (c^2 - (a + b + d)^2) u v^2 + (c^2 - (a +
b + d)^2)^2 u^2 v^2=0.
\end{aligned}
$$
The corresponding cubic is:
\begin{equation}\label{eq:seccubic}
\hat \Gamma: \mu^2=4\lambda^3 - \hat g_2\lambda - \hat g_3,
\end{equation}
where
$$
\begin{aligned}
\hat g_2=\ &\frac{2^{16}}{3} b^4 d^4 \left(a^4-2 a^2 \left(b^2+c^2+d^2\right)+b^4-2 b^2 \left(c^2+d^2\right)+\left(c^2-d^2\right)^2\right)^2
\\
&
-2^{14}
b^4 d^4
(a-b+c+d) (a+b-c-d) (a-b-c+d) (a+b+c-d)\times
\\&\qquad\quad\times
 (a-b+c-d) (a+b-c+d) (a-b-c-d) (a+b+c+d)
,
\\
\hat{g}_3=\ &
-\frac{2^{21}}{27} b^6 d^6
\left(
a^4-2 a^2 b^2-2 a^2 c^2-2 a^2 d^2+b^4-2 b^2 c^2-2 b^2 d^2+c^4-2 c^2 d^2+d^4
\right)\times
\\&\qquad\quad\times
\left(a^4-2 a^2 b^2-2 a^2 c^2-2 a^2 d^2-24 a b c d+b^4-2 b^2 c^2-2 b^2 d^2+c^4-2 c^2 d^2+d^4\right)
\times
\\&\qquad\quad\times
 \left(a^4-2 a^2 b^2-2 a^2 c^2-2 a^2 d^2+24 a b c d+b^4-2 b^2 c^2-2 b^2 d^2+c^4-2 c^2 d^2+d^4\right).
\end{aligned}
$$
\end{lemma}

\begin{lemma}
The secondary biquadratic $\hat L$ defines a smooth elliptic curve if and only if the biquadratic $L$ does and $a_{11}\neq0$.
\end{lemma}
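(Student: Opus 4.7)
I will combine the explicit factorization of the discriminant $F_L$ of the $4$-bar biquadratic \eqref{eq:4bar22} with the geometric observation that the map $(x,y)\mapsto(x^2,y^2)$ realises $\hat L$ as the quotient of $L$ by the central involution $\sigma:(x,y)\mapsto(-x,-y)$.

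I would first dispose of the case $a_{11}=0$: writing $\hat Q(u,v)=A'(u,v)^2-a_{11}^2 uv$ with $A'(u,v)=a_{22}uv+a_{20}u+a_{02}v+a_{00}$, we see that when $a_{11}=0$ the polynomial $\hat Q=A'{}^2$ is a perfect square, so $\hat L$ is non-reduced and hence not smooth. When $a_{11}\ne 0$, $a_{11}^2 uv$ is not a square in $\mathbb C[u,v]$, so $\hat Q$ is not a square and $\hat L$ is reduced. Next, I would verify by direct expansion the identity
$$F_L=2^{24}(abcd)^4\,a_{00}a_{02}a_{20}a_{22},$$
for instance $a_{00}=(a-b-d)^2-c^2=(a-b-c-d)(a-b+c-d)$, and similarly the remaining three coefficients account for the other six of the eight linear factors appearing in $F_L$. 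Since $abcd\ne 0$, Theorem~\ref{th:smooth} then gives that $L$ is smooth iff none of $a_{00},a_{02},a_{20},a_{22}$ vanishes, equivalently iff $L$ avoids the four corners $(0,0),(0,\infty),(\infty,0),(\infty,\infty)$ of $\mathbb P^1\times\mathbb P^1$. These four corners are exactly the fixed points of $\sigma$; moreover, since $a_{10}=a_{01}=a_{12}=a_{21}=0$ in \eqref{eq:4bar22}, both partials of $Q$ vanish at every corner in the appropriate affine chart, so any corner lying on $L$ is automatically a singular point of $L$.

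For the forward implication ($L$ smooth and $a_{11}\ne 0\Rightarrow \hat L$ smooth elliptic): since $L$ avoids the fixed locus of $\sigma$, the involution acts freely on $L$. The identity $\hat Q(x^2,y^2)=Q(x,y)\,Q^{\vee}(x,y)$, where $Q^{\vee}$ is obtained from $Q$ by $a_{11}\mapsto-a_{11}$, together with $a_{11}\ne 0$, shows that of the four preimages of a generic point of $\hat L$ under $(x,y)\mapsto(x^2,y^2)$, exactly two lie on $L$ and these form a single $\sigma$-orbit. Hence the map $(x,y)\mapsto(x^2,y^2)$ descends to an isomorphism $L/\sigma\xrightarrow{\sim}\hat L$: the quotient is smooth because $\sigma$ acts freely on the smooth $L$, and $\hat L$ is reduced and irreducible because its preimage in $\mathbb P^1\times\mathbb P^1$ is $L\cup L^{\vee}$ with $L\ne L^{\vee}$. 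Applying Riemann--Hurwitz to the unramified double cover $L\to L/\sigma$ gives $g(\hat L)=g(L)=1$, so $\hat L$ is smooth elliptic. For the converse, if $L$ is singular then, after a change of chart on $\mathbb P^1\times\mathbb P^1$, I may assume $a_{00}=0$; then
$$\hat Q(u,v)=(a_{20}u+a_{02}v+a_{22}uv)^2-a_{11}^2 uv$$
has no constant or linear term at $(u,v)=(0,0)$, so $\hat Q$ and both its partials vanish there and $\hat L$ is singular at the origin.

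The hard part will be the scheme-theoretic identification $L/\sigma\cong\hat L$ in the forward direction: set-theoretically this is immediate from the orbit computation, but to conclude a genuine isomorphism one must also verify the irreducibility of $\hat L$ and that the degree-$2$ map realises the quotient rather than merely its normalisation. Both follow from $L\ne L^{\vee}$ when $a_{11}\ne 0$, which ensures that the degree of $(x,y)\mapsto(x^2,y^2)\colon L\to\hat L$ matches that of the quotient $L\to L/\sigma$.
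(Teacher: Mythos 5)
Your approach is geometric and genuinely different from the paper's, which simply computes both discriminants in closed form for a \emph{general} centrally-symmetric biquadratic,
\[
F_L=a_{00}a_{02}a_{20}a_{22}\,R^2,\qquad
F_{\hat L}=a_{11}^{12}\,(a_{00}a_{02}a_{20}a_{22})^2\,R,
\]
with $R=a_{11}^4-8a_{11}^2(a_{00}a_{22}+a_{02}a_{20})+16(a_{02}a_{20}-a_{00}a_{22})^2$, and then reads off that the second vanishes if and only if the first does or $a_{11}=0$. Your route via the identity $\hat Q(x^2,y^2)=Q(x,y)Q^\vee(x,y)$, the free action of $\sigma$ and Riemann--Hurwitz is more conceptual and explains \emph{why} the two smoothness conditions are linked, but it is also specific to the $4$-bar parametrization: your factorization $F_L=2^{24}(abcd)^4\,a_{00}a_{02}a_{20}a_{22}$ hard-codes the identity $R=2^{12}(abcd)^2$, whereas the paper's two-line computation treats any centrally-symmetric biquadratic.

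There are two gaps. First, the inference ``$a_{11}^2uv$ is not a square, so $\hat Q=A'{}^2-a_{11}^2uv$ is not a square'' is not valid: a difference of a square and a non-square can still be a square (e.g.\ $(u^2+1)^2-4u^2=(u^2-1)^2$). The correct argument is that $\hat Q=B^2$ would force $a_{11}^2uv=(A'-B)(A'+B)$, and matching against the factorization of $uv$ forces either $a_{00}=a_{22}=0$ or $a_{20}=a_{02}=0$, both excluded once $L$ is smooth.

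Second, and more seriously, in the forward direction you show that $L/\sigma$ is a smooth genus-one curve with a birational morphism onto the reduced irreducible curve $\hat L$. That exhibits $L/\sigma$ as the \emph{normalization} of $\hat L$, not an isomorphism; your proposed fix ``$L\ne L^{\vee}$ ensures the degrees match'' only yields birationality. To close the gap you must invoke arithmetic genus: $\hat L$ is a reduced irreducible curve of bidegree $(2,2)$ in $\mathbb P^1\times\mathbb P^1$, so $p_a(\hat L)=1$; since its normalization $L/\sigma$ has geometric genus $1$, the $\delta$-invariant of $\hat L$ vanishes and $\hat L$ is smooth. With these two repairs the argument is correct, at the cost of being much longer than the paper's direct discriminant comparison.
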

\begin{proof}
The discriminant of $L$ is:
$$
256\, a_{00} a_{02} a_{20} a_{22}
\left(
a_{11}^4-8 a_{11}^2 (a_{00} a_{22}+a_{02} a_{20})+16 (a_{02} a_{20}-a_{00} a_{22})^2
\right)^2,
$$
while the discriminant of $\hat{L}$ is:
$$
256\, a_{11}^{10}  (a_{00} a_{02} a_{20} a_{22})^2
\left(
a_{11}^4-8 a_{11}^2 (a_{00} a_{22}+a_{02} a_{20})+16 (a_{02} a_{20}-a_{00} a_{22})^2
\right),
$$
which immediately implies the statement.
\end{proof}

\begin{proposition}[\cite{Izm}*{Theorem 4}]	\label{prop:2semi-periodic}
The Darboux transformation of the $4$-bar link $(a, b, c, d)$ is $2$-semi-periodic if and only if
$
ac=bd.
$
\end{proposition}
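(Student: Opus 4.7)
The plan is to apply Theorem~\ref{th:ksemiperiodic}, which reduces the statement to showing that the secondary correspondence $\hat L$ from Lemma~\ref{lem:secondlinkage} is $2$-periodic (while $L$ itself is not). By Proposition~\ref{prop:2Darboux}, $L$ is $2$-periodic iff $a^2+c^2=b^2+d^2$, and by Theorem~\ref{th:cayley}(b)(i) applied to $\hat L$, the latter is $2$-periodic iff $\det M_{\hat L}=0$. Thus the proof reduces to computing $\det M_{\hat L}$ and comparing its vanishing locus with the criterion $a^2+c^2=b^2+d^2$.

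To keep the determinant manageable, I would introduce the abbreviations
\[
\alpha=c^2-(a+b+d)^2,\quad\beta=c^2-(a+b-d)^2,\quad\gamma=c^2-(a-b+d)^2,\quad\delta=c^2-(a-b-d)^2,
\]
so that $M_L$ from \eqref{eq:ML} has entries $a_{22}=-\alpha$, $a_{20}=-\beta$, $a_{02}=-\gamma$, $a_{00}=-\delta$, $a_{11}=8bd$. Then by \eqref{eq:secondary} (equivalently, directly from Lemma~\ref{lem:secondlinkage})
\[
M_{\hat L}=\begin{pmatrix}
\alpha^2 & 2\alpha\beta & \beta^2\\
2\alpha\gamma & 2(\alpha\delta+\beta\gamma)-64b^2d^2 & 2\beta\delta\\
\gamma^2 & 2\gamma\delta & \delta^2
\end{pmatrix}.
\]
Cofactor expansion along the first row, combined with the elementary identity $2(\alpha\delta+\beta\gamma)^2-8\alpha\beta\gamma\delta=2(\alpha\delta-\beta\gamma)^2$, factors the determinant as
\[
\det M_{\hat L}=2(\alpha\delta-\beta\gamma)\bigl[(\alpha\delta-\beta\gamma)^2-32b^2d^2(\alpha\delta+\beta\gamma)\bigr].
\]

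The remaining, and hardest, step is translating this back into $a,b,c,d$. Using $(c^2-p^2)(c^2-q^2)=c^4-c^2(p^2+q^2)+(pq)^2$ on the pairs $\{a+b+d,a-b-d\}$ and $\{a+b-d,a-b+d\}$, a short symmetric computation should yield the two key identities
\[
\alpha\delta-\beta\gamma=-8bd\,(a^2+c^2-b^2-d^2),\qquad 2(a^2+c^2-b^2-d^2)^2-(\alpha\delta+\beta\gamma)=8(a^2c^2-b^2d^2).
\]
Substituting these collapses the bracketed factor to $256\,b^2d^2(a^2c^2-b^2d^2)$, giving
\[
\det M_{\hat L}=-4096\,b^3d^3\,(a^2+c^2-b^2-d^2)(ac-bd)(ac+bd).
\]
Since $b,d>0$ and $ac+bd>0$, the determinant vanishes iff $a^2+c^2=b^2+d^2$ or $ac=bd$; discarding the first alternative, which corresponds to $L$ being $2$-periodic, leaves exactly $ac=bd$, as claimed. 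The only substantive obstacle is the brute verification of the two polynomial identities for $\alpha\delta\pm\beta\gamma$; once those are in hand, the appearance of the linear factor $ac-bd$ out of the unwieldy secondary biquadratic is immediate.
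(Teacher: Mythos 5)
Your proof is correct and follows the same route as the paper: reduce via Theorem~\ref{th:ksemiperiodic} to $2$-periodicity of $\hat L$, compute $\det M_{\hat L}$, discard the factor $a^2+c^2-b^2-d^2$ by Proposition~\ref{prop:2Darboux}. The paper simply asserts $\det M_{\hat L}=-4096\,b^3d^3(a^2-b^2+c^2-d^2)(a^2c^2-b^2d^2)$; your cofactor expansion in $\alpha,\beta,\gamma,\delta$ and the two identities for $\alpha\delta\pm\beta\gamma$ are a correct way of deriving it, and they do check out.
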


We provide two ways to prove Proposition \ref{prop:2semi-periodic}, which are both different than the one used in \cite{Izm}.
The first way is in the proof below, and the second one in Example \ref{ex:second-proof}.

\begin{proof}
Let $L$ be the corresponding $(2,2)$ correspondence \eqref{eq:4bar22} and $\hat{L}$ its secondary correspondence.

According to Theorem \ref{th:ksemiperiodic},
the Darboux transformation is $2$-semi-periodic if and only if $\hat{L}$ is $2$-periodic and $L$ is not.
	
The matrix corresponding to $\hat{L}$ is:
$$
M_{\hat L}
=
\begin{pmatrix}
a_{22}^2 & 2a_{02}a_{22} & a_{02}^2\\
2a_{20}a_{22} & 2a_{22}a_{00} +2a_{20}a_{02}-a_{11}^2 & 2a_{00}a_{02}\\
a_{20}^2 & 2a_{00}a_{20} & a_{00}^2
 \end{pmatrix},
$$
with
\begin{gather*}
	a_{11}=8bd,\quad
a_{00}=(a-b-d)^2-c^2,\quad
a_{20}=(a+b-d)^2-c^2,
\\
a_{02}=(a-b+d)^2-c^2,\quad
a_{22}=(a+b+d)^2-c^2.	
\end{gather*}
According to Theorem \ref{th:cayley}, the QRT transformation of $\hat L$ is of order two if and only if $\det M_{\hat L}=0$, i.e.
$$
\det M_{\hat L}
=
-4096 b^3 d^3 (a^2-b^2+c^2-d^2) (a^2c^2-b^2d^2)=0.
$$
Since $bd\neq0$, this is equivalent to
$
(a^2-b^2+c^2-d^2) (a^2c^2-b^2d^2)=0.
$
From this expression we need to factor out the condition that $L$ is $2$-periodic, which is $a^2-b^2+c^2-d^2=0$, as derived in Proposition \ref{prop:2Darboux}.
Thus the statement is proved.
\end{proof}

Next, we want to give a geometric argument for Proposition \ref{prop:2semi-periodic}, for which we will use the following:

\begin{lemma}\label{lem:angles}
The Darboux transformation applied to a given quadrilateral $ABCD$ is $2$-semi-periodic if and only if
$$
\angle DAC'=\angle BAC\ (\mod\ \pi) \quad\text{and}\quad \angle A'CD=\angle ACB\ (\mod\ \pi),
$$
where $C'$, $A'$ are symmetric to $C$, $A$ with respect to $BD$.
\end{lemma}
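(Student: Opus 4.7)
The plan is to reduce each angle condition to a polynomial equation in the two diagonal lengths $m=|AC|$ and $m_1=|AC_1|$, and then show that the combined system is equivalent, away from degenerate configurations, to $ac=bd$, which by Proposition~\ref{prop:2semi-periodic} characterises $2$-semi-periodicity.

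First I would record the basic isometry observation: since reflection across $BD$ sends the ordered pair $(A,C_1)$ to $(A_1,C)$, we have $|AC_1|=|A_1C|=m_1$, together with $|BC_1|=b$, $|DC_1|=c$, $|BA_1|=a$, $|DA_1|=d$. Applying the law of cosines in $\triangle ABC$, $\triangle ADC_1$, $\triangle ACB$, $\triangle A_1CD$, the conditions $\cos\angle DAC_1=-\cos\angle BAC$ and $\cos\angle A_1CD=\cos\angle ACB$ rewrite as
\begin{equation*}
(*)\ \ am(m_1^2+d^2-c^2)+dm_1(m^2+a^2-b^2)=0,\qquad (**)\ \ bm(m_1^2+c^2-d^2)-cm_1(m^2+b^2-a^2)=0.
\end{equation*}

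The core of the argument is then the pair of factorisations, verified by direct expansion,
\begin{equation*}
(*)+(**)=\bigl[(a+b)m_1-(c-d)m\bigr]\bigl[mm_1-(c+d)(b-a)\bigr],
\end{equation*}
\begin{equation*}
(*)-(**)=\bigl[(c+d)m+(a-b)m_1\bigr]\bigl[mm_1-(c-d)(a+b)\bigr].
\end{equation*}
Thus (*) and (**) both vanish iff one factor of each product is zero. A short case analysis of the four combinations: pairing the two linear factors forces $ac+bd=0$, impossible for positive sides; pairing one linear factor with the opposite quadratic factor forces $m=a+b$ or $m=|b-a|$, i.e.~degenerate collinearity of $A$, $B$, $C$; the only surviving non-degenerate case requires both quadratic factors to vanish, which is $mm_1=(c+d)(b-a)=(c-d)(a+b)$, and this double equality is itself equivalent to $ac=bd$.

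For the converse I would verify that $mm_1$ is a link-invariant when $ac=bd$. Parametrising $C=a+be^{i\varphi}$ and using that the reflection across $BD$ gives $C_1=a+be^{i(2\beta-\varphi)}$ with $\beta=\arg(D-B)$, expansion via the sum-to-product identities and the law of cosines in $\triangle ABD$ and $\triangle BCD$ (to evaluate $\cos\beta$ and $\cos(\varphi-\beta)$) yields $m^2m_1^2=(a^2-b^2)^2+4abXY$ with $Y=(b^2+d^2-a^2-c^2)/(2n)$ and $X=[(bd)^2-(ac)^2-n^2(b^2-a^2)]/(2abn)$, where $n=|BD|$. When $ac=bd$, the first summand of $X$ drops and a brief simplification yields $m^2m_1^2=d^2(b^2-a^2)^2/a^2$, independent of $\psi$. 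Hence $mm_1=d(b^2-a^2)/a=(c+d)(b-a)=(c-d)(a+b)$, both quadratic factors vanish identically on the configuration variety, and (*), (**)---hence the angle conditions---are automatic. Combining both directions with Proposition~\ref{prop:2semi-periodic} concludes the proof. The principal obstacle is this trigonometric verification that $mm_1$ is a link-invariant when $ac=bd$; the sum/difference factorisations in the forward direction, though not transparent at first sight, are routine polynomial identities.
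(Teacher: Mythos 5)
Your route is genuinely different from the paper's. The paper gives a short synthetic argument: writing $D_1=(s_{AB}\circ s_{AC})(D)=(s_{AC_1}\circ s_{AD})(D)$ shows that two rotations about $A$ agree on a point other than $A$, hence coincide, giving $2\angle CAB\equiv 2\angle DAC_1\ (\mathrm{mod}\ 2\pi)$ as oriented angles, with the companion identity at $C$ following by the same reasoning. You instead translate the (unsigned) angle conditions into law-of-cosines polynomials in $m=|AC|$, $m_1=|AC_1|$ and factor. I checked both of your factorisations by expansion and they are correct, and your four-case analysis correctly isolates $mm_1=(c+d)(b-a)=(c-d)(a+b)$, which forces $ac=bd$, as the only non-degenerate branch.

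There is, however, a genuine gap in the converse direction, and it exposes something your computation hides. The identity $mm_1=(c+d)(b-a)$ requires $b>a$ (and $c>d$), but $ac=bd$ is perfectly compatible with $a>b$, $c<d$. For such a link, e.g. $(a,b,c,d)=(2,1,3/2,3)$, the link is still $2$-semi-periodic by Proposition \ref{prop:2semi-periodic}, but a direct check on a configuration gives $\angle DAC_1=\angle BAC$ and $\angle A_1CD=\pi-\angle ACB$: both signs in the lemma's conditions are reversed, so $(*)$ and $(**)$ as you wrote them fail and the claimed $mm_1$ would have to be negative. Your argument therefore establishes only one of the two sign patterns, while the paper's rotation argument is immune to this because it produces the oriented identity $\angle CAB\equiv\angle DAC_1\ (\mathrm{mod}\ \pi)$ directly, which subsumes both unsigned versions. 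To close the gap you would need to repeat the factorisation/case analysis for the companion pair $(-(*),-(**))$ and explain how the sign of $b-a$ selects the branch. Finally, note that invoking Proposition \ref{prop:2semi-periodic} is not circular (that proposition is proved independently via the secondary cubic), but it works against the role the lemma plays in the text: the lemma is set up precisely so that the example that follows it can rederive $ac=bd$ by pure planimetry, which presumes the lemma itself is proved without appealing to the $ac=bd$ criterion.
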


\begin{proof}
The Darboux transformation is $2$-semi-periodic	if and only if quadrilaterals
$(h\circ v)(ABCD)=ABC_1D_1$ and $(v\circ h)(ABCD)=ABC_2D_2$ are symmetric to each other with respect to the line $AB$, see Figure \ref{fig:2semi}.
Note that $C'=C_1$.
	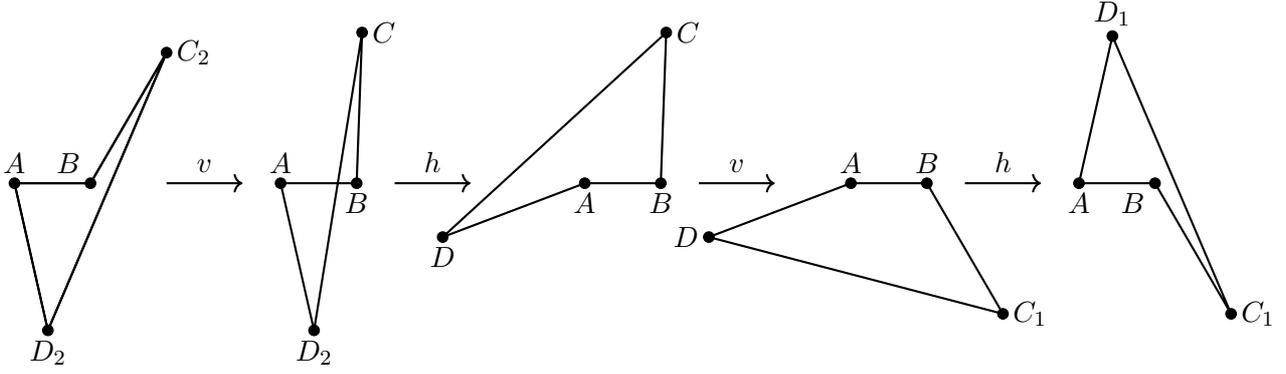
\begin{figure}[h]
	\begin{center}
		\begin{tikzpicture}[scale=1]
\draw[thick] (0,0) -- (1,0)--(2., 1.73205)--(0.439678, -1.95107)--cycle;
\begin{scope}[shift={(0,0)}]
	\draw[thick] (0,0) -- (1,0)--(2., 1.73205)--(0.439678, -1.95107)--cycle;
	\filldraw[black] (0,0) circle [radius=0.07] node [above]{$A$};
	\filldraw[black] (1,0) circle [radius=0.07] node [above left]{$B$};
	\filldraw[black] (2., 1.73205) circle [radius=0.07] node [right]{$C_2$};
	\filldraw[black] (0.439678, -1.95107) circle [radius=0.07] node [below]{$D_2$};
\end{scope}

\draw[thick,->,shift={(2,0)}] (0,0)--node[sloped, above] {$v$}(1,0);
			
\begin{scope}[shift={(3.5,0)}]
	\draw[thick] (0,0) -- (1,0)--(1.07143, 1.998722)--(0.439678, -1.95107)--cycle;
	\filldraw[black] (0,0) circle [radius=0.07] node [above]{$A$};
	\filldraw[black] (1,0) circle [radius=0.07] node [below]{$B$};
	\filldraw[black] (1.07143, 1.998722) circle [radius=0.07] node [right]{$C$};
	\filldraw[black] (0.439678, -1.95107) circle [radius=0.07] node [below]{$D_2$};
\end{scope}

\draw[thick,->,shift={(5,0)}] (0,0)--node[sloped, above]{$h$}(1,0);

\begin{scope}[shift={(7.5,0)}]
\draw[thick] (0,0) -- (1,0)--(1.07143, 1.998722)--(-1.86825, -0.713893)--cycle;
\filldraw[black] (0,0) circle [radius=0.07] node [below]{$A$};
\filldraw[black] (1,0) circle [radius=0.07] node [below]{$B$};
\filldraw[black] (1.07143, 1.998722) circle [radius=0.07] node [right]{$C$};
\filldraw[black] (-1.86825, -0.713893) circle [radius=0.07] node [below]{$D$};
\end{scope}

\draw[thick,->,shift={(9,0)}] (0,0)--node[sloped, above]{$v$}(1,0);
			
\begin{scope}[shift={(11,0)}]
	\draw[thick] (0,0) -- (1,0)--(2., -1.73205)--(-1.86825, -0.713893)--cycle;
	\filldraw[black] (0,0) circle [radius=0.07] node [above]{$A$};
	\filldraw[black] (1,0) circle [radius=0.07] node [above]{$B$};
	\filldraw[black](2., -1.73205) circle [radius=0.07] node [right]{$C_1$};
	\filldraw[black] (-1.86825, -0.713893) circle [radius=0.07] node [left]{$D$};
\end{scope}

\draw[thick,->,shift={(12.5,0)}] (0,0)--node[sloped, above]{$h$}(1,0);

\begin{scope}[shift={(14,0)}]
	\draw[thick] (0,0) -- (1,0)--(2., -1.73205)--(0.439678, 1.95107)--cycle;
	\filldraw[black] (0,0) circle [radius=0.07] node [below]{$A$};
	\filldraw[black] (1,0) circle [radius=0.07] node [below left]{$B$};
	\filldraw[black](2., -1.73205) circle [radius=0.07] node [right]{$C_1$};
	\filldraw[black] (0.439678, 1.95107) circle [radius=0.07] node [above]{$D_1$};
\end{scope}

		\end{tikzpicture}
	\end{center}\caption{A $2$-semi-periodic Darboux transformation.}\label{fig:2semi}
\end{figure}

The $2$-semi-periodicity of the Darboux transformation  implies that $s_{AB}$ maps $C_1$, $D_1$ to $C_2$, $D_2$, thus:
$$
D_1=s_{AB}(D_2)=(s_{AB}\circ s_{AC})(D)
\quad
\text{and}
\quad
D_1=s_{AC_1}(D)=(s_{AC_1}\circ s_{AD})(D).
$$
In other words, $D_1$ is obtained from $D$ as a result of the rotation with the center at $A$ by the angle $2\angle CAB$, but also as a result of the rotation with the same center by the angle $2\angle DAC_1$.
Thus, those two rotations are in fact the same map, so the two oriented angles $2\angle CAB$ and $2\angle DAC_1$ must be equal modulo $2\pi$.
By symmetry, the same holds for $2\angle ACB$ and $2\angle DCA'$.
\end{proof}

\begin{example}[Second proof for Proposition \ref{prop:2semi-periodic}.]\label{ex:second-proof}
Here, we are going to provide a planimetric proof that $ac=bd$ is equivalent to the $2$-semi-periodicity of the link $(a, b, c, d)$.

First, suppose that $ac=bd$.
For the link $(a,b,c,d)$, choose a cyclic polygonal configuration $T_1=ABCD$.
Recall that a cyclic quadrilateral that satisfies $ac=bd$ is called \emph{harmonic quadrilateral} and that it is characterized by the property that each diagonal is a \emph{symmedian} of the triangles formed by dividing the quadrilateral by the other diagonal.
Recall also that a symmedian of a triangle is the line symmetric to its median with respect to the bisector of the angle with the same vertex as the median.

Now, denote by $Q$ the midpoint of the diagonal $BD$, see Figure \ref{fig:tetivni}.
	\begin{figure}[h]
	\begin{center}
		\begin{tikzpicture}[scale=0.8]
\coordinate (A) at (0,0);
\coordinate (B) at (2,0);
\coordinate (C) at (3.30738, 4.82605);
\coordinate (D) at (-1.7669, 1.62422);
\coordinate (O) at (1, 2.86101);
\coordinate (Q) at (0.116552, 0.812108);
\coordinate (C2) at (0.997377, 6.94951);
\coordinate (C1) at (-0.611897, -4.26357);

\draw[very thick] (A) -- (B)--(C)--(D)--cycle;

\draw (B)--(D);
\draw (A)--(C);
\draw (C)--(Q);
\draw(C1)--(C2)--(D)--cycle;

\filldraw[black] (A) circle [radius=0.07] node [below left]{$A$};
\filldraw[black] (B) circle [radius=0.07] node [below]{$B$};
\filldraw[black] (C) circle [radius=0.07] node [right]{$C$};
\filldraw[black] (D) circle [radius=0.07] node [left]{$D$};
\filldraw[black] (Q) circle [radius=0.07] node [above left]{$Q$};
\filldraw[black] (C1) circle [radius=0.07] node [left]{$C_1$};
\filldraw[black] (C2) circle [radius=0.07] node [left]{$C_2$};

\draw (O) circle [radius=3.03074];

\tkzMarkAngle[size=0.5cm,mark=||](B,A,C);
\tkzMarkAngle[size=0.5cm,mark=||](Q,A,D);

\tkzMarkAngle[size=1cm,mark=|](A,C,B);
\tkzMarkAngle[size=1cm,mark=|](D,C,Q);
\tkzMarkAngle[size=0.9cm,mark=|](D,C2,Q);
\tkzMarkAngle[size=0.9cm,mark=|](Q,C1,D);
		\end{tikzpicture}
	\end{center}\caption{A harmonic quadrilateral $ABCD$ is cyclic and the products of the pairs of opposite sides are equal. }\label{fig:tetivni}
\end{figure}
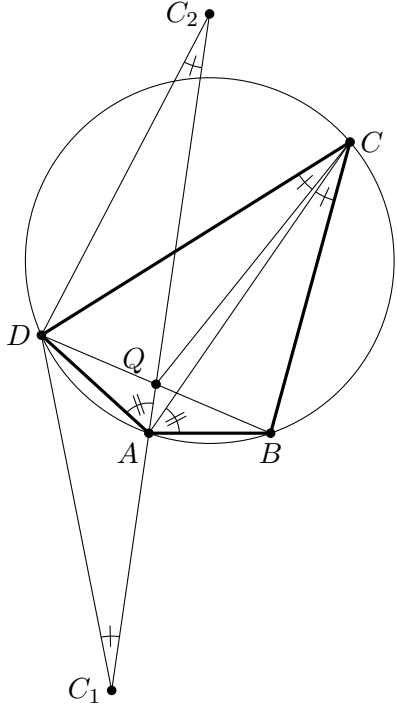
Since $AC$ is the symmedian of the triangles $ABD$ and $BCD$, we have:
$\angle DAQ\cong\angle BAC$ and $\angle DCQ \cong \angle ACB$.

In order to show that the quadrilateral $ABCD$ belongs to a $2$-semi-periodic link, according to Lemma \ref{lem:angles}, we need to prove that $Q$, $A$, and $C_1=s_{BD}(C)$ are collinear.

By the property of axial symmetry, we know that
$DC_1=DC=c$ and $\angle DC_1Q\cong\angle DCQ$.

On the ray $AQ$, we construct the point $C_2$, such that $DC_2=c$.
Observe the similarity of triangles
$
\triangle C_2DA\sim \triangle CBA,
$
which follows from
$$
\angle DAQ\cong\angle BAC \quad \text{and}\quad \frac {c}{d}=\frac{b}{a}.
$$
Thus,
\begin{align*}
\angle AC_2D\cong\angle ACB
\cong\angle QCD
\cong\angle QC_1D
\cong\angle DC_2C,
\end{align*}
which shows that
$
A\in C_1C_2
$
and
$
Q\in C_1C_2.
$
Thus, $Q$, $A$, and $C_1$ are collinear.
We also get that $Q$, $A'=s_{BD}(A)$, and $C$ are collinear.
Now, from $\angle DCQ \cong \angle ACB$, we have
$
\angle DCA' \cong \angle ACB,
$
which, according to Lemma \ref{lem:angles} implies that the Darboux transformation applied to the cyclic quadrangle $ABCD$ is $2$-semi-periodic.
By the poristic nature of $2$-semi-periodicity, the same will hold for all configurations of the same $4$-link.

\smallskip

\noindent
{\bf Converse: from $2$-semi-periodicity to $ac=bd$.}
Again, we choose a cyclic configuration $ABCD$ for the $4$-link $(a,b,c,d)$.
According to Lemma \ref{lem:angles}, we assume
$
\angle DAC_1=\pi - \angle BAC
$
and
$\angle A'CD\cong\angle ACB$.
Denote by $Q$ the intersection point of three lines $AC_1$, $A'C$ and $BD$.
On the ray $AQ$, we construct the point $C_2$, such that $DC_2\cong DC=c$, see Figure \ref{fig:tetivni-obrat}.
	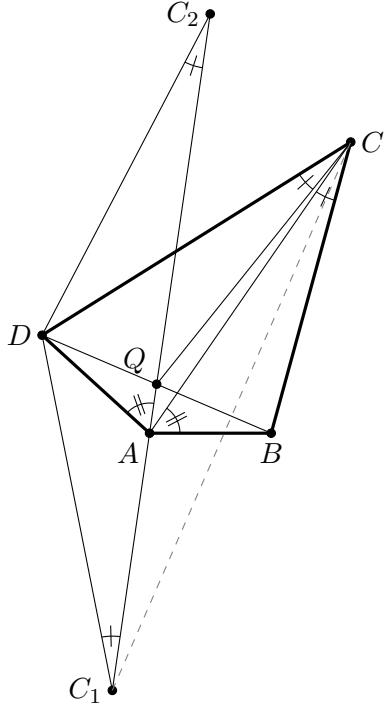
\begin{figure}[h]
	\begin{center}
		\begin{tikzpicture}[scale=0.8]
			\coordinate (A) at (0,0);
			\coordinate (B) at (2,0);
			\coordinate (C) at (3.30738, 4.82605);
			\coordinate (D) at (-1.7669, 1.62422);
			\coordinate (Q) at (0.116552, 0.812108);
			\coordinate (C2) at (0.997377, 6.94951);
			\coordinate (C1) at (-0.611897, -4.26357);

			\draw[very thick] (A) -- (B)--(C)--(D)--cycle;
			
			\draw (B)--(D);
			\draw (A)--(C);
			\draw (C)--(Q);
			\draw(C1)--(C2)--(D)--cycle;
			
			\filldraw[black] (A) circle [radius=0.07] node [below left]{$A$};
			\filldraw[black] (B) circle [radius=0.07] node [below]{$B$};
			\filldraw[black] (C) circle [radius=0.07] node [right]{$C$};
			\filldraw[black] (D) circle [radius=0.07] node [left]{$D$};
			\filldraw[black] (Q) circle [radius=0.07] node [above left]{$Q$};
			\filldraw[black] (C1) circle [radius=0.07] node [left]{$C_1$};
			\filldraw[black] (C2) circle [radius=0.07] node [left]{$C_2$};
			
			\draw[dashed,gray](C1)--(C);

			
			\tkzMarkAngle[size=0.5cm,mark=||](B,A,C);
			\tkzMarkAngle[size=0.5cm,mark=||](Q,A,D);
			
			\tkzMarkAngle[size=1cm,mark=|](A,C,B);
			\tkzMarkAngle[size=1cm,mark=|](D,C,Q);
			\tkzMarkAngle[size=0.9cm,mark=|](D,C2,Q);
			\tkzMarkAngle[size=0.9cm,mark=|](Q,C1,D);
		\end{tikzpicture}
	\end{center}\caption{The Darboux transformation of quadrilateral $ABCD$ is $2$-semi-periodic. }\label{fig:tetivni-obrat}
\end{figure}

Observe that the similarity of triangles
$
\triangle C_2DA\sim \triangle CBA,
$
follows from
$
\angle DAQ\cong\angle BAC$
and
$\angle DC_2Q\cong\angle DC_1Q\cong\angle QCD\cong\angle ACB.
$
Thus:
$
 \frac{DC_2}{DA}=\frac{CB}{AB},
$
which gives $ac=bd$ and completes the proof.
\end{example}

\begin{proposition} A $4$-bar link $(a,b,c,d)$ is $3$-semi-periodic if and only if
\begin{equation}\label{eq:3semi}
a^2 c^2 \left(a^2-b^2+c^2-d^2\right)^2=
(a^2 c^2-b^2 d^2)^2.
\end{equation}
\end{proposition}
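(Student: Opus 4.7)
The plan is to invoke Theorem \ref{th:ksemiperiodic}, which reduces $3$-semi-periodicity of the $4$-bar link correspondence $L$ to the combined condition that its secondary $(2-2)$-correspondence $\hat L$ is $3$-periodic while $L$ itself is not. Since $a_{11}=8bd\neq 0$ for positive side lengths, the hypothesis of that theorem is met, and the reduction is clean.

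The main computational step is then to write down explicitly the $3$-periodicity condition for $\hat L$. I would use the criterion from Theorem \ref{th:cayley}(b(ii)) in the form $C_2^{\hat L}=0$, taking the secondary cubic $\hat\Gamma:\mu^2=4\lambda^3-\hat g_2\lambda-\hat g_3$ from Lemma \ref{lem:secondlinkage} and expanding the Taylor series of $\sqrt{4\lambda^3-\hat g_2\lambda-\hat g_3}$ around the relevant $X$-coordinate. Equivalently, one may apply Proposition \ref{prop:6} directly and impose $\det(\Delta_{\hat L})=0$, with the entries of $\Delta_{\hat L}$ read off from the matrix $M_{\hat L}$ listed in the proof of Proposition \ref{prop:2semi-periodic}. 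Either route produces a single polynomial condition in $a,b,c,d$.

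The crux — and the main obstacle — is to verify that the resulting polynomial factors in the form
\[
C_2^{\hat L}\ =\ \frac{\bigl(b^2d^2(a^2-b^2+c^2-d^2)^2-(a^2c^2-b^2d^2)^2\bigr)\cdot\bigl(a^2c^2(a^2-b^2+c^2-d^2)^2-(a^2c^2-b^2d^2)^2\bigr)}{N(a,b,c,d)},
\]
with $N$ a nonvanishing denominator expressible in terms of $bd$ and $a^2-b^2+c^2-d^2$, in analogy with the formula for $C_2$ derived for $L$ itself in Remark \ref{rem:kejli3}. This factorization is not manifest and will be established by direct computation, most efficiently aided by computer algebra, followed by expansion to confirm the identity. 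By Proposition \ref{prop:3Darboux}, the first factor vanishes precisely when $L$ is $3$-periodic; thus, under the requirement from Theorem \ref{th:ksemiperiodic} that $L$ is \emph{not} $3$-periodic, the vanishing of $C_2^{\hat L}$ collapses to the vanishing of the second factor, which is exactly the stated condition \eqref{eq:3semi}.

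As an independent consistency check that guides the factorization, I would use Remark \ref{rem:quasi}: $3$-semi-periodicity implies $6$-periodicity of $L$. Proposition \ref{prop:Darboux6} lists two alternatives for $6$-periodicity, namely \eqref{eq:3semi} itself and the vanishing of $K_6$. The latter should correspond to links already $3$-periodic (whose square is tautologically $6$-periodic) or to other spurious $6$-periodic behavior, whereas the former should be exactly what is isolated by excluding $3$-periodicity. Matching the second alternative of Proposition \ref{prop:Darboux6} against the factor containing $C_2$ (which equals the $3$-periodicity condition for $L$) provides both a verification of the factorization above and a clean geometric interpretation of the two factors.
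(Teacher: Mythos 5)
Your proposal follows essentially the same route as the paper: reduce via Theorem \ref{th:ksemiperiodic} to $3$-periodicity of the secondary correspondence $\hat L$, impose the Cayley condition $C_2^{\hat L}=0$ (the paper calls this coefficient $B_2$) on the secondary cubic from Lemma \ref{lem:secondlinkage}, confirm by direct computation the product factorization with one factor being precisely the $3$-periodicity condition for $L$ from Proposition \ref{prop:3Darboux}, and then discard that factor. The extra consistency check against Proposition \ref{prop:Darboux6} and Remark \ref{rem:quasi} is a sensible sanity test but not part of the paper's argument.
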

\begin{proof}
Denote by $L$ the $(2,2)$ correspondence joined to the link, and by $\hat L$ its secondary correspondence.
Theorem \ref{th:ksemiperiodic} says that the link is $3$-semi-periodic if and only if $\hat L$ is $3$-periodic but $L$ is not.

The cubic curve $\hat{\Gamma}$ corresponding to $\hat L$ is given in Lemma \ref{lem:secondlinkage}, while the value of the coordinate $X_0$ from \eqref{eq:XY} is:
$$
X_0=\frac{64}{3} b^2 d^2 \left(a^4-2 a^2 \left(b^2-5 c^2+d^2\right)+b^4-2 b^2 \left(c^2-5 d^2\right)+\left(c^2-d^2\right)^2\right).
$$
Then, the condition for the $3$-periodicity of $\hat L$ is equivalent to $B_2=0$, where
$$
\sqrt{4 x^3 - D_{\hat L} x + E_{\hat L}}
=
B_0+B_1(x-X_0)+B_2(x-X_0)^2+B_3(x-X_0)^3+\dots.
$$
The direct calculation gives the following:
$$
B_2=
\frac{\left(b^2 d^2 \left(a^2-b^2+c^2-d^2\right)^2-(a^2 c^2-b^2 d^2)^2\right) \left(a^2 c^2 \left(a^2-b^2+c^2-d^2\right)^2-(a^2 c^2-b^2 d^2)^2\right)}
{8 b d (b d-a c)^3 (a c+b d)^3 \left(a^2-b^2+c^2-d^2\right)^3}.
$$
Now, factoring out the condition for the $3$-periodicity of $L$, which was obtained in Proposition \ref{prop:3Darboux}, will conclude the proof.
\end{proof}

\begin{remark}\label{rem:3-periodic&semi}
We see that the condition for $3$-periodicity
$$
b^2 d^2 \left(a^2-b^2+c^2-d^2\right)^2=(a^2 c^2-b^2 d^2)^2,
$$
transforms to the condition for $3$-semi periodicity, given above in \eqref{eq:3semi}, with a cyclic transformation of the $4$-bar link $(a, b, c, d)$ to $(b, c, d, a)$.
This shows that the order of the Darboux transformation is not invariant with respect to this cyclic transformation of $4$-bar links.
For example, compare the Darboux transformation of two congruent quadrangles: in Figure \ref{fig:6Darbouxb}, it is $3$-semi-periodic, and in Figure \ref{fig:6Darbouxb-3semi} it is $3$-periodic.
\end{remark}
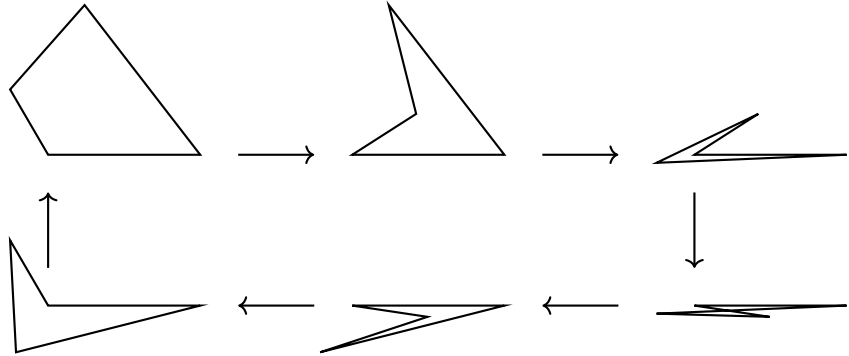
\begin{figure}[h]
	\begin{center}
		\begin{tikzpicture}[scale=1]
			\draw[thick] (0,0) -- (2,0)--(0.479833, 1.98471)--(-0.5, 0.866025)--cycle;
			
			\draw[thick,->,shift={(2.5,0)}] (0,0)--(1,0);
			
			\draw[thick,shift={(4,0)}] (0,0) -- (2,0)--(0.479833, 1.98471)--(0.840402, 0.541964)--cycle;

			\draw[thick,->,shift={(6.5,0)}] (0,0)--(1,0);
			
			\draw[thick,shift={(8.5,0)}] (0,0) -- (2,0)--(-0.497715, -0.106868)--(0.840402, 0.541964)--cycle;
			
			\draw[thick,->,shift={(8.5,-0.5)}] (0,0)--(0,-1);

\draw[thick,shift={(8.5,-2)}] (0,0) -- (2,0)--(-0.497715, -0.106868)--(0.988807, -0.149202)--cycle;

\draw[thick,<-,shift={(6.5,-2)}] (0,0)--(1,0);

\draw[thick,shift={(4,-2)}] (0,0) -- (2,0)--(-0.422141, -0.619059)--(0.988807, -0.149202)--cycle;

\draw[thick,<-,shift={(2.5,-2)}] (0,0)--(1,0);

\draw[thick,shift={(0,-2)}] (0,0) -- (2,0)--(-0.422141, -0.619059)--(-0.5, 0.866025)--cycle;

			\draw[thick,<-,shift={(0,-0.5)}] (0,0)--(0,-1);
			
		\end{tikzpicture}
	\end{center}\caption{A $3$-periodic Darboux transformation, with  $a=2$, $b=5/2$, $c=\sqrt{115/13}/2$, $d=1$.}\label{fig:6Darbouxb-3semi}
\end{figure}

\begin{remark}\label{rem:invariant}
Assume for a moment a modification of the definition of polygonal configurations of $4$-bar links in a way that we identify those obtained from each other by an isometric transformation of the Euclidean plane, oriented or nonoriented. Then we see that the secondary biquadratic and the secondary cubic provide necessary and sufficient conditions for $n$-periodicity in this new sense.
 This is exactly the problem that was solved by Izmestiev in \cite{Izm}*{Theorem 4}, where using a very clever approach based on \cite{GrifHar1978}, the following cubic was constructed for $4$-bar links:
$$
\Gamma'\ :\
y^2
=
\left(
x-(ac+bd)^2
\right)
\left(
x-(ac-bd)^2
\right)
\left(
x-\frac{(a^2-b^2+c^2-d^2)^2}4
\right).
$$
That cubic turns out to be  isomorphic to our secondary cubic $\hat \Gamma$ from \eqref{eq:seccubic}, via the isomorphism:
$$
\Gamma'\to\hat{\Gamma}\ :\
(x,y)
\mapsto
(\lambda,\mu)
=
\left(
(16bd)^2x+X_0,
(16bd)^3y
\right),
$$
where $X_0$ is calculated as in the second proof of Proposition \ref{prop:3Darboux}, see equation \eqref{eq:X0semi}.

Moreover, it was observed in \cite{Izm} that $4$-bar links can be $n$-periodic with respect to one side and $2n$-periodic with respect to the neighboring side, and explicit conditions were derived for $n=3$.
The examples we present in Figures \ref{fig:6Darbouxb} and \ref{fig:6Darbouxb-3semi} exactly fit into this framework.
\end{remark}

\subsection{Singular case: the sum of two sides equals the sum of the remaining two}

According to Remark \ref{rem:singular}, a $4$-bar link which satisfies the triangle inequality will generate a singular biquadratic curve if and only if the link has a pair of sides whose sum equals the semi-perimeter of the link.
Thus, we will have two cases:
\begin{itemize}
\item the sums of two pairs of opposite sides are equal; and
\item the sum of a pair of consecutive sides equals the sum of the remaining pair of sides.
\end{itemize}

First, consider the links $(a,b,c,d)$ satisfying $a+c=b+d$.
The quadrilaterals satisfying that relation are sometimes called \emph{the Pitot quadrilaterals}, see e.g.~\cite{DrKal}.

\begin{proposition}
The Darboux transformation applied to a Pitot quadrilateral which is not a kite is not periodic.
\end{proposition}
\begin{proof}
In that case, we have a singular $(2,2)$ correspondence:
\begin{equation}\label{eq:4bar22Sin}
		a(b+d)x^2y^2+b(a-d)x^2 +d(a-b)y^2
		+2bdxy=0.
\end{equation}
We notice that the origin is an ordinary double point, unless $(a-d)(a-b)=bd$, when it is a cusp.
Thus,
Theorem \ref{th:doublepoint} implies non-periodicity for $(a-d)(a-b)=bd$, while, for $(a-d)(a-b)\neq bd$, it yields periodicity if there are natural numbers $m$, $n$ such that:
\begin{equation}\label{eq:nsingular4bar}
\frac{bd}{(a-d)(a-b)}
=
\cos^2\left(\dfrac{\pi m}{n}\right).
\end{equation}
Thus,
$$
0\le \frac{bd}{(a-d)(a-b)}\le 1.
$$
From $bd>0$, it follows that $(a-d)(a-b)>0$, and thus $bd<(a-d)(a-b)$, which is equivalent to $0<a(a-(b+d))$.
This leads to the contradiction with $a>0$ and $c=b+d-a>0$.
\end{proof}

\begin{remark}[Kites]
Kites appear as configurations of the links of the form $(a,a,c,c)$, $(a,c,c,a)$.
If $a\neq c$, the biquadratic curve \eqref{eq:4bar22Sin} is the union of a conic and a line.
If $a=c$, the configurations are rhombi and the biquadratic curve is the union of three lines.
We note that the kites can be treated as a limit case of a family of $2$-periodic cases, since kites have orthogonal diagonals.
\end{remark}

The case $a+b=c+d$ can be treated similarly.

\begin{proposition}
There are no $4$-bar links with $a+b=c+d$, that generate a periodic Darboux transformation.
\end{proposition}
\begin{proof}
The $(2,2)$ correspondence is
 $$
 a(d-b)X^2y^2+b(d-a)X^2+d(a+b)y^2+2bdXy=0,
 $$
 where $X=1/x$.
If the Darboux transformation is periodic, then there are natural numbers $m$, $n$ such that:
 $$
\frac{bd}{(d-a)(a+b)}
=
\cos^2\left(\dfrac{\pi m}{n}\right).
 $$
From $0<\dfrac{bd}{(d-a)(a+b)}<1$,
we get $d>a$ and then also $0<-ac$, which leads to contradiction.
\end{proof}

\subsection{From $4$-bar links back to random walks}\label{sec:fbrw}

In this section, we establish a new two-way relationship between the $(2,2)$ correspondences of random walks and those of the $4$-bar links. We recall

\begin{definition}\label{def:diagonal} A random walk is called \emph{diagonal} if  $p_{j0}=p_{0j}=0$ for $j\in\{-1,1\}$.
\end{definition}

Let us start with a more general statement, where we do not assume that $a$, $b$, $c$, $d$ are positive or that $0\le p_{ij}\le 1$.

\begin{proposition}\label{prop:cor}
Let $a$, $b$, $c$, $d$ be arbitrary numbers.
If we set:
\begin{align*}
p_{00}&=\frac{8bd+\lambda}{\lambda},\\
p_{j0}&=p_{0j}=0,\quad\text{for } j\in\{-1,1\},\\
p_{jk}&=\frac{(a-jb-kd)^2-c^2}{\lambda},\quad\text{for }  k,j\in\{-1,1\},
\end{align*}
where $\lambda\neq0$ is any parameter,
then the matrices \eqref{eq:P} and \eqref{eq:ML} define the same $(2,2)$ correspondence.

Conversely, suppose that $p_{jk}$, $j,k\in\{-1,0,1\}$, are arbitrary given numbers, with $p_{j0}=p_{0j}=0$ for $j\in\{-1,1\}$
and satisfying $p_{11} - p_{-1,-1}\neq\pm( p_{1,-1} - p_{-1,1})$, $p_{11} - p_{1,-1}\neq\pm( p_{-1,1} - p_{-1,-1})$.
Set:
\begin{equation}\label{eq:abcd}
\begin{aligned}
a&=-Q_2\sqrt{\frac{\lambda(p_{00}-1)Q_1}{8}}
,\\
b&=\sqrt{\frac{\lambda(p_{00}-1)Q_1}{8}}
,\\
c&=
\sqrt{
\lambda
\left(
\frac{p_{00}-1}{8Q_1}
\left(Q_1Q_2-Q_1-1\right)^2
-p_{-1,-1}
\right)
}
,\\
d&=   \sqrt{\frac{\lambda(p_{00}-1)}{8Q_1}},
\end{aligned}
\end{equation}
where
$$
Q_1
=
\dfrac{p_{11} + p_{1,-1} - p_{-1,1} - p_{-1,-1}}{p_{11} - p_{1,-1} + p_{-1,1} - p_{-1,-1}},
\qquad
Q_2
=
	\dfrac{-p_{11} + p_{1,-1} - p_{-1,1} + p_{-1,-1}}{p_{11} - p_{1,-1} - p_{-1,1} + p_{-1,-1}},
$$
and $\lambda$ is an arbitrary non-zero constant.
Then the matrices \eqref{eq:P} and \eqref{eq:ML}define the same $(2,2)$ correspondence.
\end{proposition}
\begin{proof}
The first part follows from a straightforward comparison of the corresponding entries in the matrices \eqref{eq:P} and \eqref{eq:ML}.

For the opposite direction, we first observe that
$$
\lambda\begin{pmatrix}
		p_{11} & p_{10} &  p_{1,-1}\\
		p_{01} &  p_{00}-1 &  p_{0,-1}\\
		p_{-1,1} &  p_{-1,0} &  p_{-1,-1}
	\end{pmatrix}
=
\begin{pmatrix}
	(a+b+d)^2-c^2 & 0 & (a+b-d)^2-c^2 \\
0 & 8 b d & 0 \\
(a-b+d)^2-c^2 & 0 & (a-b-d)^2-c^2
\end{pmatrix}.
$$
implies
\begin{align*}
\lambda(p_{1,-1}-p_{-1,1})&=4a(b-d),\\
\lambda(p_{-1,1}-p_{-1,-1})&=4d(a-b),\\
\lambda(p_{11}-p_{1,-1})&=4d(a+b),\\
\lambda(p_{11}-p_{-1,-1})&=4a(b+d).
\end{align*}
Thus,
$$
Q_1
=
\dfrac{p_{11} + p_{1,-1} - p_{-1,1} - p_{-1,-1}}{p_{11} - p_{1,-1} + p_{-1,1} - p_{-1,-1}}
=
\frac{b}{d},
\quad
Q_2
=
\dfrac{-p_{11} + p_{1,-1} - p_{-1,1} + p_{-1,-1}}{p_{11} - p_{1,-1} - p_{-1,1} + p_{-1,-1}}
=
-\frac{a}b,
$$
i.e.~$b=Q_1d$ and
$a=-Q_2b$.
From there and $8bd=\lambda(p_{00}-1)$, we get
$
8b^2=\lambda Q_1(p_{00}-1),
$
thus:
$$
b=\sqrt{\frac{\lambda Q_1(p_{00}-1)}{8}},
$$
and we also get
$$
d=  \sqrt{\frac{\lambda(p_{00}-1)}{8Q_1}},
\quad
a=-Q_2\sqrt{\frac{\lambda Q_1(p_{00}-1)}{8}}.
$$
Finally, substituting the expressions for $a$, $b$, $d$ into
$
c^2=(a+b+d)^2-\lambda p_{-1,-1},
$
 we calculate $c$.
\end{proof}

The following corollary is useful for obtaining examples of random walks and $4$-bar links that determine that same $(2,2)$ correspondence.
\begin{corollary}
Assume $0\le p_{00}< 1$.
Then, if $a$, $b$, $c$, $d$ are positive numbers, we must have $Q_1>0$, $Q_2<0$ and $\lambda<0$.

Moreover, if $Q_1>0$, $Q_2<0$, $\lambda<0$, $p_{-1,-1}\ge0$, $0\le p_{00}<1$, then $a$, $b$, $c$, $d$ from \eqref{eq:abcd} are positive.
\end{corollary}
\begin{example}
Consider the $4$-bar link with $a=3/2$, $b=1$, $c=\sqrt{13}/2$, $d=1$.
Then Proposition \ref{prop:cor}, with $\lambda=-10$, gives the transition probabilities of a diagonal random walk (see Definition \ref{def:diagonal}),
$p_{-1,1}=p_{1, -1}=1/4$,
$p_{-1, -1}=3/10$,
$p_{11}=0$,
$p_{00}=1/5$.
\end{example}

\subsection*{Acknowledgment}
This research  was supported
by
the Serbian Ministry of Science, Technological Development and Innovation and the Science Fund of Serbia grant IntegraRS, and the Simons Foundation grant no.~854861.
We thank Ivan Izmestiev for kindly indicating \cite{Izm} to us.


\begin{bibdiv}
\begin{biblist}

\bib{Bax71b}{article}{
  title = {One-Dimensional Anisotropic Heisenberg Chain},
  author = {Baxter, R. J.},
  journal = {Phys. Rev. Lett.},
  volume = {26},
  number = {14},
  pages = {834--834},
  year = {1971},
  publisher = {American Physical Society},
}

\bib{Bax72a}{article}{
    title = {Partition function of the Eight-Vertex lattice model},
    journal = {Annals of Physics},
    volume = {70},
    number = {1},
    pages = {193 - 228},
    year = {1972},
    author = {Baxter, Rodney J.}
}

\bib{Bax72b}{article}{
    title = {One-dimensional anisotropic Heisenberg chain},
    author = {Baxter, Rodney J.},
    journal = {Annals of Physics},
    volume = {70},
    number = {2},
    pages = {323 - 337},
}

\bib{Bax82}{book}{
     title = {Exactly solved models in statistical mechanics},
     author = {Baxter, Rodney J.},
     publisher = {Adacemic Press Inc. [Harcourt Brace Jovanovich, Publishers], London},
     year = {1982},
     pages = {486}
}

\bib{BH}{article}{
   author={Benoist, Y.},
   author={Hulin, D.},
   title={It\'eration de pliages de quadrilateres},
   journal={Invent. Math.},
   volume={157},
   date={2004},
   number={1},
   pages={147--194},
}

\bib{BKR}{article}{
   author={Bostan, A.},
   author={Kurkova, I.},
   author={Raschel, K.},
   title={A human proof of Gessel's lattice path conjecture},
   journal={Trans. Amer. Math. Soc.},
   volume={369},
   date={2017},
   number={2},
   pages={1365--1393},
}

\bib{BM}{article}{
   author={Bousquet-M\'elou, Mireille},
   title={Four classes of pattern-avoiding permutations under one roof:
   generating trees with two labels},
   note={Permutation patterns (Otago, 2003)},
   journal={Electron. J. Combin.},
   volume={9},
   date={2002/03},
   number={2},
   pages={Research paper 19, 31},
   review={\MR{2028288}},
   doi={10.37236/1691},
}

\bib{BMX}{article}{
   author={Bousquet-M\'elou, Mireille},
   author={Xin, Guoce},
   title={On partitions avoiding 3-crossings},
   journal={S\'em. Lothar. Combin.},
   volume={54},
   date={2005/07},
   pages={Art. B54e, 21},
}

\bib{BMM}{article}{
   author={Bousquet-M\'elou, Mireille},
   author={Mishna, Marni},
   title={Walks with small steps in the quarter plane},
   conference={
      title={Algorithmic probability and combinatorics},
   },
   book={
      series={Contemp. Math.},
      volume={520},
      publisher={Amer. Math. Soc., Providence, RI},
   },
   isbn={978-0-8218-4783-1},
   date={2010},
   pages={1--39},
}

\bib{Cayley1853}{article}{
	author={Cayley, Arthur},
	title={Note on the porism of the in-and-circumscribed polygon},
	journal={Philosophical magazine},
	volume={6},
	date={1853},
	pages={99--102}
}

\bib{Cayley1871}{article}{
	author={Cayley, Arthur},
	title={On the porism of the in-and-circumscribed polygon and the (2,2) correspondence of points on a conic},
	journal={Quarterly Journal of Pure and Applied Mathematics},
	volume={11},
	date={1871},
	pages={83--91}
}

\bib{Clem}{book}{
	author={Clemens, Herbert C.},
	title={A Scrapbook of Complex Curve Theory},
	series={Graduate Studies in Mathematics, Vol. 55, 2nd Edition},
	publisher={American Mathematical Society},
	date={2003},
	pages={xi+188},
}

\bib{Dar}{article}{
	author={Darboux, G.},
	title={De l'emploi des fonctions elliptiques dans l'th\'eorie du quadrilatere plan},
	journal={Bulletin des Sciences Math\'ematiques et Astronomiques},
	volume={3},
	date={1879},
	pages={109--120}
}

\bib{Dra1992}{article}{
   author={Dragovich, V. I.},
   title={Solutions of the Yang equation with rational spectral curves},
   language={Russian, with Russian summary},
   journal={Algebra i Analiz},
   volume={4},
   date={1992},
   number={5},
   pages={104--116},
   issn={0234-0852},
   translation={
      journal={St. Petersburg Math. J.},
      volume={4},
      date={1993},
      number={5},
      pages={921--931},
      issn={1061-0022},
   },
}

\bib{Dra1993}{article}{
   author={Dragovich, V. I.},
   title={Solutions of the Yang equation with rational irreducible spectral
   curves},
   language={Russian, with Russian summary},
   journal={Izv. Ross. Akad. Nauk Ser. Mat.},
   volume={57},
   date={1993},
   number={1},
   pages={59--75},
   issn={1607-0046},
   translation={
      journal={Russian Acad. Sci. Izv. Math.},
      volume={42},
      date={1994},
      number={1},
      pages={51--65},
      issn={1064-5632},
   },
}
\bib{Dra2014}{article}{
   author={Dragovi\'c, Vladimir},
   title={Pencils of conics and biquadratics, and integrability},
   conference={
      title={Topology, geometry, integrable systems, and mathematical
      physics},
   },
   book={
      series={Amer. Math. Soc. Transl. Ser. 2},
      volume={234},
      publisher={Amer. Math. Soc., Providence, RI},
   },
   isbn={978-1-4704-1871-7},
   isbn={1-4704-1871-1},
   date={2014},
   pages={117--140},
}

\bib{DrKal}{article}{
	author={Dragovi\'{c}, Vladimir},
	author={Kalaj, David},
	title={Minimal surfaces over the Pitot quadrilaterals},
	journal={},
	volume={},
	date={2025},
	number={},
	pages={},
    note={arXiv 2512.01029 },
}

\bib{DR2011knjiga}{book}{
	author={Dragovi\'{c}, Vladimir},
	author={Radnovi\'{c}, Milena},
	title={Poncelet porisms and beyond},
	series={Frontiers in Mathematics},
	note={Integrable billiards, hyperelliptic Jacobians and pencils of
		quadrics},
	publisher={Birkh\"{a}user/Springer Basel AG, Basel},
	date={2011},
	pages={viii+293},
}

\bib{DR2019cmp}{article}{
	author={Dragovi\'{c}, Vladimir},
	author={Radnovi\'{c}, Milena},
	title={Periodic ellipsoidal billiard trajectories and extremal
		polynomials},
	journal={Comm. Math. Phys.},
	volume={372},
	date={2019},
	number={1},
	pages={183--211},
}

\bib{DR2019rcd}{article}{
	author={Dragovi\'{c}, Vladimir},
	author={Radnovi\'{c}, Milena},
	title={Caustics of Poncelet polygons and classical extremal polynomials},
	journal={Regul. Chaotic Dyn.},
	volume={24},
	date={2019},
	number={1},
	pages={1--35},
}

\bib{DR2025rcd}{article}{
	author={Dragovi\'{c}, Vladimir},
	author={Radnovi\'{c}, Milena},
	title={Poncelet porism in singular cases},
	journal={Regul. Chaotic Dyn.},
	volume={30},
	date={2025},
	number={4},
	pages={598-611},
}

\bib{DuistermaatBOOK}{book}{
	author={Duistermaat, Johannes J.},
	title={Discrete integrable systems: QRT maps and elliptic surfaces},
	series={Springer Monographs in Mathematics},
	publisher={Springer},
	place={New York},
	date={2010},
	pages={xxii+627},
	isbn={978-1-4419-7116-6}
}

\bib{Euler1766}{article}{
  title={Evolutio generalior formularum comparationi curvarum inservientium},
  author={Euler, Leonhard},
  journal={Opera Omnia Ser 1},
  volume = {20},
  pages={318-356},
  year={1766}
}

\bib{Far}{book}{
   author={Farber, Michael},
   title={Invitation to topological robotics},
   series={Zurich Lectures in Advanced Mathematics},
   publisher={European Mathematical Society (EMS), Z\"urich},
   date={2008},
   pages={x+133},
   isbn={978-3-03719-054-8},
}

\bib{Far2}{article}{
   author={Farber, Michael},
   title={Topology of random linkages},
   journal={Algebraic and Geometric Topology},
   volume={8},
   date={2008},
   pages={155-171},
}

\bib{FarKap}{article}{
   author={Farber, Michael},
   author={Kappeler, T},
   title={Betti numbers of random manifolds},
   journal={Homology, Homotopy and Applications},
   volume={10},
   date={2008},
   pages={205-222},
}

\bib{FayMalMen}{article}{
   author={Fayolle, G.},
   author={Malyshev, V. A.},
   author={Men\cprime shikov, M. V.},
   title={Random walks in a quarter plane with zero drifts. I. Ergodicity
   and null recurrence},
   language={English, with French summary},
   journal={Ann. Inst. H. Poincar\'e{} Probab. Statist.},
   volume={28},
   date={1992},
   number={2},
   pages={179--194},
}

\bib{FayRas}{article}{
   author={Fayolle, G.},
   author={Raschel, K.},
   title={Random walks in the quarter-plane with zero drift: an explicit
   criterion for the finiteness of the associated group},
   journal={Markov Process. Related Fields},
   volume={17},
   date={2011},
   number={4},
   pages={619--636},
}

\bib{FayRas1}{article}{
   author={Fayolle, G.},
   author={Raschel, K.},
   title={On the holonomy or algebraicity of generating functions counting
   lattice walks in the quarter-plane},
   journal={Markov Process. Related Fields},
   volume={16},
   date={2010},
   number={3},
   pages={485--496},
}

\bib{FayIas0}{article}{
   author={Fayolle, G.},
   author={Iasnogorodski, R.},
   title={Two coupled processors: the reduction to a Riemann-Hilbert problem },
   journal={zeitschrift für wahrscheinlichkeitstheorie und verwandte gebiete},
   volume={47},
   date={1979},
   number={},
   pages={325--351},
}

\bib{FayIas}{article}{
   author={Fayolle, G.},
   author={Iasnogorodski, R.},
   title={Random walks in the quarter-plane: advances in explicit criterions
   for the finiteness of the associated group in the genus 1 case},
   journal={Markov Process. Related Fields},
   volume={21},
   date={2015},
   number={4},
   pages={1005--1032},
}

\bib{FayIas1}{article}{
   author={Fayolle, Guy},
   author={Iasnogorodski, Roudolf},
   title={Conditions for some non stationary random walks in the quarter
   plane to be singular or of genus 0},
   journal={Markov Process. Related Fields},
   volume={27},
   date={2021},
   number={1},
   pages={111--122},
}

\bib{Randomwalks0}{book}{
   author={Fayolle, Guy},
   author={Iasnogorodski, Roudolf},
   author={Malyshev, Vadim},
   title={Random walks in the quarter-plane},
   series={Applications of Mathematics (New York)},
   volume={40},
   note={Algebraic methods, boundary value problems and applications},
   publisher={Springer-Verlag, Berlin},
   date={1999},
   pages={xvi+156},
}

\bib{RandomWalks}{book}{
   author={Fayolle, Guy},
   author={Iasnogorodski, Roudolf},
   author={Malyshev, Vadim},
   title={Random walks in the quarter plane},
   series={Probability Theory and Stochastic Modelling},
   volume={40},
   edition={2},
   publisher={Springer, Cham},
   date={2017},

}

\bib{FlattoBOOK}{book}{
   author={Flatto, Leoplod},
   title={Poncelet's Theorem},
   publisher={AMS},
   date={2009},
 pages={240},
   isbn={ISBN: 978-0-8218-4375-8}
}

\bib{FH}{article}{
	author={Flatto, L.},
	author={Hahn, S.},
	title={Two parallel queues created by arrivals
with two demands I},
	journal={SIAM J. Appl. Math.},
	volume={44},
	date={1984},
	number={5},
	pages={1041-1053},
}

\bib{Fr}{article}{
	author={Frobenius, G.},
	title={Theorie der Biquadratischen Formen},
	journal={J. Reine u. Angew. Math. },
	volume={106},
	date={1890},
	number={},
	pages={125-188},
}

\bib{GN}{article}{
	author={Gibson, C. G.},
	author={Newstead, P. E},
	title={On the geometry of the planar $4$-bar mechanism},
	journal={Acta Applicandae Mathematicae},
	volume={7},
	date={1986},
	number={},
	pages={113-135},
}

\bib{GrifHarPRINC}{book}{
	author={Griffiths, Philip},
	author={Harris, Joseph},
	title={Principles of Algebraic Geometry},
	publisher={Wiley-Interscience},
	place={New York},
	date={1978}
}

\bib{GrifHar1978}{article}{
  author={Griffiths, Philip},
  author={Harris, Joe},
  title={On Cayley's explicit solution to Poncelet's porism},
  journal={EnsFeign. Math.},
  volume={24},
  date={1978},
  number={1-2},
  pages={31--40},
}

\bib{HartshorneAG}{book}{
	author={Hartshorne, Robin},
	title={Algebraic geometry},
	note={Graduate Texts in Mathematics, No. 52},
	publisher={Springer-Verlag, New York-Heidelberg},
	date={1977}
}

\bib{Izm}{article}{
   author={Izmestiev, Ivan},
   title={Deformation of quadrilaterals and addition on elliptic curves},
   journal={Moscow Mathematical Journal},
   volume={23},
   date={2023},
   number={2},
   pages={205-242},
}

\bib{KaYa}{article}{
   author={Kauers, M},
   author={Yatchak, R.},
   title={Walks in the quarter plane with multiple steps},
note={ Proceedings of
FPSAC 2015, Daejeon, South Korea},
 journal={ Discrete Mathematics \& Theoretical Computer Science
(DMTCS)},
   volume={},
   date={2015},
   number={},
   pages={25-36},
}

\bib{Krich}{article}{
   author={Krichever, Igor Moiseevich},
   title={The Baxter equations and algebraic geometry},
   language={Russian},
   journal={Funktsional. Anal. i Prilozhen.},
   volume={15},
   date={1981},
   number={2},
   pages={22--35},
}

\bib{KuRas}{article}{
   author={Kurkova, Irina},
   author={Raschel, Kilian},
   title={On the functions counting walks with small steps in the quarter
   plane},
   journal={Publ. Math. Inst. Hautes \'Etudes Sci.},
   volume={116},
   date={2012},
   pages={69--114},
   issn={0073-8301},
}

\bib{Mal}{book}{
   author={Malyshev, V. A.},
   title={{Slucha\u i nye bluzhdaniya Uravneniya. Vinera-Khopfa v
   chetverti ploskosti. Avtomorfizmy Galua}},
   language={Russian},
   publisher={Izdat. Moskov. Univ., Moscow},
   date={1970},
   pages={201},
}

\bib{Mal1}{article}{
   author={Maly\v sev, V. A.},
   title={Wiener-Hopf equations in the quarter-plane, discrete groups and
   automorphic functions},
   language={Russian},
   journal={Mat. Sb. (N.S.)},
   volume={84(126)},
   date={1971},
   pages={499--525},
}

\bib{Panina}{article}{
   author={Panina, Gaiane},
   title={Moduli space of a planar polygonal linkage: a combinatorial
   description},
   journal={Arnold Math. J.},
   volume={3},
   date={2017},
   number={3},
   pages={351--364},
}

\bib{Poncelet}{book}{
    author={Poncelet, Jean-Victor},
title={Trait\'e des propri\'et\'es projectives des figures},
publisher={Mett},
address={Paris},
date={1822}
}

\bib{QRT1988}{article}{
	author={Quispel, G. R. W.},
	author={Roberts, J. A. G.},
    author={Thompson, C. J. },
	title={Integrable mappings and soliton equations},
	journal={Physics Letters A},
	volume={126},
	date={1988},
	pages={419-421},
}

\bib{Ras}{article}{
   author={Raschel, Kilian},
   title={Counting walks in a quadrant: a unified approach via boundary
   value problems},
   journal={J. Eur. Math. Soc. (JEMS)},
   volume={14},
   date={2012},
   number={3},
   pages={749--777},
}

\bib{Sam}{book}{
	author={Samuel, Pierre},
	title={Projective Geometry},
	series={Undergraduate Texts in Mathematics},
	publisher={Springer-Verlag},
	date={1988},
	pages={vii+156},
}

\bib{Tsu}{article}{
    title={Integrable mappings via rational elliptic surfaces},
    volume={37},
    number={},
    journal={Journal of Physics A: Math. Gen},
    author={Tsuda, T},
    year={2004},
    pages={2721--2730}
}

\bib{V1992}{article}{
    title={Growth and integrability in the dynamics of mappings},
    volume={145},
    number={1},
    journal={Communications in Mathematical Physics},
    author={Veselov, A. P.},
    year={1992},
    pages={181--193}
}

\bib{XZ}{article}{
   author={Xin, Guoce},
   author={Zhang, Terence Y. J.},
   title={Enumeration of bilaterally symmetric 3-noncrossing partitions},
   journal={Discrete Math.},
   volume={309},
   date={2009},
   number={8},
   pages={2497--2509},
}

\bib{ZhukovaPanona}{article}{
   author={Zhukova, A. M.},
   author={Panina, G. Yu.},
   title={Discrete Morse theory for moduli spaces of flexible polygons, or a
   solitaire game on the circle},
   language={Russian, with Russian summary},
   journal={Mat. Sb.},
   volume={208},
   date={2017},
   number={9},
   pages={100--115},
   issn={0368-8666},
   translation={
      journal={Sb. Math.},
      volume={208},
      date={2017},
      number={9},
      pages={1353-1367},
      issn={1064-5616},
   },
}
\end{biblist}
\end{bibdiv}

\end{document}